\def\Sfrak{\mathfrak{S}}
\newtheorem*{thma}{Theorem A}
\newtheorem*{thmb}{Theorem B}
\newtheorem*{thmc}{Theorem C}
\newtheorem*{thmd}{Theorem D}
\newtheorem*{thme}{Theorem E}
\def\ds{\displaystyle}
\DeclareMathOperator{\MTub}{MTub}  
\DeclareMathOperator{\topT}{top} 
\DeclareMathOperator{\tors}{\mathsf{tors}}
\renewcommand{\top}{\operatorname{top}}
\renewcommand{\pidown}{\pi_\downarrow}
\renewcommand{\piup}{\pi^\uparrow}
\newcommand{\meet}{\wedge}
\renewcommand{\join}{\vee}
\renewcommand{\Join}{\bigvee}
\newcommand{\greaterdot}{\;\reflectbox{$\lessdot$}}
\DeclareMathOperator{\inv}{inv}
\DeclareMathOperator{\inc}{inc}
\DeclareMathOperator{\coinv}{coinv}
\DeclareMathOperator{\asc}{asc}
\DeclareMathOperator{\dec}{des}
\DeclareMathOperator{\Cut}{Cut}
\DeclareMathOperator{\cut}{cut}
\DeclareMathOperator{\Sew}{Sew}
\DeclareMathOperator{\sew}{sew}
\newcommand{\abs}[1]{\left| #1 \right|}
\newcommand{\ct}{\mathcal{T}}
\newcommand{\cj}{\mathcal{J}}
\newcommand{\ck}{\mathcal{K}}
\newcommand{\cl}{\mathcal{L}}
\newcommand{\cm}{\mathcal{M}}
\newcommand{\cx}{\mathcal{X}}
\newcommand{\cy}{\mathcal{Y}}
\newcommand{\cz}{\mathcal{Z}}
\newcommand{\downT}{\ct_\downarrow}
\newcommand{\downJ}{\cj_\downarrow}
\newcommand{\downK}{\ck_\downarrow}
\newcommand{\downX}{\cx_\downarrow}
\newcommand{\twoheadrightarrowcn}{\twoheadrightarrow}
\newcommand{\hookrightarrowcn}{\hookrightarrow}
\newcommand{\rightsquigarrowcn}{\rightsquigarrow}
\title{The poset of maximal tubings of the cycle graph is a lattice}
\author[B. Adenbaum]{Ben Adenbaum} \address{Department of Mathematics and Statistics \\ Villanova University\\ 800 E. Lancaster Ave \\ Villanova, PA 19085 \\ USA} 
\email{benadenbaummath@gmail.com} 
\urladdr{\url{https://badenbaum.github.io/}}
\author[E. Barnard]{Emily Barnard} \address{Department of Mathematical Sciences\\ DePaul University \\ 2320 N Kenmore Ave \\ Chicago, IL 60614-3210 \\ USA }
\email{e.barnard@depaul.edu}
\urladdr{\url{https://emilybarnard.github.io/}}
\author[M. Hlavacek]{Max Hlavacek} \address{Mathematics and Statistics Department\\ Pomona College \\ 333 N. College Way \\ Claremont, CA \\ USA }
\email{mhap2023@pomona.edu}
\urladdr{\url{https://maxhlav.github.io/}}
\author[B. Kagy]{Bryson Kagy} \address{Department of Mathematics\\ Texas State University\\ Pickard St \\ San Marcos, TX 78666 \\ USA } 
\email{BrysonKagy@txstate.edu}
\urladdr{\url{https://brysonkagy.github.io/}}
\author[N. Lesnevich]{Nathan Lesnevich} \address{Department of Mathematics\\ Oklahoma State University \\ 401 Mathematical Sciences \\ Stillwater, OK 74075 \\ USA }
\email{nlesnev@okstate.edu}
\urladdr{\url{https://nlesnevich.github.io/}}
\author[G. D. Nasr]{George David Nasr} \address{Department of Mathematics\\ Augustana University \\ 2001 S Summit Ave
 \\ Sioux Falls, SD 57197 \\ USA }
\email{george.nasr@augie.edu}
\urladdr{\url{https://sites.google.com/view/george-d-nasr-math}}
\author[K. Waddle]{Katie Waddle} \address{Department of Mathematics\\ University of Michigan \\ 2074 East Hall, 530 Church St \\ Ann Arbor, MI 48109 \\ USA }
\email{waddle@umich.edu}
\urladdr{\url{https://sites.google.com/view/katie-waddle/home}}
\begin{document}

\begin{abstract}
    The poset of maximal tubings of a graph generalizes several well-known and remarkable partial orders. Notable examples include the weak Bruhat order and the Tamari lattice, posets of maximal tubings for the complete graph and the path graph, respectively. It is an open problem to characterize graphs for which the poset of maximal tubings is a lattice. In this paper, we prove that the poset of maximal tubings for the cycle graph is a lattice, and moreover that it is semidistributive and congruence uniform. As main tools, we characterize all order relations in the poset, and introduce a useful map from maximal tubings of the cycle graph to maximal tubings of the path graph.
\end{abstract}

\maketitle 
\tableofcontents

\section{Introduction}\label{sec:Intro}
Fix a simple graph $G$ with vertex set $V=[n]:=\{1, 2, \ldots, n\}$.
A subset $X\subseteq V$ is a \emph{tube} of $G$ if the subgraph induced by $X$ is connected.
A pair of tubes~$X$ and~$Y$ are \emph{compatible} if~$X\subseteq Y$,~$X\supseteq Y$, or~$X\cup Y$ is not a tube.
A \emph{maximal tubing}~$\ct$ of a graph~$G$ is a maximal collection of compatible tubes of~$G$.
For each proper tube $X\subset V$ in $\ct$, there exists a unique tube $Y$ such that $(\ct \setminus \{X\})\cup \{Y\}$ is a maximal tubing.
We call the transition $\ct \to (\ct \setminus \{X\})\cup \{Y\}$ a \emph{flip}.
See Definitions~\ref{def:maximal tubing} and~\ref{def:MTub}.

With adjacency given by the flip relation, the set of maximal tubings  form a simple graph that is isomorphic to the 1-skeleton of a simple polytope called the \emph{graph associahedron} $P_G$.
That is, the set of vertices of the graph associahedron $P_G$ is in bijection with the set of maximal tubings of $G$.
By fixing a linear function, $\lambda(x_1, x_2, \ldots, x_n) = nx_1 +(n-1)x_2+ \cdots 1x_n$, and orienting the edges of $P_G$ so that $\lambda$ is increasing, we obtain a partial order which we call the \emph{poset of maximal tubings}, $\MTub(G)$. 
See \cite[Lemma~2.8]{BM2021}.

Each graph associahedron is an example a generalized permutahedron, as defined by Postnikov in \cite{postnikov:2009permutohedra}; they also appeared independently in \cite{carr.devadoss:2006coxeter} and \cite{davis.janus.scott:2003fundamental}. 
Classical examples of graph associahedra and their corresponding poset include the permutahedron and the weak order on the type A Coxeter group, the associahedron, and the Tamari lattice.
Both the weak order and the Tamari lattices are remarkable lattice posets:
They are semidistributive and congruence-uniform, meaning that the combinatorics of their lattice quotients can be understood in terms of their join irreducible elements \cite{Caspard, GEYER}.
In addition, they are both the lattice of torsion classes for some finite-dimensional associative algebra $\Lambda$ \cite{Thomas_Torsion, Mizuno}.

For a general graph $G$, it is not true that $\MTub(G)$ is even a lattice (let alone semidistributive and congruence uniform).
See \cite[Figure~9]{BM2021}.
A fundamental question is to characterize for which $G$ the poset of maximal tubings $\MTub(G)$ is a lattice-poset \cite{Forcey_Lattice_Question}.
This problem remains open, in part, because while cover relations in $\MTub(G)$ have an explicit combinatorial description, in general it is difficult to tell when two maximal tubings $\ct$ and $\ct'$ are even comparable.

In \cite{BM2021}, the authors provide a partial answer to this question.
A graph $G$ is \emph{filled} if whenever $\{i,j\}$ is an edge of $G$, with $i<j$, then every pair of elements in the interval $[i,j]$ is also an edge in $G$.
\cite[Theorem~1.1]{BM2021} says that if $G$ is filled, then $\MTub(G)$ is a lattice, and indeed, it is a lattice quotient of the weak order on the type A Coxeter group.
Notably, the cycle graph $C_n$ with vertices labeled in cyclic order is not filled.

This paper studies the remarkable lattice-theoretic properties of $\MTub(C_n)$.
Our first main result says that $\MTub(C_n)$ is indeed a lattice-poset.

\begin{thma}[Theorem~\ref{thm:lattice}]
\label{theorem_a}
Let $C_n$ denote the cycle graph on $n$ vertices, labeled $1, 2, \ldots, n$ in cyclic order.
The poset of maximal tubings $\MTub(C_n)$ is a lattice-poset.
\end{thma}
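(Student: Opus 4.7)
The plan is to prove the lattice property by showing that every pair $\ct,\ct'\in\MTub(C_n)$ admits a meet; since $\MTub(C_n)$ is finite and has a minimum element (by \cite[Lemma~2.8]{BM2021}), the existence of all binary meets suffices. The key tool is the map $\Cut:\MTub(C_n)\to\MTub(P_n)$ advertised in the abstract: because $\MTub(P_n)$ is the Tamari lattice, meets on the path side are already understood, and the problem reduces to transporting them back through $\Cut$.

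First, I would characterize the order relation on $\MTub(C_n)$ beyond its cover relations. In analogy with the weak order---where $u\le v$ iff $\inv(u)\subseteq\inv(v)$---I would attach to each maximal tubing $\ct$ a combinatorial invariant $I(\ct)$, naturally built out of oriented or ``inverted'' tubes of $C_n$, and show that $\ct\le\ct'$ is equivalent to $I(\ct)\subseteq I(\ct')$. Second, I would verify that $\Cut$ is order-preserving and analyze its fibers, which should parameterize precisely the cyclic information that is lost when the cycle is cut open into a path. Third, using these ingredients, I would construct an explicit candidate for $\ct\wedge\ct'$---either as the tubing with invariant $I(\ct)\cap I(\ct')$, or by first computing $\Cut(\ct)\wedge\Cut(\ct')$ in the Tamari lattice and then lifting canonically through the fibers of $\Cut$---and verify that this candidate is in fact the greatest lower bound.

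The principal obstacle is the realization/lifting step. An intersection $I(\ct)\cap I(\ct')$ of invariants need not a priori arise from an actual tubing, and the fibers of $\Cut$ may fail to be totally ordered, so a canonical lift of a Tamari meet is not immediate. Handling the ``wrap-around'' data that is invisible to the path, and showing that two maximal tubings cut at different vertices can nevertheless be compared after a controlled adjustment, is where the bulk of the technical work will lie. I anticipate an induction on flip distance, with the path-graph case accessed via $\Cut$ providing the structural backbone and the cyclic adjustment treated as a bounded correction that can always be carried out compatibly.
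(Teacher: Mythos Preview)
Your overall strategy---reduce to the Tamari lattice via $\Cut$ and then lift back through the fibers---is exactly what the paper does (the paper works with joins rather than meets, but $\MTub(C_n)$ is self-dual so this is immaterial). Two specific points in your plan need correction, however, and they are precisely where the paper's work concentrates.

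First, the hoped-for characterization ``$\ct\le\ct'$ iff $I(\ct)\subseteq I(\ct')$'' does not exist in that symmetric form. The paper shows (Example after Theorem~\ref{thm:inversion_order}) that a cover relation $\cj\lessdot\ck$ can satisfy $\inv(\cj)\not\subseteq\inv(\ck)$, so inversion sets alone fail. The correct statement is the asymmetric one $\inv(\cj)\subseteq\inv(\ck)\cup\inc(\ck)$, and its proof (Section~\ref{sec:Global Relations}) is lengthy and already uses $\Cut$ in an essential way. In particular your candidate ``the tubing with invariant $I(\ct)\cap I(\ct')$'' is not viable: such an intersection need not be the inversion set of any tubing, and the order is not governed by a closure operator on a single set.

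Second, you correctly flag that fibers of $\Cut$ are not chains, but you stop short of what makes the lift work. The paper proves (Proposition~\ref{prop: fiber is an interval}) that each fiber $\Cut^{-1}(\cx)$ is isomorphic to an interval of the weak order on $\mathfrak{S}_{l+r}$---hence a lattice---and, crucially, constructs for each $\cj$ with $\Cut(\cj)\le\cx$ a canonical element $\cj^\cx\in\Cut^{-1}(\cx)$ with the property that $\cj\le\ck\iff\cj^\cx\le\ck$ for every $\ck$ with $\Cut(\ck)\ge\cx$ (Lemma~\ref{lem:key_lemma} and Corollary~\ref{cor:key_corollary}). This is the ``canonical lift'' you are looking for, and its construction is an explicit case analysis on the four types of tree moves connecting $\Cut(\cj)$ to $\cx$. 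Once $\cj^\cx$ and $\ck^\cx$ are in hand for $\cx=\Cut(\cj)\vee\Cut(\ck)$, the join $\cj\vee\ck$ is simply $\cj^\cx\vee\ck^\cx$ computed \emph{inside the fiber}. Your ``induction on flip distance with bounded cyclic correction'' is vaguer than what is actually needed; the $\cj^\cx$ construction is the concrete mechanism, and proving its key property relies on the asymmetric order characterization above.
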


The poset of maximal tubings $\MTub(C_n)$ stands out for many reasons.
The number of elements in $\MTub(C_n)$ is $\binom{2n-2}{n-1}$, which is the type B Catalan number.
Like a classical Tamari lattice or a $c$-Cambrian lattice (of any finite type Coxeter group) the Hasse diagram of $\MTub(C_n)$ is a regular graph.
However, $\MTub(C_n)$ is not isomorphic to a type B Tamari lattice or type B $c$-Cambrian lattice.
In fact, $\MTub(C_n)$ is not \emph{trim} (a lattice-theoretic property shared by any $c$-Cambrian lattice, regardless of type).
See \cite{Thomas_Trim, Muhle}.

In order to prove Theorem~\ref{thm:lattice}, we first characterized global order relations in $\MTub(C_n)$.
Importantly, each maximal tubing $\ct$ for any $\MTub(G)$, where $G$ is connected, can be uniquely encoded by a tree poset, called the $G$-tree of $\ct$, and denoted $G_\ct$.
See Definition~\ref{def:Gtree} for the precise definition.
When $G$ is the path graph $P_n$ with vertices labeled $1, 2, \ldots, n$ in the usual linear order,  each $G$-tree is a binary search tree \cite{BjornerWachs_shellableII}.
When $G$ is the complete graph on $[n]$, then each $G$-tree corresponds to a permutation.

For a filled graph $G$, the set $\{\ct'\in \MTub(G) \mid\ \ct'<\ct\}$ is entirely determined by the set of pairs $1\le i<j\le n$ such that $j$ is less than $i$ in the $G$-tree $G_\ct$.
We call such a pair $(i,j)$ an \emph{inversion} of $\ct$.
See Definition~\ref{def:inversions etc}.
For $\MTub(C_n)$ inversions alone cannot determine whether a pair of maximal tubings are comparable.
See Example~\ref{ex:Global Relations}.
We prove the following:

\begin{thmb}[Theorem~\ref{thm:inversion_order}]
\label{theorem_b}
  Let~$\cj$ and~$\ck$ be tubings of the cycle graph. Then~$\cj \leq \ck$ if and only if~${\inv(\cj) \subseteq \inv(\ck) \cup \inc(\ck)}$, where $\inc(\ck)$ denotes the set of pair $i<j$ such that $i$ and $j$ are not comparable in the $G$-tree of $\ck$.
\end{thmb}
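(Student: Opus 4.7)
The plan is to prove the two implications separately, and to reduce each one to a local statement about a single flip $\cj \lessdot \cj'$ in $\MTub(C_n)$. Both directions hinge on understanding how the sets $\inv(\cdot)$ and $\inc(\cdot)$ change when a tube is flipped upward, so my first step in either direction would be to record a precise lemma describing the effect of a cover relation on the $G$-tree of a tubing.

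For the forward direction, I would show that whenever $\cj \lessdot \ck$ is a single cover, the containment $\inv(\cj) \subseteq \inv(\ck) \cup \inc(\ck)$ already holds, and then iterate along any saturated chain. A cover is given by replacing one tube $X$ of $\cj$ by a tube $Y$, and the corresponding local modification of $G_\cj$ is explicit. I would then do a short case analysis on the position of an arbitrary pair $(i,j)\in\inv(\cj)$ relative to the flipped tubes $X$ and $Y$, checking that every pair which fails to be an inversion of $\ck$ must instead have become incomparable in $G_\ck$. This is essentially a local $G$-tree calculation and does not rely on cyclic-specific phenomena beyond the flip description itself.

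For the reverse direction, I would argue by induction on some measure of the defect between $\cj$ and $\ck$, for instance $|\inv(\ck)\setminus \inv(\cj)|$ together with $|\inc(\ck)\cap \inv(\cj)|$. Assuming $\cj \neq \ck$ and $\inv(\cj) \subseteq \inv(\ck) \cup \inc(\ck)$, the goal is to produce an upward flip $\cj \lessdot \cj'$ with $\inv(\cj') \subseteq \inv(\ck) \cup \inc(\ck)$ and with strictly smaller defect. To find such a flip, I would select a pair $(i,j) \in \inv(\ck) \setminus \inv(\cj)$ that is extremal in a suitable sense (for example, minimizing $j-i$, or minimal under the tree order of $G_\cj$), and show that this pair pinpoints a specific tube of $\cj$ whose upward flip introduces $(i,j)$ as an inversion while not disturbing any pair already correctly placed.

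The main obstacle is the reverse direction, and in particular the verification that the chosen flip preserves the inclusion. Unlike the path case, a single cover in $\MTub(C_n)$ can change several inversions at once, because tubes of $C_n$ come in two flavors (arcs avoiding a fixed edge and their complements) and a flip can swap these roles. The key technical ingredient will therefore be a sharp description, in terms of $G_\cj$, of exactly which pairs enter and exit $\inv(\cdot)$ under an upward flip, together with an argument that the hypothesis $\inv(\cj) \subseteq \inv(\ck) \cup \inc(\ck)$ together with the extremal choice of $(i,j)$ forces every newly created inversion to lie in $\inv(\ck)$. Once this lemma is in hand, the induction and termination are routine; the real content sits in pinpointing the correct flip at each step and in controlling the secondary inversions it creates.
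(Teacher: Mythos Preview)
Your forward direction has a real gap: the relation ``$\inv(\cj)\subseteq\inv(\ck)\cup\inc(\ck)$'' is \emph{not} transitive on the nose, so checking it for a single cover and then iterating along a saturated chain does not suffice. Concretely, if $(a,b)\in\inv(\cj)$ lands in $\inc(\cj_1)$ after one flip, your cover-lemma gives you no control over $(a,b)$ at the next step; nothing prevents it from becoming a coinversion of $\cj_2$. The paper does \emph{not} argue cover-by-cover. Instead it assumes a bad pair $(i,k)\in\inv(\cj)\cap\coinv(\ck)$ exists, chooses one with $k-i$ maximal, locates along a saturated chain the exact cover at which $(i,k)$ first becomes a coinversion and the exact cover at which it last was an inversion, and from the explicit shape of those two tree moves produces a bad pair $(i,x)$ with $x>k$, contradicting maximality. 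Some global argument of this kind is needed; your plan as written skips it.

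For the backward direction you propose ascending from $\cj$, whereas the paper descends from $\ck$: it exhibits a descent of $\ck$ that is not an inversion of $\cj$, does the corresponding down-flip, and checks the hypothesis persists. The existence of that descent is the crux, and the paper obtains it via the $\Cut$ map to $\MTub(P_n)$: one first shows $\inv(\cj)\subseteq\inv(\ck)\cup\inc(\ck)$ forces $\Cut(\cj)\le\Cut(\ck)$ in the Tamari lattice, then pulls back a descent from the path side and repairs it with a zipper argument. Your ascending strategy is not obviously wrong, but you have not identified which ascent of $\cj$ to flip or why the newly created inversions (which, as you note, can be several at once on the cycle) all lie in $\inv(\ck)\cup\inc(\ck)$. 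In the paper the analogous verification (Propositions~\ref{prop: backwards outline_R} and~\ref{prop: backwards outline_L}) is a genuine case analysis that uses the cyclic order and the relative positions of the two roots $m_1,m_2$; the ``extremal $(i,j)$'' heuristic you sketch does not by itself pin down the right flip. Absent the $\Cut$ map or an equivalent tool to locate the correct edge, this direction is still missing its main idea.
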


In order to prove Theorem~\ref{thm:inversion_order}, we define and analyze a surjective map ${\Cut : \MTub(C_n) \to \MTub(P_n)}$ which we call the $\Cut$ map. For a precise definition see Definition~\ref{def:cutmap}.
Intuitively, to visualize the $\Cut$ map, we imagine taking a pair of scissors and snipping the edge between $1$ and $n$. 
Tubes of a maximal tubing $\ct$ of the cycle graph which do not contain both $1$ and $n$ are unchanged by the $\Cut$ map.
A tube of $\ct$ containing both $1$ and $n$ must have the form $\{i, i+1, \ldots, n, 1, \ldots j\}$.
The $\Cut$ map splits this tube into two tubes of $P_n$, and selects one of these two tubes to include in the image. 

It turns out that each fiber $\Cut^{-1}(\cx)$ for $\cx\in\MTub(P_n)$ is isomorphic to an interval of the weak order a type $A$ Coxeter group. 
In fact, we show the following.
Recall that a surjection $\phi : L \to L'$ from lattice $L$ to $L'$ is a \emph{lattice quotient} map if $\phi$ respects the join and meet operations of $L$ (e.g. $\phi(x\join y) = \phi(x) \join \phi(y)$).  

\begin{thmc}[Corollary~\ref{cut_lattice_map}]
\label{theorem_c}
The surjection $\Cut: \MTub(C_n) \to \MTub(P_n)$ is a lattice quotient map.
\end{thmc}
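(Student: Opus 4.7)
My plan is to use the standard lattice-theoretic criterion: a surjection $\phi : L \twoheadrightarrow L'$ of finite lattices is a lattice quotient map if and only if (i)~$\phi$ is order-preserving, (ii)~each fiber $\phi^{-1}(\cx)$ is an interval of $L$, and (iii)~the projections $\pidown$ and $\piup$ sending each element to the bottom, resp.\ top, of its fiber are both order-preserving on $L$. Since $\MTub(C_n)$ is a lattice by Theorem~\ref{theorem_a} and $\MTub(P_n)$ is the Tamari lattice, it suffices to verify (i)--(iii) for $\Cut$.

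For (i), I would translate the relation $\cj \le \ck$ through Theorem~\ref{theorem_b} and through the analogous inversion description of the Tamari lattice. Because $\Cut$ modifies only the at-most-one tube containing both $1$ and $n$, one can describe $\inv(\Cut(\ct))$ and $\inc(\Cut(\ct))$ explicitly in terms of $\inv(\ct)$, $\inc(\ct)$, and the choice of retained half of the wraparound tube. Theorem~\ref{theorem_b} then reduces monotonicity of $\Cut$ to the combinatorial statement that inversions, modulo the incomparability slack, survive this modification.

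For (ii), the preceding identification of each $\Cut^{-1}(\cx)$ with an interval of the type~$A$ weak order already pins down the fiber's structure: the elements of the fiber are parameterized by the admissible positions of the wraparound tube, and toggling adjacent positions is a single flip in $\MTub(C_n)$. This produces a chain of covers in $\MTub(C_n)$ with a distinguished minimum $\pidown(\ct)$ and maximum $\piup(\ct)$, and it remains to check that no other maximal tubing of $C_n$ lies between them, a short inversion computation via Theorem~\ref{theorem_b}.

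The main obstacle is (iii). The key local fact needed is: whenever $\ct \covered \ct'$ is a cover in $\MTub(C_n)$ with $\Cut(\ct) \ne \Cut(\ct')$, and $\ck$ lies in the fiber of $\ct$, then there exists $\ck'$ in the fiber of $\ct'$ with $\ck \le \ck'$ (and dually for $\piup$). Establishing this requires a careful case analysis on the position of the flipped tube relative to the cut edge $\{1,n\}$, distinguishing whether the flip occurs inside or outside the wraparound region, and once more using Theorem~\ref{theorem_b} to control how the inversion and incomparability sets evolve under the flip. Once this local statement is verified, $\pidown$ and $\piup$ are monotone, the fiber partition is a lattice congruence, and the corollary follows.
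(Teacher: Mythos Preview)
Your outline is a valid route to the result, but it is considerably longer than what the paper does, and in fact it duplicates work already carried out in proving Theorem~\ref{theorem_a}. The paper's proof of the corollary is essentially one sentence: the argument for Theorem~\ref{thm:lattice} (via Proposition~\ref{prop:path_join}) shows directly that the join $\cj \vee \ck$ in $\MTub(C_n)$ lies in the fiber $\Cut^{-1}\bigl(\Cut(\cj)\vee\Cut(\ck)\bigr)$, so $\Cut(\cj\vee\ck)=\Cut(\cj)\vee\Cut(\ck)$; meet-preservation then follows from self-duality (Corollary~\ref{cor:involution}) together with the fact that $\Cut$ commutes with $w_0$ (Proposition~\ref{prop:cut_commuting}). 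No separate verification of the interval/$\pidown$/$\piup$ criterion is needed.

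By contrast, the ``careful case analysis'' you propose for step~(iii) is essentially the content of Lemma~\ref{lem:key_lemma} and Corollary~\ref{cor:key_corollary}, which the paper already proved on the way to the lattice theorem. So your plan would re-derive those results in a slightly different packaging rather than invoke them. Two smaller remarks: in~(ii), order-convexity of the fibers is immediate from~(i) (if $\cj\le\cl\le\ck$ with $\Cut(\cj)=\Cut(\ck)$ then $\Cut(\cl)$ is squeezed), so no separate inversion computation is required there; and in~(iii) your parenthetical ``(and dually for $\piup$)'' has the labels reversed---the displayed local fact is the one that yields monotonicity of $\piup$, and it is the dual statement (pulling elements down across a cover) that handles $\pidown$.
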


Recall that an element $j$ in a finite lattice-poset is called \emph{join irreducible} if whenever $j=\Join A$ for any subset $A\subset L$, then $j\in A$.
Equivalently, $j$ covers precisely one element in $L$.
The combinatorics of the $G$-trees of $\ct \in \MTub(C_n)$ allow us to characterize and enumerate the set of all join irreducible maximal tubings in $\MTub(C_n)$.
See Definition~\ref{def:join_irr}, and Lemma~\ref{lem:all_ji_gtrees}.
Surprisingly, the number of join irreducible elements in $\MTub(C_n)$ is equal to the number of join irreducible elements in a type-B Tamari lattice (or any type-B Cambrian lattice), which is equal to the number of reflections in the type B Coxeter group \cite{Reading_Speyer}.

\begin{thmd}[Corollary~\ref{cor:jienum}]
The number of join irreducible elements in $\MTub(C_n)$ is equal to $(n-1)^2$.
\end{thmd}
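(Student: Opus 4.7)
Since Lemma~\ref{lem:all_ji_gtrees} already provides an explicit combinatorial description of the $G$-trees of all join irreducible elements of $\MTub(C_n)$, the plan is purely enumerative: pass through the bijection between maximal tubings and $G$-trees, and count the join irreducible $G$-trees directly.

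The concrete strategy is to translate the description from Lemma~\ref{lem:all_ji_gtrees} into a parametrization of each join irreducible $G$-tree by two combinatorial invariants, each ranging over a set of size $n-1$, so that the count is immediately $(n-1)^2$. Since the target factors as $(n-1)^2 = (n-1) + 2\binom{n-1}{2}$, and since $\MTub(C_n)$ has the same cardinality as a type B Tamari lattice (whose join irreducibles are in bijection with the $(n-1)^2$ positive roots of $B_{n-1}$), a natural alternative is a three-fold decomposition of the join irreducibles into one family of size $n-1$ and two families of size $\binom{n-1}{2}$, distinguished by the shape of the arcs meeting the root tube of the $G$-tree. Either way, the counting step itself is routine.

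The main obstacle I foresee is setting up the parametrization so that each join irreducible $G$-tree is counted exactly once. The cycle graph has a rotational symmetry that is broken only by the linear labeling $1, 2, \ldots, n$, so one must carefully specify which arc of the cycle plays which role in the $G$-tree when recording the parameters, and verify that every $G$-tree satisfying Lemma~\ref{lem:all_ji_gtrees} is recovered from a unique choice of parameters. As a sanity check, for $n=3$ the formula predicts $4$ join irreducibles, which agrees with the hexagonal $\MTub(C_3)$. Once the bookkeeping is handled, the corollary follows from Lemma~\ref{lem:all_ji_gtrees} by a short summation.
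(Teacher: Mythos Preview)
Your proposal is correct and matches the paper's approach: the corollary is an immediate consequence of Lemma~\ref{lem:all_ji_gtrees}, which already states that the join irreducible $G$-trees are exactly the $j_{i,k}$ for $1\le i,k\le n-1$ as defined in Definition~\ref{def:join_irr}. The ``main obstacle'' you anticipate---setting up a two-parameter bijection and checking no double counting---is therefore already handled by that definition and lemma, so the enumeration is literally one line; your alternative $(n-1)+2\binom{n-1}{2}$ decomposition is unnecessary here.
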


Finally we combine our global description of the order in $\MTub(C_n)$ and our characterization of the join irreducible elements of $\MTub(C_n)$ to prove our final main result.

\begin{thme}[Theorem~\ref{thm:semidistributive} and Theorem~\ref{thm:congruence_uniform}]
The poset of maximal tubings $\MTub(C_n)$ is a semidistributive, congruence uniform lattice.
\end{thme}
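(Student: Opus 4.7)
The plan is to establish semidistributivity first, and then leverage it (together with the explicit description of join irreducibles) to obtain congruence uniformity.

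For semidistributivity, I would use the standard characterization: a finite lattice $L$ is semidistributive if and only if every element of $L$ admits both a canonical join representation and a canonical meet representation (a minimal antichain of join, resp. meet, irreducibles whose join, resp. meet, is the element). Fix $\ct \in \MTub(C_n)$, and let $\cj_1, \ldots, \cj_k$ be the maximal tubings covered by $\ct$; these correspond to the descending flips available at $\ct$. For each cover $\ct \greaterdot \cj_i$, the flip removes some tube $X_i \in \ct$ and replaces it with some tube $Y_i$. Using the combinatorial description of join-irreducible $G$-trees from Lemma~\ref{lem:all_ji_gtrees}, I would produce a specific join-irreducible tubing $j_i$ whose sole nontrivial inversion is the one that distinguishes $\ct$ from $\cj_i$. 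Then I would verify, using the global order description in Theorem~\ref{thm:inversion_order}, that (i) each $j_i$ satisfies $j_i \leq \ct$ but $j_i \not\leq \cj_i$, (ii) the collection $\{j_1, \ldots, j_k\}$ is an antichain, and (iii) $\ct = j_1 \vee \cdots \vee j_k$. This produces the canonical join representation of $\ct$; the canonical meet representation is obtained by the dual argument, using a parallel classification of meet irreducibles.

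For congruence uniformity, I would appeal to the theorem of Reading (and Barnard) that a finite semidistributive lattice is congruence uniform if and only if its ``kappa map'' $\kappa: \JI(L) \to \MI(L)$ is a well-defined bijection — equivalently, if and only if for each join irreducible $j$ with unique lower cover $j_*$, there exists a unique maximal element $\kappa(j)$ above which we see $j$ but below which we do not. Using the explicit $G$-tree description of the $(n-1)^2$ join irreducibles (Theorem~\ref{theorem_d}) and the analogous description of meet irreducibles one obtains from a dual analysis, I would construct $\kappa$ directly, verifying bijectivity by exhibiting a combinatorial inverse. Alternatively, one could attempt to exhibit $\MTub(C_n)$ as a sequence of Day doublings of intervals, starting from the one-element lattice, using the canonical join representations to guide the order in which join irreducibles are introduced.

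The main obstacle will be verifying step (iii) above — that the join of the proposed $j_i$ really is $\ct$. This is precisely where the cycle graph diverges meaningfully from the filled/Tamari case: the inversion set of the candidate join $\bigvee_i j_i$ will not in general coincide with $\inv(\ct)$ on the nose, and one must use the $\inc(\ct)$ term in Theorem~\ref{theorem_b} to explain why the two tubings are nonetheless equal. Controlling the interaction between inversions, incomparabilities, and the $\Cut$ map (which sends canonical join representations in $\MTub(C_n)$ to canonical join representations in the Tamari lattice $\MTub(P_n)$, by Theorem~\ref{theorem_c}) should provide the necessary leverage, but making this bookkeeping precise for every possible flip configuration at $\ct$ is where the real work lies.
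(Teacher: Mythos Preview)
Your semidistributivity strategy is legitimate but different from the paper's. The paper does not construct canonical join representations at all; instead it verifies directly that $\kappa(j) = \max\{x : x \wedge j = j_*\}$ exists for every join irreducible $j$ (this is the Freese--Je\v{z}ek--Nation criterion for meet-semidistributivity), by computing $\inv(\cj_{i,k})$ and $\coinv(\cm_{i,k})$ explicitly and checking the relevant order relations chain by chain. Self-duality then gives join-semidistributivity. Your canonical-join-representation route is equivalent in principle, but as you correctly anticipate, proving that $\bigvee_i j_i = \ct$ for an arbitrary $\ct$ is genuinely delicate here, and the paper sidesteps that computation entirely.

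Your congruence-uniformity argument, however, contains a real error. The statement ``a finite semidistributive lattice is congruence uniform if and only if $\kappa$ is a well-defined bijection'' is false: the $\kappa$ map is a bijection in \emph{every} finite semidistributive lattice, so this criterion cannot distinguish congruence-uniform lattices from the rest. (There exist finite semidistributive lattices that are not congruence uniform.) What you need is a strictly stronger condition on the structure of $\kappa$. The paper uses the Reading--Speyer--Thomas characterization: from the two-acyclic factorization system $(\JI(L), \to, \twoheadrightarrow, \hookrightarrow)$ associated to a semidistributive $L$, one forms the ``directly forces'' relation $\rightsquigarrow$, and $L$ is congruence uniform if and only if $\rightsquigarrow$ is acyclic. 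The paper computes $\twoheadrightarrow$ and $\hookrightarrow$ explicitly on the $\cj_{i,k}$ (both are disjoint unions of chains, and each is an antichain for the other), then shows $\cj_{i,k} \rightsquigarrow \cj_{s,t}$ forces $(i+k,k) <_{\mathrm{lex}} (s+t,t)$, so no cycle can occur. Your alternative suggestion of a Day-doubling sequence would also work, but would require essentially the same acyclicity analysis to determine the doubling order.
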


Congruence uniform lattices form an important family of lattice-posets. 
(A friendly introduction to the definition and properties of lattice congruences and congruence uniform lattices can be found in \cite[Section~5]{Reading_Lattice_Theory}.)
The combinatorics of all of their lattice-quotients is entirely determined by their join irreducible elements.
As already noted, both the weak order and the Tamari lattice (more generally, any $c$-Cambrian lattice) are congruence uniform. 
Other notable examples are quotients of the weak order of the a finite Coxeter group.
Thus, $\MTub(C_n)$ is a rare example of a congruence uniform lattice, whose Hasse diagram is a regular graph, which is \emph{not} such a lattice-quotient.
Another family of congruence uniform lattices, whose Hasse diagrams are also regular graphs, are lattices of torsion classes $\tors(\Lambda)$ where $\Lambda$ is a $\tau$-titling finite algebra.
It would be interesting to know whether there exists $\Lambda_{C_n}$ such that $\MTub(C_n)$ is isomorphic to $\Lambda_{C_n}$.

Our paper is organized as follows.
In Section~\ref{sec:graphs and tubings} we state the key definitions for tubings of a graph $G$, the poset of maximal tubings, and $G$-trees.
In Sections~\ref{sec:G-trees for paths} and~\ref{sec:Gtrees for cycle}, we review the (well-known) characterization of the $G$-trees of the maximal tubings of the path and cycle graphs.
In Section~\ref{sec:Cut Map}, we define the $\Cut$ map and characterize its fibers.
In Section~\ref{sec:Global Relations}, we describe the global relations for $\MTub(C_n)$, as in Theorem~\ref{thm:inversion_order}.
In Section~\ref{sec:Lattice}, we prove our first main result, Theorem~\ref{thm:lattice}, that $\MTub(C_n)$ is a lattice poset.
In Section~\ref{sec:joinirr} we characterize the join irreducible elements of $\MTub(C_n)$, and finally, in Section~\ref{sec:semidis}, we prove that $\MTub(C_n)$ is a semidistributive and congruence uniform lattice.
Our final section describes several new and interesting research directions.

\subsection*{Acknowledgments}
This paper began as a project at the AMS Mathematical Research Communities (MRC) in June 2024, and we thank the organizers for inviting us to participate, and providing a safe and supportive research environment. 
In particular, we thank Rebecca Garcia for her leadership and guidance. 
We thank Katherine Orme{\~n}o Bast{\'i}as for her key insights.
This material is based upon work supported by the National Science Foundation grant DMS-1916439. 
Katie Waddle was also partially supported by the National Science Foundation grant DMS-2348501.

\section{Graphs and maximal tubings}\label{sec:graphs and tubings}
We will denote~$[n] \coloneqq \{1,\ldots,n\}$. For partial or total orders, we use~$<$ and~$\leq$ normally and~$\lessdot$ to denote cover relations. If the order is on~$[n]$, then~$<$ and~$\leq$ denote the usual order~$1 < \cdots < n$, and any other partial order will be denoted with a subscript, e.g.~$\leq_X$ as the partial order on~$[n]$ derived from object~$X$.

A graph is a tuple~$G = (V,E)$ where~$V$ is the set of vertices and~$E \subseteq V \times V$ is the set of edges. Graphs here are understood to be undirected and simple (i.e., finite, loopless, and without multiple edges), and will always have vertex set~$[n] \coloneqq \{1,\ldots,n\}$.

Good references for the definitions in this section are \cite{postnikov.reiner.williams:2008faces,carr.devadoss:2006coxeter, BM2021}.

\begin{definition}\label{def:maximal tubing}
    A \emph{tube}~$X$ of a graph~$G$ is a set of vertices that form a connected induced subgraph of~$G$. Two tubes~$X$ and~$Y$ are \emph{compatible} if~$X\subseteq Y$,~$X\supseteq Y$, or~$X\cup Y$ is not a tube.
     A \emph{maximal tubing}~$\ct$ of a graph~$G$ is a maximal collection of compatible tubes of~$G$.
\end{definition}
Given a maximal tubing~$\ct$ and a vertex~$x \in G$, there is a single unique smallest tube that contains~$x$, denoted~$\downT(x)$. Similarly, given a tube~$X$ there exists a unique least-nested element~$x = \topT_\ct(T)$ such that~$\downT(x) = X$.

Our main object of study is the following.
\begin{definition}\label{def:MTub} 
    Let~$\MTub(G)$ be the partial order of all maximal tubings of the graph~$G$ defined by the cover relations $\ct \lessdot \cj$ if
    \begin{enumerate}
        \item $\ct$ and $\cj$ differ by precisely one tube $T \in \ct$ and $J \in \cj$, and
        \item $\top_\ct(T) < \top_\cj(J)$ in the standard order on integers.
    \end{enumerate}
    The process of exchanging the single tube $T$ for $J$ to move from $\ct$ to $\cj$ is called a \emph{flip}.
\end{definition}
Lemma \ref{lem:tubing_covers} below states that we get precisely one cover relation involving $\ct$ in $\MTub(G)$ for every nontrivial tube in $\ct$. 
\begin{lemma}\label{lem:tubing_covers}
    Let $\ct \in \MTub(G)$. For each nontrivial tube $T \in \ct$, there is a unique tube $J \neq T$ where such that $(\ct \setminus \{T\})\cup \{J\}$ is also a maximal tubing for $G$.
\end{lemma}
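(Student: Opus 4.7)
My plan is to deduce this lemma from the well-established fact that the graph associahedron $P_G$ is a simple polytope, proved by Carr and Devadoss \cite{carr.devadoss:2006coxeter} (see also \cite{postnikov:2009permutohedra}). Recall that the facets of $P_G$ are indexed by the proper tubes of $G$, and each maximal tubing $\ct$ corresponds to the vertex of $P_G$ sitting at the intersection of the facets $\{F_T : T \in \ct,\ T \subsetneq V\}$. Simplicity then says every vertex of $P_G$ lies on exactly $\dim P_G$ facets and exactly $\dim P_G$ edges, and for $G$ connected on $n$ vertices one has $\dim P_G = n-1$, so every maximal tubing contains precisely $n-1$ proper tubes.

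The lemma then follows formally. Given $\ct$, its proper tubes index the facets through the corresponding vertex, which by simplicity are in bijection with the edges of $P_G$ at that vertex: the edge labeled by $T \in \ct$ is the one obtained by removing the facet $F_T$ from the intersection. Traversing this edge to its other endpoint yields an adjacent maximal tubing $\cj$ that retains every other facet through $\ct$, hence differs from $\ct$ only in the tube $T$. Writing $\cj = (\ct \setminus \{T\}) \cup \{J\}$, the new tube $J \neq T$ is uniquely determined by $T$ because distinct edges in a polytope end at distinct vertices.

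A direct combinatorial argument would instead exhibit $J$ from the laminar structure of $\ct$. Letting $P$ denote the parent of $T$ in $\ct$ (the smallest tube in $\ct$ strictly containing $T$, which exists because $V \in \ct$) and $C_1, \ldots, C_k$ its children in $\ct$, a first guess is $J = P \setminus T$, which is itself a tube exactly when removing the vertices of $T$ leaves the induced subgraph on $P$ connected; otherwise $J$ must be assembled by grouping the $C_i$ with adjacent children of $P$ or with vertices of $P$ not covered by any proper descendant in $\ct$. Uniqueness in this approach would be argued by noting that if $J_1 \neq J_2$ were both valid replacements, then $J_1, J_2$ and $T$ would be pairwise incompatible yet all compatible with $\ct \setminus \{T\}$, and a careful analysis of the induced subgraph on $P$ rules this out. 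The main technical obstacle in such a proof is giving a clean, case-free description of $J$, which the polytope-theoretic argument elegantly sidesteps by invoking simplicity.
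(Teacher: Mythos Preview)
Your polytope-theoretic argument is correct and is the standard way to establish this fact. Note, however, that the paper does not actually give its own proof of this lemma: it is stated as a known background result, and the introduction already records (with the same references you cite) that the set of maximal tubings with the flip relation is the $1$-skeleton of the simple polytope $P_G$. Your writeup simply unpacks that sentence, so there is nothing to compare---your approach \emph{is} the intended one.

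One small point worth tightening: you assume $G$ is connected when asserting $\dim P_G = n-1$. The lemma as stated does not impose connectivity. The argument still goes through, since for disconnected $G$ the graph associahedron is a product of the associahedra of the connected components and is therefore still simple; but you should either say this or add the connectivity hypothesis explicitly.

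Your closing paragraph sketching a direct combinatorial construction of $J$ is reasonable as commentary, but since you yourself flag it as incomplete, it does not contribute to the proof and could be dropped or relegated to a remark.
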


Rather than a graph operation which changes the set of edges or vertices, there is a way of relabeling the graph which yields an interesting relationship between the original and new graphs' tubing poset.
\begin{lemma}\label{lem:w0_dual}
    Let~$G$ be a labeled graph with vertex set~$[n]$. Let~$G'$ be the labeled graph obtained from~$G$ by relabeling~$i$ with~$n+1-i$. Then~$\MTub(G')$ is anti-isomorphic to~$\MTub(G)$. 
\end{lemma}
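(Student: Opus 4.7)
The plan is to exhibit an explicit anti-isomorphism $\phi \colon \MTub(G) \to \MTub(G')$ induced by the label-reversal bijection $\sigma \colon [n] \to [n]$, $i \mapsto n+1-i$, applied tubewise. The first step is to observe that $\sigma$ is, by construction, a graph isomorphism from $G$ to $G'$ viewed as unlabeled graphs. Consequently a subset $T \subseteq [n]$ induces a connected subgraph of $G$ if and only if $\sigma(T)$ induces a connected subgraph of $G'$. Because $\sigma$ is a set-theoretic bijection on $[n]$, it also preserves inclusions, unions, and intersections of subsets, so it carries compatible pairs of tubes of $G$ to compatible pairs of tubes of $G'$ and vice versa. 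It follows that the map
\[
\phi(\ct) \;:=\; \{\sigma(T) : T \in \ct\}
\]
sends maximal tubings of $G$ bijectively to maximal tubings of $G'$, with inverse obtained by reapplying the involution $\sigma$.

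Next I verify that $\phi$ reverses cover relations. Suppose $\ct \lessdot \cj$ in $\MTub(G)$, so $\ct$ and $\cj$ differ in exactly one tube each, say $T \in \ct$ and $J \in \cj$, with $\top_\ct(T) < \top_\cj(J)$ in the standard integer order. The key subsidiary fact needed is that $\top$ commutes with $\sigma$: for any tube $X$ of $\ct$,
\[
\top_{\phi(\ct)}\bigl(\sigma(X)\bigr) \;=\; \sigma\bigl(\top_\ct(X)\bigr) \;=\; n+1-\top_\ct(X).
\]
This follows from the defining property of $\top_\ct(X)$ as the unique vertex $x \in X$ whose smallest enclosing tube in $\ct$ is $X$ itself, together with the fact that $\sigma$ preserves the nesting structure of $\ct$ setwise. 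Applying this identity on both sides of $\top_\ct(T) < \top_\cj(J)$, and using that $i \mapsto n+1-i$ reverses the integer order, yields
\[
\top_{\phi(\ct)}\bigl(\sigma(T)\bigr) \;>\; \top_{\phi(\cj)}\bigl(\sigma(J)\bigr),
\]
which by Definition~\ref{def:MTub} means precisely $\phi(\cj) \lessdot \phi(\ct)$ in $\MTub(G')$. Since $\phi$ is a bijection and the partial order on each side is the transitive closure of its cover relations, this is enough to conclude that $\phi$ is an anti-isomorphism.

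The main (and essentially only) subtlety I anticipate is confirming the compatibility of $\top_\ct$ with the relabeling $\sigma$; everything else is immediate once the definitions are unwound. The characterization of $\top_\ct(X)$ used in the paper is defined entirely in terms of containment among the tubes of $\ct$, so because $\sigma$ commutes with both taking induced subgraphs and with set inclusion, the nested chain of tubes used to identify $\top_\ct(X)$ is simply renamed by $\sigma$. This single observation is the one worth checking carefully; after it, the anti-isomorphism statement is essentially a tautology in the definitions.
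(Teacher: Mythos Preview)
Your proof is correct and follows essentially the same approach as the paper's: define the bijection via the relabeling $\sigma$ (the paper calls it $w_0$), verify that $\top$ commutes with $\sigma$, and conclude that cover relations are reversed. Your version is slightly more explicit about why $\sigma$ preserves tubes, compatibility, and the $\top$ operator, but the argument is otherwise identical.
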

\begin{proof}
    Denote by~$w_0$ the permutation which sends~$i\mapsto n+1-i$. We extend~$w_0$ to act on all subsets and collection of subsets of~$V(G)$, which by abusing notation we also denote by~$w_0 S$, where if~$S = \{i_1,\dots , i_k\}\subseteq [n]$ then~$w_0 S  = \{n+1-i_1, \dots ,n+1-i_k\}$. If~$\ct \lessdot \cj$ in~$\MTub(G)$, since~$G' = w_0 G$  then in~$\MTub(G')$ all labels are just~$w_0(i)$, so in particular for any maximal tubing $\ct$ and tube $T\in \ct$, $\top_{w_0\ct}(w_0T)=w_0\top_\ct(T)$. The pairwise order of all labels then are reversed implying~$w_0(\cj)\lessdot w_0(\ct)$ in~$\MTub(G')$. Consequently we have a bijection from~$\MTub(G)$ to~$\MTub(G')$ that reverses the order relations of every cover relation. Furthermore since the global order relation on~$\MTub(G')$ is defined by being related by a sequence of covers then~$w_0$ is an order reversing bijection from~$\MTub(G)$ to~$\MTub(G')$. That the inverse function is also order reversing follows immediately since the action of~$w_0$ is involutive. 
\end{proof}

\begin{corollary}\label{cor:involution} Denote $w_0$ the order-reversing permutation that relabels $i$ with $n-i+1$.
    If $w_0$ induces an automorphism of labeled graphs on $G$, a graph isomorphism that preserves adjacency of labels, then $\MTub(G)$ is self-dual. In particular, $\MTub(P_n)$ and $\MTub(C_n)$ are self-dual.
\end{corollary}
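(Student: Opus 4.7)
The plan is to deduce this immediately from Lemma~\ref{lem:w0_dual}. That lemma produces, for any labeled graph $G$ on vertex set $[n]$, an order-reversing bijection $w_0 : \MTub(G) \to \MTub(G')$, where $G'$ is the graph obtained by relabeling $i \mapsto n+1-i$. The hypothesis of the corollary is precisely the statement that the relabeling $i \mapsto n+1-i$ is a graph automorphism of $G$, which is equivalent to $G' = G$ as labeled graphs. Consequently $w_0$ becomes an order-reversing bijection from $\MTub(G)$ to itself, which is exactly the definition of self-duality.

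For the explicit cases $P_n$ and $C_n$, the verification is a direct check on edge sets. For the path graph $P_n$, the edges are $\{i, i+1\}$ for $1 \leq i \leq n-1$; under $i \mapsto n+1-i$ this edge maps to $\{n+1-i, n-i\}$, which is again an edge of $P_n$. For the cycle graph $C_n$, the same argument covers the ``path edges,'' while the single wrap-around edge $\{1,n\}$ is fixed setwise by $w_0$. Thus both graphs satisfy the hypothesis, and self-duality follows.

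There is no real obstacle here: the content of the corollary is entirely contained in Lemma~\ref{lem:w0_dual}, and the only work is to recognize that ``$w_0$ is a labeled-graph automorphism'' is synonymous with ``$G = G'$,'' together with the trivial edge check for $P_n$ and $C_n$. The only point worth writing out carefully is that an order-reversing self-bijection is what is meant by ``self-dual,'' so the conclusion $w_0 : \MTub(G) \to \MTub(G)$ with $w_0$ order-reversing and involutive yields a poset anti-automorphism, which is precisely the assertion.
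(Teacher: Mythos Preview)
Your proposal is correct and matches the paper's approach exactly: the paper states this as a corollary of Lemma~\ref{lem:w0_dual} without giving a separate proof, and your argument spells out precisely the intended deduction, including the verification that $w_0$ is a labeled-graph automorphism of $P_n$ and $C_n$.
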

    
\subsection{\texorpdfstring{$G$}{G}-trees}\label{subsec:Background Gtrees}
The tubings of a graph~$G$ are uniquely determined by particular posets on~$[n]$.
\begin{definition}\label{def:Gtree}
    Given a graph~$G$ on~$[n]$, a maximal tubing~$\ct \in \MTub(G)$ defines a partial order~$\leq_\ct$ on~$[n]$, where~$x \leq_\ct y$ if and only if~$x \in \downT(y)$. The Hasse diagram of the partial order~$\leq_\ct$ is a tree, and we will flexibly refer to the poset~$([n],\leq_\ct)$ and the diagram as a \emph{$G$-tree}, denoted $G_\ct$. Let $x$ be a vertex in $G$. We denote the principal lower order ideal of $x$ in the partial order $\leq_\ct$ by $\downT(x)$.
\end{definition}

\begin{remark}
    For any finite poset, every element having at most~$1$ cover implies that the Hasse diagram is a tree (or forest), since any minimal element in a cycle must have two covers. For maximal tubings~$\ct$, every element has at most one cover since if~$t < j_1$ and~$t < j_2$ then~$t \in \downT(j_1) \cap \downT(j_2) \neq \emptyset$, and so one of~$\downT(j_1)$ or~$\downT(j_2)$ is contained within the other. 
\end{remark}
\begin{remark}
$G$-trees are rooted. The root element of the~$G$-tree corresponding to a maximal tubing~$\ct$ is~$\topT_\ct([n])$.
\end{remark}
\begin{definition}\label{def:inversions etc}
Given a poset~$P$ and a total order~$<$ on its elements, the set of \emph{inversions}~$\inv(P)$, set of \emph{descents}~$\dec(P)$, set of \emph{coinversions}~$\coinv(P)$, set of \emph{ascents}~$\asc(P)$, and set of \emph{incomparable elements}~$\inc(P)$ are
\begin{center}
\begin{tabular}{rlrl}
$\inv(P)$&\hspace{-7pt}$\coloneqq\{(i<j) \mid i>_P j\}$,&
$\dec(P)$&\hspace{-7pt}$\coloneqq \{(i< j) \mid i\greaterdot_P\; j\}$, \\
$\coinv(P)$&\hspace{-7pt}$\coloneqq\{(i<j) \mid i<_P j\}$,&
$\asc(P)$ &\hspace{-7pt}$\coloneqq \{(i< j) \mid i\lessdot_P j\}$, and \\
$\inc(P)$&\hspace{-7pt}$\coloneqq \{(i,j) \mid i \not\leq_P j \text{ and }j \not\leq_P i\}$, & \hspace{24pt}respectively.&
\end{tabular}
\end{center}
When referring to above sets for a G-tree $G_\ct$, we simply write the tubing $\ct$, i.e. $\inv(\ct) \coloneqq \inv(G_\ct)$.
\end{definition}
\begin{remark}\label{rem:set_rels.}
    These sets have several apparent relations, and each naturally sit within~$[n]^2$. For example, the set~$\inv(P) \cup \coinv(P) = \inc(P)^c$ is the set of all order relations on~$P$, as is~$\dec(P) \cup \asc(P)$ the set of cover relations, similarly~$\inv(P) \cup \inc(P) = \coinv(P)^c$, and so on.
\end{remark}

\section{\texorpdfstring{$G$}{G}-trees for maximal tubings of \texorpdfstring{$P_n$}{Pn}}\label{sec:G-trees for paths} 
This section describes maximal tubings of the path graph~$P_n$ on~$n$ vertices, and characterizes their cover relations in terms of $G$-trees. We will use $\cx,\cy,\cz$ for maximal tubings of $P_n$.
\begin{definition}
    \emph{Binary search trees} are rooted binary trees on~$[n]$ with the property that if any parent vertex~$p$ has a left child~$x$, then every vertex in the left subtree~$T_2$ rooted at~$x$ is less than~$p$, and if~$p$ has a right child~$y$, then every vertex in the right subtree~$T_3$ rooted at~$y$ is greater than~$p$. Visually, given the following:
\begin{center}
    \begin{tikzpicture}
        \draw (0,0.5) node[draw, circle] (RT) {$T_1$};

        \draw (0,-1) node[draw, circle] (p) {$p$};
        \draw (-1,-2) node[draw, circle] (X) {$T_2$};
        \draw (1,-2) node[draw, circle] (Y) {$T_3$};

        \draw (RT)--(p);
        \draw (p)--(X);
        \draw (p)--(Y);
    \end{tikzpicture}
\end{center}
all elements in the sub-tree~$T_2$ are less than~$p$ and all elements in the sub-tree~$T_3$ are greater than~$p$. 
\end{definition}

\begin{proposition}[\cite{BjornerWachs_shellableII}, \S9]\label{prop: BST}
Let~$\cx\in\MTub(P_n)$ with~$G$-tree~$G_\cx$. Then~$G_\cx$ is a binary search tree. Every binary search tree is the~$G$-tree for a maximal tubing of the path. 
\end{proposition}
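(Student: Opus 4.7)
The plan is to prove both directions by induction on $n$, relying on the fact that tubes of $P_n$ are exactly the subintervals of $[n]$.

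For the forward direction, given $\cx \in \MTub(P_n)$, observe that $[n]$ is a tube of $P_n$ compatible with every other tube, so $[n] \in \cx$, and the root of $G_\cx$ is $r = \topT_\cx([n])$. The central claim is that whenever $r > 1$ the interval $[1, r - 1]$ already lies in $\cx$; symmetrically $[r+1, n] \in \cx$ when $r < n$. To see this, I would verify that $[1, r-1]$ is compatible with every $T \in \cx$: since $\downT(r) = [n]$, any tube of $\cx$ containing $r$ must contain $[n]$ and so equals $[n]$, which is nested with $[1, r-1]$; every remaining $T \in \cx$ is an interval $T \subsetneq [n]$ avoiding $r$, and hence sits inside either $[1, r-1]$ (giving nesting) or $[r+1, n]$ (giving a union with a gap at $r$, not a tube). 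Maximality of $\cx$ then forces $[1, r-1] \in \cx$. Consequently $r$ has at most a left child $\topT_\cx([1, r-1])$, whose descendants are strictly less than $r$, and a right child $\topT_\cx([r+1, n])$, whose descendants are strictly greater than $r$. Restricting $\cx$ to the tubes lying inside $[1, r-1]$ (resp.\ $[r+1, n]$) produces a maximal tubing of the shorter path, and the inductive hypothesis finishes the BST verification.

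For the reverse direction, starting from a BST $T$ on $[n]$, I would define $\cx \coloneqq \{S_x : x \in [n]\}$, where $S_x$ is the set of labels in the subtree of $T$ rooted at $x$. The BST property makes each $S_x$ an interval, hence a tube of $P_n$. Pairwise compatibility follows from the same property: either one of $x, y$ is an ancestor of the other and the subtrees nest, or they lie in opposite subtrees of their least common ancestor $p$, in which case $S_x$ and $S_y$ are separated by $p$ and $S_x \cup S_y$ has a gap at $p$. To prove maximality, I would take any interval $I = [a, b]$ compatible with every element of $\cx$ and show $I \in \cx$. Let $x$ be the shallowest node of $T$ whose label lies in $[a, b]$ — equivalently, the least common ancestor in $T$ of the labels in $[a, b]$; the BST structure forces $x \in [a, b]$ and $S_x \supseteq [a, b]$. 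If the containment were strict, say $S_x = [a', b']$ with $a' < a$, then the left child $\ell$ of $x$ satisfies $S_\ell = [a', x - 1]$, which overlaps $[a, b]$ without nesting while their union $[a', b]$ is an interval, contradicting compatibility of $I$ with $S_\ell \in \cx$. Symmetrically $b' = b$, so $I = S_x \in \cx$. Finally, $\downT(x) = S_x$ for each $x$ identifies $G_\cx$ with $T$.

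The main obstacle will be the maximality step in the reverse direction: this is the unique place where the BST condition is used at full strength, via the LCA argument above. The rest is a clean inductive dichotomy driven entirely by the interval structure of tubes in $P_n$.
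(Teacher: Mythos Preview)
Your argument is correct. Note, however, that the paper does not supply its own proof of this proposition: it is stated with a citation to \cite{BjornerWachs_shellableII}, \S9, and treated as a known background result. So there is no ``paper's proof'' to compare against; you have written out a clean, self-contained justification of a fact the authors import from the literature.

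A couple of minor remarks on presentation. In the forward direction, the claim that restricting $\cx$ to tubes contained in $[1,r-1]$ yields a \emph{maximal} tubing of the shorter path deserves one sentence: any interval $I \subseteq [1,r-1]$ compatible with those tubes is automatically compatible with every tube of $\cx$ (the tubes not in $[1,r-1]$ either equal $[n]$ or lie in $[r+1,n]$), so maximality of $\cx$ forces $I \in \cx$. In the reverse direction, your maximality argument via the least common ancestor is the right idea; you might note explicitly that even in the boundary case $x=a$ (where $S_\ell = [a',a-1]$ and $I=[a,b]$ are disjoint), their union $[a',b]$ is still an interval and hence a tube, so compatibility fails for the same reason.
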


\begin{example}\label{ex: G-Trees for P3}
Below are all binary search trees with three elements, and the nontrival maximal tubings of~$P_3$ they correspond to. All also include the trivial tubing $\{1,2,3\}$.
\begin{center}
    \begin{tikzpicture}
        \begin{scope}
            \draw (0,0) node[draw, circle] (t1) {$1$};
            \draw (1,-1) node[draw,circle] (t2) {$2$};
            \draw (2,-2) node[draw,circle] (t3) {$3$};
            \draw (t1)--(t2)--(t3);
            \draw (0.5,-3) node (tub) {$\{3\},\{2,3\}
            $};
        \end{scope}
        \begin{scope}[xshift = 4cm]
            \draw (0,0) node[draw, circle] (t1) {$1$};
            \draw (1,-1) node[draw,circle] (t2) {$3$};
            \draw (0,-2) node[draw,circle] (t3) {$2$};
            \draw (t1)--(t2)--(t3);
            \draw (0.25,-3) node (tub) {$\{2\},\{2,3\}$};
        \end{scope}
        \begin{scope}[xshift = 8cm]
            \draw (0,0) node[draw, circle] (t1) {$3$};
            \draw (-1,-1) node[draw,circle] (t2) {$1$};
            \draw (0,-2) node[draw,circle] (t3) {$2$};
            \draw (t1)--(t2)--(t3);
            \draw (-0.5,-3) node (tub) {$\{2\},\{1,2\}$};
        \end{scope}
        \begin{scope}[xshift = 12cm]
            \draw (0,0) node[draw, circle] (t1) {$3$};
            \draw (-1,-1) node[draw,circle] (t2) {$2$};
            \draw (-2,-2) node[draw,circle] (t3) {$1$};
            \draw (t1)--(t2)--(t3);
            \draw (-1,-3) node (tub) {$\{1\},\{1,2\}$};
        \end{scope}
        \begin{scope}[xshift = 14.5cm]
            \draw (0,0) node[draw, circle] (t1) {$2$};
            \draw (1,-1) node[draw,circle] (t2) {$3$};
            \draw (-1,-1) node[draw,circle] (t3) {$1$};
            \draw (t1)--(t2);
            \draw (t1)--(t3);
            
            \draw (0,-3) node (tub) {$\{1\},\{3\}$};
        \end{scope}
    \end{tikzpicture}
\end{center}
\end{example}

\begin{definition}\label{def:zipper}
    Given a binary search tree~$B$, the \emph{left zipper} of~$B$ is the saturated chain~$a = (a_1,\ldots,a_l)$ from~$a_1 = 1$ up to~$a_l$ the left child of the root~$m$. The \emph{right zipper} of~$B$ is the saturated chain~$b = (b_1,\ldots,b_r)$ from~$b_1 = n$ up to~$b_r$ the right child of the root~$m$.
\end{definition}
The order relations on the left and right zipper are  
\[1=a_1  < a_2 <\cdots < a_{l-1} < a_l < m < b_r < b_{r-1} < \cdots < b_2 < b_1 = n.\] \par

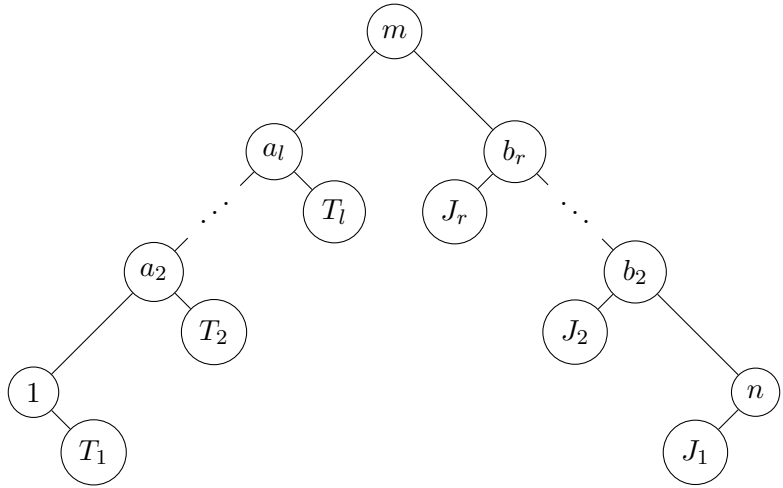
\begin{figure}[htbp!]
    \centering
    \begin{tikzpicture}[scale=0.8]
        \draw (0,0) node[draw,circle] (m) {$m$};
        
        \draw (-2,-2) node[draw,circle] (al) {$a_l$};
        \draw (-3,-3) node (ldts) {\raisebox{6pt}{$\iddots$}};
        \draw (-4,-4) node[draw,circle] (a2) {$a_2$};
        \draw (-6,-6) node[draw,circle] (a1) {$1$};

        \draw (2,-2) node[draw,circle] (br) {$b_r$};
        \draw (3,-3) node (rdts) {\raisebox{6pt}{$\ddots$}};
        \draw (4,-4) node[draw,circle] (b2) {$b_2$};
        \draw (6,-6) node[draw,circle] (b1) {$n$};

        \draw (a1)--(a2)--(ldts)--(al)--(m)--(br)--(rdts)--(b2)--(b1);

        \draw (-1,-3) node[draw,circle] (Tl)  {$T_l$};
        \draw (-3,-5) node[draw,circle] (T2)  {$T_2$};
        \draw (-5,-7) node[draw,circle] (T1) {$T_1$};
        \draw (1,-3) node[draw,circle] (Jr)  {$J_r$};
        \draw (3,-5) node[draw,circle] (J2)  {$J_2$};
        \draw (5,-7) node[draw,circle] (J1) {$J_1$};

        \draw (a1)--(T1);
        \draw (a2)--(T2);
        \draw (al)--(Tl);
        \draw (b1)--(J1);
        \draw (b2)--(J2);
        \draw (br)--(Jr);
        
    \end{tikzpicture}
    \caption{A general structure for the~$G$-tree of an element of~$\MTub(P_n)$.}
    \label{fig:path_zipper}
\end{figure}

Let~$B$ be a binary search tree on~$[n]$.  For each of the~$n-1$ edges in~$B$ we next define a corresponding \emph{tree move}, a ``local'' move that results in a new binary tree. 
\begin{definition}\label{def: tree move}
Let~$B$ be a binary search tree on~$[n]$, and~$x < y \in [n]$ be such that~$x$ is a left child of~$y$ in~$B$. Let~$T_1$ be the left child subtree of~$x$,~$T_2$ the right child subtree of~$x$, and~$T_3$ the right child subtree of~$y$ (see Figure \ref{fig:tree_move}).  Form the binary tree~$B'$ from~$B$ as follows: replace~$y$ with~$x$, make~$T_1$ the left child subtree of~$x$ and~$y$ its right child, and make~$T_2$ and~$T_3$ the left and right child subtrees of~$y$ respectively. We will refer to going from~$B$ to~$B'$ as performing a ``tree move''.
\end{definition}
\begin{figure}[htbp!]
    \centering
     \begin{tikzpicture}
      \begin{scope}
         \draw (0,0.5) node[draw, circle] (T4)     {$T_4$};
         \draw (0,-1) node[draw, circle] (y)     {$y$};
         \draw (-1.5,-2) node[draw, circle] (x)  {$x$};
         \draw (-2.5,-3) node[draw, circle] (T1) {$T_1$};
         \draw (-0.5,-3) node[draw, circle] (T2) {$T_2$};
         \draw (1,-2) node[draw, circle] (T3) {$T_3$};

         \draw (T4)--(y)--(x)--(T1);
         \draw (x)--(T2);
         \draw (y)--(T3);
     \end{scope} 
     \begin{scope}[xshift = 1.5in, yshift = -0.5in]
         \draw (0,0) node {\Large${ \longleftrightarrow}$};
     \end{scope}
     \begin{scope}[xshift = 3in]
        \draw (0,0.4) node[draw, circle] (T4)     {$T_4$};
         \draw (1.5,-2) node[draw, circle] (y)     {$y$};
         \draw (0,-1) node[draw, circle] (x)  {$x$};
         \draw (-1,-2) node[draw, circle] (T1) {$T_1$};
         \draw (0.5,-3) node[draw, circle] (T2) {$T_2$};
         \draw (2.5,-3) node[draw, circle] (T3) {$T_3$};

         \draw (T4)--(x)--(y)--(T3);
         \draw (x)--(T1);
         \draw (y)--(T2);
     \end{scope}
 \end{tikzpicture}
    \caption{A tree move on a binary search tree.}
    \label{fig:tree_move}
\end{figure}
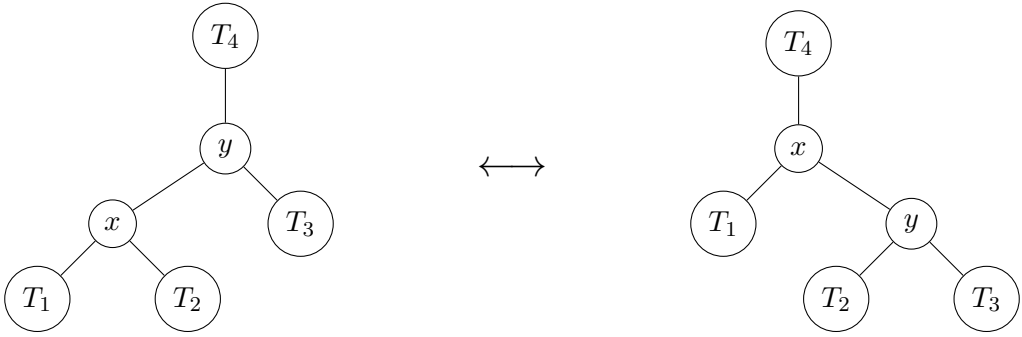

\begin{remark}  The fact that~$B'$ is binary search tree is well known, see for example \cite[\S13.2]{Algorithms}.
\end{remark}

 The move for a right edge is the reverse of this process.

\begin{proposition}\label{prop: cover_relation_path}
Let~$\cx,\cy\in\MTub(P_n)$, with~$G$-trees~$G_\cx, G_\cy$ related by a tree move on a left edge in~$G_\cx$ as in Definition~\ref{def: tree move}. Then~$\cx \lessdot \cy$ is a cover relation in~$\MTub(P_n)$.
\end{proposition}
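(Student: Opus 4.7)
The plan is to verify directly the two conditions of Definition~\ref{def:MTub}: that $\cx$ and $\cy$ differ in exactly one tube, and that the ``top'' of the differing tube is smaller in $\cx$ than in $\cy$. Because Proposition~\ref{prop: BST} gives a bijection between $\MTub(P_n)$ and binary search trees on $[n]$, and because the remark following Definition~\ref{def: tree move} asserts that $G_\cy$ is again a binary search tree, both $\cx$ and $\cy$ lie in $\MTub(P_n)$ from the outset.

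First I would identify each tube as a principal lower order ideal of some vertex in the $G$-tree: by Definition~\ref{def:Gtree}, $\downX(v)$ is exactly the set of vertices in the subtree of $G_\cx$ rooted at $v$, and likewise for $\downY(v)$ in $G_\cy$. Comparing the two sides of Figure~\ref{fig:tree_move} vertex by vertex then yields the following. Any vertex strictly inside $T_1$, $T_2$, or $T_3$ has the same descendant set in both trees, so its tube is unchanged. Any vertex above the move site in $G_\cx$ corresponds to the same vertex above the move site in $G_\cy$; in both trees the set of its descendants includes all of $T_1 \cup \{x\} \cup T_2 \cup \{y\} \cup T_3$ together with the same vertices strictly between, so its tube is also unchanged. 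The only discrepancy occurs at $x$ and $y$: in $\cx$, the vertex $x$ carries the tube $T := T_1 \cup \{x\} \cup T_2 = \downX(x)$ while $y$ carries the large tube $T_1 \cup \{x\} \cup T_2 \cup \{y\} \cup T_3$; in $\cy$ the vertex $x$ carries this same large tube while $y$ now carries $J := T_2 \cup \{y\} \cup T_3 = \downY(y)$. Thus $\cx$ and $\cy$ differ by swapping the single tube $T$ for $J$, verifying condition~(1) of Definition~\ref{def:MTub}.

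For condition~(2), note that $x$ is the root of the subtree $T$ inside $G_\cx$ and $y$ is the root of the subtree $J$ inside $G_\cy$, so $\top_\cx(T) = x$ and $\top_\cy(J) = y$. The assumption that the move is performed on a \emph{left} edge is precisely the inequality $x < y$ in the standard integer order, whence $\top_\cx(T) < \top_\cy(J)$; both conditions of Definition~\ref{def:MTub} are met and we conclude $\cx \lessdot \cy$. The only step requiring real care is the second paragraph's bookkeeping, where one must match up tubes across the two tubings by tracking which vertex in each tree is the top of which subtree; everything else reduces to reading Figure~\ref{fig:tree_move} carefully and using that the internal structure of $T_1, T_2, T_3$, and of the subtree above the move site, is untouched.
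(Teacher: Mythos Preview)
Your proof is correct and follows essentially the same approach as the paper's: both arguments verify that the principal lower order ideals of $G_\cx$ and $G_\cy$ agree except at $x$ and $y$, where the tube $T_1\cup\{x\}\cup T_2$ is exchanged for $T_2\cup\{y\}\cup T_3$, and then observe that $x<y$ gives the correct direction for the cover. Your version is somewhat more explicit in the bookkeeping (checking vertices in $T_1,T_2,T_3$ and in $T_4$ separately), while the paper also appends the observation---not strictly part of the stated proposition---that these tree moves account for \emph{all} cover relations by a counting argument.
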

\begin{proof}
    The principal lower order ideals of $G_\cx$ and $G_\cy$ differ by precisely one: the set~$\{x\}\cup T_1 \cup T_2$ is exchanged for~$\{y\} \cup T_2 \cup T_3$.\par 
    This represents the exchange of the tube~${x}\cup T_1 \cup T_2$ with the tube~$\{y\} \cup T_2 \cup T_3$, and so it is a cover relation in $\MTub(P_n)$. Specifically, a tree move along an edge~$(x <_{\cx} y)$ corresponds to the flip at~$\downX(x)$. \par 
    There is precisely one way to flip a particular tube in a maximal tubing, and since there are~$n-1$ of these tree moves and a maximal tubing of~$P_n$ contains~$n-1$ nontrivial tubes, these tree moves give \emph{every} cover relation. Furthermore, it is straightforward to determine which element covers the other by observing whether~$x<y$ or~$x>y$, as~$x$ and~$y$ are the tops of the exchanged tubes. 
\end{proof}
The global relations on $\MTub(P_n)$ are well known.
\begin{proposition}[cf. \cite{BjornerWachs_shellableII}, \S9]\label{prop:path_order}
    Let $\cx$ and $\cy$ be maximal tubings in $\MTub(P_n)$. The following are equivalent.
    \begin{itemize}
        \item $\cx \leq \cy$,
        \item $\inv(\cx) \subseteq \inv(\cy)$,
        \item $\inv(\cx) \subseteq \inv(\cy) \cup \inc(\cy)$,
        \item $\inv(\cx) \cup \inc(\cx) \subseteq \inv(\cy) \cup \inc(\cy)$, and
        \item $\coinv(\cx) \supseteq \coinv(\cy)$. 
    \end{itemize}
\end{proposition}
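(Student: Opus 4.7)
The plan is to split the five conditions into two groups. The implication $(1) \Leftrightarrow (2)$ is the classical inversion-set characterization of the Tamari lattice on binary search trees, while the remaining equivalences $(2) \Leftrightarrow (3) \Leftrightarrow (4) \Leftrightarrow (5)$ exploit the fact that in $\MTub(P_n)$ every principal lower order ideal $\downX(v)$ is an interval of $[n]$, together with the BST ancestor property from Proposition~\ref{prop: BST}.

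For $(1) \Leftrightarrow (2)$, I would track inversions under a single cover $\cx \lessdot \cx'$ coming from a tree move on a left edge $x <_\cx y$ (Proposition~\ref{prop: cover_relation_path}). A direct case-check on the pairs among $x$, $y$, and the subtrees $T_1$, $T_2$, $T_3$ yields $\inv(\cx') = \inv(\cx) \sqcup \{(x, y)\} \sqcup \{(x, t) : t \in T_3\}$, from which $(1) \Rightarrow (2)$ follows by chaining covers along any saturated chain from $\cx$ to $\cy$. For $(2) \Rightarrow (1)$, I would induct on $|\inv(\cy) \setminus \inv(\cx)|$: a componentwise-minimal pair in this difference picks out a left edge of $G_\cx$ whose tree move strictly enlarges $\inv(\cx)$ while keeping the inclusion in $\inv(\cy)$.

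For the second group, $(4) \Leftrightarrow (5)$ holds by complementation in $\{(i,j) : i < j\}$ since $\coinv(P) = (\inv(P) \cup \inc(P))^c$, and the inclusions $(2) \Rightarrow (3)$ and $(4) \Rightarrow (3)$ are trivial. The heart of the argument is $(3) \Rightarrow (2)$: if some $(i, j) \in \inv(\cx) \cap \inc(\cy)$ existed, the BST property of $G_\cy$ would yield a common $G_\cy$-ancestor $v$ of $i$ and $j$ with $i < v < j$, so $(i, v) \in \coinv(\cy)$; but $\downX(i) \ni j$ is a tube, hence an interval of $[n]$, so $v \in \downX(i)$ and thus $(i, v) \in \inv(\cx)$, contradicting (3). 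A parallel argument gives $(3) \Rightarrow (4)$: any $(i, j) \in \inc(\cx) \cap \coinv(\cy)$ produces a witness $(v, j) \in \inv(\cx) \cap \coinv(\cy)$ from the $G_\cx$-ancestor $v \in (i, j)$ of both $i$ and $j$, again contradicting (3).

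The main obstacle is the direction $(2) \Rightarrow (1)$, where one must carefully select the correct flippable left edge; this is the standard Tamari-lattice argument and could alternatively be obtained by quoting the inversion-set characterization from \cite{BjornerWachs_shellableII}. Everything else reduces to short set-theoretic bookkeeping using the interval structure of tubes in $P_n$ together with the BST ancestor property.
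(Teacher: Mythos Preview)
The paper does not actually prove this proposition: it is stated as ``well known'' with a citation to Bj\"orner--Wachs and no argument is given. So there is no paper proof to compare against; your proposal supplies a self-contained argument where the paper simply appeals to the literature.

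Your argument is correct. The implications $(3)\Rightarrow(2)$ and $(3)\Rightarrow(4)$ are the substantive ones, and your use of the BST common-ancestor property together with the fact that tubes of $P_n$ are intervals is exactly right: in each case the interval structure forces the intermediate vertex $v$ into the relevant downset, manufacturing a pair that violates~(3). The cover-by-cover computation $\inv(\cx') = \inv(\cx)\sqcup\{(x,y)\}\sqcup\{(x,t):t\in T_3\}$ is also correct (one should note in passing that the pairs $(t_1,y)$ for $t_1\in T_1$ move from $\coinv(\cx)$ to $\inc(\cx')$, but these are never inversions, so the equality holds as stated).

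The one place where your sketch is thin is $(2)\Rightarrow(1)$: ``a componentwise-minimal pair in this difference picks out a left edge of $G_\cx$'' is not quite the right mechanism. What you need is an \emph{ascent} $(a,b)\in\asc(\cx)$ with $(a,b)\in\inv(\cy)$, and then to check that after the tree move the new inversions $\{(a,b)\}\cup\{(a,t):t\in T_3\}$ all lie in $\inv(\cy)$ (the latter because $b<_\cy a$ and $t\in\downY(a)$ forces $t<_\cy a$ via the interval property). Producing such an ascent from an arbitrary element of $\inv(\cy)\setminus\inv(\cx)$ requires a short extra argument. Since you already flag this as the standard Tamari step and offer to cite \cite{BjornerWachs_shellableII}, this is not a genuine gap, but if you want the proof to stand alone you should fill in that sentence.
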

\begin{remark}
    Given a tubing $\cx \in \MTub(P_n)$, there are permutations in the symmetric group $\Sfrak_n$ whose inversion sets are $\inv(\cx)$ and $\inv(\cx) \cup\inc(\cx)$. They are often referred to as $\pidown$ and $\piup$ respectively.
\end{remark}

\section{\texorpdfstring{$G$}{G}-trees for maximal tubings of \texorpdfstring{$C_n$}{Cn}}\label{sec:Gtrees for cycle}

This section characterizes the $G$-trees for maximal tubings of~$C_n$, and gives an operation on those $G$-trees that corresponds to cover relations in~$\MTub(C_n)$. We will use $\cj,\ck,\cl,\cm$ for maximal tubings of $C_n$.  \par

\begin{definition}
Let~$m\in[n]$. Define a \emph{cyclic order}~$<_m$ on~$[n]\setminus m$ by
\[
m+1 <_m \cdots <_m n <_m 1 <_m \cdots <_m m-1.
\]
\end{definition}
\begin{definition}
A \emph{cyclic binary tree} is a rooted binary tree on~$[n]$ with root~$m$, where  the unique child subtree of~$m$ is a binary search tree with respect to the \emph{cyclic order}~$<_m$. 
\end{definition}

\begin{remark}
    Note if~$m=n$, then the cyclic order~$\leq_m$ is the same as the standard order~$\leq$ on~$[n-1]$.
\end{remark}

\begin{proposition}\label{prop:cyclic_BST}Let~$\cj\in\MTub(C_n)$ with~$G$-tree~$G_\cj$. Then~$G_\cj$ is a cyclic binary tree. Every cyclic binary tree is the~$G$-tree for a maximal tubing of the cycle graph.
\end{proposition}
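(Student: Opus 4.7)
The plan is to prove both directions by systematically exploiting the fact that removing the root vertex $m$ turns $C_n$ into the path graph $P_{n-1}$ on $[n]\setminus\{m\}$ ordered by the cyclic order $<_m$, and then invoking Proposition~\ref{prop: BST}. First I would observe that the proper tubes of $C_n$ are exactly its arcs, and those \emph{not} containing a fixed vertex $m$ are precisely the intervals with respect to $<_m$.

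For the forward direction, let $\cj\in\MTub(C_n)$ with root $m=\topT_\cj([n])$. The first key step is to show that $m$ has a unique child in $G_\cj$ (assuming $n\geq 2$). Suppose for contradiction that $m$ has distinct children $c_1,\dots,c_k$ with $k\geq 2$. In any tree poset, incomparable principal ideals are disjoint, so $\downT(c_1),\dots,\downT(c_k)$ partition $[n]\setminus\{m\}$ into disjoint arcs. I then claim that the arc $[n]\setminus\{m\}$ is compatible with every tube of $\cj$: it is a subset of $[n]$, a superset of each $\downT(c_i)$, and hence a superset of every $\downT(x)$. This contradicts maximality of $\cj$, so $m$ has exactly one child $c$ with $\downT(c)=[n]\setminus\{m\}$. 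The subtree of $G_\cj$ below $m$ is then a tree poset on $[n]\setminus\{m\}$ whose principal ideals are all intervals with respect to $<_m$; these are precisely tubes of the path graph $P_{n-1}$ realized on $[n]\setminus\{m\}$ in the order $<_m$. This subtree encodes $n-1$ pairwise compatible tubes of $P_{n-1}$, and it is maximal in $\MTub(P_{n-1})$, since any additional interval compatible with these tubes would also be compatible with $[n]$ and could be added to $\cj$. Proposition~\ref{prop: BST} then tells us the subtree is a binary search tree with respect to $<_m$, so $G_\cj$ is a cyclic binary tree.

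For the converse, let $B$ be a cyclic binary tree with root $m$, and define $\cj=\{\downT_B(x) : x\in[n]\}$. I would check three things. First, every $\downT_B(x)$ is a tube of $C_n$: the ideal at $m$ is $[n]$, and for $x\neq m$ the ideal lies in the BST below $m$, hence it is an interval in $<_m$ and thus an arc of $C_n$. Second, the tubes of $\cj$ are pairwise compatible: for comparable $x\leq_B y$, inclusion of ideals gives compatibility, while for incomparable $x,y$, letting $z$ be their least common ancestor in $B$, the BST property places $\downT_B(x)$ and $\downT_B(y)$ in strictly separated intervals of $<_m$ whose union omits $z$ and is therefore not contiguous (hence not a tube), giving compatibility. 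Third, $\cj$ is maximal: any arc of $C_n$ containing $m$ other than $[n]$ has union $[n]$ with $[n]\setminus\{m\}\in\cj$, so is incompatible with $\cj$; any arc not containing $m$ is an interval in $<_m$, and adding it would contradict the maximality of the path tubing guaranteed by Proposition~\ref{prop: BST}. Since $\downT_\cj(x)=\downT_B(x)$ for every $x$, the $G$-tree of $\cj$ equals $B$.

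The main obstacle is the compatibility verification for incomparable nodes in the converse direction: one must identify the correct BST-theoretic reason why their subtree-intervals are strictly separated (not merely disjoint) in $<_m$, since adjacent disjoint intervals would yield a tube and violate compatibility. Everything else is relatively routine once the delete-$m$ reduction to $P_{n-1}$ in the order $<_m$ is made explicit.
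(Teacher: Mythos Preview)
Your proposal is correct and follows essentially the same approach as the paper: remove the root $m$, observe that the remaining structure is a maximal tubing of the path on $[n]\setminus\{m\}$ ordered by $<_m$, and invoke Proposition~\ref{prop: BST}. The paper's own proof is considerably terser, simply noting that the largest nontrivial tube has size $n-1$ and that the induced subgraph is isomorphic to $P_{n-1}$ via a cyclic shift, so your explicit maximality argument for the uniqueness of the child of $m$ and your careful compatibility check in the converse direction are more detailed than what the paper provides, but the underlying idea is the same.
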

\begin{proof}
In any maximal tubing of~$C_n$, the largest nontrivial tube~$H$ is a connected subgraph of~$C_n$ of size~$n-1$.
This subgraph is isomorphic to the path graph~$P_{n-1}$ via a cyclic shift on~$[n]$, and is uniquely characterized by the single element $m$ in~$C_n \setminus H$.\par 
In any cyclic binary tree, if one removes the maximal element $m$ and cyclically shifts the remaining elements in the binary tree to be the set $[n-1]$, the result is a binary search tree. \par 
These cyclic shift operations are the same, and so by Proposition \ref{prop: BST} we have the claim.
\end{proof}

Proposition~\ref{prop:cyclic_BST} allows us to show the following (already known) fact.
\begin{corollary}
The number of~$G$-trees for~$C_{n+1}$ is~$\binom{2n}{n}$.
\end{corollary}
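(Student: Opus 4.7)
The plan is to count cyclic binary trees on $[n+1]$ directly, using the structural description already provided by Proposition~\ref{prop:cyclic_BST}. By that proposition, $G$-trees for $C_{n+1}$ are in bijection with cyclic binary trees on $[n+1]$, so it suffices to enumerate the latter.

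First I would stratify the set of cyclic binary trees by their root. A cyclic binary tree on $[n+1]$ has a unique root $m \in [n+1]$, and below the root sits the unique child subtree, which by definition is a binary search tree on $[n+1] \setminus \{m\}$ with respect to the cyclic order $<_m$. The cyclic shift sending $m+1, m+2, \ldots, n+1, 1, \ldots, m-1$ to $1, 2, \ldots, n$ is an order-preserving bijection between $([n+1]\setminus\{m\}, <_m)$ and $([n], <)$, and it induces a bijection between cyclic binary trees on $[n+1]$ rooted at $m$ and binary search trees on $[n]$ (with the usual order).

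Next I would apply Proposition~\ref{prop: BST} together with the well-known fact that the number of binary search trees on $n$ elements equals the Catalan number $C_n = \tfrac{1}{n+1}\binom{2n}{n}$. Summing over the $n+1$ possible roots gives
\[
\#\{G\text{-trees for }C_{n+1}\} \;=\; \sum_{m=1}^{n+1} C_n \;=\; (n+1)\cdot \frac{1}{n+1}\binom{2n}{n} \;=\; \binom{2n}{n}.
\]

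There is essentially no obstacle here: the substantive content has already been done in Propositions~\ref{prop: BST} and~\ref{prop:cyclic_BST}. The only thing to be careful about is confirming that the cyclic shift does indeed carry binary-search-tree structure with respect to $<_m$ to binary-search-tree structure with respect to the usual order, which is immediate because the shift is an order isomorphism. As a sanity check, this recovers the statement from the introduction that $|\MTub(C_n)| = \binom{2n-2}{n-1}$ is the type B Catalan number.
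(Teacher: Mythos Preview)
Your proof is correct and follows essentially the same approach as the paper: stratify cyclic binary trees by their root $m\in[n+1]$, observe via the cyclic shift that each stratum is in bijection with binary search trees on $[n]$ (counted by the Catalan number $C_n$), and multiply by $n+1$. The paper's proof is simply a two-line compression of the same argument.
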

\begin{proof}
    The number of binary search trees on~$[n]$ is~$\frac{1}{n+1}\binom{2n}{n}$. Every~$G$-tree is a choice of a number in~$[n+1]$ paired with any possible binary search tree. 
\end{proof}

\begin{example}\label{ex: G-trees for C4}
The following are all the~$G$-trees for maximal tubings of~$C_4$ such that the least nested element is 4. Note that for each when the tube~$[4]$ is removed, we get a tubing of~$P_3$ (see Example~\ref{ex: G-Trees for P3}).
\begin{center}
    \begin{tikzpicture}
        \begin{scope}
            \draw (0,0) node[draw, circle] (t1) {$1$};
            \draw (1,-1) node[draw,circle] (t2) {$2$};
            \draw (2,-2) node[draw,circle] (t3) {$3$};
            \draw (t1)--(t2)--(t3);
            \draw (0,1) node[draw,circle] (c) {$4$};
            \draw (c)--(t1);
        \end{scope}
        \begin{scope}[xshift = 4cm]
            \draw (0,0) node[draw, circle] (t1) {$1$};
            \draw (1,-1) node[draw,circle] (t2) {$3$};
            \draw (0,-2) node[draw,circle] (t3) {$2$};
            \draw (t1)--(t2)--(t3);
            \draw (0,1) node[draw,circle] (c) {$4$};
            \draw (c)--(t1);
        \end{scope}
        \begin{scope}[xshift = 8cm]
            \draw (0,0) node[draw, circle] (t1) {$3$};
            \draw (-1,-1) node[draw,circle] (t2) {$1$};
            \draw (0,-2) node[draw,circle] (t3) {$2$};
            \draw (t1)--(t2)--(t3);
            \draw (0,1) node[draw,circle] (c) {$4$};
            \draw (c)--(t1);
        \end{scope}
        \begin{scope}[xshift = 12cm]
            \draw (0,0) node[draw, circle] (t1) {$3$};
            \draw (-1,-1) node[draw,circle] (t2) {$2$};
            \draw (-2,-2) node[draw,circle] (t3) {$1$};
            \draw (t1)--(t2)--(t3);
            \draw (0,1) node[draw,circle] (c) {$4$};
            \draw (c)--(t1);
        \end{scope}
        \begin{scope}[xshift = 14cm]
            \draw (0,0) node[draw, circle] (t1) {$2$};
            \draw (1,-1) node[draw,circle] (t2) {$3$};
            \draw (-1,-1) node[draw,circle] (t3) {$1$};
            \draw (t1)--(t2);
            \draw (t1)--(t3);
            \draw (0,1) node[draw,circle] (c) {$4$};
            \draw (c)--(t1);
        \end{scope}
    \end{tikzpicture}
\end{center}
Applying all possible cyclic shifts gives the remaining~$G$-trees for maximal tubings of~$C_4$, drawn below.  Trees that are related by cyclic shifts are displayed in vertical columns.
\begin{center}
    \begin{tikzpicture}
        \begin{scope}
            \draw (0,0) node[draw, circle] (t1) {$2$};
            \draw (1,-1) node[draw,circle] (t2) {$3$};
            \draw (2,-2) node[draw,circle] (t3) {$4$};
            \draw (t1)--(t2)--(t3);
            \draw (0,1) node[draw,circle] (c) {$1$};
            \draw (c)--(t1);
        \end{scope}
        \begin{scope}[xshift = 4cm]
            \draw (0,0) node[draw, circle] (t1) {$2$};
            \draw (1,-1) node[draw,circle] (t2) {$4$};
            \draw (0,-2) node[draw,circle] (t3) {$3$};
            \draw (t1)--(t2)--(t3);
            \draw (0,1) node[draw,circle] (c) {$1$};
            \draw (c)--(t1);
        \end{scope}
        \begin{scope}[xshift = 8cm]
            \draw (0,0) node[draw, circle] (t1) {$4$};
            \draw (-1,-1) node[draw,circle] (t2) {$2$};
            \draw (0,-2) node[draw,circle] (t3) {$3$};
            \draw (t1)--(t2)--(t3);
            \draw (0,1) node[draw,circle] (c) {$1$};
            \draw (c)--(t1);
        \end{scope}
        \begin{scope}[xshift = 12cm]
            \draw (0,0) node[draw, circle] (t1) {$4$};
            \draw (-1,-1) node[draw,circle] (t2) {$3$};
            \draw (-2,-2) node[draw,circle] (t3) {$2$};
            \draw (t1)--(t2)--(t3);
            \draw (0,1) node[draw,circle] (c) {$1$};
            \draw (c)--(t1);
        \end{scope}
        \begin{scope}[xshift = 14cm]
            \draw (0,0) node[draw, circle] (t1) {$3$};
            \draw (1,-1) node[draw,circle] (t2) {$4$};
            \draw (-1,-1) node[draw,circle] (t3) {$2$};
            \draw (t1)--(t2);
            \draw (t1)--(t3);
            \draw (0,1) node[draw,circle] (c) {$1$};
            \draw (c)--(t1);
        \end{scope}
    \end{tikzpicture}
\end{center}
\begin{center}
    \begin{tikzpicture}
        \begin{scope}
            \draw (0,0) node[draw, circle] (t1) {$3$};
            \draw (1,-1) node[draw,circle] (t2) {$4$};
            \draw (2,-2) node[draw,circle] (t3) {$1$};
            \draw (t1)--(t2)--(t3);
            \draw (0,1) node[draw,circle] (c) {$2$};
            \draw (c)--(t1);
        \end{scope}
        \begin{scope}[xshift = 4cm]
            \draw (0,0) node[draw, circle] (t1) {$3$};
            \draw (1,-1) node[draw,circle] (t2) {$1$};
            \draw (0,-2) node[draw,circle] (t3) {$4$};
            \draw (t1)--(t2)--(t3);
            \draw (0,1) node[draw,circle] (c) {$2$};
            \draw (c)--(t1);
        \end{scope}
        \begin{scope}[xshift = 8cm]
            \draw (0,0) node[draw, circle] (t1) {$1$};
            \draw (-1,-1) node[draw,circle] (t2) {$3$};
            \draw (0,-2) node[draw,circle] (t3) {$4$};
            \draw (t1)--(t2)--(t3);
            \draw (0,1) node[draw,circle] (c) {$2$};
            \draw (c)--(t1);
        \end{scope}
        \begin{scope}[xshift = 12cm]
            \draw (0,0) node[draw, circle] (t1) {$1$};
            \draw (-1,-1) node[draw,circle] (t2) {$4$};
            \draw (-2,-2) node[draw,circle] (t3) {$3$};
            \draw (t1)--(t2)--(t3);
            \draw (0,1) node[draw,circle] (c) {$2$};
            \draw (c)--(t1);
        \end{scope}
        \begin{scope}[xshift = 14cm]
            \draw (0,0) node[draw, circle] (t1) {$4$};
            \draw (1,-1) node[draw,circle] (t2) {$1$};
            \draw (-1,-1) node[draw,circle] (t3) {$3$};
            \draw (t1)--(t2);
            \draw (t1)--(t3);
            \draw (0,1) node[draw,circle] (c) {$2$};
            \draw (c)--(t1);
        \end{scope}
    \end{tikzpicture}
\end{center}
\begin{center}
    \begin{tikzpicture}
        \begin{scope}
            \draw (0,0) node[draw, circle] (t1) {$4$};
            \draw (1,-1) node[draw,circle] (t2) {$1$};
            \draw (2,-2) node[draw,circle] (t3) {$2$};
            \draw (t1)--(t2)--(t3);
            \draw (0,1) node[draw,circle] (c) {$3$};
            \draw (c)--(t1);
        \end{scope}
        \begin{scope}[xshift = 4cm]
            \draw (0,0) node[draw, circle] (t1) {$4$};
            \draw (1,-1) node[draw,circle] (t2) {$2$};
            \draw (0,-2) node[draw,circle] (t3) {$1$};
            \draw (t1)--(t2)--(t3);
            \draw (0,1) node[draw,circle] (c) {$3$};
            \draw (c)--(t1);
        \end{scope}
        \begin{scope}[xshift = 8cm]
            \draw (0,0) node[draw, circle] (t1) {$2$};
            \draw (-1,-1) node[draw,circle] (t2) {$4$};
            \draw (0,-2) node[draw,circle] (t3) {$1$};
            \draw (t1)--(t2)--(t3);
            \draw (0,1) node[draw,circle] (c) {$3$};
            \draw (c)--(t1);
        \end{scope}
        \begin{scope}[xshift = 12cm]
            \draw (0,0) node[draw, circle] (t1) {$2$};
            \draw (-1,-1) node[draw,circle] (t2) {$1$};
            \draw (-2,-2) node[draw,circle] (t3) {$4$};
            \draw (t1)--(t2)--(t3);
            \draw (0,1) node[draw,circle] (c) {$3$};
            \draw (c)--(t1);
        \end{scope}
        \begin{scope}[xshift = 14cm]
            \draw (0,0) node[draw, circle] (t1) {$1$};
            \draw (1,-1) node[draw,circle] (t2) {$2$};
            \draw (-1,-1) node[draw,circle] (t3) {$4$};
            \draw (t1)--(t2);
            \draw (t1)--(t3);
            \draw (0,1) node[draw,circle] (c) {$3$};
            \draw (c)--(t1);
        \end{scope}
    \end{tikzpicture}
\end{center}
\end{example}

\begin{example}\label{ex:cyclic order}
Say that~$m=5$ is the least nested element in a maximal tubing on~$C_9$. The following is one of the possible~$G$-trees for such a tubing, as well as a visual for the cyclic order.
\begin{center}
\begin{tikzpicture}[scale=0.9]
\begin{scope}
    \draw (0,0) node[draw,circle] (v5) {$5$};
    \draw (0,-1) node[draw,circle] (v7) {$7$};
    \draw (1,-2) node[draw,circle] (v3) {$3$};
    \draw (0,-3) node[draw,circle] (v1) {$1$};
    \draw (-1,-4) node[draw,circle] (v9) {$9$};
    \draw (v9)--(v1)--(v3)--(v7)--(v5);

    \draw (-1,-2) node[draw,circle] (v6) {$6$};
    \draw (2,-3) node[draw,circle] (v4) {$4$};
    \draw (1,-4) node[draw,circle] (v2) {$2$};
    \draw (-2,-5) node[draw,circle] (v8) {$8$};
    \draw (v1)--(v2);
    \draw (v4)--(v3);
    \draw (v6)--(v7);
    \draw (v8)--(v9);
    \end{scope}
\begin{scope}[xshift=3in,yshift=-1in]

    \draw (0,0) circle (2cm);
    
    \draw (0:1.8cm) node (ar1) {$\downarrow$};
    \draw (90:1.8cm) node (ar2) {$\rightarrow$};
    \draw (180:1.8cm) node (ar3) {$\uparrow$};    
    \draw (270:1.8cm) node (ar4) {$\leftarrow$};
    
    \draw (0:2.25cm) node    (v1) {$1$};
    \draw (40:2.25cm) node   (v9) {$9$};
    \draw (80:2.25cm) node   (v8) {$8$};
    \draw (120:2.25cm) node  (v7) {$7$};
    \draw (160:2.25cm) node  (v6) {$6$};
    \draw (200:2.25cm) node[draw]  (v5) {$5$};
    \draw (240:2.25cm) node  (v4) {$4$};
    \draw (280:2.25cm) node  (v3) {$3$};
    \draw (320:2.25cm) node  (v2) {$2$};

\end{scope} 

\end{tikzpicture}
\end{center}
Here, the cyclic order on~$[9] \setminus \{5\}$ is~$6 <_5 7 <_5 8 <_5 9 <_5 1 <_5 2 <_5 3 <_5 4$. The~$G$-tree above is a cyclic binary tree with respect to this order, for example the left sub-tree of~$1$ contains~$8$ and~$9$, both of which are less than~$1$ in this cyclic order. \par 
We encourage the reader to use the visual aid of the circle to depict the cyclic order, as it can illustrate containment conditions. For example, in any maximal tubing of~$C_9$ where~$m=5$, if a tube contains~$7$ and~$2$ it must also contain~$9$, since~$7 <_5 9 <_5 2$.
\end{example}

\begin{definition}\label{def:tree_move_cyclic}
    Let~$B$ be a cyclic binary tree on~$[n]$. A \emph{tree move} on $B$ along an $(x,y)$ that is not the top edge takes precisely the same form as one for a binary search tree (Definition \ref{def: tree move}). The tree move along the top edge $(x,m)$ swaps $x$ with $m$, and swaps the left-subtree of $x$ with the right.
\end{definition}
\begin{figure}[htbp!]
    \centering
     \begin{tikzpicture}
      \begin{scope}
         \draw (0,0.5) node[draw, circle] (m)     {$m$};
         \draw (0,-1) node[draw, circle] (x)     {$x$};
         \draw (-1,-2) node[draw, circle] (T1)  {$T_1$};
         \draw (1,-2) node[draw, circle] (T2) {$T_2$};

         \draw (m)--(x)--(T1);
         \draw (x)--(T2);
     \end{scope} 
     \begin{scope}[xshift = 1.5in, yshift = -0.5in]
         \draw (0,0) node {\Large${ \longleftrightarrow}$};
     \end{scope}
     \begin{scope}[xshift = 3in]
        \draw (0,0.5) node[draw, circle] (m)     {$x$};
         \draw (0,-1) node[draw, circle] (x)     {$m$};
         \draw (-1,-2) node[draw, circle] (T1)  {$T_2$};
         \draw (1,-2) node[draw, circle] (T2) {$T_1$};

         \draw (m)--(x)--(T1);
         \draw (x)--(T2);
     \end{scope}
 \end{tikzpicture}
    \caption{The tree move on $(x,m)$ in a cyclic binary tree.}
    \label{fig:tree_move_cyclic_top}
\end{figure}
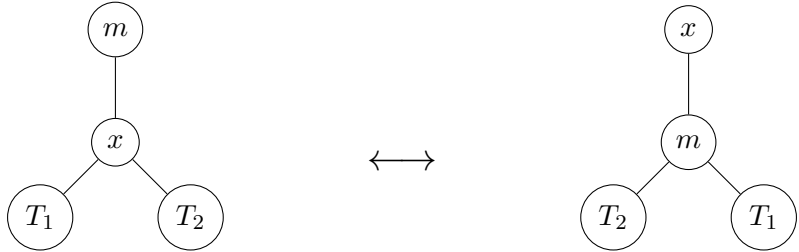
\begin{remark}
    The fact that a tree move in the non-top-edge case results in a cyclic binary tree is precisely the same as for binary search trees. The top-edge move can be checked with Figure \ref{fig:tree_move_cyclic_top}, as $t_1 <_m x <_m t_2$ and $t_2 <_x m <_x t_1$ for all elements $t_1 \in T_1$ and $t_2 \in T_2$, and the cyclic orders within $T_1$ and $T_2$ are identical.
\end{remark}
\begin{proposition}\label{prop:cover_relation_cycle}
Let~$\cj,\ck\in\MTub(C_n)$, with~$G$-trees~$B, B'$ related by a tree move on an edge $(x<y)$ in $B$ where $x <_B y$ as in Definition~\ref{def:tree_move_cyclic}. Then~$\cj \lessdot \ck$ is a cover relation in~$\MTub(C_n)$.
\end{proposition}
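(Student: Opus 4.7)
The plan is to mirror the proof of Proposition~\ref{prop: cover_relation_path} but to handle the top-edge case of Definition~\ref{def:tree_move_cyclic}, which has no analogue in the path graph, separately. In each case, the strategy is to identify precisely which tubes are exchanged by the tree move, verify that these are valid tubes of $C_n$, and confirm the ordering condition in Definition~\ref{def:MTub} that says $\top_\cj(T) < \top_\ck(J)$ for the unique exchanged pair.

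For a non-top edge $(x,y)$, the tree move acts identically to the binary search tree move of Definition~\ref{def: tree move}, restricted to the child subtree of the root $m$. This child subtree is a binary search tree under the cyclic order $<_m$, which agrees with the standard order on the linear subgraph $C_n \setminus \{m\} \cong P_{n-1}$. Thus the local argument of Proposition~\ref{prop: cover_relation_path} applies verbatim: the tube $\{x\} \cup T_1 \cup T_2$ is exchanged for $\{y\} \cup T_2 \cup T_3$, and both are tubes of $C_n$ as subsets of $[n] \setminus \{m\}$. Their tops are $x$ and $y$ respectively, and $x < y$ gives the cover $\cj \lessdot \ck$.

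For the top edge $(x,m)$, the move swaps the labels $x$ and $m$ and interchanges the subtrees $T_1$ and $T_2$. For every node $v \in T_1 \cup T_2$, its principal lower-order ideal is unchanged, because $T_1$ and $T_2$ remain intact as subtrees of $B'$. The only tubes that change are those associated to $x$ and $m$: in $\cj$, $\downJ(x) = [n] \setminus \{m\}$, while in $\ck$, $\downK(m) = [n] \setminus \{x\}$; the tube $[n]$ is common to both. Both $[n] \setminus \{m\}$ and $[n] \setminus \{x\}$ are tubes of $C_n$, since removing a single vertex from a cycle produces a path. Hence $\cj$ and $\ck$ differ in exactly one tube, and the tops of the exchanged pair are $x$ and $m$ respectively; since $x < y = m$, this yields the cover $\cj \lessdot \ck$.

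The main obstacle is really a bookkeeping point rather than a deep difficulty: in the top-edge case one must carefully verify that both size-$(n-1)$ tubes are valid in $C_n$, and that no lower tube is disturbed despite the apparent restructuring of subtrees. Combined with Proposition~\ref{prop:cyclic_BST}, which guarantees that the resulting cyclic binary tree $B'$ is itself the $G$-tree of a genuine maximal tubing of $C_n$, this establishes the cover relation.
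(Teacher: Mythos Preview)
Your proof is correct and follows essentially the same approach as the paper's: split into the non-top-edge case (where the argument of Proposition~\ref{prop: cover_relation_path} carries over) and the top-edge case (where the exchanged tubes are $[n]\setminus\{m\}$ and $[n]\setminus\{x\}$). Your version supplies more explicit detail in the top-edge case---checking that lower tubes in $T_1\cup T_2$ are undisturbed and that the size-$(n-1)$ sets are genuine tubes of $C_n$---while the paper is content to say ``the logic [is] again the same as in Proposition~\ref{prop: cover_relation_path}.''
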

\begin{proof}
    In the non-top-edge case this is identical to Proposition \ref{prop: cover_relation_path}. \par 
    In the top edge case, the tubes exchanged are $[n] \setminus\{m\}$ and $n \setminus\{x\}$, and the logic again the same as in Proposition \ref{prop: cover_relation_path}.\par 
    Since there are~$n-1$ of these tree moves and a maximal tubing of~$C_n$ contains~$n-1$ nontrivial tubes, these tree moves give \emph{every} cover relation. Furthermore, it is straightforward to determine which element covers the other by observing whether~$x<y$ or~$x>y$, as~$x$ and~$y$ are the tops of the exchanged tubes. 
\end{proof}

\begin{example}
    Consider the following~$G$-tree for~$\cj$, a maximal tubing of~$C_5$. 
    \begin{center}
    \begin{tikzpicture}
    \draw (0,0) node[draw,circle] (v3) {$3$};
    \draw (0,-1) node[draw,circle] (v5) {$5$};
    \draw (1,-2) node[draw,circle] (v2) {$2$};
    \draw (0,-3) node[draw,circle] (v1) {$1$};
    \draw (-1,-2) node[draw,circle] (v4) {$4$};
    
    \draw (v3)--(v5)--(v2)--(v1);
    \draw (v4)--(v5);
    \end{tikzpicture}
    \end{center}
    There are~$4$ edges in this tree, and so~$4$ available tree moves. Below are each of the corresponding new~$G$-trees, the edge along which a tree move was performed to create them, as well as whether the resulting maximal tubing is less than or greater than~$\cj$ in~$\MTub(C_5)$ (i.e. whether the edge is an ascent or descent in~$\cj$).
    \begin{center}
\begin{tikzpicture}[scale=0.9]
\begin{scope}[xshift=0in,yshift=0in]
    \draw (0,0.8) node (name) {$(3,5), \; < \cj$};
    \draw (0,0) node[draw,circle] (v3) {$5$};
    \draw (0,-1) node[draw,circle] (v5) {$3$};
    \draw (-1,-2) node[draw,circle] (v2) {$2$};
    \draw (-2,-3) node[draw,circle] (v1) {$1$};
    \draw (1,-2) node[draw,circle] (v4) {$4$};
    
    \draw (v3)--(v5)--(v2)--(v1);
    \draw (v4)--(v5);
\end{scope}

\begin{scope}[xshift=1.5in,yshift=0in]
    \draw (0,0.8) node (name) {$(4,5), \; > \cj$};
    \draw (0,0) node[draw,circle] (v3) {$3$};
    \draw (0,-1) node[draw,circle] (v4) {$4$};
    \draw (1,-2) node[draw,circle] (v5) {$5$};
    \draw (2,-3) node[draw,circle] (v2) {$2$};
    \draw (1,-4) node[draw,circle] (v1) {$1$};
    
    \draw (v3)--(v4)--(v5)--(v2)--(v1);
\end{scope}

\begin{scope}[xshift=4in,yshift=0in]
    \draw (0,0.8) node (name) {$(2,5), \; > \cj$};
    \draw (0,0) node[draw,circle] (v3) {$3$};
    \draw (0,-1) node[draw,circle] (v2) {$2$};
    \draw (-2,-3) node[draw,circle] (v4) {$4$};
    \draw (0,-3) node[draw,circle] (v1) {$1$};
    \draw (-1,-2) node[draw,circle] (v5) {$5$};
    
    \draw (v3)--(v2)--(v5)--(v1);
    \draw (v4)--(v5);
\end{scope}

\begin{scope}[xshift=5.5in,yshift=0in]
    \draw (0,0.8) node (name) {$(1,2), \; > \cj$};
    \draw (0,0) node[draw,circle] (v3) {$3$};
    \draw (0,-1) node[draw,circle] (v5) {$5$};
    \draw (1,-2) node[draw,circle] (v2) {$1$};
    \draw (2,-3) node[draw,circle] (v1) {$2$};
    \draw (-1,-2) node[draw,circle] (v4) {$4$};
    
    \draw (v3)--(v5)--(v2)--(v1);
    \draw (v4)--(v5);
\end{scope}

\end{tikzpicture}    
\end{center}

\end{example}

\begin{remark}\label{rem:minmax_gtree}
    The~$G$-trees for the unique minimal and maximal tubings in~$\MTub(C_n)$ are the total orders~$(1,...,n)$ and~$(n,...,1)$. 
\end{remark}

\section{Cut Map}\label{sec:Cut Map}

In this section we define two useful maps and describe some of their properties. The first map~$\Cut$, given in Definition~\ref{def:cutmap}, is a  surjective map  from~$\MTub(C_n)$ to~$\MTub(P_n)$. The~$\Cut$ map is not injective, but our second map~$\Sew_\cx$, given in Definition~\ref{def:Psi}, takes a tubing~$\cx\in\MTub(P_n)$ and an in-order shuffling of the left and right zipper of~$\cx$ and constructs an element in the fiber of~$\cx$ under the~$\Cut$ map.  \par 
\begin{definition}\label{def:cutmap}
    Let~$\Cut$ be the map
    \begin{align*}
        \Cut \colon \MTub(C_n) &\to \MTub(P_n)\\
        \cj = \{X_1,...,X_n\} &\mapsto \{\cut_\cj(X_1),...,\cut_\cj(X_n)\}
    \end{align*}
where if~$m$ is the root of the~$G$-tree of~$\cj$, then 
\[
\cut_\cj(\downJ(x)) \coloneqq \begin{cases}
    \downJ(x) &\text{if } x=m,\\
     \downJ(x) \cap [1,m-1] &\text{if }x < m, \\
    \downJ(x) \cap [m+1,n]  &\text{if } x > m,\\
\end{cases}
\]
where $<$ and $>$ indicate the usual ordering on the integers.
\end{definition}
\begin{remark}
    Let~$\cj\in\MTub(C_n)$. To visualize~$\cut(\cj)$, we imagine taking a pair of scissors and ``snipping'' the edge of~$C_n$ between~$1$ and~$n$.  The snipping turns~$C_n$ into~$P_n$, and also splits any nontrivial tubes that traverse this edge into two pieces, one piece that contains~$1$ and one that contains~$n$.  
    
    For proper downsets of~$\cj$ that contain both~$1$ and~$n$, this snipping creates two new tubes; the map~$\cut_\cj$ applied to $\downJ(x)$ selects the tube containing $x$. 
    If~$X \in \cj$ is a nontrivial tube that does \emph{not} contain both~$1$ and~$n$, then either~$X \subset [1,m-1]$ or~$X \subset [m+1,n]$. Thus,~$\cut_\cj(X) = X$. In other words, the only tubes that get ``cut'' are those that contain both~$1$ and~$n$.
    Hence, the image of~$\Cut$ is a new collection of tubes---one for each tube in~$\cj$.
    
\end{remark}

The following lemma will show that the image of the~$\Cut$ map truly lies in~$\MTub(P_n)$.
\begin{lemma}\label{lem:cutmap}
    Let~$\cj\in\MTub(C_n)$. Then~$\Cut(\cj)\in\MTub(P_n)$.
\end{lemma}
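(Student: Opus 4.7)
My plan is to verify directly that $\Cut(\cj)$ satisfies the three defining conditions of a maximal tubing of $P_n$: (1) every set in $\Cut(\cj)$ is a tube of $P_n$, (2) the sets are pairwise compatible, and (3) $|\Cut(\cj)| = n$, which together with (1) and (2) is enough to conclude maximality (every maximal tubing of a connected graph on $n$ vertices contains exactly $n$ tubes).

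For (1), the starting observation is that the root $m$ of $G_\cj$ is the unique element with $\downJ(m) = [n]$, so no proper tube of $\cj$ contains $m$. Consequently every proper tube of $\cj$ is a connected subgraph of $C_n \setminus \{m\}$ and so has one of three types: contained in $[1,m-1]$, contained in $[m+1,n]$, or of the ``cross-cut'' form $\{i, i+1, \ldots, n, 1, \ldots, j\}$ with $j < m < i$. In each case the intersection prescribed by $\cut_\cj$ is an interval of $[n]$, hence a tube of $P_n$, and the root tube $[n]$ is preserved verbatim.

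For (3), I would check injectivity of the map on tubes. If two tubes $\downJ(x),\downJ(y)\in\cj$ have tops on opposite sides of $m$, their images lie in the disjoint sets $[1,m-1]$ and $[m+1,n]$ and cannot coincide. If the tops are on the same side and $\cut_\cj(\downJ(x))=\cut_\cj(\downJ(y))$, then both $x \in \downJ(y)$ and $y \in \downJ(x)$, forcing $x=y$. So the $n$ images are distinct.

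The core of the argument is (2). Here I would case-split two compatible tubes $A,B\in\cj$ by their types. Two cross-cut tubes both contain $\{1,n\}$ and hence are never disjoint, so only the nested subcase arises there. If $A \subseteq B$ and the tops lie on the same side of $m$, then $\cut_\cj(A) \subseteq \cut_\cj(B)$; if they lie on opposite sides, the images land in disjoint subsets of $[1,m-1]$ and $[m+1,n]$ with the vertex $m$ serving as a gap, so their union is not an interval of $[n]$. If $A$ and $B$ are disjoint with $A \cup B$ not a tube of $C_n$, the subtle case is that $A$ is cross-cut and $B$ is one-sided: compatibility in $C_n$ prohibits $B$ from being adjacent to either endpoint of the arc $A$ on the cycle, so after cutting either a genuine interior vertex of $[n]$ or the vertex $m$ itself remains as a gap between $\cut_\cj(A)$ and $\cut_\cj(B)$, ensuring the union is not an interval. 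The remaining subcase (both one-sided and disjoint) is immediate from the fact that adjacency in $[1,m-1]$ or $[m+1,n]$ as paths agrees with adjacency in $C_n$.

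I expect the main obstacle to be the bookkeeping in (2), specifically the cross-cut-versus-one-sided subcase: one must use that $C_n$-compatibility already forbids $B$ from meeting the cycle-neighbors of the endpoints of $A$, so that $P_n$-compatibility survives the cut.
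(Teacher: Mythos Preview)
Your proposal is correct and follows essentially the same three-step structure as the paper's proof: verify that each image is a tube of $P_n$, check pairwise compatibility, and count via injectivity of $\cut_\cj$. The only notable difference is that the paper handles the compatibility step more economically: for the disjoint case it simply observes that $\cut_\cj(X_i)\subseteq X_i$ and that passing to the subgraph $P_n$ cannot create new adjacencies, so $X_1\cup X_2$ disconnected in $C_n$ forces $\cut_\cj(X_1)\cup\cut_\cj(X_2)$ disconnected in $P_n$---this bypasses your more detailed cross-cut/one-sided case analysis entirely.
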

\begin{proof}
    Note that an element~$Y\in\Cut(\cj)$ is the result of restricting a tube to the subgraph~$[1,m-1]$ or~$[m+1,n]$, and therefore~$Y$ is also a tube of~$P_n$.

    Let~$x_1,x_2\in[n]$ with~$x_1\ne x_2$.  Let~$X_1=\downJ(x_1), X_2=\downJ(x_2)$.
    Let~${Y_1 = \cut_\cj(\downJ(x_1))}, {Y_2 = \cut_\cj(\downJ(x_2))}$. We will first argue that~$Y_1$ and~$Y_2$ are compatible tubes. 
    
    First assume~$X_1\cup X_2$ is disconnected in~$C_n$. Since~$Y_1=\cut_\cj(X_1) \subseteq X_1$ and~$Y_2=\cut_\cj(X_2) \subseteq X_2$, the union~$Y_1\cup Y_2$ is disconnected in~$P_n$, making the tubes~$Y_1, Y_2$ compatible. \par 
    Now assume~$X_1$ and~$X_2$ are nested,  without loss of generality~$X_1 \subseteq X_2$. Since~$[1,m-1]\cup[m+1,n]$ is disconnected in~$P_n$, either
    \begin{itemize}
        \item $X_1 \cap [1,m-1] \subset X_2 \cap[1,m-1]$,
        \item $X_1 \cap [m+1,n]\subset X_2 \cap[m+1,n]$, 
        \item $(X_1 \cap [1,m-1])\cup (X_2 \cap[m+1,n])$ is disconnected in $P_n$, or 
        \item $(X_1 \cap [m+1,n]) \cup (X_2 \cap[1,m-1])$ is disconnected in $P_n$.
    \end{itemize}
   In all cases~$Y_1$ and~$Y_2$ are compatible, and thus~$\Cut(\cj)$ is a tubing of~$P_n$.\par 
    Next we will argue that~$\Cut(\cj)$ is a maximal tubing by proving~$|\Cut(\cj)|=n$. In particular, we will show that~$\cut_\cj$ is injective as a map on~$\cj$. If~$\cut_\cj(X_1) = \cut_\cj(X_2)$ then~$X_1 \cap X_2 \neq \emptyset$. Since
    \[x_1 \in \cut_\cj(X_1) = \cut_\cj(X_2) \subseteq X_2\, ,\]
    we can infer that~$X_1=\downJ(x_1)\subseteq X_2$.
    By similar reasoning~$X_2\subseteq X_1$. 
     As a consequence, $\cut_\cj$ is injective, so~${\abs{\Cut(\cj)} = \abs{\cj} = n}$.  Thus,~$\Cut(\cj)\in\MTub(P_n)$.
\end{proof}

\begin{figure}[htbp!]
\begin{subfigure}[b]{0.25\textwidth}
        \centering
\begin{tikzpicture}[scale=.8]
        \node (p1) at (0,0) {$1$};
        \node (p2) at (-1,1) {$2$};
        \node (p3) at (0,2) {$3$};
        \node (p4) at (1.5,2) {$4$};
        \node (p5) at (2.5,1) {$5$};
        \node (p6) at (1.5,0) {$6$};

        \draw (p1) to (p2) to (p3) to (p4) to (p5) to (p6) to (p1);

        \draw[rounded corners] (-1.25,1.25) to (-1,1.5) to (-.5,1) to (-1,.5) to (-1.5,1) to (-1.25,1.25); 

         \draw[rounded corners] 
        (-.75,2) to (0,2.75) to (.75,2) to (-1,.25) to (-1.75,1) to (-.75,2); 

        \draw[rounded corners] (0,.5) to (-.25,.75) to (1,2) to (0,3) to (-2,1) to (0,-.75) to (.5,0) to (0,.5);

         \draw[rounded corners] (2.25,1.25) to (2.5,1.5) to (3,1) to (2.5,.5) to (2,1) to (2.25,1.25); 

         \draw[rounded corners] (-1,2.25) to (0,3.25) to (1.25,2) to (0,.75) to (.5,.25) to (1,.25) to (2.5,1.75) to (3.25,1) to 
         (1.5,-1) to (0,-1) to (-2.25,1) to (-1,2.25); 

\end{tikzpicture}
        
        \caption{$\cj\in\MTub(C_n)$}
        \label{subfig: graph tubing example}
    \end{subfigure}
    \begin{subfigure}[b]{0.39\textwidth}
    \centering
\begin{tikzpicture}[scale=0.5,smallnode/.style={outer sep = 0pt}]
        \node[smallnode] (p1) at (0,0) {$1$};
        \node[smallnode] (p2) at (-1,1) {$2$};
        \node[smallnode] (p3) at (0,2) {$3$};
        \node[smallnode] (p4) at (1,2) {$4$};
        \node[smallnode] (p5) at (2,1) {$5$};
        \node[smallnode] (p6) at (1,0) {$6$};

        \draw (p1) to (p2) to (p3) to (p4) to (p5) to (p6) to (p1);

        \draw[rounded corners] (-1.25,1.25) to (-1,1.5) to (-.5,1) to (-1,.5) to (-1.5,1) to (-1.25,1.25);

        \draw[->] (3,1) to node[above] {$\cut_\cj$} (5,1);

        \node[smallnode] (q1) at (6,1) {$1$};
        \node[smallnode] (q2) at (7,1) {$2$};
        \node[smallnode] (q3) at (8,1) {$3$};
        \node[smallnode] (q4) at (9,1) {$4$};
        \node[smallnode] (q5) at (10,1) {$5$};
        \node[smallnode] (q6) at (11,1) {$6$};

        \draw (q1) to (q2) to (q3) to (q4) to (q5) to (q6);

        \draw[rounded corners] (6.75,1.25) to (7,1.5) to (7.5,1) to (7,.5) to (6.5,1) to (6.75,1.25);

    \end{tikzpicture}
    
    \vspace{.2in}
    
    \begin{tikzpicture}[scale=0.5,smallnode/.style={outer sep = 0pt}]
        \node[smallnode] (p1) at (0,0) {$1$};
        \node[smallnode] (p2) at (-1,1) {$2$};
        \node[smallnode] (p3) at (0,2) {$3$};
        \node[smallnode] (p4) at (1,2) {$4$};
        \node[smallnode] (p5) at (2,1) {$5$};
        \node[smallnode] (p6) at (1,0) {$6$};

        \draw (p1) to (p2) to (p3) to (p4) to (p5) to (p6) to (p1);

        \draw[rounded corners] 
        (-.5,2) to (0,2.5) to (.5,2) to (-1,.5) to (-1.5,1) to (-.5,2);
        
        \draw[->] (3,1) to node[above] {$\cut_\cj$} (5,1);

        \node[smallnode] (q1) at (6,1) {$1$};
        \node[smallnode] (q2) at (7,1) {$2$};
        \node[smallnode] (q3) at (8,1) {$3$};
        \node[smallnode] (q4) at (9,1) {$4$};
        \node[smallnode] (q5) at (10,1) {$5$};
        \node[smallnode] (q6) at (11,1) {$6$};

        \draw (q1) to (q2) to (q3) to (q4) to (q5) to (q6);

        \draw[rounded corners] (7.5,.5) to (7,.5) to (6.5,1) to (7,1.5) to (8,1.5) to (8.5,1) to (8,.5) to (7.5,.5);

    \end{tikzpicture}

    \vspace{.2in}
    
    \begin{tikzpicture}[scale=0.5,smallnode/.style={outer sep = 0pt}]
        \node[smallnode] (p1) at (0,0) {$1$};
        \node[smallnode] (p2) at (-1,1) {$2$};
        \node[smallnode] (p3) at (0,2) {$3$};
        \node[smallnode] (p4) at (1,2) {$4$};
        \node[smallnode] (p5) at (2,1) {$5$};
        \node[smallnode] (p6) at (1,0) {$6$};

        \draw (p1) to (p2) to (p3) to (p4) to (p5) to (p6) to (p1);

        \draw[rounded corners] (-.25,.75) to (-.5,1) to (.5,2) to (0,2.5) to (-1.5,1) to (0,-.5) to (.5,0) to (-.25,.75);

        \draw[->] (3,1) to node[above] {$\cut_\cj$} (5,1);

        \node[smallnode] (q1) at (6,1) {$1$};
        \node[smallnode] (q2) at (7,1) {$2$};
        \node[smallnode] (q3) at (8,1) {$3$};
        \node[smallnode] (q4) at (9,1) {$4$};
        \node[smallnode] (q5) at (10,1) {$5$};
        \node[smallnode] (q6) at (11,1) {$6$};

        \draw (q1) to (q2) to (q3) to (q4) to (q5) to (q6);

        \draw[rounded corners] (7,.5) to (6,.5) to (5.5,1) to (6,1.5) to (8,1.5) to (8.5,1) to (8,.5) to (7,.5);

    \end{tikzpicture}

    \vspace{.2in}

    \begin{tikzpicture}[scale=.5,smallnode/.style={outer sep = 0pt}]
        \node[smallnode] (p1) at (0,0) {$1$};
        \node[smallnode] (p2) at (-1,1) {$2$};
        \node[smallnode] (p3) at (0,2) {$3$};
        \node[smallnode] (p4) at (1,2) {$4$};
        \node[smallnode] (p5) at (2,1) {$5$};
        \node[smallnode] (p6) at (1,0) {$6$};

        \draw (p1) to (p2) to (p3) to (p4) to (p5) to (p6) to (p1);
       
        \draw[rounded corners] (1.75,1.25) to (2,1.5) to (2.5,1) to (2,.5) to (1.5,1) to (1.75,1.25);

        \draw[->] (3,1) to node[above] {$\cut_\cj$} (5,1);

        \node[smallnode] (q1) at (6,1) {$1$};
        \node[smallnode] (q2) at (7,1) {$2$};
        \node[smallnode] (q3) at (8,1) {$3$};
        \node[smallnode] (q4) at (9,1) {$4$};
        \node[smallnode] (q5) at (10,1) {$5$};
        \node[smallnode] (q6) at (11,1) {$6$};

        \draw (q1) to (q2) to (q3) to (q4) to (q5) to (q6);

        \draw[rounded corners] (9.75,1.25) to (10,1.5) to (10.5,1) to (10,.5) to (9.5,1) to (9.75,1.25);

    \end{tikzpicture}
    
    \vspace{.2in}
    
    \begin{tikzpicture}[scale=.5]
        \node (p1) at (0,0) {$1$};
        \node (p2) at (-1,1) {$2$};
        \node (p3) at (0,2) {$3$};
        \node (p4) at (1,2) {$4$};
        \node (p5) at (2,1) {$5$};
        \node (p6) at (1,0) {$6$};

        \draw (p1) to (p2) to (p3) to (p4) to (p5) to (p6) to (p1);
        \draw[rounded corners] (-.5,2) to (0,2.5) to (.5,2) to (0,1.5) to (-.5,1) to (0,.5) to (1,.5) to (1.5,1) to (2,1.5) to (2.5,1) to (1,-.5) to (0,-.5) to (-1.5,1) to (-.5,2);

        \draw[->] (3,1) to node[above]{$\cut_\cj$} (5,1);

        \node (q1) at (6,1) {$1$};
        \node (q2) at (7,1) {$2$};
        \node (q3) at (8,1) {$3$};
        \node (q4) at (9,1) {$4$};
        \node (q5) at (10,1) {$5$};
        \node (q6) at (11,1) {$6$};

        \draw (q1) to (q2) to (q3) to (q4) to (q5) to (q6);

        \draw[rounded corners] (10.5,.5) to (10,.5) to (9.5,1) to (10,1.5) to (11,1.5) to (11.5,1) to (11,.5) to (10.5,.5);
    \end{tikzpicture}
        
        \caption{Applying~$\cut_\cj$ to each tube in~$\cj$.}
        \label{subfig: cut of indiv tubes}
    \end{subfigure}
    \begin{subfigure}[b]{0.25\textwidth}
        \centering
        \begin{tikzpicture}[scale=0.7,smallnode/.style={outer sep = 0pt}]

        \node[smallnode] (q1) at (5,1) {$1$};
        \node[smallnode] (q2) at (6,1) {$2$};
        \node[smallnode] (q3) at (7,1) {$3$};
        \node[smallnode] (q4) at (8,1) {$4$};
        \node[smallnode] (q5) at (9,1) {$5$};
        \node[smallnode] (q6) at (10,1) {$6$};

        \draw (q1) to (q2) to (q3) to (q4) to (q5) to (q6);

        \draw[rounded corners] (5.75,1.25) to (6,1.5) to (6.5,1) to (6,.5) to (5.5,1) to (5.75,1.25); 

        \draw[rounded corners] (6.5,.25) to (6,.25) to (5.25,1) to (6,1.75) to (7,1.75) to (7.5,1) to (7,.25) to (6.5,.25); 

        \draw[rounded corners] (6,0) to (5,0) to (4.5,1) to (5,2) to (7,2) to (7.75,1) to (7,0) to (6,0); 

        \draw[rounded corners] (8.75,1.25) to (9,1.5) to (9.5,1) to (9,.5) to (8.5,1) to (8.75,1.25); 

        \draw[rounded corners] (9.5,1.75) to (9,1.75) to (8.25,1) to (9,.25) to (10,.25) to (10.5,1) to (10,1.75) to (9.5,1.75); 
        
    \end{tikzpicture}
    \caption{$\Cut(\cj)\in\MTub(P_n)$}
    \label{subfig: result of cut}
    \end{subfigure}
    \caption{An example application of the~$\Cut$ map to a maximal tubing~$\cj\in\MTub(C_n)$.}
    \label{fig: cut example}
\end{figure}
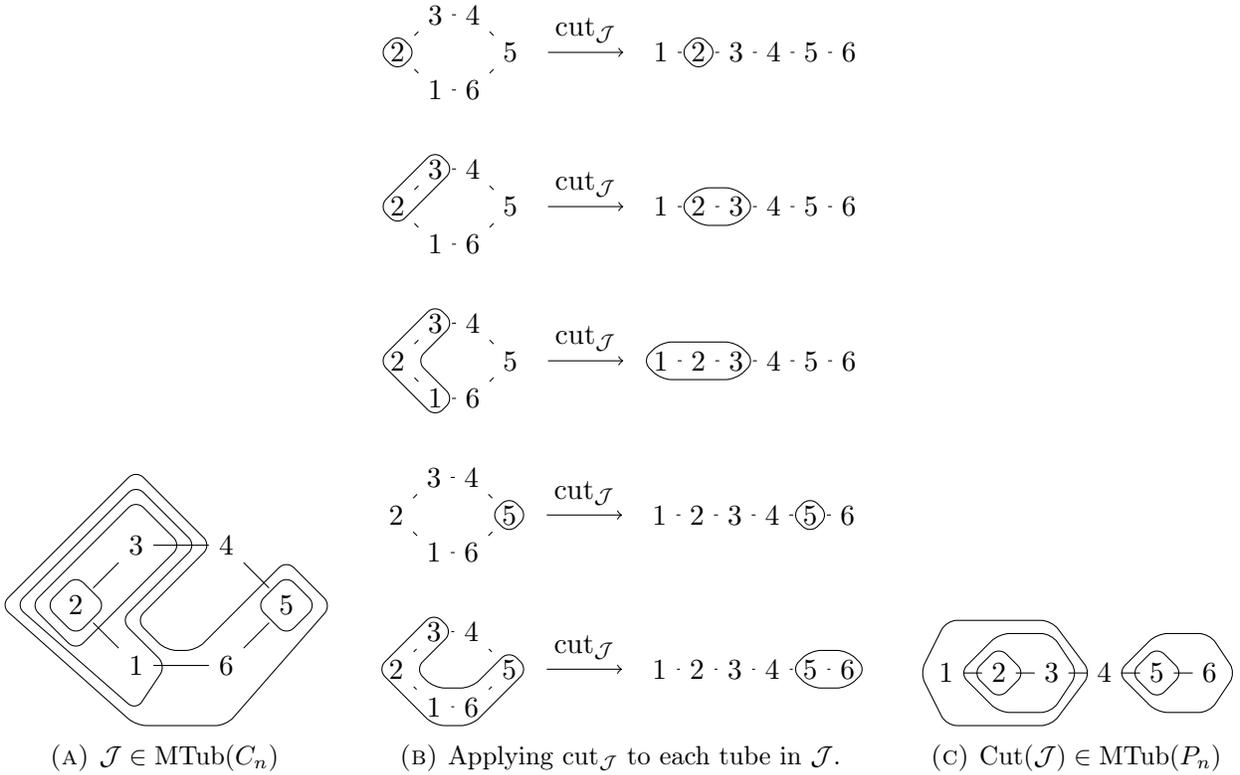

\begin{lemma}\label{lem:cut_tops}
Let~$\cj \in \MTub(C_n)$. Then for all vertices~$x\in C_n$,
    \[\cut_\cj(\downJ(x)) = \Cut(\cj)_\downarrow(x).\] In words, the~$\cut_\cj$ map takes the downset of the element~$x$ in $G_\cj$ to the downset of~$x$ in the~$G$-tree of the image of~$\cj$ under the~$\Cut$ map.
\end{lemma}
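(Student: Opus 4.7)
The plan is to verify the set equality by showing both containments, which reduces to identifying $\cut_\cj(\downJ(x))$ as the smallest tube of $\Cut(\cj)$ containing $x$. By construction, $\Cut(\cj)$ consists of exactly the tubes $\{\cut_\cj(\downJ(y)) : y \in [n]\}$, and Lemma~\ref{lem:cutmap} guarantees these are pairwise distinct and form a maximal tubing of $P_n$. Hence $\Cut(\cj)_\downarrow(x)$, being the principal lower order ideal of $x$ in the $G$-tree of $\Cut(\cj)$, is precisely the intersection of all members of $\Cut(\cj)$ that contain $x$, equivalently the smallest such member (since the tubes in a maximal tubing containing a common element are linearly nested).

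The first step is to verify that $\cut_\cj(\downJ(x))$ actually contains $x$. This is immediate from the three cases in Definition~\ref{def:cutmap}: if $x = m$, then $\cut_\cj(\downJ(x)) = \downJ(x)$; if $x < m$, then $x \in \downJ(x) \cap [1,m-1]$; if $x > m$, then $x \in \downJ(x) \cap [m+1,n]$.

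The second step is to show that for every $y$ with $x \in \cut_\cj(\downJ(y))$, we have $\cut_\cj(\downJ(x)) \subseteq \cut_\cj(\downJ(y))$. Since $\cut_\cj(\downJ(y)) \subseteq \downJ(y)$, membership of $x$ forces $x \leq_\cj y$ and therefore $\downJ(x) \subseteq \downJ(y)$. I would then case-check on the position of $y$ relative to the root $m$: if $y = m$ then $\cut_\cj(\downJ(y)) = [n]$ and the containment is trivial; if $y < m$ then membership of $x$ in $\cut_\cj(\downJ(y)) = \downJ(y) \cap [1,m-1]$ forces $x < m$, so $\cut_\cj(\downJ(x)) = \downJ(x) \cap [1,m-1] \subseteq \downJ(y) \cap [1,m-1] = \cut_\cj(\downJ(y))$; the case $y > m$ is symmetric.

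There is no real obstacle here; the argument is a straightforward unpacking of the definition of $\cut_\cj$ combined with the observation that the ``snipping'' operation commutes with taking downsets within each of the two half-cycles $[1,m-1]$ and $[m+1,n]$. The only mild subtlety worth naming is ensuring that one uses Lemma~\ref{lem:cutmap} (in particular injectivity of $\cut_\cj$ on the tubes of $\cj$) to justify that the $n$ sets $\cut_\cj(\downJ(y))$ really are the principal lower order ideals of the $G$-tree of $\Cut(\cj)$, rather than some redundant or smaller collection.
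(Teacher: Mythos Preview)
Your proposal is correct and takes essentially the same approach as the paper: both arguments verify that $\cut_\cj(\downJ(x))$ is a tube of $\Cut(\cj)$ containing $x$ (giving one containment), and then check by a case split on the position relative to $m$ that it is contained in every other tube of $\Cut(\cj)$ containing $x$ (giving the other). The paper phrases the second half elementwise (showing each $y\in\cut_\cj(\downJ(x))$ lies in every such tube) while you phrase it as a direct set containment, but the underlying verification is identical.
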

\begin{proof}
    Let~$x$ be a vertex of~$C_n$. If~$x = m$ is the least-nested element in~$\cj$, then it is the least-nested element in~$\Cut(\cj)$ as well and the claim holds.\par 
    Assume~$x\ne m$.  Since~$x \in \cut_\cj(\downJ(x))$ and~$\cut_\cj(\downJ(x))$ is a tube in~$\Cut(\cj)$, we have by definition that~$\cut_\cj(\downJ(x)) \supseteq \Cut(\cj)_\downarrow(x)$. On the other hand if~$y \in \cut_\cj(\downJ(x))$ then~$y \in \downJ(x)$ and all tubes~$X \in \cj$ that contain~$x$ also contain~$y$. In addition, if~$y \in \cut_\cj(\downJ(x))$ then~$y < m$ if~$x < m$ and~$y > m$ if~$x>m$. \par
    So if~$X \in \cj$ is any tube in~$\cj$ with~$x \in X$, then~$y \in X$ as well and either
    \begin{itemize}
        \item $x,y \in X \cap [1,m-1]$ or 
        \item $x,y \in X \cap [m+1,n]$. 
    \end{itemize}
    In particular, if~$X \in \cj$ is any tube in~$\cj$ with~$x \in \cut_\cj(X)$, then~$y \in \cut_\cj(X)$. So ~$y \in \Cut(\cj)_\downarrow(x)$, and we have the other direction of containment. 
\end{proof}
Now we will describe the result of the~$\Cut$ map on~$G$-trees for maximal tubings of~$C_n$. If~$G_\cj$ is the~$G$-tree of~$\cj \in \MTub(C_n)$ with maximal element~$m$, the \emph{unzipping}~$T$ of~$G_\cj$ is the poset on~$[n]$ with the following relations:
\begin{itemize}
    \item $a <_{T} b$ if $a <_\cj b$ and $a,b < m$,
    \item $a <_{T} b$ if $a <_\cj b$ and $a,b > m$, and 
    \item $a <_{T} m$ if $a \neq m$.
\end{itemize}
Less formally, the unzipping of~$P$ separates the two induced subposets of~$P$ on~$[1,m-1]$ and~$[m+1,n]$,  keeping~$m$ as the maximal element (see Example~\ref{ex:Cut map fiber}).

\begin{lemma}\label{lem:Cut_Gtrees_forward}
    If~$\cj \in \MTub(C_n)$, then the~$G$-tree of~$\Cut(\cj)$ is the unzipping of $G_\cj$.
\end{lemma}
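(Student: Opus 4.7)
The plan is to compare the two posets by verifying they have the same principal lower order ideals at every vertex. A finite rooted tree poset on $[n]$ is determined by its collection of principal downsets $\{\downarrow(x) : x \in [n]\}$ (since $a \leq b$ iff $a \in \downarrow(b)$), so it suffices to show that for every $x \in [n]$, the downset of $x$ in the $G$-tree of $\Cut(\cj)$ equals the downset of $x$ in the unzipping $T$ of $G_\cj$. By Lemma~\ref{lem:cut_tops}, the downset of $x$ in the $G$-tree of $\Cut(\cj)$ is exactly $\cut_\cj(\downJ(x))$, so the whole task reduces to matching $\cut_\cj(\downJ(x))$ with the principal downset of $x$ in $T$ as defined in the unzipping.

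I would then case-split on $x$ relative to the root $m$ of $G_\cj$, mirroring Definition~\ref{def:cutmap} and the three clauses defining $T$. When $x = m$, we have $\cut_\cj(\downJ(m)) = \downJ(m) = [n]$ since $m$ is the root, and by construction every element of $[n]$ satisfies $a \leq_T m$, so both downsets equal $[n]$. When $x < m$, Definition~\ref{def:cutmap} gives $\cut_\cj(\downJ(x)) = \downJ(x) \cap [1,m-1]$, which is the set of $y$ with $y \leq_\cj x$ and $y < m$; this is precisely the set of $y$ with $y \leq_T x$ by the first clause in the definition of the unzipping (noting $x < m$ is automatic). The case $x > m$ is symmetric using the second clause and $[m+1,n]$ in place of $[1,m-1]$.

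Since the principal downsets agree for every $x$, the $G$-tree of $\Cut(\cj)$ and the unzipping of $G_\cj$ coincide as rooted posets on $[n]$, which is the desired conclusion.

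There is no substantive obstacle here: the statement is essentially a reorganization of definitions once Lemma~\ref{lem:cut_tops} is in hand. The only point requiring mild care is the opening reduction, i.e.\ that a rooted tree poset is reconstructible from its principal downsets; this is standard, and without it one would have to verify covering relations directly, which would be more cumbersome but still straightforward.
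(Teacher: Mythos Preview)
Your proposal is correct and essentially identical to the paper's own proof: both invoke Lemma~\ref{lem:cut_tops} to identify the downset of $x$ in the $G$-tree of $\Cut(\cj)$ with $\cut_\cj(\downJ(x))$, then case-split on $x$ versus the root $m$ to match this with the principal downset of $x$ in the unzipping. The only difference is that you spell out explicitly why matching principal downsets suffices, which the paper leaves implicit.
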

\begin{proof}
    Let~$T$ be the unzipping of~$G_\cj$. We will prove that the lower order ideal of an element~$x$ in~$T$ is precisely~$\Cut(\cj)_\downarrow(x)$. This will heavily leverage Lemma \ref{lem:cut_tops}.\par 
    If~$x=m$ is the maximal element in $G_\cj$, then the lower order ideal of~$m$ in the unzipping of the~$G$-tree is~$[n]$. It is also the case that~$\Cut(\cj)_\downarrow(m) =\cut_\cj(\downJ(m)) =   \downJ(m) = [n]$ since removing the edge~$(1,n)$ does not disconnect~$C_n$. \par 
    If~$x < m$ then~$\Cut(\cj)_\downarrow(x) = \downJ(x) \cap [1,m-1]$. Since~$x < m$, an element~$a \in \Cut(\cj)_\downarrow(x)$ if and only if~$a < m$ and~$a \in \downJ(x)$ (i.e.~$a \leq_\cj x$). This is precisely the condition for~$a \leq_T x$. \par 
    If~$x > m$, then the same logic applies and~$a \in \Cut(\cj)_\downarrow(x) = \downJ(x) \cap [m+1,n]$ if and only if~$a > m$ and~$a <_\cj x$, which is the condition for~$a <_T x$.\par 
    So for any~$x \in [n]$, the lower order ideal~$\Cut(\cj)_\downarrow(x)$ is equal to the lower order ideal~$T_\downarrow(x)$. So~$T$ is the~$G$-tree of~$\Cut(\cj)$.
\end{proof}

\begin{example}\label{ex:Cut map fiber}
    Figure~\ref{fig: cut and sew map example} shows a maximal tubing 
    \[\cj = \left\{ \{2\},\, \{4\},\, \{6\},\, \{8\},\, \{8,9  \} ,\, \{1,2,8,9  \} ,\, \{1,2,3,4,8,9  \} ,\, \{1,2,3,4,6,7,8,9  \}  ,\, [9]\right\}\in\MTub(C_9)\, ,\]
    and its image
    \[
    \Cut(\cj)=\left\{\{2\},\{4\},\{6\},\{8\},\{8,9\},\{1,2\},\{1,2,3,4\},\{6,7,8,9\},[9] \right\}\in\MTub(P_n)
    \]
    under the~$\Cut$ map.
    The~$G$-tree of~$\Cut(\cj)$ is the unzipping of~$G_\cj$. The linear ordering~$9<1<3<7$ becomes two separate linear orders (the left and right zippers of~$\Cut(\cj)$):~$1<3$ and~$7<9$.

\end{example}

 \begin{figure}[htbp!]
        \centering
        
    \begin{tikzpicture}[scale=0.9,zipnode/.style={fill=black!30}]
    \begin{scope}
        \draw (0,0) node[draw,circle] (v5) {$5$};
    \draw (0,-1) node[draw,circle,zipnode] (v7) {$7$};
    \draw (1,-2) node[draw,circle,zipnode] (v3) {$3$};
    \draw (0,-3) node[draw,circle,zipnode] (v1) {$1$};
    \draw (-1,-4) node[draw,circle,zipnode] (v9) {$9$};
    \draw (v9)--(v1)--(v3)--(v7)--(v5);

    \draw (-1,-2) node[draw,circle] (v6) {$6$};
    \draw (2,-3) node[draw,circle] (v4) {$4$};
    \draw (1,-4) node[draw,circle] (v2) {$2$};
    \draw (-2,-5) node[draw,circle] (v8) {$8$};
    \draw (v1)--(v2);
    \draw (v4)--(v3);
    \draw (v6)--(v7);
    \draw (v8)--(v9);

    \draw (0,-6) node (name) {The~$G$-tree for~$\cj$.};

    \end{scope}
    \begin{scope}[xshift=1.5in]
        \draw[-stealth, very thick] (0,0) to node[below]{$\Cut$} (1,0);
        \draw[-stealth,very thick] (1,-2) to node[below]{$\Sew_{\Cut(\cj)}(9137)$} (0,-2);
    \end{scope}
    \begin{scope}[xshift=3.5in]
         \draw (0,0) node[draw,circle] (m) {$5$};
        
        \draw (-2,-2) node[draw,circle,zipnode] (al) {$3$};
        \draw (-4,-4) node[draw,circle,zipnode] (a2) {$1$};

        \draw (2,-2) node[draw,circle,zipnode] (br) {$7$};
        \draw (4,-4) node[draw,circle,zipnode] (b2) {$9$};

        \draw (a2)--(al)--(m)--(br)--(b2);

        \draw (-1,-3) node[draw,circle] (Tl)  {$4$};
        \draw (-3,-5) node[draw,circle] (T2)  {$2$};
        \draw (1,-3) node[draw,circle] (Jr)  {$6$};
        \draw (3,-5) node[draw,circle] (J2)  {$8$};

        \draw (a2)--(T2);
        \draw (al)--(Tl);
        \draw (b2)--(J2);
        \draw (br)--(Jr);
    \draw (0,-6) node (name) {The~$G$-tree for~$\Cut(\cj)$ is the unzipping of~$\cj$.};
        \end{scope}
    \end{tikzpicture}
 \caption{A~$G$-tree for~$\cj\in\MTub(C_n)$ and a~$G$-tree for~$\Cut(\cj)\in\MTub(P_n)$.  Note that~$\Sew_{\Cut(\cj)}(9137)=\cj$ (see Example~\ref{ex:Cut_map_inverse}.)}
        \label{fig: cut and sew map example}
    \end{figure}

Now we will define the map~$\Sew_\cx$. 
\begin{definition}\label{def:Psi}
    Let~$\cx=(Y_1,\dots,Y_n) \in \MTub(P_n)$. Recalling Definition \ref{def:zipper}, identify the left and right zippers of~$\cx$. Let~$W_\cx$ be the set of all in-order shufflings of the left and right zipper of~$\cx$. An element~$w\in W_\cx$ is a permutation that maintains the order relations of the left and right zippers. 
Define 
\begin{align*}
    \Sew_\cx \colon W_\cx &\to \MTub(C_n)\\
    w&\mapsto \{\sew_\cx(Y_1,w),...,\sew_\cx(Y_n,w)\}
\end{align*} 
where
\[
\sew_\cx(\downX(y),w) \coloneqq \begin{cases}
    \downX(y) &\text{if } y \text{ is not in the zipper, and}\\[5pt]
    \ds \bigcup_{i=1}^{w^{-1}(y)}\downX(w(i)) &\text{if } y \text{ is in the zipper.}
\end{cases}
\]
\end{definition}
\begin{remark}
    Let~$\cx\in\MTub(P_n)$. To visualize~$\Sew_\cx$, we imagine stitching the left and right zippers together in the order given by~$w$, i.e. taking a stitch on the left, then on the right, on the right again, etc. as though sewing a seam. Anything hanging off of the zipper is unaffected, in the image these elements hang off a single new seam (see Figure~\ref{fig: cut and sew map example}).
\end{remark}
\begin{lemma}
    Let~$\cx\in\MTub(P_n)$ and~$w\in W_\cx$. Then~$\Sew_\cx(w)\in\MTub(C_n)$.
\end{lemma}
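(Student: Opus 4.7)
The plan is to verify the three properties defining a maximal tubing of $C_n$: every element of $\Sew_\cx(w)$ is a tube of $C_n$, any two tubes are compatible, and there are exactly $n$ distinct tubes in the collection. Throughout, I exploit the zipper structure of $\cx$ from Definition~\ref{def:zipper}: downsets along each zipper are strictly nested, $\downX(a_i)$ is an interval $[1, k_i]$ with $k_i < m$, and symmetrically $\downX(b_j) = [k'_j, n]$ with $k'_j > m$.

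First, I check that each $\sew_\cx(\downX(y), w)$ is a tube of $C_n$. If $y$ is not on a zipper (including $y = m$), then $\sew_\cx(\downX(y), w) = \downX(y)$ is a tube of $P_n$ that does not contain both $1$ and $n$ unless it equals $[n]$, so it remains a tube of $C_n$. If $y = a_i$, the nesting of zipper downsets collapses the defining union to $\downX(a_i) \cup \downX(b_j)$, where $b_j$ is the largest-indexed right-zipper element preceding $a_i$ in $w$ (omitted if none exists); this yields the interval pair $[1, k_i] \cup [k'_j, n]$, which is connected in $C_n$ via the edge $\{n, 1\}$. The case $y = b_j$ is symmetric.

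Next, for pairwise compatibility, set $Y_s = \sew_\cx(\downX(x_s), w)$ for $s = 1, 2$. The zipper/zipper case is immediate: if $w^{-1}(x_1) \le w^{-1}(x_2)$, every downset appearing in the union defining $Y_1$ also appears in $Y_2$, so $Y_1 \subseteq Y_2$. The non-zipper/non-zipper case transfers $P_n$-compatibility to $C_n$-compatibility since neither tube touches both endpoints of the added edge $\{n,1\}$. The mixed case is the delicate one: using the tree-theoretic fact that any two principal downsets of a tree are either nested or disjoint, $\downX(x_2)$ either sits inside $\downX(a_i)$ or $\downX(b_j)$ (yielding $Y_2 \subseteq Y_1$), contains one of them (which by the zipper structure forces $x_2 = m$, so $Y_2 = [n] \supseteq Y_1$), or is disjoint from both (forcing it into the integer gap $(k_i, k'_j)$, which is disconnected from $Y_1$ in $C_n$).

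Finally, $|\Sew_\cx(w)| = n$ because each tube contains its indexing vertex, the zipper tubes are strictly nested along the shuffling order $w$ (each increment of $w^{-1}$ adds a new zipper downset), the non-zipper tubes are the distinct downsets $\downX(y)$ from $\cx$, and a zipper tube cannot equal a non-zipper tube (zipper tubes contain $1$ or $n$ except when they equal $[n] = \downX(m)$, which is non-zipper). I expect the main obstacle to be the mixed compatibility subcase, since it requires combining the interval decomposition of zipper tubes with the tree-disjointness of principal downsets in $\cx$; once both structural observations are in place, the case analysis reduces to routine interval arithmetic.
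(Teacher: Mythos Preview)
Your proposal is correct and follows essentially the same three-step structure as the paper's proof: verify tubes, verify pairwise compatibility by splitting into the zipper/zipper, non-zipper/non-zipper, and mixed cases, and then count to get maximality. The only organizational difference is in the mixed case: you work with the explicit interval form $Y_1 = [1,k_i]\cup[k'_j,n]$ and tree-disjointness of principal downsets, whereas the paper argues via the minimal index $k$ with $y_2 \in \downX(w(k))$ and compares it to $w^{-1}(y_1)$; these are equivalent packagings of the same observation.
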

\begin{proof}
    We will argue that~$\Sew_\cx(w)$ is a maximal tubing in much the same fashion as Lemma \ref{lem:cutmap}. \par 
    Let~$y \in [n]$. 
    Since~$\cx$ is a tubing of~$P_n$,~$\downX(y)$ is connected in~$P_n$ and thus~$\downX(y)$ is connected in~$C_n$.   All sets~$\downX(w(i))$ for~$w(i)$ in the zipper contain~$1$ or~$n$. In particular, every individual set in the union~$\ds \bigcup_{i=1}^{w^{-1}(x)}\downX(w(i))$ contains~$1$ or~$n$. Since~$(1,n)$ is an edge in~$C_n$ and each individual~$\downX(w(i))$ is connected, their union is also connected. This shows that all elements of~$\Sew_\cx(w)$ are tubes.\par 
    Let~$X_1 = \Sew_\cx(\downX(y_1))$  and~$X_2= \Sew_\cx(\downX(y_2))$ be tubes in~$\Sew_\cx(w)$ where~$y_1 \neq y_2$. We will show that~$X_1$ and~$X_2$ are compatible as tubes of~$C_n$ (i.e. the image is a tubing) and that~$X_1 \neq X_2$ (i.e.~$\sew_\cx$ is injective) by cases. Note that~$y_1=m$ if and only if~$\sew_\cx(\downX(y_1),w) = [n]$, so then~$X_2 \subsetneq X_1$, and vice versa, so we let~$y_1,y_2 \neq m$.\par 
    (Case 1) If neither~$y_1$ nor~$y_2$ is in the zipper then~$\sew_\cx(Y_1,w) = Y_1 \neq Y_2 = \sew_\cx(Y_2,w)$. If one is contained in the other that will not change. Similarly, if~$Y_1 ,Y_2 \neq [n]$ and~$y_1,y_2$ are not in the zipper, both~$Y_1$ and~$Y_2$ contain neither~$1$ nor~$n$. In particular, if~$Y_1$ and~$Y_2$ are disconnected in~$P_n$ then they are disconnected in~$C_n$.\par 
    (Case 2) Assume~$y_1 = w(j)$ is in the zipper (so~$X_1 = \bigcup_{i=1}^{w^{-1}(y_1)}\downX(w(i))$ but~$y_2$ is not (so~$X_2 = \downX(y_2)$). Let~$k$ be the minimal element of~$[r+l]$ so that~$y_2 \in \downX(w(k))$ (i.e. the smallest tube in~$\cx$ that contains both~$y_2$ and either~$1$ or~$n$). If~$k = j$ then~$\downX(y_2) \subset \downX(y_1)$, and so~$X_1 \subset \downX(y_1) \subseteq X_2$. If~$k < j$ then 
    \[
    X_2 = \downX(y_2) \subset \downX(w(k)) \subset \bigcup_{i=1}^{w^{-1}(y_1)}\downX(w(i)) = X_1.
    \]
    In particular,~$X_2 \subset X_1$. \par 
    If~$k > j$ then~$X_2 = \downX(y_2)$ is disconnected in~$P_n$ from all~$\downX(w(i))$ for~$i \leq j$, and so is disconnected from their union. So~$X_1$ and~$X_2$ are disconnected in~$P_n$. Since~$1,n \notin X_2$,~$X_1$ and~$X_2$ are disconnected in~$C_n$ as well. \par 
    (Case 3) If both~$y_1$ and~$y_2$ are in the zipper, then~$\sew_\cx(Y_1,w) \subset \sew_\cx(Y_2,w)$ if and only if~$w^{-1}(y_1) < w^{-1}(y_2)$ and vice versa. Since one of $X_1,X_2$ is always contained within the other, these tubes of~$C_n$ are compatible.\par 
    Since in any case~$X_1$ and~$X_2$ are compatible in~$C_n$ and~$X_1 \neq X_2$, the set~$\Sew_\cx(w)$ is a maximal tubing of~$C_n$.
\end{proof}

\begin{lemma}\label{lem:cutinv_downsets}
    Let~$\cx \in \MTub(P_n)$,~$w \in W_\cx$ and~$y \in [n]$. Then 
    \[\sew_\cx(\downX(y),w) = \Sew_\cx(w)_\downarrow(y).\]
    In words, the downset of~$y$ in~$\Sew_\cx(w)$ is the same as the image under~$\sew_\cx$ of the downset of~$\downX(y)$ and~$w$.  Loosely speaking, the~$\Sew_\cx$ map takes downsets to downsets.
\end{lemma}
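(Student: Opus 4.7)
The plan is to mirror the structure of the proof of Lemma~\ref{lem:cut_tops}, proving both containments $\Sew_\cx(w)_\downarrow(y)\subseteq \sew_\cx(\downX(y),w)$ and $\sew_\cx(\downX(y),w)\subseteq \Sew_\cx(w)_\downarrow(y)$ separately. The first is essentially formal: in both cases of Definition~\ref{def:Psi}, the element $y$ itself lies in $\sew_\cx(\downX(y),w)$, and this set is by construction a tube of the maximal tubing $\Sew_\cx(w)$. Since $\Sew_\cx(w)_\downarrow(y)$ is the smallest tube of $\Sew_\cx(w)$ containing $y$, this gives $\Sew_\cx(w)_\downarrow(y)\subseteq \sew_\cx(\downX(y),w)$.

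For the reverse containment I would fix $z\in \sew_\cx(\downX(y),w)$ and an arbitrary tube $\sew_\cx(\downX(y'),w)\in \Sew_\cx(w)$ containing $y$, and show $z\in \sew_\cx(\downX(y'),w)$; then varying $y'$ forces $z$ into the intersection $\Sew_\cx(w)_\downarrow(y)$. I would split on whether each of $y$ and $y'$ lies in the left or right zipper of $\cx$. When neither is in the zipper, $\sew_\cx$ acts as the identity on both downsets, so the claim reduces to transitivity of $\leq_\cx$. When $y$ is in the zipper but $y'$ is not, the key structural fact is that any element of $G_\cx$ strictly above a zipper element is either another element of the same zipper or the root $m$; since $y'\notin$ zipper and $y\leq_\cx y'$, we must have $y'=m$, so $\sew_\cx(\downX(y'),w)=[n]\ni z$. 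When $y\notin$ zipper and $y'$ is in the zipper, transitivity of $\leq_\cx$ places $z$ in some $\downX(w(i^*))$ appearing in the union defining $\sew_\cx(\downX(y'),w)$.

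The main obstacle is the case where both $y$ and $y'$ lie in the zipper, where I need to reconcile the tree order on $G_\cx$ with the linear order on the zipper elements imposed by the shuffle $w$. The plan is to use the hypothesis $y\in \sew_\cx(\downX(y'),w)$ to locate an index $i^*\le w^{-1}(y')$ with $y\in \downX(w(i^*))$. By the structural fact above, $w(i^*)$ must lie in the same zipper as $y$ and be weakly above it there; since an in-order shuffle preserves the relative order within each zipper, this forces $w^{-1}(y)\le i^*$. Meanwhile $z\in \sew_\cx(\downX(y),w)=\bigcup_{i=1}^{w^{-1}(y)}\downX(w(i))$ gives some $j\le w^{-1}(y)$ with $z\in \downX(w(j))$, and chaining $j\le w^{-1}(y)\le i^*\le w^{-1}(y')$ places $z$ in $\bigcup_{i=1}^{w^{-1}(y')}\downX(w(i))=\sew_\cx(\downX(y'),w)$, as required.
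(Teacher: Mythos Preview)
Your proof is correct, but it takes a genuinely different route from the paper's argument. The paper does not do a case analysis on the position of $y$ and $y'$ relative to the zipper. Instead, it defines
\[
\phi(y)\;=\;\topT_{\Sew_\cx(w)}\bigl(\sew_\cx(\downX(y),w)\bigr),
\]
observes that $\phi$ is a bijection on $[n]$ (because $\sew_\cx$ is injective on tubes, as established in the previous lemma), and uses the easy containment $y\in\sew_\cx(\downX(y),w)$ to deduce $y\le_{\Sew_\cx(w)}\phi(y)$. The proof then finishes with a short pigeonhole argument: for each fixed $y$, the set $K=\{k\mid y\in\Sew_\cx(w)_\downarrow(k)\}$ is a chain, $\phi$ restricts to an injective weakly-increasing self-map of $K$, hence is the identity on $K$; since $y$ is arbitrary, $\phi$ is the identity everywhere.

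Your approach trades that abstract bijection trick for explicit structural reasoning about the zipper: you use that the only elements of $G_\cx$ above a zipper element are further elements of the same zipper or the root $m$, and that an in-order shuffle respects each zipper's internal order. This buys concreteness and independence from the injectivity lemma, at the cost of four cases. The paper's version is shorter and avoids any direct use of the zipper combinatorics, but yours makes the mechanism visible.
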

\begin{proof}
    Define the map 
    \begin{align*}   
    \phi \colon [n] &\to [n] \\
    y &\mapsto \topT_{\Sew_\cx(w)}(\sew_\cx(\downX(y),w)),
        \end{align*}
    so that~$\phi(y)$ is the element in~$C_n$ whose downset in~$\Sew_\cx(w)$ is~$\sew_\cx(\downX(y),w)$. We seek to prove that~$\phi$ is the identity map.\par 
    Since~$\Sew_\cx(w)$ is a maximal tubing, the map~$\phi$ must be surjective, which means it is also injective. Certainly~${y \in \sew_\cx(\downX(y),w)}$, so by definition
    \[\Sew_\cx(w)_\downarrow(y) \subseteq \sew_\cx(\downX(y),w) = \Sew_\cx(w)_\downarrow(\phi(y))\, ,\]
    and so~$y \leq_{\Sew_\cx(w)} \phi(y)$.  \par 
    Let~$y\in[n]$. Let~$K =\{k \in [n] \mid y \in \Sew_\cx(w)_\downarrow(k)\}$. Since each of the tubes~$\Sew_\cx(w)_\downarrow(k)$ for~$k \in K$ intersect nontrivially,~$K$ is totally ordered by~$\leq_{\Sew_\cx(w)}$. Since~$k\leq_{\Sew_\cx(w)}\phi(k)$, for all~$k \in K$,~$\phi$ is a map from~$K$ to itself. Since~$\phi$ is injective and~$K$ is totally ordered, it must be that~$\phi$ is the identity on~$K$. Since~$y$ was arbitrary, we have that~$\phi$ is the identity and we have the claim.
\end{proof}

We give a more precise description of the~$G$-tree of~$\Sew_\cx(w)$ in Proposition \ref{prop:gtree_zipper}.
\begin{proposition}\label{prop:gtree_zipper}
    Let~$\cx\in\MTub(P_n)$, with~$G$-tree~$G_\cx$. Let~$w\in W_\cx$. The~$G$-tree of~$\Sew_\cx(w)$ is the transitive closure of~$G_\cx \cup (w(1) < w(2) < \cdots < w(r+l))$.  In other words, the~$G$-tree of~$\Sew_\cx(w)$ has all the relations of $G_\cx$, and the left and right zippers are ``zipped up'' in the order given by~$w$, introducing additional relations.
\end{proposition}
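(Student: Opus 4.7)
The plan is to verify that the claimed poset, call it $Q$, and the $G$-tree of $\Sew_\cx(w)$ have the same principal downsets. By Lemma~\ref{lem:cutinv_downsets}, the downset of $y\in[n]$ in the $G$-tree of $\Sew_\cx(w)$ is $\sew_\cx(\downX(y),w)$, so it suffices to show $Q_\downarrow(y)=\sew_\cx(\downX(y),w)$ for every $y$. I would then split the argument into three cases according to the location of $y$: whether $y=m$, $y$ is not on either zipper (and $y\ne m$), or $y$ is on the zipper.

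The case $y=m$ is immediate: both sides equal $[n]$, since $m$ is the maximum of $G_\cx$ and hence remains the maximum in $Q$. For the non-zipper case $y\ne m$, note that $\sew_\cx(\downX(y),w)=\downX(y)$ by definition, so it suffices to show $Q_\downarrow(y)=\downX(y)$. Forward containment is clear because $Q$ extends $G_\cx$. The reverse uses the BST structure of $G_\cx$: a non-zipper element $y\ne m$ lies in a single subtree hanging off the spine, and the entire $G_\cx$-subtree below $y$ is contained in that subtree, which is disjoint from both zippers and from $\{m\}$. Consequently, any $Q$-chain $z_0<z_1<\cdots<z_k=y$ must use a $G_\cx$-edge at its top step (since the newly added edges only connect zipper elements); an induction on $k$ then shows every $z_i$ lies in $\downX(y)$.

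The main case, and the heart of the argument, is when $y=w(i)$ for some $i\in[r+l]$. Here I would prove by strong induction on $i$ that $Q_\downarrow(w(i))=S_i\coloneqq\bigcup_{j\le i}\downX(w(j))$. Containment $S_i\subseteq Q_\downarrow(w(i))$ is immediate from the definition of $Q$, since $w(j)\le_Q w(i)$ for $j\le i$ via the new chain and each $\downX(w(j))$ lies below $w(j)$ already in $G_\cx$. For the reverse, the direct predecessors of $w(i)$ among the generating relations of $Q$ are the elements of $\downX(w(i))\setminus\{w(i)\}$ together with $w(i-1)$ when $i>1$. For any non-zipper $z\in\downX(w(i))$, $Q_\downarrow(z)=\downX(z)$ by the previous case; for a zipper element $z=w(j)\in\downX(w(i))$ with $j<i$, $Q_\downarrow(w(j))=S_j$ by the inductive hypothesis; and $Q_\downarrow(w(i-1))=S_{i-1}$ by the same hypothesis. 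Taking the union and using $S_j\subseteq S_{i-1}$ for $j\le i-1$ gives $Q_\downarrow(w(i))\subseteq\downX(w(i))\cup S_{i-1}=S_i$, completing the induction.

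The main obstacle I anticipate is verifying that forming the transitive closure does not enlarge the downset of a non-zipper element. This rests on the structural observation that no zipper element and not $m$ lies in $\downX(y)$ when $y\ne m$ is non-zipper, so the new chain relations on the zippers cannot reach $y$ from below via transitivity. Once this is established, the zipper case is a clean induction that essentially re-derives the definition of $\sew_\cx$ on zipper downsets.
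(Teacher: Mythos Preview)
Your proof is correct. Both your argument and the paper's hinge on Lemma~\ref{lem:cutinv_downsets} and the same structural observation (that a non-zipper element $y\ne m$ has $\downX(y)$ contained in a single off-zipper subtree, hence disjoint from the zippers and from $m$), but the two proofs are organized differently. The paper argues in two halves: first it shows every generating relation of $T$ (those from $G_\cx$ and those from the chain $w(1)<\cdots<w(r+l)$) is already a relation in $G_{\Sew_\cx(w)}$, and then it shows that any pair incomparable in $T$ remains incomparable in $G_{\Sew_\cx(w)}$ via a case split on zipper membership. You instead compute each principal downset $Q_\downarrow(y)$ explicitly and match it against $\sew_\cx(\downX(y),w)$, handling the zipper case by a clean strong induction on the position in $w$. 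Your route is slightly more direct and makes the inductive structure of the zipper downsets $S_i=\bigcup_{j\le i}\downX(w(j))$ explicit, at the cost of having to verify the non-zipper case carefully; the paper's route trades that induction for an incomparability argument that mirrors the same case analysis.
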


\begin{proof}
    Let~$T$ be the transitive closure of~$G_\cx \cup (w(1) < w(2) < \cdots < w(r+l))$, and for notation let $\cj = \Sew_\cx(w)$. Since the left and right zippers of~$\cx$ are disjoint chains and~$w$ respects their orders, the transitive closure of these relations is in fact a partially ordered set. Specifically, we will prove that the set of order relations for~$T$ is contained within the order relations in~$G_\cj$, and that incomparable elements in~$T$ are also incomparable in~$G_\cj$. \par 
    First we will show that the order relations for~$G_\cx$ are contained within those of the~$G_\cj$. If~$i <_\cx j$ (so~$i \in \downX(j)$), then~$i \in \sew_\cx(\downX(j),w) = \cj_\downarrow(j)$, and thus~$i <_{\cj} j$. So the order relations of~$G_\cx$ are contained within the order relations of the~$G_\cj$. By Lemma \ref{lem:cutinv_downsets} and the definition of~$\sew_\cx$, we have that 
    \[
    \cj_\downarrow(w(j)) =  \bigcup_{i=1}^{j}\downX
    (w(i)) \subset  \bigcup_{i=1}^{k}\downX(w(i)) = \cj_\downarrow(w(k))
    \]
    whenever~$j < k$, so the order relations~$w(1) < \cdots  < w(r+l)$ are also contained within the order relations of~$G_\cj$. Thus, the order relations of the transitive closure~$T$ are also contained within the order relations of the~$G_\cj$. \par 
    Now we will prove that the set of incomparable elements in~$T$ is the same as the set of incomparable elements in~$G_\cj$.\par 
    Let~$x \not\sim_{T} y$. In particular,~$x$ and~$y$ cannot both be in the zipper. If neither~$x$ nor~$y$ are in the zipper of~$\cx$, then the intersection 
    \[
    \left(\cj_\downarrow(x) = \sew_\cx(\downX(x),w) = \downX(x)\right) \cap \left(\downX(y) = \sew_\cx(\downX(y),w) = \cj_\downarrow(y)\right)
    \]
    is empty, and so~$x \not\sim_{\cj} y$. \par 
    If, without loss of generality,~$x$ is in the zipper of~$\cx$ and~$y$ is not, then we have two cases. We will prove that~$x \not<_{\cj} y$ and~$y \not<_{\cj} x$, both by contradiction.\par 
    If~$x <_{\cj}y$ and~$y$ is not in the zipper, then~$y = m$. Then~$\downX(y) = [n]$ contains~$\downX(x)$ and so~$x <_{T} y$, a contradiction. \par 
    On the other hand, if~$y <_{\cj} x$ then~$y \in {\cj}_\downarrow(x) = \ds \bigcup_{i=1}^{w^{-1}(x)}\downX(w(i))$. In particular, for some~${i < w^{-1}(x)}$, we have~$y \in \downX(w(i))$. Namely,~$y <_\cx w(i)$ and~$w(i)$ appears (weakly) before~$x$ in~$w$, so~$y <_{T} w(i) \leq_{T} x$, which is also a contradiction. 
\end{proof}

We can see Proposition~\ref{prop:gtree_zipper} in action in Figure~\ref{fig: cut and sew map example}.  Note that the $G$-tree of $\cj$ has all the relations of $\Cut(\cj)$, and the left and right zippers ($(1,3)$ and $(9,7)$ resp.) are zipped up in the order given by $w=9137$.

\begin{corollary}
    If~$\cx \in \MTub(P_n)$ then~$\Sew_\cx$ is injective.
\end{corollary}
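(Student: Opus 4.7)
The plan is to recover $w$ from $\Sew_\cx(w)$ by examining the $G$-tree, invoking Proposition~\ref{prop:gtree_zipper} as the main tool. Let $Z \subseteq [n]$ denote the union of the elements of the left and right zippers of $\cx$; this is a fixed subset of cardinality $r+l$ that is determined by $\cx$ alone, and every $w \in W_\cx$ is a linear arrangement of the elements of $Z$ compatible with the two zipper chains.

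The key observation is that by Proposition~\ref{prop:gtree_zipper}, the $G$-tree of $\Sew_\cx(w)$ contains the chain of relations
\[
w(1) <_{\Sew_\cx(w)} w(2) <_{\Sew_\cx(w)} \cdots <_{\Sew_\cx(w)} w(r+l).
\]
Since this chain exhausts $Z$, the restriction of $\leq_{\Sew_\cx(w)}$ to $Z$ is a total order, and moreover this total order is precisely the sequence $w$. Thus $w$ can be read off from the $G$-tree of $\Sew_\cx(w)$ by listing the elements of $Z$ in their $\leq_{\Sew_\cx(w)}$-order.

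To finish, suppose $w_1, w_2 \in W_\cx$ satisfy $\Sew_\cx(w_1) = \Sew_\cx(w_2)$. Since a maximal tubing and its $G$-tree determine each other (Definition~\ref{def:Gtree}), the $G$-trees $G_{\Sew_\cx(w_1)}$ and $G_{\Sew_\cx(w_2)}$ coincide, and hence so do their restrictions to $Z$. By the preceding observation this forces $w_1 = w_2$, and injectivity follows. The argument is short because the substantive work has already been absorbed into Proposition~\ref{prop:gtree_zipper}; the only point to verify carefully is that $Z$ depends only on $\cx$, not on $w$, so that the recovered total order on $Z$ is unambiguously comparable between $w_1$ and $w_2$.
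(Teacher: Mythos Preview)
Your proof is correct and follows essentially the same approach as the paper: both arguments invoke Proposition~\ref{prop:gtree_zipper} to observe that the $G$-tree of $\Sew_\cx(w)$ restricts to the total order $w$ on the zipper elements, so equal images force equal shuffles. Your version is simply a more explicit unpacking of the paper's two-sentence proof.
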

\begin{proof}
    Two elements~$w,w' \in W_\cx$ are equal if and only if they represent the same total order on~$w([l+r])$. By Proposition \ref{prop:gtree_zipper}, the~$G$-tree of~$\Sew_\cx(w)$ is equal to the~$G$-tree of~$\Sew_\cx(w')$ if and only if that total order is identical.
\end{proof}
\begin{proposition}\label{prop:cutinv}
    Let~$\cx \in \MTub(P_n)$. Let~$\Cut^{-1}(\cx)=\{\cj\in\MTub(C_n) \ | \ \Cut(\cj)=\cx\}$. Then \[\Cut^{-1}(\cx) = \{\Sew_\cx(w) \mid w \in W_\cx\}.\]
\end{proposition}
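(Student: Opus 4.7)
The plan is to prove the claimed equality by two-way containment. For the direction $\{\Sew_\cx(w) : w\in W_\cx\}\subseteq\Cut^{-1}(\cx)$, I would combine Proposition~\ref{prop:gtree_zipper} and Lemma~\ref{lem:Cut_Gtrees_forward}: Proposition~\ref{prop:gtree_zipper} describes $G_{\Sew_\cx(w)}$ as the transitive closure of $G_\cx$ together with the chain $w(1) < w(2) < \cdots < w(l+r)$ on the zipper elements, and Lemma~\ref{lem:Cut_Gtrees_forward} says that $G_{\Cut(\Sew_\cx(w))}$ is the unzipping of this tree. Because $w\in W_\cx$ respects the internal orders of the left and right zippers of $\cx$, the unzipping discards precisely the cross-zipper relations introduced by the interleaving and recovers $G_\cx$, so $\Cut(\Sew_\cx(w)) = \cx$.

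For the reverse containment, given $\cj\in\Cut^{-1}(\cx)$, I would recover $w$ from $\cj$ by reading off the zipper elements $Z = \{a_1,\dots,a_l\}\cup\{b_1,\dots,b_r\}$ of $\cx$ in their $\cj$-order. As a preliminary, since $\cut_\cj(\downJ(a_i)) = \downX(a_i)$ by Lemma~\ref{lem:cut_tops}, each $\downJ(a_i)$ contains $1$, so the tubes $\downJ(a_1),\dots,\downJ(a_l)$ are pairwise nested in the same order as the $\downX(a_i)$; hence $a_1 <_\cj \cdots <_\cj a_l$, and symmetrically $b_1 <_\cj \cdots <_\cj b_r$. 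The crucial step is then to show that $Z$ forms a single chain in $G_\cj$. For this I would use that, below $m$, $G_\cj$ is a binary search tree with respect to $<_m$, and in that cyclic order $1$ and $n$ are adjacent. The standard BST fact that two adjacent keys are always in ancestor-descendant relation then forces either $1\leq_\cj n$ or $n\leq_\cj 1$. In either case, every element of $Z$ is an ancestor of whichever of $1,n$ lies at the bottom, and ancestors of a node in a tree form a chain, so $Z$ is totally ordered in $\cj$. Reading off $Z$ in this order gives a permutation $w$ that respects both zipper orders, hence $w\in W_\cx$.

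To close the argument, I would verify $\Sew_\cx(w) = \cj$ by comparing down-sets pointwise: it suffices to show $\downJ(x) = \Sew_\cx(w)_\downarrow(x)$ for every $x\in[n]$. For non-zipper $x$, a short cyclic-arc argument shows $\downJ(x)$ cannot cross the edge $(n,1)$, so $\downJ(x) = \downX(x)$, matching the first case of Definition~\ref{def:Psi}. For a zipper element $a_i$ at position $k$ in $w$, one checks that its $\cj$-descendants in $[m+1,n]$ are exactly $\downX(b_{j^\ast})$, where $b_{j^\ast}$ is the largest $b_j$ appearing before $a_i$ in $w$; hence $\downJ(a_i) = \downX(a_i)\cup\downX(b_{j^\ast}) = \bigcup_{s=1}^{k}\downX(w(s))$, which matches the second case of Definition~\ref{def:Psi}, and the case of $b_j$ is symmetric. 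The main obstacle I expect is the chain claim for $Z$: reconciling the cyclic BST structure with the two zipper chains, and handling symmetrically the dichotomy between $1\leq_\cj n$ and $n\leq_\cj 1$, requires care to ensure that the interleaving read off in $w$ is consistent with the formulas defining $\Sew_\cx$.
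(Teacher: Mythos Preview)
Your proposal is correct and follows essentially the same two-containment strategy as the paper: for $\Cut^{-1}(\cx)\subseteq\{\Sew_\cx(w)\}$ both arguments recover $w$ as the total order on the zipper elements inside $G_\cj$ (the paper phrases this as ``the saturated chain from $1$ to $m$,'' which is exactly your chain $Z$), and then verify $\downJ(x)=\Sew_\cx(w)_\downarrow(x)$ by the same case split on whether $x$ is a zipper element. The one noteworthy difference is your treatment of the reverse containment $\{\Sew_\cx(w)\}\subseteq\Cut^{-1}(\cx)$: you invoke Proposition~\ref{prop:gtree_zipper} and Lemma~\ref{lem:Cut_Gtrees_forward} to argue at the level of $G$-trees that unzipping $G_{\Sew_\cx(w)}$ returns $G_\cx$, whereas the paper computes $\cut_{\Sew_\cx(w)}\circ\sew_\cx$ directly on tubes; your route is slightly more conceptual but relies on the same underlying facts.
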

\begin{proof}
    Let~$\cj \in \MTub(C_n)$ and~$\cx = \Cut(\cj)$. First the forwards direction of containment. We will show that~$\cj = \Sew_{\cx}(w)$ for some~$w \in W_{\cx}$. In particular, we will show for a fixed~$w$ that
    \[\sew_\cx(\cut_\cj(\downJ(x)),w) = \downJ(x).\] 
    Let~$\cj$ have maximal element~$m$. Since~$1$ and~$n$ are adjacent in~$C_n$ they must have a relation in $G_\cj$. Without loss of generality assume that~$1 <_\cj n$. Let~$w$ be the saturated chain from~$1$ to~$m$ in$G_\cj$. \par 
    If~$x = m$ then by definition~$\sew_\cx(\cut_\cj(\downJ(x)),w) = [n] = \downJ(x)$ for all~$w \in W_{\cx}$.\par 
    If~$x < m$ then 
    \begin{align*}
        \sew_\cx(\cut_\cj(\downJ(x)),w) &= \sew_\cx(\downJ(x)\cap [1,m-1],w) \\
        &= \begin{cases}
             \downJ(x)\cap [1,m-1] &\text{if } x \text{ is not in the zipper of~$\cx$, and}\\[5pt]
            \ds \bigcup_{i=1}^{w^{-1}(x)}\downX(w(i)) &\text{if } x \text{ is in the zipper.}
            \end{cases} \\       
    \end{align*}
    Now if~$x$ is not in the zipper then~$\cx_\downarrow(x)$ contains neither~$1$ nor~$n$. In particular,~$\downJ(x)\cap [1,m-1] = \downJ(x)$. \par 
    If~$x$ is in the (left, since~$x<m$) zipper and~$y \in \downJ(x)\cap [m+1,n]$, let~$k$ be such that~$\cx_\downarrow(k) = \downJ(x)\cap [m+1,n]$. So~$k$ is also in the zipper, and~$w^{-1}(k) < w^{-1}(x)$. It follows that~$y \in \bigcup_{i=1}^{w^{-1}(x)}\cx_\downarrow(w(i))$.\par 
    The argument for~$x > m$ is the same, swapping~$1$ with~$n$, left with right, and~$[1,m-1]$ with~$[m+1,n]$. Thus, we have that $ \sew_\cx(\cut_\cj(\downJ(x)),w) = \downJ(x)$, and so~$\cj = \Sew_{\cx}(w)$. \par 
    For the backwards direction of containment, let~$\cx \in \MTub(P_n)$ and~$w \in W_\cx$ be arbitrary. For notation let~$\cj = \Sew_\cx(w)$. Then have that 
    \begin{align*}
        \cut_{\cj}(\sew_\cx(\downX(x),w)) 
        &= \begin{cases}
             \cut_{\cj}(\downX(x)) &\text{if } x \text{ is not in the zipper of~$\cx$, and}\\[5pt]
            \ds \cut_\cj\left( \bigcup_{i=1}^{w^{-1}(x)}\downX(w(i))\right) &\text{if } x \text{ is in the zipper.}
            \end{cases} \\ 
        &= \begin{cases}
             \downX(x) &\text{if } x \text{ is not in the zipper of~$\cx$,}\\[5pt]
            \ds [1,m-1] \cap \left( \bigcup_{i=1}^{w^{-1}(x)}\downX(w(i))\right) &\text{if } x \text{ is in the left zipper of~$\cx$, and }\\
            \ds [m+1,n] \cap \left( \bigcup_{i=1}^{w^{-1}(x)}\downX(w(i))\right) &\text{if } x \text{ is in the right zipper of~$\cx$. }
            \end{cases} 
    \end{align*}
    However~$\downX(w(i)) \subseteq [1,m-1]$ if and only if~$w(i)$ is in the left zipper and~$\downX(w(i)) \subseteq [m+1,n]$ if and only if~$w(i)$ is in the right zipper. So each case above is equal to~$\downX(x)$. So we have that~$\Cut(\Sew_\cx(w)) = \cx$, and we get both directions of containment.    
\end{proof}
\begin{example}\label{ex:Cut_map_inverse}
Figure~\ref{fig: cut and sew map example} depicts the~$G$-trees of~$\cj=\Sew_{\Cut(\cj)}(9137)$ and~$\Cut(\cj)$.
Note that~$a = (1,3)$ is the left zipper and~$b = (9,7)$ is the right zipper of~$\Cut(\cj)$. There is one~$G$-tree in the fiber~$\Cut^{-1}(\Cut(\cj))$ for each of the following permutations: 
\[1397,\;\;1937,\;\;1973,\;\;9137,\;\;9173,\;\;9713.\]
\end{example}
\begin{corollary}
         The map~$\Cut:\MTub(C_n)\to\MTub(P_n)$ is surjective.
\end{corollary}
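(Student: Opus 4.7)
The plan is to derive surjectivity as an immediate consequence of Proposition~\ref{prop:cutinv}, which identifies the fiber $\Cut^{-1}(\cx)$ with the image of the set $W_\cx$ under $\Sew_\cx$. Concretely, given any $\cx\in\MTub(P_n)$, it suffices to exhibit a single element in $\Cut^{-1}(\cx)$, and by Proposition~\ref{prop:cutinv} any $\Sew_\cx(w)$ for $w\in W_\cx$ will serve. So everything reduces to showing $W_\cx\neq\emptyset$.

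To see this, I would recall (Definition~\ref{def:zipper}) that the left and right zippers of $\cx$ are two disjoint totally ordered chains $a = (a_1,\ldots,a_l)$ and $b = (b_1,\ldots,b_r)$ in $[n]$, one (or both) of which may be empty when the root $m$ of $G_\cx$ lacks a left or right child. In any case, an in-order shuffling of the two chains is just an interleaving that preserves each chain's internal order, and at least one such shuffling always exists — for instance, the concatenation $(a_1,\ldots,a_l,b_1,\ldots,b_r)$. Hence $W_\cx\neq\emptyset$, and $\Sew_\cx(w)\in\MTub(C_n)$ for any $w\in W_\cx$ satisfies $\Cut(\Sew_\cx(w))=\cx$ by Proposition~\ref{prop:cutinv}.

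There is no real obstacle here; the work has been done already in Proposition~\ref{prop:cutinv} and in the construction of the $\Sew_\cx$ map. The only care needed is to handle the degenerate cases where one of the zippers is empty (i.e., when $m=1$ or $m=n$), but the concatenation interleaving handles these uniformly. Thus the corollary follows in a line or two from the preceding structural results.
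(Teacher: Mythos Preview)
Your proposal is correct and matches the paper's approach exactly: both derive surjectivity from Proposition~\ref{prop:cutinv} by observing that $W_\cx$ is nonempty. The paper phrases this as ``$\cx$ cannot lack both zippers,'' while you exhibit an explicit shuffle (the concatenation), but these are the same one-line argument.
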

\begin{proof}
    While it is possible for~$\cx \in \MTub(P_n)$ to not have a left or right zipper (if $m=1$ or $m=n$), it cannot lack both. In particular,~$W_\cx$ is never empty.
\end{proof}

\begin{remark}
     Let~$l,r$ be the sizes of the left and right zippers respectively of~$\cx \in \MTub(P_n)$. There are 
  ~$\frac{(l+r)!}{(l)!(r)!}$ elements in the fiber of~$\cx$. This is the size of~$W_\cx$.
\end{remark}

\begin{proposition}\label{prop: fiber is an interval}
Let~$\cx\in\MTub(P_n)$.  Then~$\Cut^{-1}(\cx)\in\MTub(C_n)$ is isomorphic to~$W_\cx$ as an interval in the weak order.
\end{proposition}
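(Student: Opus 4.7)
The plan is to upgrade the bijection $\Sew_\cx\colon W_\cx\to\Cut^{-1}(\cx)$ established in Proposition~\ref{prop:cutinv} and its injectivity corollary to an isomorphism of posets, and then identify $W_\cx$ with an interval in the weak order by a standard shuffle argument. Throughout, write $l$ and $r$ for the sizes of the left and right zippers of $\cx$, and relabel the left-zipper letters $1,\ldots,l$ in zipper order and the right-zipper letters $l{+}1,\ldots,l{+}r$ in zipper order; under this relabeling $W_\cx$ becomes exactly the set of shuffles of the two increasing words $(1,\ldots,l)$ and $(l{+}1,\ldots,l{+}r)$ inside $\Sfrak_{l+r}$.

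For the interval structure on $W_\cx$, I would invoke the well-known fact that the set of such shuffles, ordered by containment of inversions, is the set of minimal-length coset representatives of $\Sfrak_{l+r}/(\Sfrak_l\times\Sfrak_r)$, and this coset system is an interval in the weak order: namely $[e,w_{\max}]$, where $w_{\max}$ is the shuffle $(l{+}1,\ldots,l{+}r,1,\ldots,l)$. Cover relations in this interval are exactly the adjacent transpositions $w\lessdot w'$ that swap one left-zipper letter past one right-zipper letter.

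To show $\Sew_\cx$ respects the order, I would analyze cover relations within the fiber. By Proposition~\ref{prop:gtree_zipper}, the $G$-tree of $\Sew_\cx(w)$ is obtained from $G_\cx$ by threading the two zipper chains together in the order prescribed by $w$; the elements off the zippers and their attachments are unchanged. By Proposition~\ref{prop:cover_relation_cycle}, cover relations at $\Sew_\cx(w)$ correspond to tree moves on edges of this $G$-tree. A tree move on an edge incident to a non-zipper element would alter the non-zipper structure of $G_\cx$ and hence change the image under $\Cut$ by Lemma~\ref{lem:Cut_Gtrees_forward}; such a cover leaves the fiber. A tree move along a zipper edge whose two endpoints belong to the same zipper cannot occur while remaining a valid cyclic binary tree with unzipping $G_\cx$, because it would reverse the internal linear order of that zipper. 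Thus the only in-fiber covers are tree moves along a zipper edge whose endpoints come from different zippers; these correspond exactly to the adjacent transpositions swapping a left- with a right-zipper letter, matching the weak-order covers of $W_\cx$ identified above.

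The main obstacle is the passage from local to global: I need to know that the induced subposet $\Cut^{-1}(\cx)\subseteq \MTub(C_n)$ is generated by its internal cover relations, rather than possibly acquiring extra comparabilities via saturated chains that temporarily leave the fiber. For this I would use that the inversion-set order on $W_\cx$ (an interval in the weak order) is generated by adjacent transpositions, and show that two elements of $W_\cx$ with nested inversion sets give tubings comparable in $\MTub(C_n)$ by a chain of fiber-internal covers; combined with Theorem~\ref{thm:inversion_order} (applied later to certify global comparability via inversions) this identifies the induced order with the weak order on $W_\cx$. Once this is checked, $\Sew_\cx$ is the desired poset isomorphism from an interval of the weak order to $\Cut^{-1}(\cx)$.
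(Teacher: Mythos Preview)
Your outline is essentially the paper's argument: identify $W_\cx$ with the weak-order interval of shuffles of two increasing blocks, and match cover relations via tree moves on consecutive zipper letters coming from opposite sides. Where you go further is in flagging the local-to-global step---that a cover in the induced subposet $\Cut^{-1}(\cx)\subseteq\MTub(C_n)$ need not a priori be a cover in $\MTub(C_n)$, so extra relations could in principle enter through saturated chains that temporarily leave the fiber. The paper's proof asserts ``any cover relation $\Sew_\cx(w)\lessdot\Sew_\cx(w')$ must be a tree move of the above form'' without addressing this, so your caution is well placed. Your fix via Theorem~\ref{thm:inversion_order} is valid and not circular (its forward direction, Proposition~\ref{prop: forward direction}, makes no use of this proposition), though it is a forward reference in the paper's ordering. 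A slightly cleaner packaging: once $\Cut$ is known to be order-preserving (Corollary~\ref{cor:cutmap_orderpreserving}, which again rests only on Proposition~\ref{prop: forward direction}), any saturated $\MTub(C_n)$-chain between two elements of $\Cut^{-1}(\cx)$ is forced to stay in the fiber, so the induced order is generated by the fiber-internal $\MTub(C_n)$-covers you identified, and $\Sew_\cx$ is the desired order isomorphism. Alternatively, restrict the conclusion of Proposition~\ref{prop: forward direction} to pairs of zipper letters (where $\inc$ contributes nothing) to read off $\inv(w)\subseteq\inv(w')$ directly.

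One small correction: a tree move along an edge joining two letters from the \emph{same} zipper does produce a valid cyclic binary tree; what happens is that its unzipping differs from $G_\cx$, so the resulting tubing leaves the fiber---the move is not forbidden, it simply exits $\Cut^{-1}(\cx)$.
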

\begin{proof}
    Since~$W_\cx$ is the set of in-order shufflings, it is isomorphic to an interval in the weak order. The weak order on~$W_\cx$ naturally induces an order on~$\Cut^{-1}(\cx)$. We will prove the cover relations for~$W_\cx$ are cover relations in~$\Cut^{-1}(\cx)$, and vice versa. \par 
    Denote by $s_i$ the transposition of $i$ and $i+1$, with $ws_i$ the multiplication of $w$ and $s_i$ as permutations.  A cover relation in~$W_\cx$ is~$w \lessdot ws_i$ where~$w(i)$ is in the left zipper and~$w(i+1)$ is in the right zipper. Any other case for~$w(j)$ and~$w(j+1)$ will result in~$ws_j \notin W_\cx$.\par 
    Assume~$w(i)$ is covered by~$w(i+1)$ in the~$G$-tree of~$\Sew_\cx(w)$. The tree move swapping~$w(i)$ and~$w(i+1)$ is, visually, 
     \begin{center}
 \begin{tikzpicture}[scale=1.3]
      \begin{scope}[xshift=3in]
         \draw (0,0.5) node[draw, circle] (T4)     {$T_4$};
         \draw (0,-1) node[draw, circle] (y)     {$w(i)$};
         \draw (-1.5,-2) node[draw, circle] (x)  {$w(i\!+\!1)$};
         \draw (-2.5,-3) node[draw, circle] (T1) {$T_1$};
         \draw (-0.5,-3) node[draw, circle] (T2) {$T_2$};
         \draw (1,-2) node[draw, circle] (T3) {$T_3$};

         \draw (T4)--(y)--(x)--(T1);
         \draw (x)--(T2);
         \draw (y)--(T3);
     \end{scope} 
     \begin{scope}[xshift = 1.5in, yshift = -0.5in]
         \draw (0,0) node {\Large${ \longrightarrow}$};
     \end{scope}
     \begin{scope}[xshift = 0in]
        \draw (0,0.4) node[draw, circle] (T4)     {$T_4$};
         \draw (1.5,-2) node[draw, circle] (y)     {$w(i)$};
         \draw (0,-1) node[draw, circle] (x)  {$w(i\!+\!1)$};
         \draw (-1,-2) node[draw, circle] (T1) {$T_1$};
         \draw (0.5,-3) node[draw, circle] (T2) {$T_2$};
         \draw (2.5,-3) node[draw, circle] (T3) {$T_3$};

         \draw (T4)--(x)--(y)--(T3);
         \draw (x)--(T1);
         \draw (y)--(T2);
     \end{scope}
 \end{tikzpicture}
 \end{center}
 where~$T_2 = \Sew_\cx(w)_\downarrow(w(i-1))$. Since this tree move only changes order relations for the zipper, the image under $\Cut$ of this new tubing is also~$\cx$. In particular, the~$G$-tree that corresponds to this cover relation is~$\Sew_\cx(ws_i)$. So the cover relations in~$W_\cx$ are contained within the cover relations for~$\Cut^{-1}(\cx)$. \par 
 On the other hand, any cover relation~$\Sew_\cx(w) \lessdot \Sew_\cx(w')$ must be a tree move of the above form. Any other tree move will result in a~$G$-tree that does not correspond to anything in~$\Cut^{-1}(\cx)$. We have already shown that these tree moves correspond to~$w \lessdot ws_i$ in~$W_\cx$ so the cover relations in~$\Cut^{-1}(\cx)$ are contained within the cover relations for~$W_\cx$. Since the cover relations are identical, these orders are equal.
\end{proof}
We end with an observation that the $\Cut$ map respects the map from Corollary \ref{cor:involution}, by which both $\MTub(C_n)$ and $\MTub(P_n)$ are self-dual.
\begin{proposition}\label{prop:cut_commuting}
    The map $\Cut \colon \MTub(C_n) \to \MTub(P_n)$ commutes with the involution induced from $w_0$ in Corollary \ref{cor:involution}. Formally, if $\cj \in \MTub(C_n)$ then 
    \[ \Cut(w_0(\cj)) = w_0(\Cut(\cj))\, . \]
\end{proposition}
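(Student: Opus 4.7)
The plan is to verify the identity tube-by-tube using the explicit piecewise formula defining $\cut_\cj$. The only real content is bookkeeping how the involution $w_0 \colon i \mapsto n+1-i$ interacts with the three cases in the definition of $\cut$.

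First I would fix notation: let $m$ be the root of $G_\cj$ (the least-nested element of $\cj$) and set $m' := w_0(m) = n+1-m$. Since $w_0$ is a graph automorphism of the unlabeled cycle $C_n$, it induces an order-reversing bijection on maximal tubings by Lemma~\ref{lem:w0_dual}, and the largest nontrivial tube $[n] \setminus \{m\}$ of $\cj$ is sent to the largest nontrivial tube $[n] \setminus \{m'\}$ of $w_0(\cj)$; in particular, $m'$ is the root of $G_{w_0(\cj)}$. Similarly, for each $x \in [n]$, we have $w_0(\cj)_\downarrow(w_0(x)) = w_0(\downJ(x))$. It therefore suffices to show, for each $x \in [n]$, the pointwise identity
\[
\cut_{w_0(\cj)}(w_0(\downJ(x))) \;=\; w_0(\cut_\cj(\downJ(x))).
\]

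Next I would unwind both sides from the definition of $\cut$ and split into three cases on the comparison of $x$ with $m$ (equivalently, of $w_0(x)$ with $m'$). The crucial observation is that $w_0$ swaps the two ``halves'' cut out by the root:
\[
w_0([1, m-1]) = [m'+1, n] \quad \text{and} \quad w_0([m+1, n]) = [1, m'-1].
\]
For $x = m$, both sides equal $w_0([n]) = [n]$. For $x < m$, we have $w_0(x) > m'$, and since $w_0$ is a bijection on $[n]$ it commutes with intersection, giving
\[
\cut_{w_0(\cj)}(w_0(\downJ(x))) = w_0(\downJ(x)) \cap [m'+1, n] = w_0\bigl(\downJ(x) \cap [1, m-1]\bigr) = w_0(\cut_\cj(\downJ(x))).
\]
The case $x > m$ is symmetric, using the other interval identity. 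Running over all $x \in [n]$ then yields the claimed set equality $\Cut(w_0(\cj)) = w_0(\Cut(\cj))$.

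There is no real obstacle in this argument; the only subtlety is keeping straight that $w_0$ simultaneously swaps the two linear segments $[1,m-1]$ and $[m+1,n]$ of the cycle \emph{and} replaces the root $m$ by $m' = n+1-m$, so that the two ``less than $m$'' and ``greater than $m$'' cases in the definition of $\cut$ trade places under $w_0$ in exactly the way needed for the identity to hold.
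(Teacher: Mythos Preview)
Your proof is correct and is precisely what the paper's one-line proof (``This follows from the definition of $\cut_\cj(\downJ(x))$'') means when unpacked; you have simply written out the case analysis that the paper leaves implicit.
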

\begin{proof}
    This follows from the definition of $\cut_\cj(\downJ(x))$.
\end{proof}

\section{Global Relations in \texorpdfstring{$\MTub(C_n)$}{MTub Cycle}}\label{sec:Global Relations}
The order on~$\MTub(G)$ is defined by its cover relations, so given two arbitrary maximal tubings of a graph~$G$ it is not clear at all whether they are related in~$\MTub(G)$. For~$\MTub(K_n)$ this issue is resolved by the characterization of the weak order as inversion-containment. For~$\MTub(P_n)$ this can be checked in the same fashion with Proposition \ref{prop:path_order}. 

This section will prove the following characterization of global relations in~$\MTub(C_n)$.
\begin{theorem}\label{thm:inversion_order}
    Let~$\cj$ and~$\ck$ be tubings of the cycle graph. Then~$\cj \leq \ck$ if and only if~$\inv(\cj) \subseteq \inv(\ck) \cup \inc(\ck)$.
\end{theorem}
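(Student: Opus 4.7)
My plan is to prove the two directions separately. The forward direction is by induction on chain length, while the reverse direction uses the $\Cut$ map to reduce to the path case and then re-lifts.

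For the forward direction, fix a saturated chain $\cj = \cj_0 \lessdot \cj_1 \lessdot \cdots \lessdot \cj_l = \ck$ and reduce to a single cover $\cj_t \lessdot \cj_{t+1}$ at the level of tree moves (Definition~\ref{def:tree_move_cyclic} and Proposition~\ref{prop:cover_relation_cycle}). I would split into top-edge vs.\ non-top cases, and within the non-top case further subdivide according to whether the rotated child is a cyclic left-child or cyclic right-child of its parent. In each sub-case I track precisely which pairs $(p,q)$ change status. The central observation, verified by direct inspection, is that in any up-cover no pair transitions from an inversion to a coinversion; an inversion either remains one or becomes an incomparability. The delicate point for transitivity is that a pair might go INV $\to$ INC at one cover and then INC $\to$ COINV at a later one, seemingly breaking the desired containment. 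I would rule this out with a local argument: the INV $\to$ INC transition places the two elements as siblings in adjacent cyclic-subtrees of a common parent, with integer inequalities inherited from the up-cover, and these inequalities are incompatible with any configuration that would subsequently produce an INC $\to$ COINV transition on the same pair in a later up-cover.

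For the reverse direction, suppose $\inv(\cj) \subseteq \inv(\ck) \cup \inc(\ck)$. Using the explicit description of $G_{\Cut(\cj)}$ as the unzipping of $G_\cj$ (Lemma~\ref{lem:Cut_Gtrees_forward}), inversions in $\Cut(\cj)$ are precisely the inversions of $\cj$ whose two elements lie on the same side of the root of $G_\cj$ (or involve it), while incomparabilities of $\ck$ persist as incomparabilities in $\Cut(\ck)$. These observations give $\inv(\Cut(\cj)) \subseteq \inv(\Cut(\ck)) \cup \inc(\Cut(\ck))$, and Proposition~\ref{prop:path_order} yields $\Cut(\cj) \leq \Cut(\ck)$ in $\MTub(P_n)$.

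To lift this back to $\cj \leq \ck$ in $\MTub(C_n)$, I would induct on the distance $d(\cj,\ck) = |\inv(\cj) \triangle \inv(\ck)| + |\inc(\cj) \triangle \inc(\ck)|$, noting that $d = 0$ iff pair-statuses agree on every pair iff $\cj = \ck$. For the inductive step I locate a cover-up flip at $\cj$ that strictly reduces $d$ while preserving the inversion containment toward $\ck$. There are two kinds of progress-making flips: those that move $\Cut(\cj)$ along a cover in $\MTub(P_n)$ toward $\Cut(\ck)$, which exist whenever $\Cut(\cj) \neq \Cut(\ck)$ thanks to the path-side inequality, and those strictly within the fiber of $\Cut(\cj)$, which correspond to cover relations in the weak-order interval of Proposition~\ref{prop: fiber is an interval}. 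The hardest step will be establishing that such a flip always exists and that flipping it preserves the hypothesis $\inv(\cj') \subseteq \inv(\ck) \cup \inc(\ck)$; this is where the cyclic-BST structure and the $\Cut$/$\Sew$ correspondence must work together, since it is not a priori obvious that every pair witnessing $\cj \neq \ck$ can be resolved by a flippable cover edge of $G_\cj$.
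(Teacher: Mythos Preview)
Your forward direction has a real gap. You correctly note that a single up-cover never sends a pair from inversion to coinversion, and you correctly identify the transitivity problem: along a long chain a pair could go $\mathrm{INV}\to\mathrm{INC}$ at one cover and $\mathrm{INC}\to\mathrm{COINV}$ much later. Your proposed ``local argument'' does not work: the integer inequalities you record at the $\mathrm{INV}\to\mathrm{INC}$ step constrain the local picture \emph{at that moment}, but the second transition can occur after arbitrarily many intervening tree moves, by which time the ambient structure has changed completely. Nothing local to the first transition prevents the second. The paper's fix is global, not local: among all offending pairs $(i,k)$ (over all $\cj<\ck$) choose one with $k-i$ \emph{maximal}, locate the last step where $(i,k)$ is an inversion and the first later step where it becomes a coinversion, analyze the two tree moves, and use the cyclic tube structure to produce a new offending pair $(i,x)$ with $x-i>k-i$, contradicting maximality. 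That extremal choice is the missing idea.

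Your backward direction is broadly on the right track---reduce to $\Cut(\cj)\le\Cut(\ck)$ via Proposition~\ref{prop:path_order}, then lift---but your lifting scheme is dual to the paper's and you have not carried out the key step. You propose to climb \emph{up} from $\cj$ by finding an ascent whose flip $\cj'$ still satisfies $\inv(\cj')\subseteq\inv(\ck)\cup\inc(\ck)$; the paper instead climbs \emph{down} from $\ck$ by finding a descent in $\ck$ that is not an inversion of $\cj$, flipping to $\ck'\lessdot\ck$, and checking $\inv(\cj)\subseteq\inv(\ck')\cup\inc(\ck')$. The paper's direction is better adapted to the hypothesis: the containment is asymmetric in $\cj$ and $\ck$ (there is no $\inc(\cj)$ term), so verifying preservation after shrinking $\ck$ is more direct than after enlarging $\cj$. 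Concretely, the paper splits into whether the descent is a top/right edge or a left edge of $G_\ck$ (Propositions~\ref{prop: backwards outline_R} and~\ref{prop: backwards outline_L}); the left-edge case does not always work directly and must be reduced to a right-edge descent further down the tree. Your sentence ``it is not a priori obvious'' is accurate---this is exactly where the work lies, and your proposal does not yet contain it.
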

Before we give the proof, which will consist of the remainder of this section, some examples.
\begin{example}\label{ex:Global Relations}
    Consider the following three cyclic binary trees., corresponding to the maximal tubings ${\cj,\ck,\cl \in \MTub(C_8)}$.
    \begin{center}
        
    \begin{tikzpicture}
  \node[draw,circle] (1) at (0,1) {1};
  \node[draw,circle] (2) at (1,0) {2};
  \node[draw,circle] (3) at (1,2) {3};
  \node[draw,circle] (4) at (-1,5) {4};
  \node[draw,circle] (5) at (-1,4) {5};
  \node[draw,circle] (6) at (-1,2) {6};
  \node[draw,circle] (7) at (0,3) {7};
  \node[draw,circle] (8) at (-1,0) {8};
  \node (name) at (0,-1) {$\cj$};

    \draw (4)--(5)--(7)--(3)--(1)--(2);
    \draw (1)--(8);
    \draw (7)--(6);
  
  \begin{scope}[shift = {(5,0)}]
  \node[draw,circle] (1) at (-1,0) {1};
  \node[draw,circle] (2) at (0,1) {2};
  \node[draw,circle] (3) at (1,2) {3};
  \node[draw,circle] (4) at (2,3) {4};
  \node[draw,circle] (5) at (0,6) {5};
  \node[draw,circle] (6) at (-1,4) {6};
  \node[draw,circle] (7) at (0,5) {7};
  \node[draw,circle] (8) at (1,4) {8};
  \node (name) at (0,-1) {$\ck$};
  
  \draw (5)--(7)--(8)--(4)--(3)--(2)--(1);
  \draw (7)--(6);
  \end{scope}
   \begin{scope}[shift = {(10,-1)}]
  \node[draw,circle] (1) at (1,6) {1};
  \node[draw,circle] (2) at (1,4) {2};
  \node[draw,circle] (3) at (1,2) {3};
  \node[draw,circle] (4) at (2,1) {4};
  \node[draw,circle] (5) at (2,3) {5};
  \node[draw,circle] (6) at (2,5) {6};
  \node[draw,circle] (7) at (0,8) {7};
  \node[draw,circle] (8) at (0,7) {8};
  \node (name) at (1,0) {$\cl$};
  \draw (7)--(8)--(1)--(6)--(2)--(5)--(3)--(4);
  \end{scope}
    \end{tikzpicture}
        \end{center}
    We can check that 
    \[
        \inv(\ck) = \{(5,6),(5,7),(5,8),(7,8)\} \subseteq \inv(\cj) \cup \inc(\cj),
    \]
    and so $\ck < \cj$. On the other hand, $(5,6)$ is an inversion in $\cj$ and $\ck$ but is a co-inversion in $\cl$, while $(3,4)$ is an inversion in $\cl$ but co-inversions in $\cj$ and $\ck$. So $\cl$ is not comparable to either $\cj$ or $\ck$.
    
\end{example}
\begin{example}
    The inclusion of $\inc(\ck)$ in Theorem \ref{thm:inversion_order} is necessary. For example, the following is a cover relation in $\MTub(C_4)$:
    \begin{center}
    \begin{tikzpicture}
    \begin{scope}
        \draw (0,0) node[draw,circle] (v1) {4};
        \draw (-1,1) node[draw,circle] (v2) {1};
        \draw (0,2) node[draw,circle] (v3) {2};
        \draw (0,3) node[draw,circle] (v4) {3};
        \draw (v1)--(v2)--(v3)--(v4);

        \draw (0,-1) node (name) {$\cj$};
    \end{scope}
    \begin{scope}[shift = {(3,1.5)}]
        \draw (0,0) node (l) {\Large $\lessdot$};
    \end{scope}
    \begin{scope}[shift = {(6,-1)}]
        \draw (-1,1) node[draw,circle] (v1) {4};
        \draw (0,2) node[draw,circle] (v2) {1};
        \draw (1,1) node[draw,circle] (v3) {2};
        \draw (0,3) node[draw,circle] (v4) {3};
        \draw (v3)--(v2)--(v4);
        \draw (v2)--(v1);

        \draw (0,0) node (name) {$\ck$};
    \end{scope}
        
    \end{tikzpicture}
    \end{center}
    However, $\inv(\cj) \not\subseteq \inv(\ck)$ because of $(2,4) \in \inv(\cj)$.
\end{example}

The forwards and backwards direction of Theorem \ref{thm:inversion_order} are Propositions \ref{prop: forward direction} and \ref{prop: Backwards}.\par 
First, we will show equality.

\begin{proposition}\label{prop:inv_equality}
    If~$\cj$ and~$\ck$ are tubings of the cycle graph then~$\inv(\cj) = \inv(\ck)$ if and only if~$\cj=\ck$.
\end{proposition}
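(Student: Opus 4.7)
The backwards direction is immediate; for the forwards direction, assume $\inv(\cj) = \inv(\ck)$. The plan is to reduce to the path case via the $\Cut$ map in three stages: first identify the common root, second show the $\Cut$-images agree, and third distinguish elements within a single fiber using the zipper structure.

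For the first stage, I would show that the root $m$ of $G_\cj$ is characterized by $\inv(\cj)$ alone as the unique element satisfying both $(i,m) \notin \inv(\cj)$ for all $i < m$ and $(m,j) \in \inv(\cj)$ for all $j > m$. The root satisfies both conditions because every other element lies in its $\leq_\cj$-downset. Conversely, if $m'$ is not the root, its parent $x$ in $G_\cj$ satisfies $x \neq m'$ and $m' <_\cj x$; depending on whether $x < m'$ or $x > m'$ in the standard order, exactly one of the two conditions fails at $m'$. Hence $\cj$ and $\ck$ share the same root $m$.

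For the second stage, Lemma~\ref{lem:Cut_Gtrees_forward} allows me to establish the formula
\[\inv(\Cut(\cj)) \;=\; \inv(\cj) \setminus \{(i,j) : 1 \leq i < m < j \leq n\}\]
by a short case analysis on a pair $(i,j)$ with $i<j$: if $i,j$ lie on the same side of $m$ the unzipping preserves their $\leq_\cj$-relation, if $i < m < j$ they become incomparable in the unzipping, and pairs with $m$ as an endpoint lie in $\inv$ under both orders or under neither. Since $\cj$ and $\ck$ share $m$ and have equal inversion sets, we get $\inv(\Cut(\cj)) = \inv(\Cut(\ck))$, and Proposition~\ref{prop:path_order} yields $\Cut(\cj) = \Cut(\ck) =: \cx$.

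For the third stage, Proposition~\ref{prop:cutinv} lets me write $\cj = \Sew_\cx(w)$ and $\ck = \Sew_\cx(w')$ for some $w, w' \in W_\cx$. If $m \in \{1,n\}$, then one zipper of $\cx$ is empty, so $W_\cx$ is a singleton and $\cj = \ck$ immediately. Otherwise, Proposition~\ref{prop:gtree_zipper} produces the chain $w(1) <_\cj \cdots <_\cj w(l+r)$ among the zipper elements $a_1 < \cdots < a_l < m < b_r < \cdots < b_1$ of $\cx$, and a mixed pair $(a_i, b_j)$ lies in $\inv(\cj)$ precisely when $w^{-1}(b_j) < w^{-1}(a_i)$. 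Since the relative orders of the $a$'s and of the $b$'s within $w$ are fixed by the in-order shuffling condition, these mixed-pair inversions fully determine the interleaving and hence determine $w$. Thus $w = w'$ and $\cj = \ck$. The main subtlety lies in the Stage~2 case analysis and in confirming that the mixed-pair data in Stage~3 captures the full freedom of the in-order shuffling; neither step is deep, but both benefit from careful bookkeeping.
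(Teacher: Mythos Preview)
Your proof is correct and takes a genuinely different route from the paper's. The paper argues by induction on $n$: after identifying the common root $m$ (as you do), it shows that the down-set $\downJ(1)$ is determined by $\inv(\cj)$, observes that its left and right subtrees live in the path subgraphs on $[m+1,n]$ and $[2,m-1]$ respectively (hence are determined by their inversions), and then applies the inductive hypothesis to the cyclic binary tree on $[n]\setminus\downJ(1)$.

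Your argument instead pushes the problem through the $\Cut$ map: you first recover $\Cut(\cj)=\Cut(\ck)$ via Proposition~\ref{prop:path_order}, and then separate elements in a single fiber using the zipper description from Propositions~\ref{prop:gtree_zipper} and~\ref{prop:cutinv}. This is perfectly valid since nothing in Section~\ref{sec:Cut Map} depends on Proposition~\ref{prop:inv_equality}. The trade-off is that the paper's proof is more self-contained (it needs only the binary-search-tree fact for paths and elementary induction), whereas your proof leans on the full $\Cut/\Sew$ machinery but avoids induction entirely and fits more naturally with the paper's overall strategy of reducing $\MTub(C_n)$ questions to $\MTub(P_n)$ plus fiber data. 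Your Stage~2 formula $\inv(\Cut(\cj)) = \inv(\cj)\setminus\{(i,j): i<m<j\}$ is a clean statement worth recording in its own right.
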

\begin{proof}
    Assume for induction that this is true for all~$C_m$ for~$m < n$. The base case is easily verified for~$C_3$. \par 
    The backwards direction is trivial.\par 
    Say that~$\inv(\cj) = \inv(\ck)$. For any order relation~$a<_{\cj} b$, either~$a<b$ and so~$(a,b) \notin \inv(\cj)$ or~$a>b$ and~$(a,b) \in \inv(\cj)$. In particular, the maximal element in $G_\cj$ is precisely the element~$m$ such that~$(r,m) \in \inv(\cj)$ whenever~$r>m$ and~$(r,m) \notin \inv(\cj)$ whenever~$(r<m)$. So the maximal element is uniquely determined by~$\inv(\cj)$, so the maximal element of~$\cj$ and the maximal element of~$\ck$ are equal. \par 
    Next, since~$1$ is the smallest integer in~$C_n$, it is easily determined which elements are below~$1$ in the~$G$-trees of~$\cj$ and~$\ck$ from~$\inv(\cj)$ and~$\inv(\ck)$. In particular, 
    \[ \{i \mid i <_{\cj} 1\} = \{i \mid (1,i) \in \inv(\cj)\},\]
    and the same for~$\ck$. So the lower order ideals generated by~$1$ are identical in~$\cj$ and~$\ck$. Furthermore, the lower order ideal generated by~$1$ has two distinct sub-trees, the left and the right. The left sub-tree contains strictly those elements less than~$1$ in the circular order, in particular the left sub-tree has elements in~$\{m+1,...,n\}$. Similarly, the right sub-tree has elements in~$\{2,...,m-1\}$. Since the induced subgraphs on the vertices $[2,m-1]$ and $[m+1, n]$ are both path graphs, these sub-trees are determined by their inversion sets. It follows that the lower order ideals of~$1$ are uniquely determined by~$\inv(\cj)$, and so are equal in~$\cj$ and~$\ck$. \par  
    The subposet of $G_\cj$ induced by the set $[n]\setminus \downJ(1)$ is a cyclic binary tree under the order preserving map from $[n] \setminus \downJ(1)$ to $\left[ n-\abs{\downJ(1)}\right]$.
    Moreover, this subposet is itself a~$G$-tree for~$C_{n-\abs{\downJ(1)}}$. By the induction assumption, this tree is uniquely determined by its inversion set, which is simply~$\inv(\cj)$ with all of the pairs containing elements in~$\downJ(1)$ removed from it. \par 
    So the lower order ideal for~$1$ is uniquely determined, as is the tree with that lower order ideal removed. Since the location of~$1$ is uniquely determined by that tree, the entire structure is uniquely determined, and~$\cj=\ck$.
\end{proof}
Next, we can show the forwards direction.
\begin{proposition}\label{prop: forward direction}
    
If~$\cj < \ck$ in~$\MTub(C_n)$, then~$\inv(\cj)\subseteq \inv(\ck)\cup\inc(\ck)$.
\end{proposition}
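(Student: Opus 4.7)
The plan is to prove Proposition~\ref{prop: forward direction} by induction on the length~$r$ of a saturated chain $\cj = \ck_0 \lessdot \ck_1 \lessdot \cdots \lessdot \ck_r = \ck$, with the base case $r = 0$ being immediate. The heart of the argument is a careful transition analysis of a single cover $\ck_{s-1} \lessdot \ck_s$. For a non-top-edge move on an edge $(x,y)$ with $x$ a left child of $y$ in $\ck_{s-1}$ and $x < y$ as integers (Definition~\ref{def: tree move}), the only status changes involve pairs in $\{x,y\} \cup T_1 \cup T_3$: the pair $(x,y)$ transitions $\coinv \to \inv$; for $t_1 \in T_1$, the pair $(y, t_1)$ with $t_1 > y$ transitions $\inv \to \inc$ and $(t_1, y)$ with $t_1 < y$ transitions $\coinv \to \inc$; for $t_3 \in T_3$, the pair $(x, t_3)$ with $t_3 > x$ transitions $\inc \to \inv$ and $(t_3, x)$ with $t_3 < x$ transitions $\inc \to \coinv$. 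For the top-edge move (Definition~\ref{def:tree_move_cyclic}), only $(x, m)$ changes, from $\coinv$ to $\inv$. The upshot is that in a single cover no inversion transitions directly to a coinversion, giving $\inv(\ck_{s-1}) \subseteq \inv(\ck_s) \cup \inc(\ck_s)$.

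Chaining these one-step inclusions is subtle: the $\inc \to \coinv$ transition in the table above means an incomparable pair can, in principle, later become a coinversion. To close the induction I would strengthen the hypothesis by recording, for each pair $(i,j) \in \inv(\cj)$ that has become incomparable at an intermediate $\ck_s$, where $i$ and $j$ sit in $G_{\ck_s}$. The only way an inversion becomes incomparable under a single cover is via the $(y, t_1) = (i, j)$ pattern with $j = t_1 \in T_1$ and $j > i = y$; this forces $i$ to sit as a right child of the pivot $x$ with $x < i$ as integers, and $j$ to lie in the left subtree of $x$ with $j > i > x$. Matching $(i,j)$ against the $(t_3, x')$ pattern that would drive an $\inc \to \coinv$ transition at the next upward cover requires $x' = j$ to be a left child of some $y'$ with $j < x'$ as integers to be a valid upward move; the only $y'$ whose right subtree contains $i$ is $y' = x$, and then the condition becomes $j < x$, which contradicts $j > x$. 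I expect this positional obstruction to persist through subsequent upward covers by tracking how the subtree rooted at $x$ evolves.

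The main obstacle is verifying that this positional obstruction survives arbitrary further upward rearrangements of the tree. A cleaner alternative I would pursue in parallel uses the $\Cut$ map: Corollary~\ref{cut_lattice_map} gives $\Cut(\cj) \leq \Cut(\ck)$, which by Proposition~\ref{prop:path_order} yields $\inv(\Cut(\cj)) \subseteq \inv(\Cut(\ck))$, and Lemma~\ref{lem:cut_tops} transfers this to every inversion of $\cj$ not straddling the root $m_\cj$. The remaining \emph{cross} inversions $(i,j) \in \inv(\cj)$ with $i < m_\cj < j$ can then be handled by a case split on whether $m_\ck = m_\cj$, where the fiber structure from Proposition~\ref{prop: fiber is an interval} combined with Proposition~\ref{prop:path_order} applies, or $m_\ck < m_\cj$, where the top-edge moves in the chain interact with the cyclic structure in a controlled way.
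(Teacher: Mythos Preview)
Your single-cover transition analysis is correct, and you have correctly identified the real obstacle: an inversion of $\cj$ can become incomparable at some $\ck_s$ and then, in principle, transition to a coinversion at a later step via the $\inc \to \coinv$ move. Your ``positional obstruction'' argument only addresses the \emph{very next} cover, and even there it is not complete: you claim ``the only $y'$ whose right subtree contains $i$ is $y' = x$'', but after the move $i$ sits as the right child of $x$, so any ancestor $y'$ of $x$ for which $x$ lies in the right subtree also works, and moreover the matching $x' = j$ need not be the immediate left child of $x$. You concede this yourself (``the main obstacle is verifying that this positional obstruction survives arbitrary further upward rearrangements''), so as it stands the primary approach is a sketch with an unfilled gap.

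The alternative via the $\Cut$ map is circular in this paper's logical order. You invoke Corollary~\ref{cut_lattice_map} (or even just the weaker statement that $\Cut$ is order preserving, Corollary~\ref{cor:cutmap_orderpreserving}); but the proof of Corollary~\ref{cor:cutmap_orderpreserving} is literally ``apply Proposition~\ref{prop: forward direction}, then Proposition~\ref{prop: cut ineq}''. So you would be assuming what you are trying to prove. One could in principle verify directly on cover relations that $\cj \lessdot \ck$ implies $\Cut(\cj) \le \Cut(\ck)$, but you do not do this, and your treatment of the ``cross'' inversions is also left as a case split to be worked out.

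The paper avoids the chaining problem entirely by an extremal argument: among all hypothetical counterexamples $(i,k)$ (pairs lying in $\inv(\cj) \setminus (\inv(\ck)\cup\inc(\ck))$ for some $\cj < \ck$), pick one with $k-i$ \emph{maximal}. One then locates the last $\cj_r$ in a saturated chain with $(i,k)\in\inv(\cj_r)$ and the first later $\cj_{r'}$ with $(i,k)\in\coinv(\cj_{r'})$, analyzes the two boundary covers, and uses the cyclic order to produce a strictly larger offending pair $(i,x)$ with $x>k$, contradicting maximality. This extremal trick is exactly the missing idea that lets one bypass tracking positional data through an arbitrary sequence of moves.
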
 
\begin{proof}

    Consider the set of all possible inversions that contradict the claim:
    \[
    \bigcup_{\substack{\cj,\ck\in\MTub(C_n) \\\mathcal{J}<\mathcal{K}}}\left\{ (i,j) \mid (i,j) \in \inv(\cj) \setminus (\inv(\ck)\cup\inc(\ck))\right\}.
    \]
      Fix an~$(i,k)$ in this set such that~$k-i$ is maximal (there may be many tuples with such a property, any will suffice). We will show that this element cannot exist, and so this set is empty. \par 
    Let~$\cj$ and~$\ck$ be maximal tubings such that~$(i,k) \in \inv(\cj)~$ and~$(i,k) \notin \inv(\ck)\cup\inc(\ck)$. Now~$(i,k) \in \inv(\cj)$ if and only if~$k <_{\cj} i$, where~$<_{\cj}$ is the order relation induced from $G_\cj$. On the other hand,~$(i,k) \notin \inv(\ck)\cup\inc(\ck)$ if and only if~$i <_{\ck}k$ in the order relation induced from~$G_\ck$. \par 
    Since~$\cj < \ck$, there is a sequence of cover relations~$\cj \lessdot \cj_1 \lessdot \cdots \lessdot \cj_{\ell} \lessdot \ck$  in~$\MTub(C_n)$. Since~$i \not<_{\cj} k$ and~$i <_{\ck}k$, there must exist a cover relation~$\cj_r < \cj_{r+1}$ in~$\MTub(C_n)$ such that~$i \not<_{\cj_r} k$ and~$i <_{\cj_{r+1}} k$ in the respective~$G$-trees. In particular, it suffices to assume that~$\cj$ is the \emph{last} tubing in this chain such that~$(i,k) \in  \inv(\cj)$, and that~$\ck$ is the first tubing in the chain thereafter such that~$(i,k) \notin \inv(\ck)\cup\inc(\ck)$.  \par 
    We will examine the cover relations~$\cj \lessdot \cj_1$ and~$\cj_{\ell} \lessdot \ck$ in order to find our contradiction. Note that neither of these cover relations may come from swapping the top element, since such a tree move changes only one order relation. There are two options for tree moves for a cover relation. Say that the following is the cover relation from~$\cj$ to~$\cj_1$, where~$k \in T_3$:
    \begin{center}
 \begin{tikzpicture}[scale=0.7]
      \begin{scope}[xshift=3in]
         \draw (0,0.5) node[draw, circle] (T4)     {$T_4$};
         \draw (0,-1) node[draw, circle] (y)     {$y$};
         \draw (-1.5,-2) node[draw, circle] (x)  {$i$};
         \draw (-2.5,-3) node[draw, circle] (T1) {$T_1$};
         \draw (-0.5,-3) node[draw, circle] (T2) {$T_2$};
         \draw (1,-2) node[draw, circle] (T3) {$T_3$};

         \draw (T4)--(y)--(x)--(T1);
         \draw (x)--(T2);
         \draw (y)--(T3);
         \draw (0,-4) node (NM) {$\cj_1$};
     \end{scope} 
     \begin{scope}[xshift = 1.5in, yshift = -0.5in]
         \draw (0,0) node {\Large${ \longrightarrow}$};
     \end{scope}
     \begin{scope}[xshift = 0in]
        \draw (0,0.4) node[draw, circle] (T4)     {$T_4$};
         \draw (1.5,-2) node[draw, circle] (y)     {$y$};
         \draw (0,-1) node[draw, circle] (x)  {$i$};
         \draw (-1,-2) node[draw, circle] (T1) {$T_1$};
         \draw (0.5,-3) node[draw, circle] (T2) {$T_2$};
         \draw (2.5,-3) node[draw, circle] (T3) {$T_3$};

         \draw (T4)--(x)--(y)--(T3);
         \draw (x)--(T1);
         \draw (y)--(T2);
         \draw (0,-4) node (NM) {$\cj$};
     \end{scope}
 \end{tikzpicture}
 \end{center}
 Since~$i<_m y <_m k$ in the cyclic order,  the maximal element~$m$ is either greater than~$k$ or less than~$i$. In particular,~$i < y < k$ in the standard order on integers as well. For $\cj\lessdot\cj_1$ to be a cover relation it must be that~$i > y$, a contradiction. So we have that the following is the cover relation from~$\cj$ to~$\cj_1$,  where~$k \in T_1$:
    \begin{center}
 \begin{tikzpicture}[scale=0.7]
      \begin{scope}[xshift=0in]
         \draw (0,0.5) node[draw, circle] (T4)     {$T_4$};
         \draw (0,-1) node[draw, circle] (y)     {$y$};
         \draw (-1.5,-2) node[draw, circle] (x)  {$i$};
         \draw (-2.5,-3) node[draw, circle] (T1) {$T_1$};
         \draw (-0.5,-3) node[draw, circle] (T2) {$T_2$};
         \draw (1,-2) node[draw, circle] (T3) {$T_3$};

         \draw (T4)--(y)--(x)--(T1);
         \draw (x)--(T2);
         \draw (y)--(T3);
         \draw (0,-4) node (NM) {$\cj$};
     \end{scope} 
     \begin{scope}[xshift = 1.5in, yshift = -0.5in]
         \draw (0,0) node {\Large${ \longrightarrow}$};
     \end{scope}
     \begin{scope}[xshift = 3in]
        \draw (0,0.4) node[draw, circle] (T4)     {$T_4$};
         \draw (1.5,-2) node[draw, circle] (y)     {$y$};
         \draw (0,-1) node[draw, circle] (x)  {$i$};
         \draw (-1,-2) node[draw, circle] (T1) {$T_1$};
         \draw (0.5,-3) node[draw, circle] (T2) {$T_2$};
         \draw (2.5,-3) node[draw, circle] (T3) {$T_3$};

         \draw (T4)--(x)--(y)--(T3);
         \draw (x)--(T1);
         \draw (y)--(T2);
         \draw (0,-4) node (NM) {$\cj_1$};
     \end{scope}
 \end{tikzpicture}
 \end{center}
 Observe that $k <_m i <_m y$ in circular order and~$i<y$ in the standard order on integers. In particular, the maximal element for both~$\cj$ and~$\cj_1$ is some~$m$ such that~$i < m < k$. \par 
 Now we consider the cover relation~$\cj_\ell \lessdot \ck$. Again there are two options for tree moves, and only one is valid. Say that the following is the cover relation from~$\cj_\ell$ to~$\ck$, where~$i \in T_1$:
     \begin{center}
 \begin{tikzpicture}[scale=0.7]
      \begin{scope}[xshift=3in]
         \draw (0,0.5) node[draw, circle] (T4)     {$T_4$};
         \draw (0,-1) node[draw, circle] (y)     {$k$};
         \draw (-1.5,-2) node[draw, circle] (x)  {$x$};
         \draw (-2.5,-3) node[draw, circle] (T1) {$T_1$};
         \draw (-0.5,-3) node[draw, circle] (T2) {$T_2$};
         \draw (1,-2) node[draw, circle] (T3) {$T_3$};

         \draw (T4)--(y)--(x)--(T1);
         \draw (x)--(T2);
         \draw (y)--(T3);
         \draw (0,-4) node (NM) {$\ck$};
     \end{scope} 
     \begin{scope}[xshift = 1.5in, yshift = -0.5in]
         \draw (0,0) node {\Large${ \longrightarrow}$};
     \end{scope}
     \begin{scope}[xshift = 0in]
        \draw (0,0.4) node[draw, circle] (T4)     {$T_4$};
         \draw (1.5,-2) node[draw, circle] (y)     {$k$};
         \draw (0,-1) node[draw, circle] (x)  {$x$};
         \draw (-1,-2) node[draw, circle] (T1) {$T_1$};
         \draw (0.5,-3) node[draw, circle] (T2) {$T_2$};
         \draw (2.5,-3) node[draw, circle] (T3) {$T_3$};

         \draw (T4)--(x)--(y)--(T3);
         \draw (x)--(T1);
         \draw (y)--(T2);
         \draw (0,-4) node (NM) {$\cj_\ell$};
     \end{scope}
 \end{tikzpicture}
 \end{center}
 Observe that~$i <_{m'} x <_{m'} k$ in circular order. Since $i<k$ in the integers, this implies that $i<x<k$ is true in the standard order on integers as well.
Since $\cj_\ell \lessdot \ck$, we have $k<x$ in the standard order, a contradiction.
So we have that the following is the cover relation from~$\cj_\ell$ to~$\ck$, where~$i \in T_3$:
    \begin{center}
 \begin{tikzpicture}[scale=0.7]
      \begin{scope}[xshift=0in]
         \draw (0,0.5) node[draw, circle] (T4)     {$T_4$};
         \draw (0,-1) node[draw, circle] (y)     {$r$};
         \draw (-1.5,-2) node[draw, circle] (x)  {$k$};
         \draw (-2.5,-3) node[draw, circle] (T1) {$T_1$};
         \draw (-0.5,-3) node[draw, circle] (T2) {$T_2$};
         \draw (1,-2) node[draw, circle] (T3) {$T_3$};

         \draw (T4)--(y)--(x)--(T1);
         \draw (x)--(T2);
         \draw (y)--(T3);
         \draw (0,-4) node (NM) {$\cj_\ell$};
     \end{scope} 
     \begin{scope}[xshift = 1.5in, yshift = -0.5in]
         \draw (0,0) node {\Large${ \longrightarrow}$};
     \end{scope}
     \begin{scope}[xshift = 3in]
        \draw (0,0.4) node[draw, circle] (T4)     {$T_4$};
         \draw (1.5,-2) node[draw, circle] (y)     {$r$};
         \draw (0,-1) node[draw, circle] (x)  {$k$};
         \draw (-1,-2) node[draw, circle] (T1) {$T_1$};
         \draw (0.5,-3) node[draw, circle] (T2) {$T_2$};
         \draw (2.5,-3) node[draw, circle] (T3) {$T_3$};

         \draw (T4)--(x)--(y)--(T3);
         \draw (x)--(T1);
         \draw (y)--(T2);
         \draw (0,-4) node (NM) {$\ck$};
     \end{scope}
 \end{tikzpicture}
 \end{center}
Observe that $k <_{m'} x <_{m'} i$ in circular order and~$k < x$ in the standard order. Note here that~$x >_{\ck}i$. On the other hand, there is a tube~$Y$ (the down-set of~$y$ in~$\cj$) in~$\cj$ that contains both~$i$ and~$k$. Since the maximal element~$m$ for~$\cj$ is between~$i$ and~$k$, this tube must contain all elements from~$k$ to~$i$ in that circular order for~$\cj$. That tube~$Y$ necessarily contains all elements greater than~$k$ and less than~$i$. In particular,~$x \in Y$. However, this is the same tube~$Y$ in~$\cj_1$ (the down-set of~$i$), so~$x <_{\cj_1} i$. But~$\cj_1 < \ck$ and so we have that~$(i,x)$ is in the set of problematic inversions, and~$x-i > k-i$, a contradiction. Thus no such pair~$(i,k)$ can exist, and so the set is empty, and we have our claim.
\end{proof}
The backwards direction of Theorem~\ref{thm:inversion_order} will prove more difficult, and will proceed in two main parts: First, we prove that~$\inv(\cj) \subseteq \inv(\ck) \cup \inc(\ck)$ implies the existence of a descent in~$\dec(\ck)$ that is \emph{not} an inversion in~$\inv(\cj)$. This will heavily leverage the $\Cut$ map. Second, we prove that the existence of such a descent implies the existence of a tubing~$\ck' \lessdot \ck$ such that~$\inv(\cj) \subseteq \inv(\ck') \cup \inc(\ck')$, and the full result will follow. We begin by deducing a relation on the images of~$\cj$ and~$\ck$ under the $\Cut$ map.
\begin{proposition} \label{prop: cut ineq}
    If~$\inv(\cj) \subseteq \inv(\ck) \cup \inc(\ck)$ then~$\Cut(\cj) \leq \Cut(\ck)$.
\end{proposition}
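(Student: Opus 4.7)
The plan is to reduce the claim to Proposition~\ref{prop:path_order} applied to $\MTub(P_n)$: it suffices to show
\[
\inv(\Cut(\cj)) \subseteq \inv(\Cut(\ck)) \cup \inc(\Cut(\ck)).
\]
The single intermediate fact I would establish is that for any $\cl \in \MTub(C_n)$, every order relation of the $G$-tree of $\Cut(\cl)$ is already an order relation of $G_\cl$. This is a direct unpacking of Lemma~\ref{lem:Cut_Gtrees_forward}, which identifies the $G$-tree of $\Cut(\cl)$ with the unzipping of $G_\cl$: the first two clauses in the definition of the unzipping copy relations already present in $G_\cl$, and the third clause $a <_T m$ (for all $a \neq m$) holds in $G_\cl$ because $m$ is the root of $G_\cl$. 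Taking transitive closure preserves this containment because $<_\cl$ is already transitive. Consequently
\[
\inv(\Cut(\cl)) \subseteq \inv(\cl), \quad \coinv(\Cut(\cl)) \subseteq \coinv(\cl), \quad \inc(\cl) \subseteq \inc(\Cut(\cl)).
\]

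Next, I would fix an arbitrary $(i,j) \in \inv(\Cut(\cj))$ with $i<j$. Applying the first inclusion to $\cl = \cj$ gives $(i,j) \in \inv(\cj)$, and then the hypothesis puts $(i,j) \in \inv(\ck) \cup \inc(\ck)$. If $(i,j) \in \inc(\ck)$, the third inclusion applied to $\cl = \ck$ lands $(i,j)$ in $\inc(\Cut(\ck))$, as required. If instead $(i,j) \in \inv(\ck)$, then $j <_\ck i$ and in particular $i \not <_\ck j$, i.e. $(i,j) \notin \coinv(\ck)$; the contrapositive of the second inclusion applied to $\cl = \ck$ then gives $(i,j) \notin \coinv(\Cut(\ck))$. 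Since $\inv$, $\coinv$, $\inc$ partition the set of pairs $\{(i,j) \mid i<j\}$ (Remark~\ref{rem:set_rels.}), this places $(i,j) \in \inv(\Cut(\ck)) \cup \inc(\Cut(\ck))$, completing the required containment.

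I do not expect a substantive obstacle: essentially all of the content is bundled into the observation that unzipping only removes relations, which is immediate from the three-clause description of the unzipping poset. The rest is a diagram chase through the inclusions among $\inv$, $\coinv$, and $\inc$ for $\cj$, $\ck$, $\Cut(\cj)$, and $\Cut(\ck)$, followed by an appeal to Proposition~\ref{prop:path_order}.
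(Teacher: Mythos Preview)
Your proof is correct and follows essentially the same route as the paper's. Both arguments reduce to Proposition~\ref{prop:path_order} by showing $\inv(\Cut(\cj)) \subseteq \inv(\Cut(\ck)) \cup \inc(\Cut(\ck))$, using the single observation that the $\Cut$ map only removes order relations from the $G$-tree (so $\inv(\Cut(\cl)) \subseteq \inv(\cl)$, $\coinv(\Cut(\cl)) \subseteq \coinv(\cl)$, and $\inc(\cl) \subseteq \inc(\Cut(\cl))$); the paper packages the same chain of inclusions slightly more tersely as a displayed containment rather than an elementwise case split, but the content is identical.
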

\begin{proof}

    Let~$[m] \coloneqq \{1,...,m\}$ and~$[m,n] \coloneqq \{m,m+1,...,n\}$.
    If~$m$ is the maximum of~$\cj$, it must be the case that~${\inv(\Cut(\cj)) = \inv(\cj) \cap \left( [m]^2 \cup [m,n]^2\right)}$. 
    Since the $\Cut$ map simply creates incomparable elements (never switching the order relations),~$\inv(\ck) \subseteq \inv(\Cut(\ck)) \cup \inc(\Cut(\ck))$. Furthermore,~$\inc(\ck) \subseteq \inc(\Cut(\ck))$. It follows that 
    \begin{align*}
        \inv(\Cut(\cj)) &= \inv(\cj) \cap \left( [m]^2 \cup [m,n]^2\right) \\
        &\subseteq \inv(\cj) \\
        &\subseteq \inv(\ck) \cup \inc(\ck) \\
        &\subseteq \left(\inv(\Cut(\ck)) \cup \inc(\Cut(\ck))\right) \cup \inc(\Cut(\ck)) \\
        &= \inv(\Cut(\ck)) \cup \inc(\Cut(\ck))\, .
    \end{align*}
    By Proposition \ref{prop:path_order}, the above containment implies that~$\Cut(\cj) \leq \Cut(\ck)$.
\end{proof}
\begin{corollary}\label{cor:cutmap_orderpreserving}
    If $\cj \leq \ck$ in $\MTub(C_n)$ then $\Cut(\cj) \leq \Cut(\ck)$ in $\MTub(P_n)$. In other words, the $\Cut$ map is order preserving.
\end{corollary}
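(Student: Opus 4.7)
The plan is to chain together the two immediately preceding results. Specifically, assume $\cj \leq \ck$ in $\MTub(C_n)$. The equality case $\cj = \ck$ is trivial since then $\Cut(\cj) = \Cut(\ck)$. For the strict case $\cj < \ck$, I would invoke Proposition~\ref{prop: forward direction}, which gives the inversion containment $\inv(\cj) \subseteq \inv(\ck) \cup \inc(\ck)$. Then applying Proposition~\ref{prop: cut ineq} to this containment yields $\Cut(\cj) \leq \Cut(\ck)$ in $\MTub(P_n)$, as desired.

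So in essence the corollary is a one-line consequence of the two propositions that precede it; there is no real obstacle to overcome beyond separating the trivial equality case from the strict inequality case. The only subtlety worth noting is that Proposition~\ref{prop: forward direction} is stated for strict inequality, which is why I split cases, but neither case requires any additional combinatorial argument.

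As an alternative approach one could work at the level of cover relations, reducing to the case $\cj \lessdot \ck$ and analyzing the two types of tree moves from Definition~\ref{def:tree_move_cyclic} (the top-edge move and the ordinary binary tree move) and tracking their images under $\Cut$ using Lemma~\ref{lem:Cut_Gtrees_forward}. In the non-top-edge case, either both endpoints of the moved edge lie on the same side of the root $m$, in which case the move descends to a cover relation in $\MTub(P_n)$ between $\Cut(\cj)$ and $\Cut(\ck)$, or they lie on opposite sides (one in a zipper), in which case the move is invisible after cutting and $\Cut(\cj) = \Cut(\ck)$. The top-edge move likewise either unzips trivially or produces a comparable image. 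Such an analysis would recover the result without invoking Theorem~\ref{thm:inversion_order}'s forward direction, but it is strictly more work than the two-line proof sketched above, so I would default to the first approach.
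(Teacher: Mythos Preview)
Your proposal is correct and matches the paper's proof exactly: the paper simply states that the corollary is ``the direct application of Proposition~\ref{prop: forward direction} then Proposition~\ref{prop: cut ineq}.'' Your case split between equality and strict inequality is harmless but unnecessary, since when $\cj = \ck$ the inversion containment $\inv(\cj) \subseteq \inv(\ck) \cup \inc(\ck)$ holds trivially and Proposition~\ref{prop: cut ineq} still applies.
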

\begin{proof}
    This is the direct application of Proposition \ref{prop: forward direction} then Proposition \ref{prop: cut ineq}.
\end{proof}
\begin{lemma} \label{lemma: transistive}
    If~$\cx$ and~$\cy$ are maximal tubings of the path graph and~$\cx < \cy$, then there exists a descent in~$\cy$ that is a co-inversion in~$\cx$.
\end{lemma}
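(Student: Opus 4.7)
The plan is to pin down the required pair by analyzing the final step of a saturated chain from $\cx$ up to $\cy$. Choose any saturated chain $\cx = \cx_0 \lessdot \cx_1 \lessdot \cdots \lessdot \cx_k = \cy$ and set $\cz \coloneqq \cx_{k-1}$, so $\cz \lessdot \cy$. By Proposition \ref{prop: cover_relation_path}, this cover is realized by a tree move on a left edge of $G_\cz$: there are integers $x < y$ with $x$ a left child of $y$ in $G_\cz$, while in $G_\cy$ the element $x$ has been promoted to become the parent of $y$.

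With this in hand, the two required conditions can be read directly off the local picture of the tree move. In $G_\cy$, the element $x$ covers $y$, so $(x,y) \in \dec(\cy)$ in the sense of Definition \ref{def:inversions etc}. In $G_\cz$, the element $y$ covers $x$, so $(x,y) \in \asc(\cz) \subseteq \coinv(\cz)$. To carry the coinversion back to $\cx$, I will invoke the equivalent conditions of Proposition \ref{prop:path_order}: since $\cx \leq \cz$, we have $\coinv(\cx) \supseteq \coinv(\cz)$, and hence $(x,y) \in \coinv(\cx)$. Thus the single pair $(x,y)$ lies in $\dec(\cy) \cap \coinv(\cx)$, as required.

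No separate case analysis on left versus right edges is needed: for a \emph{cover} $\cz \lessdot \cy$, Definition \ref{def:MTub} requires $\topT_\cz(T) < \topT_\cy(J)$ for the exchanged tubes, which forces the orientation $x < y$ built into the left-edge tree move of Definition \ref{def: tree move}. There is no real obstacle here—the whole argument is a marriage of the local cover description with the global characterization of order via coinversion containment from Proposition \ref{prop:path_order}—so I expect the proof to be only a few lines.
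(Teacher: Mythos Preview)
Your proof is correct and follows essentially the same approach as the paper's own argument: both take the final cover in a saturated chain from $\cx$ to $\cy$, identify the pair $(x,y)$ flipped by that tree move as simultaneously a descent of $\cy$ and an ascent of the penultimate tubing, and then push the coinversion down to $\cx$ via the coinversion-containment characterization of Proposition~\ref{prop:path_order}.
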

\begin{proof}
    If~$\cx \lessdot \cy$ this is clear, as the tree move turns an ascent in~$\cx$ in to a descent in~$\cy$. \par 
    Say that~$\cx < \cx' \lessdot \cy$. So there exists an ascent~$(i<j)$ in~$\asc(\cx')$ that is turned in to a descent in~$\dec(\cy)$ in the following manner:
\begin{center}
 \begin{tikzpicture}[scale=0.8]
      \begin{scope}[xshift=0in]
         \draw (0,0.5) node[draw, circle] (T4)    {};
         \draw (0,-1) node[draw, circle] (y)    {$j$};
         \draw (-1.5,-2) node[draw, circle] (x) {$i$};
         \draw (-2.5,-3) node[draw, circle] (T1){};
         \draw (-0.5,-3) node[draw, circle] (T2) {};
         \draw (1,-2) node[draw, circle] (T3) {};

         \draw (T4)--(y)--(x)--(T1);
         \draw (x)--(T2);
         \draw (y)--(T3);
         \draw (0,-4) node (NM) {$\cx$};
     \end{scope} 
     \begin{scope}[xshift = 1.5in, yshift = -0.5in]
         \draw (0,0) node {\Large${ \longrightarrow}$};
     \end{scope}
     \begin{scope}[xshift = 3in]
        \draw (0,0.4) node[draw, circle] (T4)     {};
         \draw (1.5,-2) node[draw, circle] (y)    {$j$};
         \draw (0,-1) node[draw, circle] (x) {$i$};
         \draw (-1,-2) node[draw, circle] (T1) {};
         \draw (0.5,-3) node[draw, circle] (T2) {};
         \draw (2.5,-3) node[draw, circle] (T3) {};

         \draw (T4)--(x)--(y)--(T3);
         \draw (x)--(T1);
         \draw (y)--(T2);
         \draw (0,-4) node (NM) {$\cy$};
     \end{scope}
 \end{tikzpicture}
 \end{center}
    By Proposition~\ref{prop:path_order}, ~$\coinv(\cx') \subset \coinv(\cx)$. So~$(i,j) \in \dec(\cy)$ and~$(i,j) \in \asc(\cx') \subset \coinv(\cx') \subset \coinv(\cx)$.
\end{proof}

\begin{lemma} \label{lemma: Inversion}
    If~$\inv(\cj) \subseteq \inv(\ck) \cup \inc(\ck)$ and~$\Cut(\cj) < \Cut(\ck)$, then there exists a descent in~$\ck$ that is not an inversion of~$\cj$.
\end{lemma}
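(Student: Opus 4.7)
The plan is to apply Lemma~\ref{lemma: transistive} to the strict inequality $\Cut(\cj) < \Cut(\ck)$ in $\MTub(P_n)$, producing a descent $(i,j) \in \dec(\Cut(\ck)) \cap \coinv(\Cut(\cj))$. By the unzipping description of Lemma~\ref{lem:Cut_Gtrees_forward}, every order relation of $\Cut(\cj)$ is still an order relation of $\cj$ (the unzipping only removes cross-half relations while keeping every element below the maximum), so $i <_\cj j$, giving $(i,j) \in \coinv(\cj)$ and hence $(i,j) \notin \inv(\cj)$. If additionally $(i,j) \in \dec(\ck)$, the proof is complete.

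The main obstacle is that a cover in the unzipping $\Cut(\ck)$ need not be a cover of $\ck$: the unzipping can short-circuit paths in $G_\ck$ that traverse the opposite half of $[n]\setminus\{m\}$, where $m = \max(\ck)$. The plan is to locate a genuine descent of $\ck$ nearby. Since $j <_\ck i$, there is a saturated chain $j = z_0 \lessdot_\ck z_1 \lessdot_\ck \cdots \lessdot_\ck z_k = i$ in $G_\ck$, and because $(i,j)$ is a cover in the unzipping, none of the intermediates $z_1,\ldots,z_{k-1}$ lies in the same half as $i,j$ (nor equals $m$). Three configurations exhaust the possibilities: if $i,j<m$, use $(i,z_{k-1}) \in \dec(\ck)$; if $i,j>m$, use $(z_1,j) \in \dec(\ck)$; and if $i = m$ (so $j>m$), then either the unique $G_\ck$-child of $m$ equals $j$ and $(m,j) \in \dec(\ck)$, or that child is some $c<m$, in which case $j$'s parent $p$ in $G_\ck$ lies in $[1,m-1]$ and $(p,j) \in \dec(\ck)$. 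In each subcase the chosen parent-child pair spans the two halves of $[n]\setminus\{m\}$, which gives the required integer inequality for a descent.

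To verify that the chosen descent $(a,b)$ lies outside $\inv(\cj)$, the plan is a uniform contradiction argument: assuming $(a,b) \in \inv(\cj)$ and composing with the known relation $i <_\cj j$ (or $m <_\cj j$ in the $i=m$ case) produces an inversion of $\cj$ on a pair that is a co-inversion of $\ck$, violating the hypothesis $\inv(\cj) \subseteq \inv(\ck) \cup \inc(\ck)$. For instance, in the $i,j<m$ subcase, assuming $z_{k-1} <_\cj i$ forces $z_{k-1} <_\cj j$, so $(j,z_{k-1}) \in \inv(\cj)$; but $j <_\ck z_{k-1}$ via the chain places $(j,z_{k-1}) \in \coinv(\ck)$, the desired contradiction. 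The bookkeeping across the two halves of $[n]\setminus\{m\}$ and the need to identify the correct parent-child pair in each configuration is what drives the case complexity, but in every subcase the contradiction follows from the same short composition.
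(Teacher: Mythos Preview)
Your argument is correct and takes essentially the same route as the paper's: both apply Lemma~\ref{lemma: transistive}, use that coinversions of $\Cut(\cj)$ remain coinversions of $\cj$, and when the resulting descent $(i,j)$ of $\Cut(\ck)$ is not a cover in $\ck$, replace it by the cover between $j$ and its $G_\ck$-parent (which lies in the opposite half), deriving a contradiction with the hypothesis via a short composition with $i<_\cj j$.

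One remark worth making: your case $i,j<m$ is vacuous. A descent of $\Cut(\ck)$ with both endpoints in $[1,m-1]$ has $j$ as the \emph{right} child of $i$ in the binary search tree structure, and such covers (lying within some $T_s$, or from $a_s$ to the root of $T_s$) are always preserved by the shuffle; only left-zipper covers $a_s\lessdot_{\Cut(\ck)} a_{s+1}$ can break, and those are ascents. The paper states this directly: the only descents of $\Cut(\ck)$ that can fail to be covers in $\ck$ are those with both endpoints in the right zipper (your case $i,j>m$) together with the top cover $b_r\lessdot m$ (your case $i=m$). It is therefore slightly unfortunate that the one contradiction you spell out in full is for the vacuous case; in case $i,j>m$ the contradicting pair is $(z_1,i)$ (with $z_1<m<i$, $i<_\cj z_1$ from $i<_\cj j<_\cj z_1$, and $z_1<_\ck i$ from the chain), and in case $i=m$ it is $(p,m)$. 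Both go through exactly as you indicate.
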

\begin{proof}
    By Lemma \ref{lemma: transistive}, we have that there is a descent in~$\Cut(\ck)$ that is a co-inversion in~$\Cut(\cj)$. \par 
    The $\Cut$ map only removes relations, so if two elements are related in~$\Cut(\cj)$ then they are related in~$\cj$. Let~$(b'<b)$ be the descent in~$\Cut(\ck)$ that is a co-inversion in~$\Cut(\cj)$. So~$b' <_{\cj} b$ and~$b \lessdot_{\Cut(\ck)} b'$. We know that~$b <_{\ck}b'$, but this may not be a descent. If~$b \lessdot_\ck b'$ then we are done. \par  
    There is only one way that a descent in~$\Cut(\ck)$ is turned in to a non-descent inversion in the shuffle operation that turns~$\Cut(\ck)$ back in to~$\ck$, and that is, $b$ and $b'$ are both elements of the right zipper of $\Cut(\ck)$. All other cover relations are either preserved or are ascents. In this case,~$b \lessdot_{\ck}a$ where~$a$ is some element in the left zipper of $\Cut(\ck)$. Note this ensures that~$a<b'$ as well. In particular, the shuffle permutation has contiguous substring~$ba \cdots b'$.\par 
     So~$b \lessdot_{\ck}a$. If~$b \not<_{\cj} a$, then we have the claim (because~$a<b$ wouldn't be an inversion in~$\cj$). Assume for contradiction that~$b <_{\cj} a$. So we have that~$b \lessdot_{\ck}a <_{\ck}b'$ and~$b' <_{\cj} b <_{\cj} a$. This means~$(a,b')$ is a co-inversion in~$\ck$ and an inversion in~$\cj$. This contradicts our assumption that~$\inv(\cj) \subseteq \inv(\ck) \cup \inc(\ck)$, so the descent~$(a,b)$ in~$\dec(\ck)$ cannot be an inversion in~$\cj$. 
\end{proof}

\begin{lemma}\label{lem: inv subset implies order rel}
    Let $\cj,\ck\in\MTub(C_n)$ with $\cj\ne\ck$. If~$\inv(\cj) \subseteq \inv(\ck) \cup \inc(\ck)$ and~$\Cut(\cj) = \Cut(\ck)$, then there exists a descent in~$\ck$ that is not an inversion of~$\cj$.
\end{lemma}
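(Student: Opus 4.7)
The plan is to exploit the fiber structure of the $\Cut$ map. Set $\cx \coloneqq \Cut(\cj) = \Cut(\ck)$. By Proposition~\ref{prop:cutinv} we may write $\cj = \Sew_\cx(w_\cj)$ and $\ck = \Sew_\cx(w_\ck)$ for distinct in-order shufflings $w_\cj, w_\ck \in W_\cx$ of the left zipper $(a_1,\ldots,a_l)$ and the right zipper $(b_1,\ldots,b_r)$ of $\cx$. By Proposition~\ref{prop:gtree_zipper}, the zipper elements form a single totally ordered chain in each of $G_\cj$ and $G_\ck$, with the chain order recorded by the respective shuffle, and all non-zipper relations agree with those of $G_\cx$.

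The first key step is to translate the hypothesis into a weak-order comparison of $w_\cj$ and $w_\ck$. For any pair $(a_i, b_j)$ with $a_i$ in the left zipper and $b_j$ in the right zipper, we have $a_i < b_j$ as integers (since $a_i < m < b_j$ where $m$ is the root of $\cx$), and $a_i, b_j$ are always comparable in the $G$-trees of both $\cj$ and $\ck$, with $(a_i, b_j) \in \inv(\cj)$ if and only if $b_j$ precedes $a_i$ in $w_\cj$ (analogously for $\ck$). Because $\inc(\ck)$ contains no zipper--zipper pair, the hypothesis $\inv(\cj) \subseteq \inv(\ck) \cup \inc(\ck)$ restricted to this class becomes the positional inclusion
\[
\{(a_i,b_j): b_j \text{ precedes } a_i \text{ in } w_\cj\} \;\subseteq\; \{(a_i,b_j): b_j \text{ precedes } a_i \text{ in } w_\ck\}.
\]
Via the isomorphism of Proposition~\ref{prop: fiber is an interval}, this is exactly $w_\cj \leq w_\ck$ in the weak order on $W_\cx$, and since $w_\cj \neq w_\ck$ the inequality is strict.

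Next I would extract the desired descent. Because $W_\cx$ is a finite lattice interval in the weak order, pick a saturated chain from $w_\cj$ up to $w_\ck$ and let $w' \lessdot w_\ck$ be its final cover. By the description of cover relations in $W_\cx$ from the proof of Proposition~\ref{prop: fiber is an interval}, $w' = w_\ck s_k$ for some $k$ with $w_\ck(k) = b_j$ in the right zipper and $w_\ck(k+1) = a_i$ in the left zipper. Applying Proposition~\ref{prop:gtree_zipper}, $b_j$ and $a_i$ are consecutive in the zipper chain of $G_\ck$ with $b_j \lessdot_\ck a_i$; as $a_i < b_j$ as integers, this cover is exactly a descent $(a_i, b_j) \in \dec(\ck)$. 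In $w'$ the element $a_i$ precedes $b_j$, so $b_j$ does not precede $a_i$ in $w'$; since $w_\cj \leq w'$, the containment of zipper inversions forces $b_j$ not to precede $a_i$ in $w_\cj$ either, giving $(a_i, b_j) \notin \inv(\cj)$ and completing the argument.

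The main obstacle I expect is the bookkeeping in step one, confirming that the full hypothesis $\inv(\cj) \subseteq \inv(\ck) \cup \inc(\ck)$ genuinely reduces to its zipper--zipper restriction. This requires a short case analysis on pair type: pairs inside a single side of the cut behave identically in $\cj$ and $\ck$; pairs with both coordinates in hanging subtrees on opposite sides give incomparable elements in both $\cj$ and $\ck$ (and contribute no inversions); and the mixed cases with one zipper element and one opposite-side non-zipper element $q \leq_\cx b_j$ (or $q \leq_\cx a_i$) are governed precisely by where $b_j$ (or $a_i$) sits relative to $a_i$ (or $b_j$) in the shuffle, so they are already controlled by the zipper--zipper condition. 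Once this reduction is in place, the remainder of the proof is a direct application of the structure theorems already established for $W_\cx$ and $\Sew_\cx$.
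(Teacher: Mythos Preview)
Your proposal is correct and follows essentially the same approach as the paper's proof: both reduce to the weak-order comparison $w_\cj < w_\ck$ on $W_\cx$ via the zipper--zipper pairs, then extract a descent in $\ck$ from a descent of $w_\ck$ that is not an inversion of $w_\cj$. Your case analysis in the final paragraph is more than what is needed, since for the implication $\inv(\cj) \subseteq \inv(\ck)\cup\inc(\ck) \Rightarrow \inv(w_\cj)\subseteq\inv(w_\ck)$ you only have to observe that zipper--zipper pairs are always comparable in $G_\ck$, so any such pair landing in $\inv(\ck)\cup\inc(\ck)$ already lands in $\inv(\ck)$.
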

\begin{proof}
    The maximal tubings~$\cj$ and~$\ck$ are given by two different in-order shufflings of the left and right zippers~$a = (a_1,...,a_l)$ and~$b = (b_1,...,b_r)$ in~$\cx := \Cut(\cj) = \Cut(\ck)$. That is, $\cj = \Sew_\cx(w)$ and $\ck = \Sew_\cx(v)$ for $w,v \in W_\cx$. Since $\inv(\cj) \subseteq \inv(\ck) \cup \inc(\ck)$, we must have that $\inv(w) \subseteq \inv(v)$, and therefore $w<v$ in the weak order. Hence there is a descent in~$v$ that is a co-inversion in~$w$, and a corresponding descent in $\ck$ that is not an inversion of $\cj$. 
\end{proof}
We are now able to move to the second part of the overall proof of the backwards direction of Theorem~\ref{thm:inversion_order}.

\begin{proposition} \label{prop: backwards outline_R}
     If~$\inv(\cj)\subseteq \inv(\ck)\cup\inc(\ck)$ and there exists a descent in~$\dec(\ck)$ not in~$\inv(\cj)$ that is either the top-edge or a right-edge in $G_\ck$, then there exists a tubing~$\ck' \lessdot \ck$ such that~$\inv(\cj) \subseteq \inv(\ck') \cup \inc(\ck')$.
\end{proposition}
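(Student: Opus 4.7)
The plan is to flip the given descent $(x,y)$ directly, producing $\ck' \lessdot \ck$ by Proposition~\ref{prop:cover_relation_cycle}, and then verify $\inv(\cj) \subseteq \inv(\ck') \cup \inc(\ck')$ by tracking exactly which pairs change status. Comparing $G_\ck$ and $G_{\ck'}$ via Figure~\ref{fig:tree_move_cyclic_top} (top edge) and Definition~\ref{def: tree move} (right edge), the only pairs that can \emph{leave} $\inv(\ck) \cup \inc(\ck)$ are $(x,y)$ itself, and, in the right-edge case, the pairs $(t_1, y)$ with $t_1 \in T_1$ and $t_1 < y$; every other altered pair either stays in the union (e.g.\ $(x,t_3)\in\inv(\ck)$ becomes $\inc(\ck')$) or enters it (e.g.\ $(y,t_1)\in\inc(\ck)$ with $t_1 > y$ becomes $\inv(\ck')$). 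For the top-edge case only $(x,y)$ changes status, so the hypothesis $(x,y) \notin \inv(\cj)$ finishes immediately.

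The bulk of the argument is the key claim for the right-edge case: no pair $(t_1,y)$ with $t_1 \in T_1$ and $t_1 < y$ lies in $\inv(\cj)$. I would prove this by contradiction. Assume $(t_1,y) \in \inv(\cj)$, so $y <_\cj t_1$. Then $\downJ(t_1)$ is an arc of $C_n$ containing both $t_1$ and $y$, and splits into two configurations according to whether $x$ lies in the arc. If $x \in \downJ(t_1)$, then $x <_\cj t_1$, so $(t_1, x) \in \inv(\cj)$; but $t_1 \in T_1 \subseteq \downK(x)$ forces $t_1 <_\ck x$, placing $(t_1, x)$ in $\coinv(\ck)$, a contradiction. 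In the remaining configuration the arc wraps: $\downJ(t_1) = \{a, a+1, \ldots, n, 1, \ldots, b\}$ with $a > b$, $b < x < a$, and (since $t_1, y$ both lie in the arc) $b \geq t_1$ and $a \leq y$.

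For the wrap case, the same "arc must contain $x$ but avoid $m_\ck$" reasoning applied now to $\downK(x) \subset C_n$ forces $\downK(x) \supseteq [t_1, y]$ as an integer interval; hence $m_\ck \in \{1,\ldots,t_1-1\} \cup \{y+1,\ldots,n\}$, and since $\{b+1, \ldots, a-1\} \subseteq [t_1,y]$ one actually has $m_\ck \in \downJ(t_1)$. If $m_\ck > y$, then $(t_1, m_\ck) \in \inv(\cj) \cap \coinv(\ck)$ (the latter because $m_\ck$ is the root of $\ck$), a contradiction. The subtle remaining subcase is $m_\ck < t_1$, where the natural pair $(m_\ck, t_1)$ is only a coinversion in $\cj$. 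Here the plan is to pivot instead to the root $m_\cj$ of $\cj$: since $m_\cj \notin \downJ(t_1)$ one has $m_\cj \in \{b+1, \ldots, a-1\}$, and $m_\cj \neq x$ (else $(x,y) \in \inv(\cj)$). From $\downK(x) \supseteq [t_1,y]$ together with the cyclic BST structure of $\ck$ in $<_{m_\ck}$, one checks that $\{t_1+1, \ldots, x-1\} \subseteq T_1$ and $\{x+1, \ldots, y-1\} \subseteq T_2$, so $m_\cj$ lies in $T_1$ or in $T_2$ according to $m_\cj < x$ or $m_\cj > x$. Accordingly either $(m_\cj, x)$ or $(m_\cj, y)$ is an inversion in $\cj$ (as $m_\cj$ is the root) and a coinversion in $\ck$, contradicting the hypothesis.

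The hardest step is the subcase $m_\ck < t_1$, in which the obvious pair through $m_\ck$ is merely a coinversion and does not witness a violation. Surmounting this requires the nonobvious pivot to $m_\cj$, together with the observation that the interval containment $\downK(x) \supseteq [t_1, y]$ forces the residual integer ranges to sit inside $T_1$ and $T_2$; this is what makes the pair $(m_\cj, x)$ or $(m_\cj, y)$ available as the contradicting inversion.
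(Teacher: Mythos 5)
Your overall strategy is the same as the paper's: flip the given descent to get $\ck'\lessdot\ck$, observe that the only pairs that can leave $\inv(\ck)\cup\inc(\ck)$ are $(x,y)$ itself and, in the right-edge case, the pairs $(t_1,y)$ with $t_1\in T_1$ and $t_1<y$, and then rule out $(t_1,y)\in\inv(\cj)$ by locating a root and exhibiting a pair that violates $\inv(\cj)\subseteq\inv(\ck)\cup\inc(\ck)$. That bookkeeping is correct, the top-edge case is correct, and your final pivot to $m_\cj$ (placing it in $T_1$ or $T_2$ and using $(m_\cj,x)$ or $(m_\cj,y)$) is essentially the paper's own case analysis on the root of $G_\cj$.

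There is, however, one genuine gap: the step ``the arc must contain $x$ but avoid $m_\ck$, which forces $\downK(x)\supseteq[t_1,y]$; hence $m_\ck\in\{1,\dots,t_1-1\}\cup\{y+1,\dots,n\}$'' has its logical dependency reversed. An arc containing $t_1,x,y$ and avoiding $m_\ck$ need \emph{not} contain $[t_1,y]$: if $m_\ck$ lay in $(t_1,x)$ or $(x,y)$, the arc $\downK(x)$ could wrap through $\{1,n\}$ (e.g.\ $\{x,\dots,n,1,\dots,t_1\}$) with its complement sitting inside $(t_1,y)$. So you must first prove $m_\ck\notin[t_1,y]$, and only then does the arc argument give $\downK(x)\supseteq[t_1,y]$. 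The missing input is the cyclic BST structure of $G_\ck$: since $t_1\in T_1$ and $y$ is the right child of $x$, one has $t_1<_{m_\ck}x<_{m_\ck}y$, which together with $t_1<y$ in the integers forces $t_1<x<y$ and $m_\ck\notin[t_1,y]$ --- exactly the observation the paper records first in its right-edge case. The same fact is used silently twice earlier in your argument: your Case A needs $t_1<x$ for $(t_1,x)$ to be an inversion of $\cj$ (if instead $x<t_1$, the pair $(x,t_1)$ is a harmless coinversion of $\cj$ and yields no contradiction), and your dichotomy ``$\downJ(t_1)$ contains $x$ or wraps'' is only the correct dichotomy when $t_1<x<y$. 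Once this one observation is inserted the proof closes; as a small simplification, recording $m_\cj\ge m_\ck$ at the outset (else $(m_\cj,m_\ck)\in\inv(\cj)\cap\coinv(\ck)$) renders your $m_\ck>y$ branch vacuous, since $m_\cj$ lies in $(t_1,y)$.
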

\begin{proof}
First, consider the maximal elements~$m_1$ and~$m_2$ in $G_\cj$ and $G_\ck$, respectively. If~$m_1 < m_2$, then we have~${(m_1,m_2) \in \inv(\cj)}$ and~${(m_1,m_2) \in \coinv(\ck)}$. So~$m_1 \geq m_{2}$.\par 
Consider a top- or right-edge descent~$(x<y) \in \dec(\ck) \setminus \inv(\cj)$. \par 
(Case 1: Top Edge) If~$(x,y)$ is the top edge in~$\ck$ (so~$x = m_2$), then the tubing~$\ck'$ corresponding to the tree move that swaps those elements easily satisfies~$\inv(\cj) \subseteq \inv(\ck') \cup \inc(\ck')$.\par 
(Case 2: Right Edge) Now consider the right-edge case, and let~$\ck'$ be the tubing for the cover relation~$\ck' \lessdot \ck$ pictured below.
\begin{center}
 \begin{tikzpicture}
      \begin{scope}
         \draw (0,0.5) node[draw, circle] (T4)     {$T_4$};
         \draw (0,-1) node[draw, circle] (y)     {$y$};
         \draw (-1.5,-2) node[draw, circle] (x)  {$x$};
         \draw (-2.5,-3) node[draw, circle] (T1) {$T_1$};
         \draw (-0.5,-3) node[draw, circle] (T2) {$T_2$};
         \draw (1,-2) node[draw, circle] (T3) {$T_3$};
         \draw (0,-4) node (name) {\Large$\ck'$};

         \draw (T4)--(y)--(x)--(T1);
         \draw (x)--(T2);
         \draw (y)--(T3);
     \end{scope} 
     \begin{scope}[xshift = 1.5in, yshift = -0.5in]
         \draw (0,0) node {\Large${ \lessdot}$};
     \end{scope}
     \begin{scope}[xshift = 3in]
        \draw (0,0.4) node[draw, circle] (T4)     {$T_4$};
         \draw (1.5,-2) node[draw, circle] (y)     {$y$};
         \draw (0,-1) node[draw, circle] (x)  {$x$};
         \draw (-1,-2) node[draw, circle] (T1) {$T_1$};
         \draw (0.5,-3) node[draw, circle] (T2) {$T_2$};
         \draw (2.5,-3) node[draw, circle] (T3) {$T_3$};
         \draw (0,-4) node (name) {\Large$\ck$};

         \draw (T4)--(x)--(y)--(T3);
         \draw (x)--(T1);
         \draw (y)--(T2);
     \end{scope}
 \end{tikzpicture}
 \end{center}\par 
We claim that~$\inv(\cj) \subseteq \inv(\ck') \cup \inc(\ck')$. Assume for contradiction that there exists a coinversion~$(a<y)$ in~$\ck'$ that is an inversion in~$\inv(\cj)$. So $y \in \downJ(a)$ and $a \in \downK'(y)$. The new order relations in~$\ck'$ that are not in~$\ck$ are between elements in~$T_1$ and~$y$. So~$a \in T_1$ and~$a <_{m_2} x <_{m_2} y$ in the cyclic order. Since~$a < y$ in the standard order, we have that~$a < x < y$ in the standard order. \par
\begin{center}
\begin{tikzpicture}[scale=0.9]
    \draw (0,0) circle (2cm);
    \draw (0:1.8cm) node (ar1) {$\downarrow$};
    \draw (90:1.8cm) node (ar2) {$\rightarrow$};
    \draw (180:1.8cm) node (ar3) {$\uparrow$};    
    \draw (270:1.8cm) node (ar4) {$\leftarrow$};
    \draw (100:2.2cm) node (y) {$y$};
    \draw (220:2.2cm) node (a) {$a$};
    \draw (170:2.4cm) node (mJ) {$x$};
    \draw  (5:2.2cm) node (n) {$n$};
    \draw (-5:2.2cm) node (n1)  {$1$};
    \draw (-60:2.27) node (x) {$m_2$};
\end{tikzpicture}
\end{center}\par 
If~$x \in \downJ(a)$ then~$(a,x) \in \inv(\cj)$, but~$(a,x) \in \coinv(\ck)$, which contradicts our original assumption. If~${x \notin \downJ(a)}$, then~$m_1$ is strictly between~$a$ and~$y$ in the standard order (otherwise the subgraph induced by $\downJ(a)$ is not connected). For the same reason, we get that~$m_1 \in \downK(x)$. We conclude the proof by considering each of the following cases:\par 
\begin{itemize}
     \item If $m_1 = x$, then $(m_1,y) = (x,y) \in \inv(\cj)$.
\item If $m_1 = y$, then $y \notin \downJ(a)$, so $(a,y) \notin \inv(\cj)$.
\item If $m_1 \in T_1$, then $a < m_1 < x$ in the standard order. So $(m_1,x) \in \inv(\cj)$ and $(m_1,x) \in \coinv(\ck)$.   
\item If $m_1 \in T_2 \cup T_3$, then~$(m_1,y) \in \coinv(\ck)$, but~$(m_1,y) \in \inv(\cj)$. 
\end{itemize}

In any case, there is a contradiction with either the original assumption that $\inv(\cj) \subseteq \inv(\ck) \cup \inc(\ck)$ and $(x,y) \notin \inv(\cj)$, or with our assumption that $(a,y)$ was an inversion in $\cj$. It follows that no such~$a$ exists, so $\inv(\cj) \subseteq \inv(\ck') \cup \inc(\ck')$.
\end{proof}
\begin{proposition}\label{prop: backwards outline_L}
    If~$\inv(\cj)\subseteq \inv(\ck)\cup\inc(\ck)$ and there exists a descent in~$\dec(\ck)$ not in~$\inv(\cj)$ that is a left-edge in $G_\ck$, then there exists a tubing~$\ck' \lessdot \ck$ such that~$\inv(\cj) \subseteq \inv(\ck') \cup \inc(\ck')$.
\end{proposition}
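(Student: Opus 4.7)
The plan is to parallel the case analysis of Proposition~\ref{prop: backwards outline_R}, performing the tree move on the left-edge descent $(x,y)$ and verifying the inversion-containment condition. Let $(x,y)\in\dec(\ck)\setminus\inv(\cj)$ be the given left-edge descent, so $x<y$ as integers and $y$ is the (cyclic) left child of $x$ in $G_\ck$. The condition $y<_{m_2}x$ in the cyclic order at the root $m_2$ of $\ck$ is compatible with the standard-order condition $x<y$ only when $x<m_2<y$; thus the edge crosses the root. Let $T_1, T_2$ be the cyclic-left and cyclic-right subtrees of $y$ in $G_\ck$, and let $T_3$ be the cyclic-right subtree of $x$. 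Then $T_3\subseteq (x,m_2)$, so every $t\in T_3$ satisfies $x<t<m_2<y$. Let $\ck'$ be the tubing obtained from $\ck$ by the tree move on $(x,y)$; then $\ck'\lessdot \ck$ by Proposition~\ref{prop:cover_relation_cycle}. In $\ck'$, the element $y$ sits above $x$ with $T_1$ as the cyclic-left subtree of $y$, while $T_2$ and $T_3$ become the cyclic-left and cyclic-right subtrees of $x$, respectively.

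A comparison of the partial orders induced by $\ck$ and $\ck'$ shows that the only pairs whose status changes are $(x,y)$ (moving from $\inv(\ck)$ into $\coinv(\ck')$), each pair $(t,y)$ with $t\in T_3$ (moving from $\inc(\ck)$ into $\coinv(\ck')$, since $t<y$), and the pairs involving $x$ and elements $a\in T_1$ (moving from $\coinv(\ck)$ or $\inv(\ck)$ into $\inc(\ck')$). The last type of change is harmless for the containment $\inv(\cj)\subseteq \inv(\ck')\cup\inc(\ck')$, so it suffices to verify that $(x,y)$ and each $(t,y)$ with $t\in T_3$ fail to lie in $\inv(\cj)$. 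The pair $(x,y)$ is excluded by hypothesis.

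For the pairs $(t,y)$ with $t\in T_3$, I would argue by contradiction: suppose $y<_\cj t$, so that $y\in\downJ(t)$. Then $\downJ(t)$ is a connected arc of $C_n$ containing both $t$ and $y$, and since $t<m_2<y$ there are exactly two such arcs. If $\downJ(t)=\{t,t+1,\ldots,y\}$, then $m_2\in\downJ(t)$, giving $m_2<_\cj t$ and hence $(t,m_2)\in\inv(\cj)$; but since $m_2$ is the root of $\ck$, we also have $(t,m_2)\in\coinv(\ck)$, contradicting the hypothesis. Otherwise $\downJ(t)=\{y,y+1,\ldots,n,1,\ldots,t\}$, the wrap-around arc that avoids $m_2$. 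In this case $m_1\notin\downJ(t)$ forces $m_1\in(t,y)$, and the hypothesis $(x,y)\notin\inv(\cj)$ leaves the two sub-possibilities that $x<_\cj y$ or that $x$ and $y$ are incomparable in $\cj$. In each sub-case, I plan to use the arc shape of $\downJ(y)$, which, as a sub-arc of $\downJ(t)$ with $y$ at an endpoint, must be a prefix of the path $y,y+1,\ldots,n,1,\ldots,t$, together with $x\in\downJ(t)$, to locate an element whose pairing with $t$, $m_2$, or $x$ yields a pair in $\inv(\cj)\cap\coinv(\ck)$.

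The principal obstacle is the wrap-around case. Unlike the right-edge situation of Proposition~\ref{prop: backwards outline_R}, where all of the witness elements lie on one side of the root and a short dichotomy on $m_1$ produces a contradiction, here $\downJ(t)$ sits on the opposite side of the cycle from $m_2$ and no element of $\downJ(t)$ is automatically comparable to $m_2$ in $\cj$. The resolution will mirror the right-edge proof in spirit but requires carefully tracking connectivity constraints along the wrap-around arc, exploiting that $y$ is an endpoint of the path $\downJ(t)$ and that $1$ and $n$ are interior to this path, in order to extract the needed witness.
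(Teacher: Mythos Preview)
Your plan is to show that the tree move on the given left-edge descent always produces a $\ck'$ with $\inv(\cj)\subseteq\inv(\ck')\cup\inc(\ck')$. That stronger claim is false, so the wrap-around case you flag as the ``principal obstacle'' cannot be closed. A counterexample in $\MTub(C_5)$: let $G_\ck$ have root $3$ with unique child $1$, left child $4$ and right child $2$ of $1$, and right child $5$ of $4$; let $G_\cj$ have root $3$ with child $2$, child $5$ of $2$, and children $4$ (left) and $1$ (right) of $5$. Then $\coinv(\ck)=\{(1,3),(2,3)\}$ is disjoint from $\inv(\cj)=\{(2,4),(2,5),(3,4),(3,5)\}$, the pair $(x,y)=(1,4)$ is a left-edge descent of $\ck$ with $(1,4)\in\inc(\cj)$, and $T_3=\{2\}$. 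After the tree move on $(1,4)$ the pair $(2,4)$ becomes a coinversion of $\ck'$, yet $(2,4)\in\inv(\cj)$. Your bookkeeping of which pairs change status is correct; the failure is exactly a pair $(t,y)$ with $t\in T_3$, and no connectivity argument along $\downJ(t)$ can rule it out. (Incidentally, your assertion that $y$ is an endpoint of $\downJ(y)$ is not justified in general either.)

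The paper does not try to make the left-edge $\ck'$ work. Instead, assuming some bad $a\in T_3$ with $(a,y)\in\inv(\cj)$ exists, it locates a \emph{right}-edge descent of $\ck$ not in $\inv(\cj)$ and invokes Proposition~\ref{prop: backwards outline_R}. One first rules out $y<m_1\le n$ (via the coinversion $(a,m_2)$ of $\ck$), so $m_2\le m_1<y$; then the arc $\downJ(a)$ must contain $x$, giving $x<_\cj a$. Now walk down the chain of right children $x=a_0,a_1,\ldots,a_r$ in $G_\ck$: all $a_i$ lie in $(x,m_2)$, so each $(a_i,a_{i+1})$ is a right-edge descent of $\ck$. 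If every one of them lay in $\inv(\cj)$, the chain $a_r<_\cj\cdots<_\cj a_1<_\cj x<_\cj a$ would place each $a_i$ in $\downJ(x)$ while $a\notin\downJ(x)$; since $\downJ(x)$ is an arc avoiding both $a$ and $m_1$, this forces $a_i<a$ as integers for all $i$, contradicting $a_r=\max T_3\ge a$. Hence some $(a_i,a_{i+1})$ is a right-edge descent of $\ck$ not in $\inv(\cj)$, and Proposition~\ref{prop: backwards outline_R} supplies the desired $\ck'$. In the counterexample above this right-edge descent is $(1,2)$.
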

\begin{proof}
As in Proposition \ref{prop: backwards outline_L}, consider the maximal elements~$m_1$ and~$m_2$ in the~$G$-trees of~$\cj$ and~$\ck$, respectively. If~$m_1 < m_2$ then~$(m_1,m_2) \in \inv(\cj)$ and~$(m_1,m_2) \in \coinv(\ck)$. So~$m_1 \geq m_{2}$.\par 
Consider a left-edge descent~$(x<y) \in \dec(\ck) \setminus \inv(\cj)$ (note this means~$m_1 \neq y$). Let~$\ck'$ be the tubing for the cover relation~$\ck' \lessdot \ck$ pictured below.
\begin{center}
 \begin{tikzpicture}[scale=0.8]
      \begin{scope}[xshift=3in]
         \draw (0,0.5) node[draw, circle] (T4)     {$T_4$};
         \draw (0,-1) node[draw, circle] (y)     {$x$};
         \draw (-1.5,-2) node[draw, circle] (x)  {$y$};
         \draw (-2.5,-3) node[draw, circle] (T1) {$T_1$};
         \draw (-0.5,-3) node[draw, circle] (T2) {$T_2$};
         \draw (1,-2) node[draw, circle] (T3) {$T_3$};
         \draw (0,-4) node (name) {\Large$\ck$};

         \draw (T4)--(y)--(x)--(T1);
         \draw (x)--(T2);
         \draw (y)--(T3);
     \end{scope} 
     \begin{scope}[xshift = 1.5in, yshift = -0.5in]
         \draw (0,0) node {\Large${ \lessdot}$};
     \end{scope}
     \begin{scope}
        \draw (0,0.4) node[draw, circle] (T4)     {$T_4$};
         \draw (1.5,-2) node[draw, circle] (y)     {$x$};
         \draw (0,-1) node[draw, circle] (x)  {$y$};
         \draw (-1,-2) node[draw, circle] (T1) {$T_1$};
         \draw (0.5,-3) node[draw, circle] (T2) {$T_2$};
         \draw (2.5,-3) node[draw, circle] (T3) {$T_3$};
         \draw (0,-4) node (name) {\Large$\ck'$};

         \draw (T4)--(x)--(y)--(T3);
         \draw (x)--(T1);
         \draw (y)--(T2);
     \end{scope}
 \end{tikzpicture}
 \end{center}
We claim that~$\inv(\cj) \subseteq \inv(\ck') \cup \inc(\ck')$. Say there exists a coinversion~$(a<y)$ in~$\ck'$ that is an inversion in~$\inv(\cj)$. If there is not, then we are done. In the other case, this proof will proceed by reducing to Proposition \ref{prop: backwards outline_R}. \par 
The order relations that are in~$\ck'$ but not in~$\ck$ are between elements in~$T_3$ and~$y$. So~$a \in T_3$ and~$y <_m x <_m a$ in the cyclic order for~$\ck$ and~$\ck'$. In particular, when moving clockwise around~$C_n$, the element~$m_2$ appears after~$a$ and before~$y$, and the element~$x$ appears after~$y$ and before~$a$. Since in addition~$x < y$ numerically, $x$ must appear after~$1$ and before~$a$. \par
\begin{center}
\begin{tikzpicture}[scale=0.7]
    \draw (0,0) circle (2cm);
    \draw (0:1.8cm) node (ar1) {$\downarrow$};
    \draw (90:1.8cm) node (ar2) {$\rightarrow$};
    \draw (180:1.8cm) node (ar3) {$\uparrow$};    
    \draw (270:1.8cm) node (ar4) {$\leftarrow$};
    \draw (100:2.2cm) node (y) {$y$};
    \draw (220:2.2cm) node (a) {$a$};
    \draw (170:2.4cm) node (mJ) {$m_2$};
    \draw  (5:2.2cm) node (n) {$n$};
    \draw (-5:2.2cm) node (n1)  {$1$};
    \draw (-60:2.2) node (x) {$x$};
\end{tikzpicture}
\end{center}
Since~$m_2 \leq m_1$, there are two possibilities for~$m_1$. Either,~$y<m_1 \leq n$, or~$m_2 \leq m_1 < y$. \par 
If~$y  < m_1 \leq n$, then any tube in~$\cj$ containing~$y$ and~$a$ contains~$m_2$. The tube~$\downJ(a)$ in~$\cj$ contains both~$a$ and~$y$. So~$m_2 \in \downJ(a)$, and so~$(a,m_2) \in \inv(\cj)$. However,~$m_{2} >_{\ck}a$ and~$m_2 > a$, so~$(a,m_2) \in \coinv(\ck)$. This contradicts our original assumption, and so cannot be the case. \par 
If~$m_2 \leq m_1 < y$ then~$x \in \downJ(a)$. In particular~$x <_{\cj} a$. Let~$(x,a_1,...,a_r)$ be the sequence of right-edges down from~$x$ in~$\ck$. We know~$a_1$ exists otherwise~$T_3$ is empty. We also know that each~$a_i$ appears after~$x$ and before~$m_
cj$ on the cycle, so~$x < a_1 < \cdots < a_r < m_2$ numerically. In particular, each~$(a_i,a_{i+1})$ is a descent. If any of the descents~$(a_i,a_{i+1}) \in \dec(\ck)$ are not inversions in~$\inv(\cj)$, then we can apply Proposition \ref{prop: backwards outline_R} to this descent and we are done. \par 
Say for contradiction that all the descents~$(a_i,a_{i+1})$ are inversions in~$\inv(\cj)$. Then 
\[a_r <_{\cj} a_{r-1} <_{\cj} \cdots <_{\cj} a_1 <_{\cj} x <_{\cj} a.\]
So~$a \neq a_i$ for any~$i$, and~$a \notin \downJ(x)$ but~$a_i \in \downJ(x)$ for~$i=1,...,r$. Since~$x < a < m_2 \leq m_1 < y$, each~$a_i$ must appear after~$x$ and before~$a$, so~$a > a_i$ for~$i=0,...,r$. In particular, $a>a_r$.
But~$a_r$ is the largest element (in both the normal and circular orderings for~$\ck$) in~$T_3$. So $a_r>a$, thus we have our contradiction. 
We conclude that one of these descents~$(a_i,a_{i+1})$ in~$\dec(\ck)$ must not be an inversion for~$\cj$, and we can apply Proposition \ref{prop: backwards outline_R}.
\end{proof}

\begin{proposition} \label{prop: Backwards}
    If~$\cj$ and~$\ck$ are tubings of the cycle graph, and~$\inv(\cj) \subseteq \inv(\ck) \cup \inc(\ck)$ then~$\cj\leq \ck$
\end{proposition}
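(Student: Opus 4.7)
The plan is to iteratively descend from $\ck$ toward $\cj$ along cover relations, using the preceding results as a black box. Proposition \ref{prop: cut ineq} already tells us that the hypothesis $\inv(\cj) \subseteq \inv(\ck) \cup \inc(\ck)$ implies $\Cut(\cj) \leq \Cut(\ck)$, so the pair $(\cj,\ck)$ falls into exactly one of three mutually exclusive cases: $\cj = \ck$, $\Cut(\cj) < \Cut(\ck)$, or $\Cut(\cj) = \Cut(\ck)$ with $\cj \neq \ck$.

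The first key step is to show that whenever $\cj \neq \ck$, some descent $(x<y) \in \dec(\ck) \setminus \inv(\cj)$ exists. This is exactly the content of Lemma \ref{lemma: Inversion} in the case $\Cut(\cj) < \Cut(\ck)$, and of Lemma \ref{lem: inv subset implies order rel} in the case $\Cut(\cj) = \Cut(\ck)$ with $\cj \neq \ck$. The second step is to convert this descent into a downward cover in $\MTub(C_n)$: depending on whether $(x,y)$ is the top edge or a right edge of $G_\ck$, or instead a left edge, invoke Proposition \ref{prop: backwards outline_R} or Proposition \ref{prop: backwards outline_L} respectively to produce $\ck' \lessdot \ck$ with $\inv(\cj) \subseteq \inv(\ck') \cup \inc(\ck')$.

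To finish, I would iterate: set $\ck^{(0)} = \ck$, and whenever $\cj \neq \ck^{(i)}$ apply the above to obtain a cover $\ck^{(i+1)} \lessdot \ck^{(i)}$ with the inclusion hypothesis preserved. Since $\MTub(C_n)$ is finite, this strictly descending chain terminates at some $\ck^{(N)}$, and the only way termination can occur is if $\cj = \ck^{(N)}$ (otherwise one more step is possible by the previous paragraph); assembling the covers gives $\cj = \ck^{(N)} \lessdot \cdots \lessdot \ck^{(0)} = \ck$, so $\cj \leq \ck$. The only real subtlety is verifying that the descent-finding step genuinely applies whenever $\cj \neq \ck$; this is where Proposition \ref{prop: cut ineq} plays the essential role, ruling out $\Cut(\cj) > \Cut(\ck)$ and guaranteeing that one of Lemma \ref{lemma: Inversion} or Lemma \ref{lem: inv subset implies order rel} covers each nontrivial case.
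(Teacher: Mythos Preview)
Your proposal is correct and follows essentially the same strategy as the paper's proof: use Proposition~\ref{prop: cut ineq} to reduce to the two nontrivial cases, invoke Lemmas~\ref{lemma: Inversion} and~\ref{lem: inv subset implies order rel} to produce a descent in $\dec(\ck)\setminus\inv(\cj)$, apply Propositions~\ref{prop: backwards outline_R} and~\ref{prop: backwards outline_L} to step down one cover, and iterate using finiteness. The paper additionally name-checks Lemma~\ref{lemma: transistive}, but that is already absorbed into the proof of Lemma~\ref{lemma: Inversion}, so your streamlined citation is fine.
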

\begin{proof}
    If~$\cj = \ck$ then we are done by Proposition \ref{prop:inv_equality}. So assume that~$\cj \neq \ck$. Proposition ~\ref{prop: cut ineq} gives that~$\Cut(\cj) \leq \Cut(\ck)$ in the poset of tubings of the path. Lemma ~\ref{lemma: transistive} states that if~$\Cut(\cj) < \Cut(\ck)$ then there exists a descent in~$\Cut(\ck)$ that is a co-inversion in~$\Cut(\cj)$. Together, Lemmas ~\ref{lemma: Inversion} and \ref{lem: inv subset implies order rel} state that there exists a descent in~$\ck$ that is not an inversion of~$\cj$. By Propositions \ref{prop: backwards outline_R} and \ref{prop: backwards outline_L}, there exists a~$\ck' \lessdot \ck$ such that~$\inv(\cj) \subseteq \inv(\ck') \cup \inc(\ck')$. \par 
    If~$\cj = \ck'$ then we have proven the claim. If not, we may repeat this process with~$\ck'$ to get a~$\ck'' \lessdot \ck'$, then~$\ck^{(3)} \lessdot \ck''$ and so on constructing~$\ck^{(i+1)} \lessdot \ck^{(i)}$ where~$\inv(\cj) \subseteq \inv(\ck^{(i+1)}) \cup \inc(\ck^{(i+1)})$ as long as~$\ck^{(i)} \neq \cj$. Since~$\MTub(C_n)$ is finite, this process must eventually terminate with some~$k$ such that~$\cj = \ck^{(k)}$, and we have the claim.
\end{proof}

\section{Lattice}\label{sec:Lattice}
This section will prove that~$\MTub(C_n)$ is a lattice, achieved by proving the existence of joins and meets. Again, the proof will heavily leverage the $\Cut$ map as well as the characterization of order in~$\MTub(C_n)$ given by Theorem \ref{thm:inversion_order}. \par 
First, given~$\cj \in \MTub(C_n)$ and~$\cx \in \MTub(P_n)$ such that~$\Cut(\cj) \lessdot \cx$, we will explicitly construct a specific element~$\cj^\cx \in \MTub(C_n)$ such that~$\Cut(\cj^\cx) = \cx$. Then, Lemmas \ref{lem:Tx_greater} and \ref{lem:key_lemma} will give properties of this~$\cj^\cx$. \par 
Let~$\cj \in \MTub(C_n)$. Then~$\Cut(\cj)$ is a maximal tubing of the path graph $P_n$.  Its $G$-tree takes the following form (cf. Figure \ref{fig:path_zipper}):
\begin{center}
    \begin{tikzpicture}[scale=0.8]
        \draw (0,0) node[draw,circle] (m) {$m$};
        
        \draw (-2,-2) node[draw,circle] (al) {$a_l$};
        \draw (-3,-3) node (ldts) {\raisebox{4pt}{$\iddots$}};
        \draw (-4,-4) node[draw,circle] (a2) {$1$};

        \draw (2,-2) node[draw,circle] (br) {$b_r$};
        \draw (3,-3) node (rdts) {\raisebox{4pt}{$\ddots$}};
        \draw (4,-4) node[draw,circle] (b2) {$n$};

        \draw (a2)--(ldts)--(al)--(m)--(br)--(rdts)--(b2);

        \draw (-1,-3) node[draw,circle] (Tl)  {$T_l$};
        \draw (-3,-5) node[draw,circle] (T2)  {$T_1$};
        \draw (1,-3) node[draw,circle] (Jr)  {$J_r$};
        \draw (3,-5) node[draw,circle] (J2)  {$J_1$};

        \draw (a2)--(T2);
        \draw (al)--(Tl);
        \draw (b2)--(J2);
        \draw (br)--(Jr);
        
    \end{tikzpicture}
\end{center}
And by the results in Section \ref{sec:Cut Map}, the tubing~$\cj = \Sew_{\Cut(\cj)}(w)$ is uniquely determined by~$w$: the in-order shuffling of the left~$a=(a_1,a_2,...,a_l)$ and right~$b=(b_1,...,b_r)$ sides of the zipper of~$\Cut(\cj)$.\par 
Using this notation, we get the following form for the~$G_\cj$:
\begin{center}
\begin{tikzpicture}[scale=1.8]
    \draw (0,0) node[draw, circle] (m) {$m$};
    \draw (0,-1) node[draw, circle] (wrl) {$w_{r+l}$};
    \draw (0,-1.7) node[inner sep=-2pt] (dts) {\Large\raisebox{6pt}{$\vdots$}};
    \draw (0,-2.25) node[draw, circle] (w2) {$w_1$};

    \draw (1,-2) node[draw,circle] (Krl) {$K_{r+l}$};
    \draw (1,-3.25) node[draw,circle] (K2) {$K_{1}$};

    \draw (m)--(wrl)--(dts)--(w2)--(K2);
    \draw (wrl)--(Krl);

\end{tikzpicture}
\end{center}
with the caveat that if~$w_s=a_i$ is in the left zipper then~$K_s=T_i$ is a right subtree (as it appears above) and if~$w_s=b_i$ is in the right zipper then~$K_s = J_i$ actually is a left subtree. 

Let~$\cx$ be a tubing of the path graph~$P_n$ that covers~$\Cut(\cj)$, so~$\Cut(\cj)\lessdot \cx$. This means that~$\Cut(\cj)$ is related to~$\cx$ by a single tree move, which turns a left edge in~$\Cut(\cj)$ (i.e. an ascent) in to a right-edge in~$\cx$ (i.e. a descent). This left edge in~$\Cut(\cj)$ can be in one of the following four places:
\begin{enumerate}
    \item entirely contained within a~$T_i$ or~$J_i$,
    \item connect the top element of~$J_i$ to the corresponding~$b_i$ in the right part of the zipper,
    \item be an edge connecting~$a_i$ to~$a_{i+1}$ in the left zipper, or
    \item Connect~$a_l$ to the maximal element~$m$.
\end{enumerate}
We are going to construct a special element~$\cj^\cx$ such that~$\cj < \cj^\cx$ in the maximal tubings of the cycle graph and~$\Cut(\cj^\cx) = \cx$. This element will be slightly different for each type of tree move listed above, but in any case~$\cj^\cx$ can be given by designating a shuffling~$w^\cx$ of the zipper of~$\cx$.
\begin{enumerate}
    \item If the left edge for the tree move connecting~$\Cut(\cj)$ to~$\cx$ is contained entirely within a~$T_i$ or~$J_i$ subtree, then the zippers of~$\Cut(\cj)$ and~$\cx$ are identical, and we let~$w^\cx = w$.
    \item If the left edge for the tree move connecting~$\Cut(\cj)$ to~$\cx$ connects the top element~$b_0$ of~$J_i$ to some~$b_i$ in~$\Cut(\cj)$, then the left zipper of~$\cx$ is identical to the left zipper of~$\Cut(\cj)$ and the right zipper of~$\cx$ has~$b_0$ inserted between~$b_i$ and~$b_{i+1}$. We construct~$w^\cx$ from~$w$ by replacing the string~$\cdots b_i w_s \cdots$ in~$w$ with~$\cdots b_i b_0 w_s\cdots~$.
    \item If the left edge for the tree move connecting~$\Cut(\cj)$ to~$\cx$ connects~$a_i$ to~$a_{i+1}$, then the left zipper of~$\cx$ has~$a_{i+1}$ removed from the left zipper of~$\Cut(\cj)$ and the right zipper is identical. We construct~$w^\cx$ from~$w$ by replacing~$\cdots w_ta_iw_{t+2} \cdots w_sa_{i+1}w_{s+2}$ with~$\cdots w_tw_{t+2} \cdots w_sa_iw_{s+2}\cdots$. Note that every element of~$w$ between~$a_i$ and~$a_{i+1}$ must be an element of~$b$. 
    \item If the left edge for the tree move connecting~$\Cut(\cj)$ to~$\cx$ is the one connecting~$a_\ell$ with~$m$, then the left zipper of~$\cx$ loses~$a_l$ from~$\Cut(\cj)$ and the right zipper gains~$m$. We construct~$w^\cx$ from~$w$ by replacing~$\cdots w_ta_lw_{t+2} \cdots w_{s+l}$ with~$\cdots w_tw_{t+2} \cdots w_{s+l}m$.
\end{enumerate}

In each case we can get a picture of~$\cj^\cx$ as well.
\begin{enumerate}
    \item  If the left edge for the tree move connecting~$\Cut(\cj)$ to~$\cx$ is contained entirely within a~$T_i$ or~$J_i$ subtree, say within a~$K_s$. Let~$K_s'$ be the new sub-tree for~$X$, then~$\cj$ to~$\cj^\cx$ is: 
    \begin{center}
    \begin{tikzpicture}[scale=1.2]
    \begin{scope}
        \draw (0,1) node (name) {$\cj^\cx$};
        \draw (0,0) node[draw,circle] (m) {$m$};
        \draw (0,-1) node[inner sep=0pt] (dts) {\raisebox{6pt}{$\vdots$}};
        \draw (0,-2) node[draw,circle] (w) {$w_s$};
        \draw (0,-3) node[inner sep=0pt] (dts2) {\raisebox{6pt}{$\vdots$}};
        \draw (1,-3) node[draw,circle] (K) {$K_s'$};

        \draw (m)--(dts)--(w)--(dts2);
        \draw (w)--(K);
    \end{scope}
    \begin{scope}[xshift = -0.9in,yshift=-1.5cm]
        \draw (0,0) node (arr) {$\longrightarrow$};
    \end{scope}
    \begin{scope}[xshift=-2in]
        \draw (0,1) node (name) {$\cj$};
        \draw (0,0) node[draw,circle] (m) {$m$};
        \draw (0,-1) node[inner sep=0pt] (dts) {\raisebox{6pt}{$\vdots$}};
        \draw (0,-2) node[draw,circle] (w) {$w_s$};
        \draw (0,-3) node[inner sep=0pt] (dts2) {\raisebox{6pt}{$\vdots$}};
        \draw (1,-3) node[draw,circle] (K) {$K_s$};

        \draw (m)--(dts)--(w)--(dts2);
        \draw (w)--(K);
    \end{scope}
    \end{tikzpicture}
    \end{center}
    
    \item If the left edge for the tree move connecting~$\Cut(\cj)$ to~$\cx$ connects the top element~$b_0$ of~$J_i$ to some~$b_i$ in~$\Cut(\cj)$, let~$J_i^1$ and~$J_i^2$ be the left and right subtrees of~$b_0$ respectively. Then~$\cj$ to~$\cj^\cx$ is:
    \begin{center}
    \begin{tikzpicture}[scale=1.4]
        \begin{scope}
        \draw (0,0.5) node (name) {$\cj$};
        \draw (0,-0.5) node[draw,circle] (m) {$m$};
        \draw (0,-1.25) node[inner sep=0pt] (dts) {\raisebox{6pt}{$\vdots$}};
        \draw (0,-2) node[draw,circle] (ws) {$w_s$};
        \draw (-1,-3) node[draw,circle] (Ks) {$K_s$};
        \draw (1,-3) node[draw,circle] (bi) {$b_i$};
        \draw (0,-4) node[draw,circle] (b0) {$b_0$};
        \draw (2,-4) node[draw,circle] (down)  {$\downJ(w_{s\!-\!2})$};
        \draw (-1,-5) node[draw,circle] (J1) {$J_i^1$};
        \draw (1,-5) node[draw,circle] (J2) {$J_i^2$};
        \draw (m)--(dts)--(ws)--(Ks);
        \draw (ws)--(bi)--(b0)--(J1);
        \draw (b0)--(J2);
        \draw (bi)--(down);
    \end{scope}
    \begin{scope}[xshift = 1.1in,yshift=-1.5cm]
        \draw (0,0) node (arr) {$\longrightarrow$};
    \end{scope}
    \begin{scope}[xshift=2in]
        \draw (0,0.5) node (name) {$\cj^\cx$};
        \draw (0,-0.5) node[draw,circle] (m) {$m$};
        \draw (0,-1.25) node[inner sep=0pt] (dts) {\raisebox{6pt}{$\vdots$}};
        \draw (0,-2) node[draw,circle] (ws) {$w_s$};
        \draw (-1,-3) node[draw,circle] (Ks) {$K_s$};
        \draw (1,-3) node[draw,circle] (b0) {$b_0$};
        \draw (0,-4) node[draw,circle] (J1) {$J_i^1$};
        \draw (2,-4) node[draw,circle] (bi) {$b_i$};
        \draw (1,-5) node[draw,circle] (J2) {$J_i^2$};
        \draw (3,-5) node[draw,circle] (down) {$\downJ(w_{s\!-\!2})$};
        \draw (m)--(dts)--(ws)--(Ks);
        \draw (ws)--(b0)--(J1);
        \draw (b0)--(bi)--(J2);
        \draw (bi)--(down);
    \end{scope}
    \end{tikzpicture}
    \end{center}

    \item If the left edge for the tree move connecting~$\Cut(\cj)$ to~$\cx$ connects~$a_i$ to~$a_{i+1}$, then every element in~$w$ between~$a_i$ and~$a_{i+1}$ is an element of~$b$. Say that string is~$b_p$ through~$b_q$ (so in our above notation for (3),~$b_p = w_{t+2}$ and~$b_q = w_s$). Then~$\cj$ to~$\cj^\cx$ is: 
    \begin{center}
    \begin{tikzpicture}[scale=1.3]
        \begin{scope}
        \draw (0,0.5) node (name) {$\cj$};
        \draw (0,-0.25) node[draw,circle] (m) {$m$};
        \draw (0,-0.9) node[inner sep=0pt] (dts) {\raisebox{6pt}{$\vdots$}};
        \draw (0,-1.75) node[draw,circle] (ws2) {$w_{s+2}$};
        \draw (0,-3) node[draw,circle] (ai1) {$a_{i+1}$};
        \draw (1,-4) node[draw,circle] (Ji1) {$T_{i+1}$};
        \draw (-1,-4) node[draw,circle] (bq) {$b_q$};
        \draw (-0.5,-4.5) node[inner sep=0pt] (dts2) {\raisebox{6pt}{$\ddots$}}; 
        \draw (0,-5) node[draw,circle] (bp) {$b_p$};
        \draw (1,-6) node[draw,circle] (ai) {$a_i$};
        \draw (2,-7) node[draw,circle] (Ji) {$T_i$};
        \draw (0,-7) node[draw,circle] (down) {$\downJ(w_t)$};
        \draw (-2,-5) node[draw,circle] (Jq) {$J_q$};
        \draw (-1,-6) node[draw,circle] (Jp) {$J_p$};

        \draw (m)--(dts)--(ws2)--(ai1)--(bq)--(dts2)--(bp)--(ai)--(down);
        \draw (ai1)--(Ji1);
        \draw (ai)--(Ji);
        \draw (bq)--(Jq);
        \draw (bp)--(Jp);
        
    \end{scope}
    \begin{scope}[xshift = 1in,yshift=-1.5cm]
        \draw (0,0) node (arr) {$\longrightarrow$};
    \end{scope}
    \begin{scope}[xshift=2in]
        \draw (0,0.5) node (name) {$\cj^\cx$};
        \draw (0,-0.25) node[draw,circle] (m) {$m$};
        \draw (0,-0.9) node[inner sep=0pt] (dts) {\raisebox{6pt}{$\vdots$}};
        \draw (0,-1.75) node[draw,circle] (ws2) {$w_{s+2}$};
        \draw (0,-3) node[draw,circle] (ai) {$a_{i}$};
        \draw (2,-4) node[draw,circle] (ai1) {$a_{i+1}$};
        \draw (1.25,-4.75) node[draw,circle] (Ji) {$T_i$};
        \draw (2.75,-4.75) node[draw,circle] (Ji1) {$T_{i+1}$};
        \draw (-1,-4) node[draw,circle] (bq) {$b_q$};
        \draw (-0.5,-4.5) node[inner sep=0pt] (dts2) {\raisebox{6pt}{$\ddots$}}; 
        \draw (0,-5) node[draw,circle] (bp) {$b_p$};
        \draw (1,-6) node[draw,circle] (down) {$\downJ(w_t)$};
        \draw (-2,-5) node[draw,circle] (Jq) {$J_q$};
        \draw (-1,-6) node[draw,circle] (Jp) {$J_p$};

        \draw (m)--(dts)--(ws2)--(ai)--(bq)--(dts2)--(bp)--(down);
        \draw (ai1)--(Ji1);
        \draw (ai)--(ai1)--(Ji);
        \draw (bq)--(Jq);
        \draw (bp)--(Jp);
    \end{scope}
    \end{tikzpicture}
    \end{center}

    \item If the left edge for the tree move connecting~$\Cut(\cj)$ to~$\cx$ connects~$a_l$ to~$m$, then every element in~$w$ after~$a_l$ is an element of~$b$. Say that string is~$b_p$ through~$b_r$ (so in our above notation for (4),~$b_p = w_{t+2}$ and~$b_r = w_{s+l}$). Then~$\cj$ to~$\cj^\cx$ is: 
    \hspace{-2cm}
    \begin{center}
    \begin{tikzpicture}[scale=1.4]
        \begin{scope}
        \draw (0,1) node (name) {$\cj$};
        \draw (0,0) node[draw,circle] (m) {$m$};
        \draw (0,-1) node[draw,circle] (br) {$b_r$};
        \draw (-1,-2) node[draw,circle] (Jr) {$J_r$};
        \draw (0.5,-1.5) node[inner sep=0pt] (dts) {\raisebox{6pt}{$\ddots$}};
        \draw (1,-2) node[draw,circle] (bp) {$b_p$};
        \draw (0,-3) node[draw,circle] (Jp) {$J_p$};
        \draw (2,-3) node[draw,circle] (ai) {$a_l$};
        \draw (1,-4) node[draw,circle] (down) {$\downJ(w_t)$};
        \draw (3,-4) node[draw,circle] (Ti) {$T_l$};

        \draw (m)--(br)--(dts)--(bp)--(ai)--(Ti);
        \draw (br)--(Jr);
        \draw (bp)--(Jp);
        \draw (ai)--(down);

    \end{scope}
    \begin{scope}[xshift = 1in,yshift=-1.5cm]
        \draw (0,0) node (arr) {$\longrightarrow$};
    \end{scope}
    \begin{scope}[xshift=2.5in,yshift=-1cm]
        \draw (-1,2) node (name) {$\cj^\cx$};
        \draw (-1,1) node[draw,circle] (ai) {$a_l$};
        \draw (-1,0) node[draw,circle] (m) {$m$};
        \draw (-2,-1) node[draw,circle] (Ti) {$T_l$};
        \draw (0,-1) node[draw,circle] (br) {$b_r$};
        \draw (-1,-2) node[draw,circle] (Jr) {$J_r$};
        \draw (0.5,-1.5) node[inner sep=0pt] (dts) {\raisebox{6pt}{$\ddots$}};
        \draw (1,-2) node[draw,circle] (bp) {$b_p$};
        \draw (0,-3) node[draw,circle] (Jp) {$J_p$};
        \draw (2,-3) node[draw,circle] (down) {$\downJ(w_t)$};

        \draw (ai)--(m)--(br)--(dts)--(bp)--(down);
        \draw(m)--(Ti);
        \draw (br)--(Jr);
        \draw (bp)--(Jp);
        
    \end{scope}
    \end{tikzpicture}
    \end{center}
\end{enumerate}
Before we prove the key lemma, we will check our original promise that~$\cj$ is related to~$\cj^\cx$.
\begin{lemma}\label{lem:Tx_greater}
    Given the above constructions,~$\cj < \cj^\cx$.
\end{lemma}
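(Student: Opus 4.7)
The plan is to handle each of the four cases of the construction of $\cj^\cx$ separately: in cases~(1) and~(2) I will exhibit a single cover relation $\cj \lessdot \cj^\cx$ in $\MTub(C_n)$, while in cases~(3) and~(4) I will construct an explicit saturated chain of covers from $\cj$ up to $\cj^\cx$.

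First I will handle case~(1). The left-edge tree move underlying $\Cut(\cj) \lessdot \cx$ is entirely internal to the subtree $K_s$, and the identical move, viewed inside $K_s$ as a subtree of $G_\cj$, is a left-edge move in a binary search tree; by Proposition~\ref{prop:cover_relation_cycle} it is a cover, and its result is $\cj^\cx$. For case~(2), I will note that in $G_\cj$ the element $b_0$ is a child of $b_i$ (since $b_0$ is the root of the subtree $J_i$ hanging off $b_i$), so performing the cyclic-BST tree move along the edge $(b_0, b_i)$ reattaches the subtrees $J_i^1$, $J_i^2$, and $\downJ(w_{s-2})$ exactly as drawn in the diagram for $\cj^\cx$; because $b_0 < b_i$ in the standard integer order, Proposition~\ref{prop:cover_relation_cycle} gives $\cj \lessdot \cj^\cx$.

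For case~(3), my strategy is to push $a_i$ upward through the main chain of $G_\cj$ one step at a time: I will apply in succession the tree moves on the edges $(a_i, b_p), (a_i, b_{p+1}), \ldots, (a_i, b_q)$ and finally on $(a_i, a_{i+1})$. At every step, $a_i$ is numerically smaller than the element directly above it ($b_j > m > a_i$ for any right-zipper element, and $a_i < a_{i+1}$ among left-zipper elements), so Proposition~\ref{prop:cover_relation_cycle} makes each move a cover. Case~(4) is entirely analogous: push $a_l$ past $b_p, \ldots, b_r$ just as in case~(3), then apply the top-edge tree move of Definition~\ref{def:tree_move_cyclic} to swap $a_l$ with $m$; that terminal step is a cover because $a_l < m$ in standard order.

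The chief obstacle I expect is the subtree bookkeeping in cases~(3) and~(4). After each right-edge tree move pushing $a_i$ past some $b_j$, one must verify, using the subtree rearrangement rules of Definition~\ref{def:tree_move_cyclic} (which reduce to Definition~\ref{def: tree move} in the non-top-edge case, with the roles of left and right swapped), that $T_i$ stays attached to $a_i$, that $J_j$ stays attached to $b_j$, and that the main chain flows correctly down to $\downJ(w_t)$. Only after this step-by-step verification does the final cyclic binary tree produced by the chain of covers actually coincide with the configuration drawn for $\cj^\cx$.
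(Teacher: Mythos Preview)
Your proposal is correct and follows essentially the same approach as the paper's own proof: cases~(1) and~(2) are dispatched as single covers, and cases~(3) and~(4) by the saturated chain of tree moves pushing $a_i$ (respectively $a_l$) up past $b_p,\ldots,b_q$ (respectively $b_p,\ldots,b_r$) and then past $a_{i+1}$ (respectively $m$). Your additional remarks on why each step is a cover and on the subtree bookkeeping simply flesh out what the paper leaves to the reader.
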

\begin{proof}
    For types (1) and (2), it is rather clear~$\cj$ and~$\cj^\cx$ are related by a single tree move, so~$\cj \lessdot \cj^\cx$.\par 
    For type (3), first doing the tree move in~$\cj$ on the ascent~$(a_i,b_p)$, then on~$(a_i,b_{p+1})$ and so on until~$(a_i,b_q)$ and then~$(a_i,a_{i+1})$ achieves~$\cj^\cx$.\par 
    For type (4), first doing the tree move in~$\cj$ on the ascent~$(a_i,b_p)$, then on~$(a_i,b_{p+1})$ and so on until~$(a_i,b_r)$ and then~$(a_i,m)$ achieves~$\cj^\cx$. 
\end{proof}
So~$\cj < \cj^
\cx$.  Moreover,~$\cj^\cx$ is the unique minimal element in the fiber~$\Cut^{-1}(\cx)$ with this property. Even stronger, as the following key lemma will show, for any element~$\ck$ in \emph{any} fiber~$\Cut^{-1}(\cy)$ such that~$\cx \leq \cy$, an order relation ~$\cj^\cx < \ck$ is equivalent to the order relation ~$\cj<\ck$.
\begin{lemma}\label{lem:key_lemma}
    Let~$\cj \in \MTub(C_n)$, let~$ \Cut(\cj)\lessdot \cx \leq \cy$ in~$\MTub(P_n)$, and let~$\Cut(\ck) = \cy$. Then~$\cj \leq \ck$ if and only if~$\cj^\cx \leq \ck$.

\end{lemma}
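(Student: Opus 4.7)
The backward direction is immediate: Lemma~\ref{lem:Tx_greater} gives $\cj \leq \cj^\cx$, so $\cj^\cx \leq \ck$ forces $\cj \leq \ck$ by transitivity.

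For the forward direction, assume $\cj \leq \ck$ and aim to show $\cj^\cx \leq \ck$. By Theorem~\ref{thm:inversion_order}, this reduces to proving $\inv(\cj^\cx) \subseteq \inv(\ck) \cup \inc(\ck)$. Since the hypothesis already yields $\inv(\cj) \subseteq \inv(\ck) \cup \inc(\ck)$, the key step is to control the new inversions $\inv(\cj^\cx) \setminus \inv(\cj)$. I plan to enumerate these new inversions directly from the explicit case-by-case construction of $\cj^\cx$ given in the four cases preceding Lemma~\ref{lem:Tx_greater}.

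For each case, comparing the $G$-trees of $\cj$ and $\cj^\cx$ shows that the new inversions split into two kinds: those coming from the single tree move in the path cover $\Cut(\cj) \lessdot \cx$ (for instance, in Case~(1) these are exactly $(x,y)$ together with $(x,t_3)$ for each $t_3 \in T_3$ with $t_3 > x$), and, in Cases~(2)--(4), additional pairs produced by the modification from $w$ to $w^\cx$ of the zipper shuffle. The first kind lies in $\inv(\cx)$ by Proposition~\ref{prop: cover_relation_path}, and an inspection of the diagrams places the zipper-shuffle pairs inside $\inv(\cx)$ as well. Since $\cx \leq \cy = \Cut(\ck)$, Proposition~\ref{prop:path_order} gives $\inv(\cx) \subseteq \inv(\cy) \cup \inc(\cy)$; pairs in $\inv(\cy)$ lift to $\inv(\ck)$ because $\Cut$ only deletes relations, so such pairs cause no trouble.

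The principal obstacle is the remaining subcase where a new pair $(i,j) \in \inv(\cj^\cx) \setminus \inv(\cj)$ lands in $\inc(\cy)$: here the sewing $\ck = \Sew_\cy(v)$ could in principle introduce the coinversion $i <_\ck j$. I plan to rule this out by contradiction. If $(i,j) \in \coinv(\ck)$, then $i$ and $j$ must lie on opposite zippers of $\cy$ with $i$ preceding $j$ in the shuffle $v$. Combining this forced placement with the structural description of $\cj$ in the particular case (for example, in Case~(2) that $b_0$ is the top of the subtree $J_i$ hanging off $b_i$; in Case~(4) that $m$ is the root of $\cj$) together with the chain $\Cut(\cj) \lessdot \cx \leq \cy$ tracking zipper changes, I expect to exhibit a specific pair $(p,q) \in \inv(\cj) \cap \coinv(\ck)$, contradicting the hypothesis $\inv(\cj) \subseteq \inv(\ck) \cup \inc(\ck)$. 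The main technical work lies in organizing this contradiction uniformly across the four cases, carefully relating the zipper of $\cj$ to that of $\ck$ via the intermediate tubing $\cx$.
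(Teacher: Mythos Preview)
Your approach is essentially the paper's: reduce via Theorem~\ref{thm:inversion_order} to controlling $\inv(\cj^\cx)\setminus\inv(\cj)$, dispose of those new inversions that already lie in $\inv(\cx)$ via $\inv(\cx)\subseteq\inv(\cy)\subseteq\inv(\ck)$, and for the rest argue by contradiction that a coinversion in $\ck$ would force some $(p,q)\in\inv(\cj)\cap\coinv(\ck)$. The contradiction pair you seek is exactly what the paper exhibits: in Case~(3) it is $(a_{i+1},\beta)$, using that $(a_i,a_{i+1})\in\inv(\cx)\subseteq\inv(\ck)$ gives $a_{i+1}<_\ck a_i<_\ck\beta$; Case~(4) is identical with $m$ in place of $a_{i+1}$.

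One correction: your claim that ``an inspection of the diagrams places the zipper-shuffle pairs inside $\inv(\cx)$ as well'' is false in Cases~(3) and~(4). The pairs $(a_i,b_p),\ldots,(a_i,b_q)$ and $(a_i,\gamma)$ for $\gamma\in J_p\cup\cdots\cup J_q$ are new inversions of $\cj^\cx$, but in $\cx$ the element $a_i$ sits in the left zipper while the $b_j$ and their subtrees sit on the right, so these pairs lie in $\inc(\cx)$, not $\inv(\cx)$. This does not break your outline, since $\inv(\cx)\cup\inc(\cx)\subseteq\inv(\cy)\cup\inc(\cy)$ by Proposition~\ref{prop:path_order}, and you already plan to treat the $\inc(\cy)$ branch separately. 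But you should state the dichotomy correctly: the new inversions of Cases~(3)--(4) split into those in $\inv(\cx)$ (namely $(a_i,\{a_{i+1}\}\cup T_{i+1})$, respectively $(a_l,m)$) and those merely in $\inc(\cx)$ (the pairs involving the right zipper), and it is precisely the latter that require the contradiction argument. The paper organizes the four cases individually rather than attempting a uniform treatment, which keeps the bookkeeping manageable; your ``uniform'' ambition in the last paragraph may cost more than it saves.
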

\begin{proof}
    Because of Lemma \ref{lem:Tx_greater}, if~$\cj^\cx \leq \ck$ then~$\cj \leq \ck$, so the backwards direction is straightforward.\par 
    Assume that~$\cj \leq \ck$, we seek to prove that~$\cj^\cx \leq \ck$. This argument will heavily depend on the characterization of order given in Proposition \ref{thm:inversion_order}, and proceed based on our cases from above.\par 
    \textbf{Case (1)}. All of the new inversions in~$\inv(\cj^\cx) \setminus \inv(\cj)$ must be contained within the inversions of~$K_s'$, as they are the only new order relations. However, every order relation in~$K_s'$ is also an order relation in~$\cx$. Since~$\cx<\cy$ in the path graph~$\inv(\cx) \subset \inv(\cy)$, and by definition~$\inv(\cy) \subseteq \inv(\ck)$. So 
    \[\inv(\cj^\cx) \setminus \inv(\cj) \subset \inv(\cx) \subset \inv(\cy) \subset \inv(\ck).\]
    Since~$\inv(\cj) \subseteq \inv(\ck) \cup \inc(\ck)$, we get that~$\inv(\cj^\cx) \subseteq \inv(\ck) \cup \inc(\ck)$ and so~$\cj^\cx \leq \ck$.\par 
    \textbf{Case (2)}. All of the new inversions in~$\inv(\cj^\cx) \setminus \inv(\cj)$ must be either~$(b_0,b_i)$ or contained within the the order relations~$(b_0,\downJ(w_{s-2}))$. For these to be new inversions, the particular elements in~$\downJ(w_{s-2})$ need to be \emph{greater} than~$b_0$. Moreover, they need to come from the \emph{right} zipper of~$\cx$, and are therefore already inversions in~$\cx$. More formally,~$\inv(\cj^\cx) \setminus \inv(\cj) \subseteq \inv(\cx)$. Now by the same logic as in (1) we get that~$\cj^\cx \leq \ck$.\par
    \textbf{Case (3)}. The new inversions in~$\inv(\cj^\cx) \setminus \inv(\cj)$ are (from the above characterization) precisely the following:
    \[
    \left(a_i,\{b_p,...,b_q\}\right) \cup \left(a_i,J_p \cup \cdots \cup J_q\right) \cup \left(a_i,\{a_{i+1}\} \cup T_{i+1}\right).
    \]
    The new inversions~$\left(a_i,\{a_{i+1}\} \cup T_{i+1}\right)$ are in~$\cx$ so by the same logic as in (1) and (2) they may be ignored. \par 
    Let~$\beta \in \{b_p,...,b_q\} \cup J_p \cup \cdots \cup J_q$ be arbitrary. Since we need for none of these inversions to be in~$\coinv(\cj)$, the claim is proven if it is shown that~$a_i \not<_{\ck} \beta$. Assume~$a_i <_{\ck} \beta$. By construction~$(a_{i+1},\beta)$ is an inversion in~$\cj$. Also,~$(a_i,a_{i+1})$ is a descent in~$\cx$, so it is an inversion in~$\cy$ and therefore an inversion in~$\ck$. So~$a_{i+1} <_{\ck} a_i$. But then~$a_{i+1} <_{\ck} a_i <_{\ck} \beta$, and we get that~$(a_{i+1},\beta)$ is a \emph{coinversion} in~$\ck$. So an inversion in~$\cj$ is a coinversion in~$\ck$, and we contradict the assumption that~$\cj \leq \ck$. Since no such~$\beta$ exists, we have that~$\cj^\cx \leq \ck$.\par 
    \textbf{Case (4)}. The new inversions in~$\inv(\cj^\cx) \setminus \inv(\cj)$ are (from the above characterization) precisely the following:
    \[
    \left(a_i,\{b_p,...,b_r\}\right) \cup \left(a_i,J_p \cup \cdots \cup J_r\right) \cup \left(a_i,\{m\}\right).
    \]
    Now~$(a_i,m)$ is an inversion in~$\cx$ and so follows the logic from (1), and the remaining logic is identical to (4) with~$a_{l+1}$ replaced with~$m$.
\end{proof}
The following statement is slightly stronger (yet less precise) and works for non-cover relations in the maximal tubings of~$P_n$.
\begin{corollary}\label{cor:key_corollary}
    Let~$\cj \in \MTub(C_n)$ and~$\cx \in \MTub(P_n)$ such that~$\Cut(\cj) \leq \cx$ in~$\MTub(P_n)$. Then there exists an element~$\cj^\cx \in \Cut^{-1}(\cx)$ such that for any tubings~$\cy \geq \cx$ and~$\ck \in \Cut^{-1}(\cy)$,
      ~$\cj \leq \ck$ if and only if~$\cj^{\cx} \leq \ck$.
    This element must be unique (for a particular~$\cj$ and~$\cx$).
\end{corollary}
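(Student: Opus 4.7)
The plan is to build $\cj^\cx$ by iterating Lemma~\ref{lem:key_lemma} along a saturated chain in $\MTub(P_n)$ from $\Cut(\cj)$ to $\cx$, and then deduce both the stated biconditional and the claimed uniqueness from the corresponding cover-case statement.

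First, I would fix a saturated chain $\Cut(\cj) = \cx_0 \lessdot \cx_1 \lessdot \cdots \lessdot \cx_k = \cx$ in $\MTub(P_n)$, which exists because $\MTub(P_n)$ is a graded poset (the Tamari lattice). I would then define a sequence $\cj^{(0)}, \cj^{(1)}, \ldots, \cj^{(k)}$ recursively by setting $\cj^{(0)} := \cj$ and, once $\cj^{(i-1)}$ has been constructed with $\Cut(\cj^{(i-1)}) = \cx_{i-1}$, letting $\cj^{(i)} := \bigl(\cj^{(i-1)}\bigr)^{\cx_i}$ via Lemma~\ref{lem:key_lemma} (applicable since $\cx_{i-1} \lessdot \cx_i$). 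Then $\cj^\cx := \cj^{(k)}$ lies in $\Cut^{-1}(\cx)$ by construction, and by Lemma~\ref{lem:Tx_greater} and transitivity we also have $\cj \leq \cj^\cx$.

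Next, I would verify the biconditional. Fix $\cy \geq \cx$ and $\ck \in \Cut^{-1}(\cy)$. For each $1 \leq i \leq k$, the chain $\Cut(\cj^{(i-1)}) = \cx_{i-1} \lessdot \cx_i \leq \cx \leq \cy = \Cut(\ck)$ puts us exactly in the hypotheses of Lemma~\ref{lem:key_lemma}, which yields the equivalence $\cj^{(i-1)} \leq \ck \iff \cj^{(i)} \leq \ck$. Stringing together these $k$ equivalences gives $\cj \leq \ck \iff \cj^\cx \leq \ck$.

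For uniqueness (which also retroactively certifies that the construction is independent of the chosen chain), suppose $\cj^{\cx\prime} \in \Cut^{-1}(\cx)$ also satisfies the biconditional. Taking $\cy = \cx$ and $\ck = \cj^{\cx\prime}$ in the property of $\cj^{\cx\prime}$ yields $\cj \leq \cj^{\cx\prime} \iff \cj^{\cx\prime} \leq \cj^{\cx\prime}$, and the right-hand side is trivially true, so $\cj \leq \cj^{\cx\prime}$. Similarly $\cj \leq \cj^\cx$. Applying the property of $\cj^\cx$ with $\ck = \cj^{\cx\prime}$ converts $\cj \leq \cj^{\cx\prime}$ into $\cj^\cx \leq \cj^{\cx\prime}$; symmetrically $\cj^{\cx\prime} \leq \cj^\cx$, so $\cj^\cx = \cj^{\cx\prime}$.

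The main obstacle has already been absorbed into Lemma~\ref{lem:key_lemma} itself: once that cover-case statement is in hand, the inductive extension to a general order relation is purely formal. The only subtlety is the uniqueness step, where one must avoid circularity; the decisive move is testing the hypothesized biconditional against $\ck = \cj^{\cx\prime}$ to obtain $\cj \leq \cj^{\cx\prime}$ without invoking the construction, which then lets the two biconditionals play off each other to force equality.
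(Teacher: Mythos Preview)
Your approach is essentially identical to the paper's: both iterate Lemma~\ref{lem:key_lemma} along a saturated chain from $\Cut(\cj)$ to $\cx$ (the paper phrases this as induction on chain length, you as an explicit recursion), and your uniqueness argument spells out cleanly what the paper leaves implicit. One small aside worth correcting: $\MTub(P_n)$ is the Tamari lattice, which is \emph{not} graded for $n\geq 4$; fortunately, the existence of a saturated chain between comparable elements needs only finiteness, so your argument is unaffected.
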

\begin{proof}
    For the case of equality~$\Cut(\cj) = \cx$, we may simply let~$ \cj^\cx = \cj$ and the statement is trivial.\par 
    If~$\cx$ covers~$\Cut(\cj)$ this is Lemma \ref{lem:key_lemma}. This will serve as the base case for our proof by induction. Generally, let 
    \[ \Cut(\cj) \lessdot \cy_1 \lessdot \cdots \lessdot \cy_r = \cy \lessdot \cx\, ,\]
    and assume for induction that this claim holds for all chains of length~$\leq r$. Consider the elements~$\cj^{\cy}$ and~$\left(\cj^{\cy}\right)^\cx$. For any~$\ck$ such that~$\Cut(\ck) \geq \cx$ (note this implies that~$\Cut(\ck) > \cy$ as well)  
    \[\cj \leq \ck \text{ if and only if } \cj^{\cy} \leq \ck\]
    by induction and 
    \[
    \cj^{\cy} \leq \ck \text{ if and only if } (\cj^{\cy})^\cx \leq \ck
    \]
    by Lemma \ref{lem:key_lemma}.
    So~$\cj^\cx = (\cj^{\cy})^\cx$ and we have the claim.
\end{proof}

\begin{proposition}\label{prop:path_join}
Let~$\cj,\ck \in \MTub(C_n)$. 
All maximal lower bounds for~$\{\cj,\ck\}$ are contained within the fiber 
\[\Cut^{-1}\left( \Cut(\cj) \land \Cut(\ck) \right),\]
and all minimal upper bounds for~$\{\cj,\ck\}$ are contained within the fiber 
\[\Cut^{-1}\left( \Cut(\cj) \lor \Cut(\ck) \right).\]
\end{proposition}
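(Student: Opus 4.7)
The two statements are dual under the compatible self-dualities of $\MTub(C_n)$ and $\MTub(P_n)$ (Corollary~\ref{cor:involution}) together with Proposition~\ref{prop:cut_commuting}, so I treat only the case of minimal upper bounds; the case of maximal lower bounds then follows by dualizing.

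Let $\cl$ be a minimal upper bound of $\{\cj, \ck\}$. By the order-preserving property of $\Cut$ (Corollary~\ref{cor:cutmap_orderpreserving}) and the fact that $\MTub(P_n)$ is a lattice, the join $\cx \coloneqq \Cut(\cj) \vee \Cut(\ck)$ exists in $\MTub(P_n)$ and $\Cut(\cl) \geq \cx$. The plan is to prove $\Cut(\cl) = \cx$ by assuming for contradiction that $\Cut(\cl) > \cx$ and producing an upper bound of $\{\cj, \ck\}$ strictly below $\cl$.

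Corollary~\ref{cor:key_corollary} supplies elements $\cj^\cx, \ck^\cx \in \Cut^{-1}(\cx)$ with the property that for every $\mathcal{Q} \in \MTub(C_n)$ with $\Cut(\mathcal{Q}) \geq \cx$, one has $\cj \leq \mathcal{Q}$ iff $\cj^\cx \leq \mathcal{Q}$ (and analogously for $\ck$); in particular $\cj^\cx, \ck^\cx \leq \cl$. By Proposition~\ref{prop: fiber is an interval}, the fiber $\Cut^{-1}(\cx)$ is order-isomorphic to a weak-order interval and hence is a lattice; let $\cm$ denote the join of $\cj^\cx$ and $\ck^\cx$ inside this fiber. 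Since $\Cut(\cm) = \cx$, applying Corollary~\ref{cor:key_corollary} with $\mathcal{Q} = \cm$ yields $\cj, \ck \leq \cm$, so $\cm$ is an upper bound of $\{\cj, \ck\}$.

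The main obstacle is to verify $\cm \leq \cl$, because once this is established, $\Cut(\cm) = \cx \neq \Cut(\cl)$ forces $\cm < \cl$ and contradicts the minimality of $\cl$. I plan to prove $\cm \leq \cl$ by a lifting argument: let $\cy \coloneqq \Cut(\cl)$, so that $\cl$ lies in the fiber $\Cut^{-1}(\cy)$, which is itself a lattice by Proposition~\ref{prop: fiber is an interval}. Applying Corollary~\ref{cor:key_corollary} to $\cj^\cx$ (and to $\ck^\cx$) with target $\cy$ gives elements $(\cj^\cx)^\cy, (\ck^\cx)^\cy \in \Cut^{-1}(\cy)$ that are both $\leq \cl$, so their join in the lattice $\Cut^{-1}(\cy)$ is also $\leq \cl$. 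The technical core is then to identify this fiber-join with $\cm^\cy$, equivalently to verify that the lift $\mathcal{Q} \mapsto \mathcal{Q}^\cy$ from $\Cut^{-1}(\cx)$ to $\Cut^{-1}(\cy)$ preserves fiber-joins. I would prove this compatibility directly from the explicit case-by-case construction of $\mathcal{Q}^\cy$ preceding Lemma~\ref{lem:Tx_greater}, combined with the zipper description of $G$-trees in a fiber given by Proposition~\ref{prop:gtree_zipper}. A final application of Corollary~\ref{cor:key_corollary} to $\cm$ then transfers the inequality back to $\cm \leq \cl$, completing the contradiction.
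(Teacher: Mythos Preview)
Your approach is substantially more complicated than necessary, and the step you flag as the ``technical core''---that the lift $\mathcal{Q} \mapsto \mathcal{Q}^{\cy}$ from $\Cut^{-1}(\cx)$ to $\Cut^{-1}(\cy)$ preserves fiber-joins---is a genuine gap. You only sketch a case-by-case verification, and it is not obvious that this holds. Order-preservation of the lift (which does follow easily from Corollary~\ref{cor:key_corollary}) gives only the inequality $(\cj^{\cx})^{\cy} \vee_{\Cut^{-1}(\cy)} (\ck^{\cx})^{\cy} \leq \cm^{\cy}$, which is the reverse of what you need.

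The paper avoids this entirely with a one-line argument, by treating the \emph{lower-bound} case directly rather than the upper-bound case. Given a lower bound $\cl$ of $\{\cj,\ck\}$ with $\Cut(\cl) < \cx \coloneqq \Cut(\cj)\wedge\Cut(\ck)$, apply Corollary~\ref{cor:key_corollary} to $\cl$ itself (not to $\cj$ and $\ck$). Since $\Cut(\cj),\Cut(\ck) \geq \cx$, the defining property of $\cl^{\cx}$ immediately gives $\cl^{\cx} \leq \cj$ and $\cl^{\cx}\leq \ck$, while $\cl < \cl^{\cx}$ shows $\cl$ is not maximal. No fiber-joins, no second lift, no compatibility check. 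The upper-bound statement then follows by duality.

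Your route is essentially the argument the paper uses later for Theorem~\ref{thm:lattice}, where one does form $\cm = \cj^{\cx} \vee \ck^{\cx}$ inside the fiber. But there the paper already knows, \emph{by the very proposition you are proving}, that any minimal upper bound $\cl$ lies in $\Cut^{-1}(\cx)$; then $\cm \leq \cl$ is immediate because all three live in the same fiber-lattice. Trying to merge the two proofs forces you to compare $\cm$ with an $\cl$ in a different fiber, and that is exactly where your difficulty appears.
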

\begin{proof}
    We will show that all maximal lower bounds are in the fiber of the meet, and the full claim will follow by Corollary \ref{cor:involution}. \par 
    Say that~$\cl$ is less than both~$\cj$ and~$\ck$. Then by Corollary \ref{cor:cutmap_orderpreserving},~$\Cut(\cl)$ is less than both~$\Cut(\cj)$ and~$\Cut(\ck)$, and is therefore less than~$\cx = \Cut(\cj) \land \Cut(\ck)$. By Corollary \ref{cor:key_corollary} there exists an element~$\cl^\cx$ such that~$\cl < \cl^\cx \leq \cj$, and~$\cl < \cl^\cx \leq \ck$. So if~$\Cut(\cl) \neq \cx$ then~$\cl$ is not a maximal lower bound.
\end{proof}
\begin{theorem}\label{thm:lattice}
    The poset of maximal tubings of the cycle graph $\MTub(C_n)$ is a lattice-poset.
\end{theorem}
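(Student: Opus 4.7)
The plan is to lift joins from the lattice $\MTub(P_n)$ to $\MTub(C_n)$ by exploiting the fiber structure of the $\Cut$ map, and to deduce meets by self-duality. Given $\cj,\ck\in\MTub(C_n)$, set $\cx := \Cut(\cj)\vee\Cut(\ck)$ in the Tamari lattice $\MTub(P_n)$, let $\cj^\cx,\ck^\cx\in\Cut^{-1}(\cx)$ be the canonical lifts provided by Corollary~\ref{cor:key_corollary}, and then define the candidate join $\cm := \cj^\cx \vee \ck^\cx$, computed inside the fiber $\Cut^{-1}(\cx)$, which is itself a lattice (in fact an interval in the weak order) by Proposition~\ref{prop: fiber is an interval}.

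First I would show that $\cm$ is an upper bound of $\{\cj,\ck\}$. Applying Corollary~\ref{cor:key_corollary} with $\cl = \cj^\cx$ yields $\cj \leq \cj^\cx$ from the trivial inequality $\cj^\cx \leq \cj^\cx$. The relation $\cj^\cx \leq \cm$ holds in the fiber by construction, and since cover relations inside $\Cut^{-1}(\cx)$ are cover relations in $\MTub(C_n)$ (Proposition~\ref{prop: fiber is an interval}), it also holds globally. Transitivity gives $\cj \leq \cm$, and the symmetric argument handles $\ck$.

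Next I would show $\cm$ is the least upper bound. Let $\cl$ be any upper bound for $\{\cj,\ck\}$. Finiteness of $\MTub(C_n)$ lets us choose a minimal upper bound $\cl^{\star} \leq \cl$. By Proposition~\ref{prop:path_join}, $\cl^{\star}$ lies in $\Cut^{-1}(\cx)$, and by Corollary~\ref{cor:key_corollary} both $\cj^\cx \leq \cl^{\star}$ and $\ck^\cx \leq \cl^{\star}$. Since $\cl^{\star}, \cj^\cx, \ck^\cx$ and $\cm$ all live in the fiber, the definition of $\cm$ as the fiber join gives $\cm \leq \cl^{\star} \leq \cl$, so $\cm = \cj \vee \ck$.

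The existence of meets then follows immediately from the self-duality of $\MTub(C_n)$ (Corollary~\ref{cor:involution}): any finite poset isomorphic to its opposite has meets whenever it has joins. I expect the most delicate step to be the least-upper-bound verification, where one must know that the order induced on $\Cut^{-1}(\cx)$ as a subposet of $\MTub(C_n)$ coincides with the weak order on $W_\cx$ from Proposition~\ref{prop: fiber is an interval}; this ensures that the fiber-level join really controls all global minimal upper bounds, and it follows from the inversion-set characterization in Theorem~\ref{thm:inversion_order} together with the fact that within a single fiber the only inversions that change are those among zipper elements, which form a total order governed by the shuffle permutation.
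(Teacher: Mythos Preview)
Your proposal is correct and follows essentially the same route as the paper: lift to the fiber $\Cut^{-1}(\cx)$ with $\cx=\Cut(\cj)\vee\Cut(\ck)$ via Corollary~\ref{cor:key_corollary}, take the join $\cm=\cj^{\cx}\vee\ck^{\cx}$ inside that fiber using Proposition~\ref{prop: fiber is an interval}, invoke Proposition~\ref{prop:path_join} to trap every minimal upper bound in the same fiber, and finish with self-duality. The only cosmetic difference is that the paper starts from an arbitrary minimal upper bound $\cl$ and shows $\cl=\cm$, whereas you build $\cm$ first and then prove it dominates every minimal upper bound; you are also more explicit than the paper about why the induced order on $\Cut^{-1}(\cx)$ agrees with the weak order on $W_{\cx}$, a point the paper uses silently.
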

\begin{proof}
    We will show that elements have a join. Let~$\cj$ and~$\ck$ be maximal tubings of~$C_n$. \par 
    Let~$\cl$ be a minimal upper bound for~$\cj$ and~$\ck$, and let~$\cx = \Cut(\cj) \lor \Cut(\ck)$.
    By Proposition \ref{prop:path_join} we have~$\cl \in \cx$.  If $\Cut(\cl) < \cx$, then by Corollary \ref{cor:key_corollary} there exist unique elements~$\cj^\cx$ and~$\ck^\cx$ in~$\Cut^{-1}(\cx)$ such that~$\cj \leq \cl$ if and only if~$\cj^\cx \leq \cl$ and~$\ck \leq \cl$ if and only if~$\ck^\cx \leq \cl$.\par 
    By Proposition \ref{prop: fiber is an interval}, the fiber~$\Cut^{-1}(\cx)$ is a lattice. Since~$\cj^\cx,\ck^\cx$, and~$\cl$ are all elements of the fiber, the join within the fiber~$\cm \coloneqq \cj^\cx \lor \ck^\cx$ is, by definition, less than or equal to~$\cl$. So~$\cj \leq \cm \leq \cl$ and~$\ck \leq \cm \leq \cl$. Since~$\cl$ is a minimal upper bound, we have that~$\cl = \cm$. So the minimal upper bound is unique, and is thus the join of~$\cj$ and~$\ck$.\par 
    By symmetry, maximal tubings~$\cj$ and~$\ck$ of~$C_n$ also have meets, and so~$\MTub(C_n)$ is a lattice.
\end{proof}

Recall that a surjection $\phi:L\to L'$ from a lattice $L$ to $L'$ is a \emph{lattice quotient} map if $\phi$ respects the join and meet operations of $L$ (e.g. $\phi(x\join y)=\phi(x)\join\phi(y)$ and $\phi(x\meet y)=\phi(x)\meet\phi(y)$ ).
\begin{corollary}
\label{cut_lattice_map}
    The map~$\Cut \colon \MTub(C_n) \to \MTub(P_n)$ is a lattice quotient map.
\end{corollary}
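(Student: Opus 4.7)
The plan is to show both $\Cut(\cj \join \ck) = \Cut(\cj) \join \Cut(\ck)$ and $\Cut(\cj \meet \ck) = \Cut(\cj) \meet \Cut(\ck)$ for any $\cj,\ck \in \MTub(C_n)$. Surjectivity of $\Cut$ has already been established, so this is what remains. The key observation is that essentially all of the work has been done inside the proof of Theorem~\ref{thm:lattice} and Proposition~\ref{prop:path_join}; we just need to read off the conclusion.

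First I would handle the join. Let $\cx = \Cut(\cj) \join \Cut(\ck)$ in $\MTub(P_n)$. Proposition~\ref{prop:path_join} shows that every minimal upper bound of $\{\cj,\ck\}$ in $\MTub(C_n)$ lies in the fiber $\Cut^{-1}(\cx)$. In particular, the join $\cj \join \ck$, which exists by Theorem~\ref{thm:lattice}, is such a minimal upper bound and therefore satisfies $\Cut(\cj \join \ck) = \cx = \Cut(\cj) \join \Cut(\ck)$.

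For the meet, I would proceed by symmetry using the self-duality results. Proposition~\ref{prop:path_join} already states the dual statement: every maximal lower bound of $\{\cj,\ck\}$ lies in $\Cut^{-1}(\Cut(\cj) \meet \Cut(\ck))$. Since Theorem~\ref{thm:lattice} (combined with Corollary~\ref{cor:involution}) guarantees that $\cj \meet \ck$ exists and is the unique maximal lower bound, the same one-line argument yields $\Cut(\cj \meet \ck) = \Cut(\cj) \meet \Cut(\ck)$. Alternatively, I could deduce the meet identity directly from the join identity together with Proposition~\ref{prop:cut_commuting}, by transporting through the order-reversing involution $w_0$: since $w_0$ is an anti-isomorphism of both $\MTub(C_n)$ and $\MTub(P_n)$ that commutes with $\Cut$, it swaps joins with meets on both sides and converts the established join identity into the desired meet identity.

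There is no real obstacle here: the heavy lifting was done in establishing Corollary~\ref{cor:key_corollary} and Proposition~\ref{prop:path_join}, which together force the join/meet of $\cj$ and $\ck$ to live in the correct fiber. The only thing worth being careful about is to cite the correct direction of Proposition~\ref{prop:path_join} for the meet statement and to verify that the minimal/maximal bound produced there is in fact the (unique) join/meet rather than merely \emph{a} minimal/maximal bound, which is immediate once $\MTub(C_n)$ is known to be a lattice.
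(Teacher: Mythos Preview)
Your proposal is correct and matches the paper's own proof essentially line for line: the paper also notes surjectivity, reads off $\Cut(\cj\lor\ck)=\Cut(\cj)\lor\Cut(\ck)$ from the fact (established in the proof of Theorem~\ref{thm:lattice} via Proposition~\ref{prop:path_join}) that the join lies in the fiber over $\Cut(\cj)\lor\Cut(\ck)$, and then obtains the meet identity by the $w_0$-symmetry of Corollary~\ref{cor:involution} and Proposition~\ref{prop:cut_commuting}. Your alternative route to the meet via the dual half of Proposition~\ref{prop:path_join} is equally valid.
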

\begin{proof}
    Since $\Cut$ is surjective, it is enough to show that the $\Cut$ preserves the join and meet operations.
    The join of~$\cj$ and~$\ck$ is in the fiber of the join under the $\Cut$ map, in other words~$\Cut(\cj \lor \ck) = \Cut(\cj) \lor \Cut(\ck)$. Since $\Cut$ also respects the usual symmetry for $C_n$ and $P_n$ (see Corollary \ref{cor:involution} and Proposition \ref{prop:cut_commuting}), the same is true for meets.
\end{proof}
The next Corollary now follows immediately from Corollary~2.21 in \cite{BM2021}.
\begin{corollary}
    The M\"{o}bius function on $\MTub(C_n)$ takes values in $\{\pm 1, 0\}$.
\end{corollary}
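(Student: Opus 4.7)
The plan is to invoke \cite[Corollary~2.21]{BM2021} directly, using our main lattice theorem as the only nontrivial input. That corollary provides a sufficient condition on a simple graph $G$ under which the M\"{o}bius function of $\MTub(G)$ takes values only in $\{-1,0,+1\}$, and the hypothesis is the lattice property of $\MTub(G)$ together with the standard flip structure described in Definition~\ref{def:MTub} and Lemma~\ref{lem:tubing_covers}.

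The first step is simply to check that these hypotheses are satisfied for $G = C_n$. The flip combinatorics are intrinsic to $\MTub(G)$ for every simple graph and hence apply for $C_n$ without modification. The lattice condition is precisely the content of Theorem~\ref{thm:lattice}, proved in Section~\ref{sec:Lattice} via the $\Cut$ map together with the global relation characterization from Theorem~\ref{thm:inversion_order}. With both hypotheses in place, applying \cite[Corollary~2.21]{BM2021} to the graph $C_n$ yields the conclusion immediately.

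The only obstacle to this argument has already been surmounted in Theorem~\ref{thm:lattice}; the present corollary is a one-line consequence of a general fact about $\MTub(G)$ whenever it happens to be a lattice. No additional combinatorics specific to the cycle graph---neither the zipper structure of $\Sew_\cx$, nor the join-irreducible analysis of later sections, nor any crosscut argument of our own---is needed to conclude that the M\"{o}bius function on $\MTub(C_n)$ takes values in $\{-1,0,+1\}$.
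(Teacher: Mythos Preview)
Your proposal is correct and matches the paper's own proof essentially verbatim: the paper also deduces the corollary in one line by citing Corollary~2.21 of \cite{BM2021}, with Theorem~\ref{thm:lattice} supplying the lattice hypothesis.
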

\section{Join and Meet Irreducible Elements of \texorpdfstring{$\MTub(C_n)$}{MTub(Cn)}}\label{sec:joinirr}

In this section, we characterize the structure of the~$G$-trees of the join and meet irreducible elements of~$\MTub(C_n)$. 
    The following is a slightly more specific version of  Proposition \ref{prop:cover_relation_cycle} which will be very useful for results in this section.
    \begin{remark}\label{rem:joinirred_descents}
            If~$\cj\in \MTub(C_n)$, then~$\cj$ covers exactly one element if and only if~$G_\cj$ has a unique descent. See Figure~\ref{fig:shifts} for a collection of $G$-trees that each have a unique descent.
    \end{remark}

\begin{proposition}\label{prop:descent_edge_numbers}
    If $\cj \in \MTub(C_n)$, then $G_\cj$ has at least as many descents as there are right edges in $G_\cj$.
\end{proposition}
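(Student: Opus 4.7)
The plan is to use the $\Cut$/$\Sew$ machinery from Section~\ref{sec:Cut Map} to put $G_\cj$ into a structured normal form, then bound $d - r$ edge by edge. I will write $\cj = \Sew_\cx(w)$, where $\cx = \Cut(\cj)$ has left zipper $a = (a_1,\ldots,a_\ell) \subseteq L := \{1,\ldots,m-1\}$ and right zipper $b = (b_1,\ldots,b_r) \subseteq R := \{m+1,\ldots,n\}$, and $w = (w_1,\ldots,w_{\ell+r}) \in W_\cx$ is an in-order shuffle of $a$ and $b$. By Proposition~\ref{prop:gtree_zipper}, this exhibits $G_\cj$ as a single descending spine $m - w_{\ell+r} - \cdots - w_1$ with one subtree $K_s$ hanging from each $w_s$; crucially, each $K_s$ lies entirely in $L$ or entirely in $R$ according to whether $w_s \in a$ or $w_s \in b$.

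Next I will exploit the fact that the cyclic order $<_m$ and the integer order $<$ agree on each of $L$ and $R$ separately. It follows at once that every edge of $G_\cj$ with both endpoints on the same side is a right edge if and only if it is a descent, so all such edges contribute $0$ to $d - r$. What remains are the top edge $(m, w_{\ell+r})$ together with the cross-side spine edges $(w_{s+1}, w_s)$ for which $w_{s+1}$ and $w_s$ come from opposite zippers. A short case analysis using the definition of $<_m$ classifies each cross-side spine edge: parent in $a$ and child in $b$ gives a left edge that is a descent, while parent in $b$ and child in $a$ gives a right edge that is an ascent; the top edge is a descent (and never a right edge) precisely when $w_{\ell+r} \in b$. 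Summing these contributions yields $d - r = \#\mathrm{LR} - \#\mathrm{RL} + [w_{\ell+r} \in b]$, where $\#\mathrm{LR}$ and $\#\mathrm{RL}$ count cross-side spine edges of the two types.

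To finish, I will observe that cross-side spine edges correspond bijectively to letter changes in the word $w$: reading $w_1, w_2, \ldots, w_{\ell+r}$ from bottom to top of the spine, each $b \to a$ transition produces an LR edge and each $a \to b$ produces an RL edge. A standard telescoping argument shows that in any word over $\{a, b\}$ the signed difference $\#(a \to b) - \#(b \to a)$ depends only on the first and last letters; enumerating the four possibilities for $(w_1, w_{\ell+r})$ then yields $d - r \in \{0, 1\}$ in every case. The only technical obstacle is keeping the two orders $<_m$ and $<$ straight during the cross-side classification and handling degenerate cases where one of the zippers is empty; once that bookkeeping is complete the final case check is immediate.
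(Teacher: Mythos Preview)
Your argument is correct and genuinely different from the paper's. The paper builds an explicit injection: for each element $a$ with a right child $b$ (and parent $c$), it shows directly from the cyclic order that at least one of $(a,b)$ or $(c,a)$ is a descent, and then greedily assigns descents to right edges along a linear extension. Your route instead exploits the $\Cut$/$\Sew$ normal form from Section~\ref{sec:Cut Map} (spine plus side-confined subtrees $K_s$), reduces the problem to cross-side spine edges, and telescopes the alternation count in the shuffle word $w$. This buys you more: you actually prove the sharper identity $d-r=[w_1\in b]\in\{0,1\}$, which (since $w_1\in\{1,n\}$) says $d-r=1$ exactly when $n<_{\cj}1$ and $d-r=0$ otherwise. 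The paper's approach, by contrast, is more self-contained---it does not invoke the $\Cut$ map or Proposition~\ref{prop:gtree_zipper}---and its local injection viewpoint is what immediately yields Corollary~\ref{cor:descent_branches}. Your bookkeeping is sound; the only places to be careful are the convention that the top edge is not a right edge (matching the paper's proof, which requires the right-edge parent to itself have a parent) and the degenerate cases $m=1$ or $m=n$ where one zipper is empty, both of which your case analysis already handles.
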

\begin{proof}
    Let $m$ be the maximal element in $G_\cj$. Consider an element $a$ covering a right child $b$ and itself covered by $c$. Visually, we have that 
    \begin{center}
        \begin{tikzpicture}
            \draw (0,0) node[draw,circle] (a) {$a$};
            \draw (1,-1) node[draw,circle] (b) {$b$};
            \draw (0,1) node[draw,circle] (c) {$c$};

            \draw (c)--(a)--(b);
        \end{tikzpicture}
    \end{center}
    The vertex $a$ may or may not have a left child, and could be a right or left child of $c$. \par 
    We first show that $(c,a)$ or $(a,b)$ are descents, then that each such $a$ may be assigned one of those descents injectively.\par

    If $a<b$ in the standard order on the integers then we have a descent. Otherwise, $b<a$ and since $b$ is a right child of $a$ we additionally get that $b < m < a$ in the standard order on the integers. Moreover, the tube $\cj_\downarrow(a)$ contains $a$ and $b$ but not $m$, and so based on the cyclic ordering it contains $1$ and $n$ (see the cyclic order below). Since $c\notin \cj_\downarrow(a)$ it must be that $b<c<a$ (i.e. $c$ lies in the same portion of the circular order pictured below as $m$, between $a$ and $b$), so $(c,a)$ is a descent. 
    \begin{center}
\begin{tikzpicture}[scale=0.9]

    \draw (0,0) circle (2cm);
    
    \draw (0:1.8cm) node (ar1) {$\downarrow$};
    \draw (90:1.8cm) node (ar2) {$\rightarrow$};
    \draw (180:1.8cm) node (ar3) {$\uparrow$};    
    \draw (270:1.8cm) node (ar4) {$\leftarrow$};
    
    \draw (0:2.25cm) node    (v1) {$1$};
    \draw (40:2.25cm) node   (v9) {$n$};
    \draw (80:2.25cm) node   (v8) {$\cdots$};
    \draw (120:2.25cm) node  (v7) {$a$};
    \draw (160:2.25cm) node  (v6) {$\vdots$};
    \draw (200:2.25cm) node[draw]  (v5) {$m$};
    \draw (240:2.25cm) node  (v4) {$\ddots$};
    \draw (280:2.25cm) node  (v3) {$b$};
    \draw (320:2.25cm) node  (v2) {$\iddots$};

\end{tikzpicture}
\end{center}
 
    \par 
    We define an assignment $\phi$ of descents to elements with right children. Select a linear extension of $G_\cj$, then starting at the bottom to each $a$ with a right child, let $\phi(a)$ be the descent with its right child if that descent exists and is not already assigned, and the descent with its parent otherwise. The descent with its parent cannot have already been assigned, both since it is higher in the selected linear extension, and $a$ can have at most one right child. 
    
    To show $\phi$ is injective, we argue that if a descent with the right child of $a$ is already assigned by $\phi$, then there is a descent with the parent of $a$. 
    Let $a\lessdot_\cj c$ and let \[b \lessdot_\cj b_1 \lessdot_\cj \cdots \lessdot_\cj b_k \lessdot_\cj a\]
    be a saturated chain of right children so that $a < b_k < \cdots < b_1$ but $b < b_1$ in the standard order on the integers. In particular, this will assign the descent $(a,b_k)$ to $b_k$. We claim that $(c,a)$ is a descent, and so may be assigned to $a$. Since $b$ is a right child of $b_1$ and $b < b_1$, by the same logic as above, the tube $\cj_\downarrow(b_1)$ must contain $1$ and $n$. So $\cj_\downarrow(a)$ contains $b_k$,  $1$, and $n$, and moreover $b_k < m < a$. So all elements in $\{1,...,b_k\} \cup \{a,...,n\}$ are also in $\cj_\downarrow(a)$. Since $c \notin \cj_\downarrow(a)$ we know that $b_k < c < a$, and so $(c,a)$ is a descent.
\end{proof}
\begin{corollary}\label{cor:descent_branches}
    A $G$ tree has at least as many descents as there are branching points.
\end{corollary}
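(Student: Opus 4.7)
The plan is to deduce this corollary almost immediately from Proposition~\ref{prop:descent_edge_numbers} by observing that a branching point of a $G$-tree is a vertex with both a left child and a right child, so in particular it has a right child, and thus contributes at least one right edge. Hence the set of branching points injects into the set of right edges via the map sending a branching point to its right-edge outgoing from it, giving the inequality
\[
\#\{\text{branching points of } G_\cj\} \;\leq\; \#\{\text{right edges of } G_\cj\}.
\]

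First I would recall that a vertex in a cyclic binary tree has at most two children (the root having exactly one), so ``branching point'' means a vertex with exactly two children, one left and one right. This makes the injection from branching points into right edges completely transparent. Then I would chain this with the inequality from Proposition~\ref{prop:descent_edge_numbers}, namely
\[
\#\{\text{right edges of } G_\cj\} \;\leq\; \#\{\text{descents of } G_\cj\},
\]
to conclude
\[
\#\{\text{branching points of } G_\cj\} \;\leq\; \#\{\text{descents of } G_\cj\},
\]
which is exactly the statement of the corollary.

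There is no real obstacle here, since the work was done in Proposition~\ref{prop:descent_edge_numbers}; the only thing worth being careful about is the convention around the root $m$ of a cyclic binary tree, which has a single child and so is never a branching point, and the (possibly branching) nature of the non-root vertices, which behave exactly like nodes of a binary search tree with respect to the cyclic order $<_m$. Once this is noted, the proof is a one-line consequence of the previous proposition.
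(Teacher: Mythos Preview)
Your proposal is correct and matches the paper's own proof essentially verbatim: the paper simply notes that a branching point has two children, so one of them is a right child, and then invokes Proposition~\ref{prop:descent_edge_numbers}.
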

\begin{proof}
    A branching point has two children, so one of them is a right child.
\end{proof}

When combined with Proposition~\ref{prop:cover_relation_cycle} (see Remark~\ref{rem:joinirred_descents}), Corollary~\ref{cor:descent_branches} implies the following.
\begin{corollary}
    Join irreducible elements of~$\MTub(C_n)$ have at most one branch.
\end{corollary}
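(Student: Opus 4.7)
The plan is to combine the two statements immediately preceding the corollary. By definition, a join irreducible element $\cj \in \MTub(C_n)$ is one which covers exactly one element. By Remark~\ref{rem:joinirred_descents}, this is equivalent to saying that the $G$-tree $G_\cj$ has exactly one descent.

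Now I would apply Corollary~\ref{cor:descent_branches}, which asserts that the number of descents in any $G$-tree is at least as large as the number of branching points. Since $G_\cj$ has only one descent, it can have at most one branching point, i.e., at most one vertex with more than one child. Identifying ``branches'' of $G_\cj$ with the branching points of the tree, we conclude that $G_\cj$ has at most one branch, as claimed.

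There is essentially no obstacle here: the substantive content has already been established in Proposition~\ref{prop:descent_edge_numbers} and Corollary~\ref{cor:descent_branches}. The only thing to double-check is that the notion of ``branch'' used in the statement matches ``branching point'' in Corollary~\ref{cor:descent_branches}; assuming this matches (as the phrasing of Corollary~\ref{cor:descent_branches} strongly suggests), the proof is a one-line deduction chaining Remark~\ref{rem:joinirred_descents} with Corollary~\ref{cor:descent_branches}.
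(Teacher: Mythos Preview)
Your proposal is correct and follows exactly the paper's approach: the paper states (without writing out a proof) that the corollary follows by combining Remark~\ref{rem:joinirred_descents} with Corollary~\ref{cor:descent_branches}, which is precisely the chain of reasoning you give.
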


We now define a class of~$G$-trees which will turn out to be the $G$-trees of the join irreducible elements of $\MTub(C_n)$ (see Lemma \ref{lem:all_ji_gtrees}).

\begin{definition}\label{def:join_irr}
We define~$j_{i,k}$ for~$1\leq i,k\leq n-1$ to be the tree built in the following way: 

\begin{itemize}
    \item If~$k=1$ we have the tree in Figure \ref{fig:jia}.

 \item If~$1<k<n-i$, then we have the tree which takes~$j_{1,k}$ and moves the~$k-1$ elements above~$i$ to be left-children of~$i$. See  Figure \ref{fig:jib}.
 \item If~$k=n-i$, then we have the tree which takes~$j_{n-i-2,i}$ and swaps 
 locations of~$n$ and~$i$. See Figure \ref{fig:jic}.

  \item If~$n-1\geq k\geq n-i+1$, the we have the tree that takes~$j_{n-i,i}$ and moves the~$k-n+i$ right children below~$n$ above~$n$. See  Figure \ref{fig:jid}.
\end{itemize}

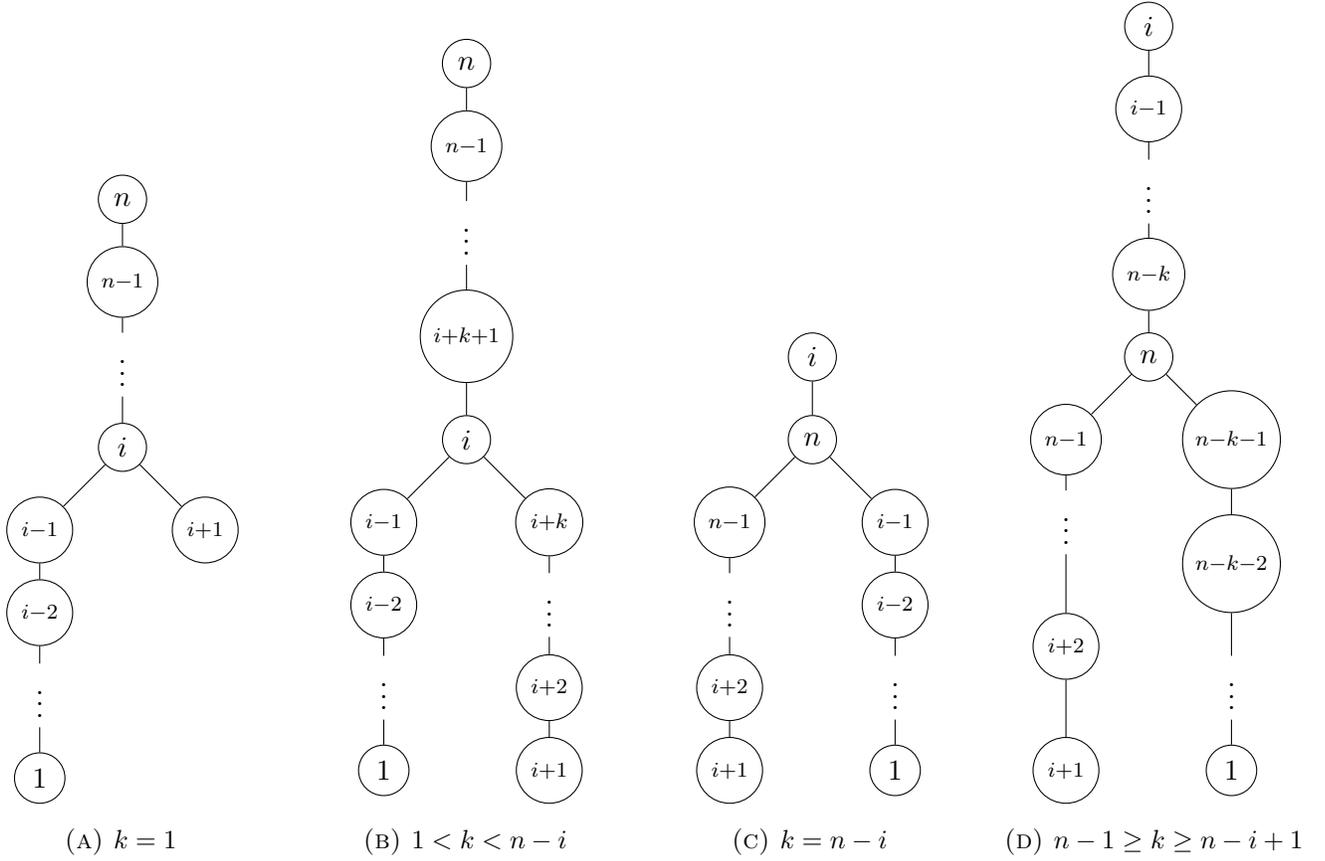
\begin{figure*}[t!]
    \centering
    \begin{subfigure}[t]{0.24\textwidth}
        \centering
    \begin{center}
    \begin{tikzpicture}[scale=1.1]
         \draw (0,3) node[draw,circle] (n)     {$n$};
         \draw (0,0) node[draw,circle] (k)     {$i$};
         \draw (0,2) node[draw, circle] (n1)     {$\scriptstyle n-1$};
         \draw (0,1) node[draw=none] (ldots)  {$\vdots$};
         \draw (1,-1) node[draw, circle] (k1)  {$\scriptstyle i+1$};
         \draw (-1,-1) node[draw, circle] (km1) {$\scriptstyle i-1$};
         \draw (-1,-2) node[draw, circle] (km2) {$\scriptstyle i-2$};
         \draw (-1,-3) node[draw=none] (rdots) {\raisebox{-1pt}{$\vdots$}};
         \draw (-1,-4) node[draw,circle] (1)     {$1$};

         \draw (n)--(n1)--(ldots)--(k)--(k1);
         \draw (k)--(km1)--(km2)--(rdots)--(1);
 \end{tikzpicture}
 \end{center} 
        \caption{$k=1$}\label{fig:jia}
    \end{subfigure}%
    ~ 
    \begin{subfigure}[t]{0.24\textwidth}
        \centering
        
    \begin{center}
    \begin{tikzpicture}[scale=1.1]
         \draw (0,4.55) node[draw,circle] (n)     {$n$};
         \draw (0,0) node[draw,circle] (k)     {$i$};
         \draw (0,3.55) node[draw, circle] (n1)     {$\scriptstyle n-1$};
         \draw (0,2.5) node[draw=none] (ldots)  {$\vdots$};
         \draw (0,1.25) node[draw, circle] (ki1)  {$\scriptstyle i+k+1$};
         \draw (1,-1) node[draw, circle] (ki)  {$\scriptstyle i+k$};
         \draw (1,-3) node[draw, circle] (k2)  {$\scriptstyle i+2$};
         \draw (1,-4) node[draw, circle] (k1)  {$\scriptstyle i+1$};
         \draw (-1,-1) node[draw, circle] (km1) {$\scriptstyle i-1$};
         \draw (-1,-2) node[draw, circle] (km2) {$\scriptstyle i-2$};
         \draw (-1,-3) node[draw=none] (rdots) {$\vdots$};
         \draw (1,-2) node[draw=none] (kdots) {$\vdots$};
         \draw (-1,-4) node[draw,circle] (1)     {$1$};

         \draw (n)--(n1)--(ldots)--(ki1)--(k)--(ki)--(kdots)--(k2)--(k1);
         \draw (k)--(km1)--(km2)--(rdots)--(1);
 \end{tikzpicture}
 \end{center}
        \caption{$1<k<n-i$}\label{fig:jib}
    \end{subfigure}%
    ~ 
    \begin{subfigure}[t]{0.24\textwidth}
        \centering
    \begin{center}
    \begin{tikzpicture}[scale=1.1]
         \draw (0,1) node[draw,circle] (n)     {$i$};
         \draw (0,0) node[draw,circle] (k)     {$n$};
         \draw (-1,-1) node[draw, circle] (ki)  {$\scriptstyle n-1$};
         \draw (-1,-3) node[draw, circle] (k2)  {$\scriptstyle i+2$};
         \draw (-1,-4) node[draw, circle] (k1)  {$\scriptstyle i+1$};
         \draw (1,-1) node[draw, circle] (km1) {$\scriptstyle i-1$};
         \draw (1,-2) node[draw, circle] (km2) {$\scriptstyle i-2$};
         \draw (1,-3) node[draw=none] (rdots) {$\vdots$};
         \draw (-1,-2) node[draw=none] (kdots) {$\vdots$};
         \draw (1,-4) node[draw,circle] (1)     {$1$};

         \draw (n)--(k)--(ki)--(kdots)--(k2)--(k1);
         \draw (k)--(km1)--(km2)--(rdots)--(1);
 \end{tikzpicture}
 \end{center} 
        \caption{$k=n-i$}\label{fig:jic}
    \end{subfigure}%
    ~ 
    \begin{subfigure}[t]{0.24\textwidth}
        \centering
        
    \begin{center}
    \begin{tikzpicture}[scale=1.1]
         \draw (0,4) node[draw,circle] (n)     {$i$};
         \draw (0,0) node[draw,circle] (k)     {$n$};
         \draw (0,3) node[draw, circle] (n1)     {$\scriptstyle i-1$};
         \draw (0,2) node[draw=none] (ldots)  {$\vdots$};
         \draw (0,1) node[draw, circle] (ki1)  {$\scriptstyle n-k$};
         \draw (-1,-1) node[draw, circle] (ki)  {$\scriptstyle n-1$};
         \draw (-1,-3.5) node[draw, circle] (k2)  {$\scriptstyle i+2$};
         \draw (-1,-5) node[draw, circle] (k1)  {$\scriptstyle i+1$};
         \draw (1,-1) node[draw, circle] (km1) {$\scriptstyle n-k-1$};
         \draw (1,-2.5) node[draw, circle] (km2) {$\scriptstyle n-k-2$};
         \draw (1,-4) node[draw=none] (rdots) {$\vdots$};
         \draw (-1,-2) node[draw=none] (kdots) {$\vdots$};
         \draw (1,-5) node[draw,circle] (1)     {$1$};

         \draw (n)--(n1)--(ldots)--(ki1)--(k)--(ki)--(kdots)--(k2)--(k1);
         \draw (k)--(km1)--(km2)--(rdots)--(1);
 \end{tikzpicture}
 \end{center} 
        \caption{$n-1\geq k\geq n-i+1$}\label{fig:jid}
    \end{subfigure}
    
    \caption{Diagrams describing~$j_{i,k}$ for various values. 
    }\label{fig:jik_diagrams}

\end{figure*}

\end{definition}

\begin{remark}
    The trees $j_{i,k}$ are constructed to be cyclic binary trees that have only one descent.  There are only a few possibilities, since such a tree can have at most one branch.  Trees where $n$ is the root correspond to $G$-trees for $P_{n-1}$, where the trees with one descent are well understood. In this case the branch occurs at $i$ and the descent will always be the pair consisting of $i$ and its right child.
    
    When the root is not $n$, we know that if the tree has a branch it will occur at $n$.  The descent will always be the pair consisting of $n$ and its parent.
\end{remark}

Figure \ref{fig:shifts} shows some specific examples. As one can verify in Figure \ref{fig:jid}, the trees $j_{i,k}$ appear to be $G$-trees satisfying the above necessary conditions for join irreducible elements for the corresponding tubing. In fact, not only is this always true, but this generates all of them.

\begin{lemma}\label{lem:all_ji_gtrees}

    The $G$-trees for the join irreducible elements in $\MTub(C_n)$ are given by the trees $j_{i,k}$ for all $1\leq i,k\leq n-1$. 
\label{lem:ji_cycles}
\end{lemma}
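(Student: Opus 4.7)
The plan is to establish two directions of the set equality: (1) each $j_{i,k}$ is the $G$-tree of a join irreducible element of $\MTub(C_n)$, and (2) every join irreducible element has its $G$-tree of the form $j_{i,k}$.

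For direction (1), I will verify directly from Figures~\ref{fig:jia}--\ref{fig:jid} that each $j_{i,k}$ is a valid cyclic binary tree. In cases 1 and 2 the root is $n$ so $<_n$ is the standard order on $[n-1]$ and the check reduces to the usual binary search tree property; in cases 3 and 4 the root is $i$, and I will check that the subtrees of $n$ (or of the spine ending in $n$) are partitioned correctly by $<_i$. I will then identify the unique descent in each case---namely $(i, i+k)$ in cases 1 and 2, the root edge $(i,n)$ in case 3, and the left edge $(n-k, n)$ in case 4---by observing that every other edge in each figure has parent greater than child. By Remark~\ref{rem:joinirred_descents}, the corresponding maximal tubing is join irreducible. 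Distinctness of the trees $j_{i,k}$ follows because the root together with the descent edge recovers $(i,k)$.

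For direction (2), let $G_\cj$ be a cyclic binary tree with exactly one descent. By Proposition~\ref{prop:descent_edge_numbers} and Corollary~\ref{cor:descent_branches}, $G_\cj$ has at most one right edge and at most one branching point. I will split into cases on the root $m$ of $G_\cj$. If $m=n$, then $<_n$ is the standard order so no left edge can be a descent; hence the single descent must be the unique right edge at some branching vertex $i$, and tracing the left spine from $n$ down to $i$ together with the descent-free chains hanging below $i$'s two children reconstructs $j_{i,k}$ from cases 1 or 2. If $m \neq n$, I further consider whether the descent is the root edge or lies strictly below it. A root-edge descent $(m,v)$ forces the subtree below $v$ to be a descent-free cyclic BST, whose root must be the numerical maximum $n$ (else some child would exceed its parent numerically); the unique such structure gives $j_{m, n-m}$ in case 3. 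A below-root descent requires a more delicate argument: I will show that the descent must have $n$ as its child vertex, that the spine above $n$ is forced to be the consecutive sequence $m, m-1, \ldots, n-k$, and that the remaining subtrees chain off $n$ as in case 4.

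The main obstacle is the below-root-descent subcase of (2). Here the interaction between cyclic and numerical comparisons requires careful bookkeeping: placing the child vertex of the descent anywhere other than $n$ either forces a second right edge (to accommodate descendants on the wrong side of the cyclic order $<_m$) or a second descent (due to wrap-around near $n$ and $1$). Once the descent edge is pinned to end at $n$, the chain structure on either side of $n$ is determined by the ``no additional right edges'' constraint, and the spine from $m$ down to $n$ must consist of consecutive integers since any gap would introduce another vertex that, by the BST property, could only be placed so as to create a second descent.
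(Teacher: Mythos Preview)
Your proposal is correct and covers the same ground as the paper's proof, but the two arguments diverge in the $m\neq n$ case.

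The paper handles that case by observing that a cyclic binary tree with at most one right edge (guaranteed by Proposition~\ref{prop:descent_edge_numbers}) is a cyclic relabeling of a standard binary search tree on $[n-1]$ with at most one right edge---i.e., a cyclic shift of some $j_{i,k}$ from cases (A) or (B). It then notes that any such shift has a descent at the edge into $n$, so the unique descent must be that one, and the shifts for which \emph{only} that edge is a descent are precisely the trees of shapes (C) and (D).

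Your approach is a direct structural analysis: you pin the descent to the edge entering $n$ (since $n$'s parent is always numerically smaller), then force the spine $m,m-1,\ldots,n-k$ and the two chains hanging off $n$ one edge at a time using the ``at most one right edge'' and ``at most one branching point'' constraints. This is more hands-on but also more transparent: the cyclic-shift argument in the paper is efficient but leaves the reader to check which shifts actually have a single descent, whereas your argument makes the forcing explicit at each step. Both routes ultimately rely on the same ingredients (Proposition~\ref{prop:descent_edge_numbers}, Corollary~\ref{cor:descent_branches}, and the observation that the edge into $n$ is always a descent when $n$ is not the root), so neither is deeper---yours just unpacks what the paper compresses into the phrase ``take the form of (C) or (D).''

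One small point to tighten in your write-up of the below-root case: when you rule out a right child on a spine vertex $v_j$, make sure you close the loop by noting that the elements $\{1,\ldots,p-1\}$ (with $p$ the parent of $n$) would then have nowhere to go, since they are $>_m n$ and hence must sit in $n$'s right subtree. This is the step that simultaneously forces the spine to be consecutive and forces the lone right edge to emanate from $n$.
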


\begin{proof}
We first note each $j_{i,k}$ is the $G$-tree of maximal tubing of $C_n$ because in the partial order on $[n]$ induced by $j_{i,k}$ each principal order ideal is a cyclic interval and thus a connected subgraph of $C_n$.
Now let $\cj \in \MTub(C_n)$ be a join irreducible element and consider the types (A--D) of $j_{i,k}$ in Figure \ref{fig:jik_diagrams}.  
Suppose the root of $G_\cj$ is $n$. The number of descents in a binary search tree is exactly the number of right edges. The binary search trees on $[n-1]$ with a single right edge---with $n$ added to the top---take the form of (A) or (B). So $G_\cj = j_{i,k}$ for some $1 \leq k < n-i$. \par 
Now suppose the root of $G_\cj$ is not $n$. By Proposition \ref{prop:descent_edge_numbers}, all cyclic binary trees with at one descent are cyclic shifts of those $j_{i,k}$ where $1 \leq k < n-i$. However, many of these cyclic shifts will have more than one descent. Since any such shift will have a descent with $n$, that must be the descent that occurs at the right edge as in the proof of Proposition \ref{prop:descent_edge_numbers}. So the cyclic binary trees with precisely one descent, whose maximal element is \emph{not} $n$, take the form of (C) or (D). Thus $G_\cj = j_{i,k}$ where $n-i \leq k \leq n-1$.
 Consequently, all such trees are precisely described by $j_{i,k}$.
\end{proof}

For the remainder of this paper we will denote by $\cj_{i,k}$ the join irreducible element  $\cj\in\MTub(C_n)$ whose $G$-tree is $G_{\cj} =j_{i,k}$.

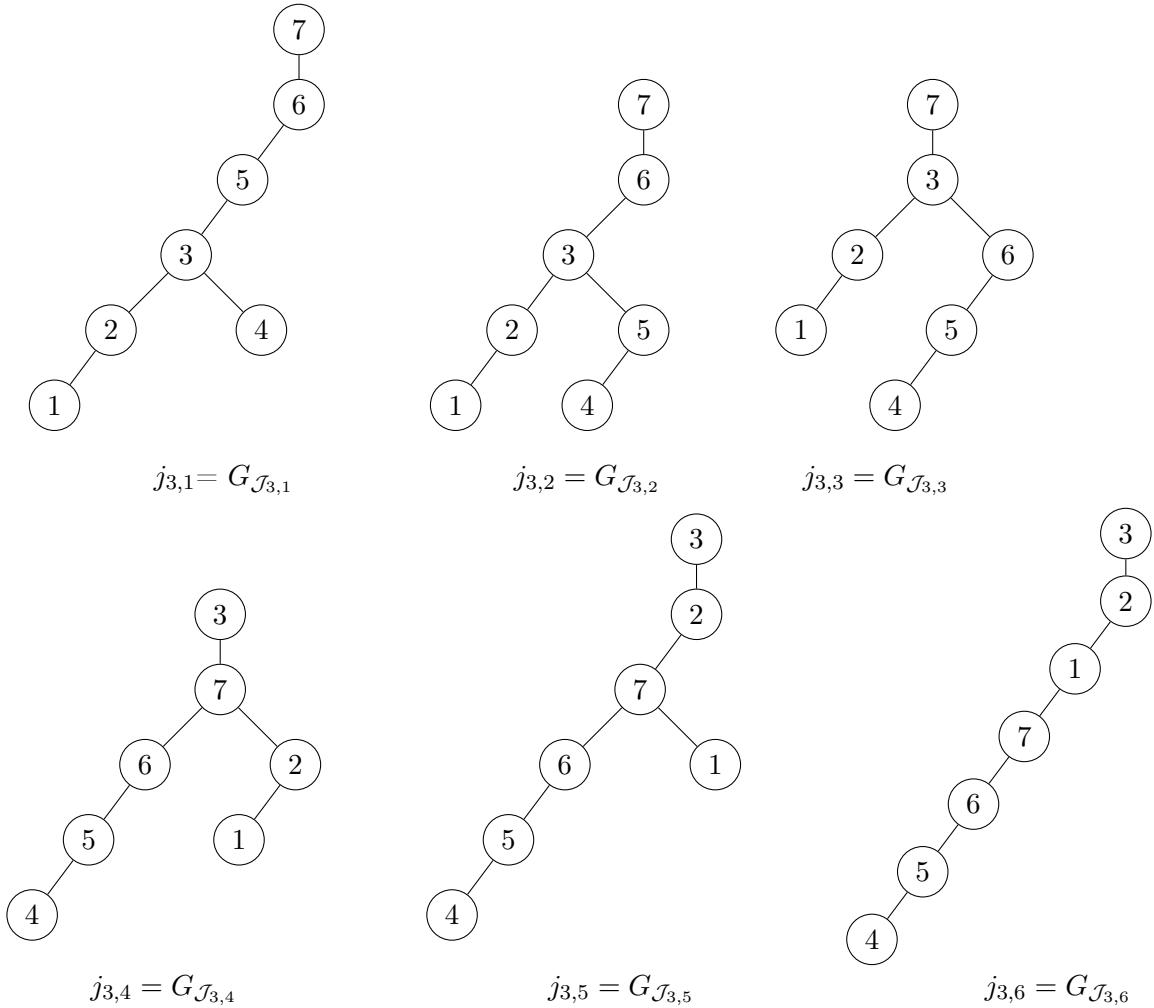
\begin{figure}[ht]
\begin{center}
    \begin{tikzpicture}
         \draw (0,3) node[draw,circle] (a)     {$7$};
         \draw (0,2) node[draw, circle] (c)     {$6$};
         \draw (-0.75,1) node[draw, circle] (d)     {$5$};
         \draw (-1.5,0) node[draw,circle] (b)     {$3$};
         \draw (-0.5,-1) node[draw, circle] (e)  {$4$};
         \draw (-2.5,-1) node[draw, circle] (f) {$2$};
         \draw (-3.25,-2) node[draw, circle] (g) {$1$};
         \draw (-1,-3) node[draw=none] (T)     {$j_{3,1}$= $G_{\cj_{3,1}}$};

         \draw (a)--(c)--(d)--(b)--(e);
         \draw (b)--(f)--(g);
 \end{tikzpicture}\hspace{0.5in}    
 \begin{tikzpicture}
         \draw (0,1) node[draw,circle] (a)     {$7$};
         \draw (0,0) node[draw,circle] (b)     {$6$};
         \draw (-1,-1) node[draw, circle] (c)     {$3$};
         \draw (-1.75,-2) node[draw, circle] (d)     {$2$};
         \draw (-2.5,-3) node[draw, circle] (e)  {$1$};
         \draw (0,-2) node[draw, circle] (f) {$5$};
         \draw (-0.75,-3) node[draw, circle] (g) {$4$};
         \draw (-0.75,-4) node[draw=none] (T)     {$j_{3,2}=G_{\cj_{3,2}}$};

         \draw (a)--(b)--(c)--(d)--(e);
         \draw (c)--(f)--(g);
    \end{tikzpicture}\hspace{0.5in}
    \begin{tikzpicture}
         \draw (0,1) node[draw,circle] (a)     {$7$};
         \draw (0,0) node[draw,circle] (b)     {$3$};
         \draw (-1,-1) node[draw, circle] (c)     {$2$};
         \draw (-1.75,-2) node[draw, circle] (d)     {$1$};
         \draw (-0.5,-3) node[draw, circle] (e)  {$4$};
         \draw (1,-1) node[draw, circle] (f) {$6$};
         \draw (0.25,-2) node[draw, circle] (g) {$5$};
         \draw (-0.75,-4) node[draw=none] (T)     {$j_{3,3}=G_{\cj_{3,3}}$};

         \draw (a)--(b)--(c)--(d);
         \draw (b)--(f)--(g)--(e);
    \end{tikzpicture}\hspace{0.5in}
    
    \begin{tikzpicture}
         \draw (0,1) node[draw,circle] (a)     {$3$};
         \draw (0,0) node[draw,circle] (b)     {$7$};
         \draw (-1,-1) node[draw, circle] (c)     {$6$};
         \draw (-1.75,-2) node[draw, circle] (d)     {$5$};
         \draw (-2.5,-3) node[draw, circle] (e)  {$4$};
         \draw (1,-1) node[draw, circle] (f) {$2$};
         \draw (0.25,-2) node[draw, circle] (g) {$1$};
         \draw (-0.75,-4) node[draw=none] (T)     {$j_{3,4}=G_{\cj_{3,4}}$};

         \draw (a)--(b)--(c)--(d)--(e);
         \draw (b)--(f)--(g);
    \end{tikzpicture}\hspace{0.5in}
    \begin{tikzpicture}
         \draw (0,2) node[draw,circle] (a)     {$3$};
         \draw (0,1) node[draw, circle] (f) {$2$};
         \draw (-0.75,0) node[draw,circle] (b)     {$7$};
         \draw (-1.75,-1) node[draw, circle] (c)     {$6$};
         \draw (-2.5,-2) node[draw, circle] (d)     {$5$};
         \draw (-3.25,-3) node[draw, circle] (e)  {$4$};
         \draw (0.25,-1) node[draw, circle] (g) {$1$};
         \draw (-1,-4) node[draw=none] (T)     {$j_{3,5}=G_{\cj_{3,5}}$};

         \draw (a)--(f)--(b)--(c)--(d)--(e);
         \draw (b)--(g);
 \end{tikzpicture}\hspace{0.5in}
 \begin{tikzpicture}[scale=0.9]
         \draw (0,2) node[draw,circle] (a)     {$3$};
         \draw (0,1) node[draw, circle] (f) {$2$};
         \draw (-0.75,0) node[draw,circle] (b)     {$1$};
         \draw (-1.5,-1) node[draw, circle] (c)     {$7$};
         \draw (-2.25,-2) node[draw, circle] (d)     {$6$};
         \draw (-3,-3) node[draw, circle] (e)  {$5$};
         \draw (-3.75,-4) node[draw, circle] (g) {$4$};
         \draw (-1,-4.75) node[draw=none] (T)     {$j_{3,6}=G_{\cj_{3,6}}$};

         \draw (a)--(f)--(b)--(c)--(d)--(e)--(g);
 \end{tikzpicture}
\end{center}
    \caption{The $G$-trees of a chain of join irreducibles in $\MTub(C_7)$: $\cj_{3,s}\le\cj_{3,t}$ provided $s\le t$. In fact this chain is saturated in $\MTub(C_7)$, see Proposition~\ref{prop:lrshifts}. In particular, referring to Definition~\ref{def:join_irr}, the tree $j_{3,1}$ is an example of case $(A)$, $j_{3,2},j_{3,3}$ are case $(B)$, $j_{3,4}$ is case $(C)$, and $j_{3,5},j_{3,6}$ are case $(D)$.}
    \label{fig:shifts}
\end{figure}

\begin{corollary}\label{cor:jienum}
    There are $(n-1)^2$ join irreducible elements of $\MTub(C_n)$.
\end{corollary}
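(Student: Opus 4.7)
My plan is to show that the parametrization $(i,k) \mapsto \cj_{i,k}$ is a bijection from the index set $\{1,\ldots,n-1\}^2$ onto the set of join irreducible elements of $\MTub(C_n)$; since the index set has $(n-1)^2$ elements, the count follows immediately. Surjectivity is exactly the content of Lemma~\ref{lem:all_ji_gtrees}, so the entire argument reduces to proving injectivity of this parametrization.

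For injectivity, I would appeal to Proposition~\ref{prop:inv_equality} (which says that a tubing of $C_n$ is determined by its inversion set) and compute $\inv(\cj_{i,k})$ directly from the $G$-trees drawn in Definition~\ref{def:join_irr}. A case-by-case inspection of the four pictures (A)--(D) yields the unified formula
\[
\inv(\cj_{i,k}) = \begin{cases} \{(i,\, i+j) : 1 \le j \le k\}, & 1 \le k \le n-i,\\[2pt] \{(x,y) : n-k \le x \le i,\ i+1 \le y \le n\}, & n-i \le k \le n-1,\end{cases}
\]
where the two expressions agree at the overlap $k = n-i$. From this, one can read off the parameters as follows: $i$ equals the maximum first coordinate appearing in $\inv(\cj_{i,k})$, and $k$ equals $|\inv(\cj_{i,k})|$ when all inversions share that single first coordinate, and equals $n - \min\{x : (x,y)\in\inv(\cj_{i,k})\}$ otherwise. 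In either case, $(i,k)$ is uniquely recovered, so distinct parameter pairs produce distinct inversion sets and hence distinct tubings.

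The main obstacle is purely the bookkeeping required to compute $\inv(\cj_{i,k})$ for each of the four cases in Definition~\ref{def:join_irr}. In particular, one must verify that cases (C) and (D) specialize to the same $G$-tree at $k = n-i$, so that the two formulas above are consistent on their overlap and the $(n-1)^2$ parameter pairs remain unambiguous. Once the inversion-set formula is established, both the injectivity of $(i,k) \mapsto \cj_{i,k}$ and the stated enumeration $(n-1)^2$ are immediate.
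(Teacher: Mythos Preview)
Your proposal is correct and follows the same logical structure the paper intends: Lemma~\ref{lem:all_ji_gtrees} gives surjectivity of $(i,k)\mapsto \cj_{i,k}$ onto the join irreducibles, and the count $(n-1)^2$ follows once injectivity is established. The paper states the corollary without proof, treating the distinctness of the trees $j_{i,k}$ as evident from the explicit pictures in Definition~\ref{def:join_irr}. Your route via inversion sets is more formal and in fact anticipates Lemma~\ref{lem:invs_jis}, which the paper proves later for other purposes; combined with Proposition~\ref{prop:inv_equality} this cleanly yields injectivity.

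One small correction: the case overlap you flag is misidentified. Case~(C) is exactly $k=n-i$ and case~(D) is $k\ge n-i+1$, so these never coincide. The boundary that matters is between your two inversion-set formulas at $k=n-i$, which corresponds to the transition from cases~(A)/(B) (root $n$) to case~(C) (root $i$). You have already checked that both formulas give $\{i\}\times\{i+1,\ldots,n\}$ there, so the argument goes through; only the labeling of which Definition~\ref{def:join_irr} cases meet needs adjusting.
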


\section{Semidistributive and Congruence Uniform}\label{sec:semidis}

Using the results from prior sections, the primary goal of this section is to demonstrate the following:

\begin{theorem}\label{thm:semidistributive}
   $\MTub(C_n)$ is a semidistributive lattice.
\end{theorem}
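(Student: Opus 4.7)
The plan is to use the $\Cut$ map as a bridge: by Corollary \ref{cut_lattice_map} it is a lattice quotient onto the Tamari lattice $\MTub(P_n)$, and by Proposition \ref{prop: fiber is an interval} each fiber is an interval of the weak order on a symmetric group. Both the Tamari lattice and weak-order intervals are well-known to be semidistributive, so the strategy is to lift the semidistributive property through this quotient.

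To prove the join-semidistributive law, suppose $\cj\vee\ck = \cj\vee\cl$. Applying $\Cut$ and using that it preserves joins gives
\[ \Cut(\cj)\vee\Cut(\ck) \;=\; \Cut(\cj)\vee\Cut(\cl). \]
By join-semidistributivity in the Tamari lattice, this common image also equals $\Cut(\cj)\vee(\Cut(\ck)\wedge\Cut(\cl)) = \Cut(\cj\vee(\ck\wedge\cl))$. Hence all three elements $\cj\vee\ck$, $\cj\vee\cl$, $\cj\vee(\ck\wedge\cl)$ lie in a single fiber $\Cut^{-1}(\cx)$ with $\cx = \Cut(\cj)\vee\Cut(\ck)$. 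Next, using Proposition \ref{prop:path_join} together with Corollary \ref{cor:key_corollary}, I would establish the identity $\cj\vee\ck = \cj^\cx \vee \ck^\cx$ in terms of canonical lifts (where the right-hand join, being taken between two elements of the fiber, agrees with the fiber-join since fibers are intervals of $\MTub(C_n)$). Applied to each of the three relevant joins, this converts the desired identity into the statement $\cj^\cx \vee \ck^\cx = \cj^\cx \vee (\ck^\cx \wedge \cl^\cx)$, which is precisely join-semidistributivity inside the weak-order interval $\Cut^{-1}(\cx)$, and therefore holds. Meet-semidistributivity then follows from the order-reversing involution on $\MTub(C_n)$ supplied by Corollary \ref{cor:involution}, which turns the join-SD law into the meet-SD law.

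The main technical obstacle will be matching the fiber meet $\ck^\cx\wedge\cl^\cx$ with the global meet $\ck\wedge\cl$; concretely, I must verify that $\cj^\cx\vee(\ck^\cx\wedge\cl^\cx) = \cj\vee(\ck\wedge\cl)$. One direction is immediate: from Corollary \ref{cor:key_corollary} we have $(\ck\wedge\cl)^\cx \le \ck^\cx\wedge\cl^\cx$, giving $\cj\vee(\ck\wedge\cl) \le \cj^\cx\vee(\ck^\cx\wedge\cl^\cx)$. The reverse comparison is more delicate, and I would approach it either by a case analysis on the explicit local tree-move description of $\cj^\cx$ developed in Section \ref{sec:Lattice} (tracking how inversions created by the lift interact with inversions of $\ck\wedge\cl$), or alternatively by translating the desired equality entirely into an inversion-set identity via Theorem \ref{thm:inversion_order} and verifying it combinatorially. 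Either route should also be cross-checked against the characterization of join-irreducibles of $\MTub(C_n)$ given in Lemma \ref{lem:all_ji_gtrees}, since semidistributivity is equivalent to every element admitting a canonical join representation in terms of these irreducibles.
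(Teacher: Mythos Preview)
Your approach is genuinely different from the paper's, but it has a real gap that you yourself flag and do not close. The paper does not attempt to lift semidistributivity through the $\Cut$ quotient at all; instead it uses the explicit classification of join irreducibles $\cj_{i,k}$ from Section~\ref{sec:joinirr}, and for each $\cj_{i,k}$ it exhibits the maximum element $\kappa(\cj_{i,k})$ of $\{\cl : \cl \wedge \cj_{i,k} = (\cj_{i,k})_*\}$ by an explicit computation with inversion and coinversion sets (Propositions~\ref{prop:join_chains_relation_to_meets} and~\ref{prop:max_not_jstar}, Lemmas~\ref{lem:max_meet} and~\ref{lem:max_meet_is_max}). Meet-semidistributivity then follows from the Freese--Nation criterion (Proposition~\ref{prop:kappa_meet_semidistributive}), and self-duality gives the other half.

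The gap in your argument is precisely the one you name: you need $\cj^\cx \vee (\ck^\cx \wedge \cl^\cx) = \cj \vee (\ck \wedge \cl)$, equivalently $(\ck \wedge \cl)^\cx$ and $\ck^\cx \wedge \cl^\cx$ become equal after joining with $\cj^\cx$. Since $a^\cx = a \vee \bot_\cx$ (with $\bot_\cx$ the bottom of the fiber), this asks for $(\ck \vee \bot_\cx) \wedge (\cl \vee \bot_\cx) \le \cj \vee (\ck \wedge \cl) \vee \bot_\cx$, which is a distributivity-type identity that fails in general lattices and is not supplied by semidistributivity of the fiber or of the quotient. In other words, ``quotient semidistributive and every congruence class semidistributive'' does not formally imply semidistributivity of the total lattice, and your argument does not invoke any additional structure that would force it here. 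Your two suggested repairs (case analysis on the four tree-move types for $\cj^\cx$, or an inversion-set computation via Theorem~\ref{thm:inversion_order}) are plausible directions but are not carried out, and carrying them out would likely require work comparable to the paper's direct $\kappa$-map computation; the reference to canonical join representations at the end is circular, since the existence of those representations is equivalent to the semidistributivity you are trying to prove.
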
 

We now review the basics of semidistributive lattices. Recall that if $L$ is a finite lattice then $Ji(L)$ and $Mi(L)$ are the sets of join and meet irreducible elements and if $j\in Ji(L)$ then $j_*$ is the unique element of $L$ such that $j_*\lessdot j$. 

A lattice $L$ is said to be \emph{meet-semidistributive} if for $x,y,z\in L$ such that if $x\meet y = x\meet z$ implies that $x\meet (y\join z) = (x\meet y)\join (x\meet z) = x\meet y$. Dually, a lattice $L$ is said to be \emph{join-semidistributive} if $x\join y = x\join z$ implies $x\join (y\meet z) = (x\join y)\meet (x\join z) = x\join y$. We say that $L$ is \emph{semidistributive} if $L$ is both meet-semidistributive and join-semidistributive. 

We further will  show that $\MTub(C_n)$ is in fact a \emph{congruence uniform} lattice by way of the characterization finite congruence uniform lattices of Reading, Speyer, and Thomas \cite{reading2021fundamental}.

We start by developing a deeper understanding of the structure of join irreducible elements in $\MTub(C_n)$, which will be critical in demonstrating Theorem \ref{thm:semidistributive}.

\begin{proposition}\label{prop:lrshifts}
     The collection of join irreducibles $\cj_{i,1},\cj_{i,2},\dots, \cj_{i,n-1}$ forms a saturated chain in $\MTub(C_n)$. 
\end{proposition}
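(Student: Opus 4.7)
The plan is to verify directly each of the $n-2$ cover relations $\cj_{i,k} \lessdot \cj_{i,k+1}$ for $k \in \{1,\dots,n-2\}$ by exhibiting, in each case, a single tree move on $G_{\cj_{i,k}}$ that produces $G_{\cj_{i,k+1}}$, and then appealing to Proposition~\ref{prop:cover_relation_cycle}. I would partition $k$ according to which of the four cases (A)--(D) of Definition~\ref{def:join_irr} contain $j_{i,k}$ and $j_{i,k+1}$: range (i) is $1 \le k \le n-i-2$, staying in cases (A) or (B); range (ii) is the single step $k = n-i-1$, crossing from (B) to (C); and range (iii) is $n-i \le k \le n-2$, from (C) into (D) or within (D).

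For range (i), the root of $G_{\cj_{i,k}}$ is $n$, so the cyclic order $<_n$ restricts to the usual integer order on $[n-1]$, and inspection of Figures~\ref{fig:jia}--\ref{fig:jib} shows that $i$ appears as the left child of $i+k+1$ on the main chain. Applying the left-edge tree move of Definition~\ref{def: tree move} with $x = i$ and $y = i+k+1$ lifts $i$ above $i+k+1$ and reattaches $i+k+1$ as the new root of the right subtree of $i$; this produces $G_{\cj_{i,k+1}}$, and since $i < i+k+1$, Proposition~\ref{prop:cover_relation_cycle} gives the desired cover.

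For range (ii), the tree move to use is the top-edge move of Definition~\ref{def:tree_move_cyclic} applied to the unique top edge joining the root $n$ of $G_{\cj_{i,n-i-1}}$ to its only child $i$; this swaps $n$ and $i$ and interchanges the subtrees of $i$, producing exactly the tree of Figure~\ref{fig:jic}, that is, $G_{\cj_{i,n-i}}$. For range (iii), the root of $G_{\cj_{i,k}}$ is now $i$, so the cyclic order is $<_i$; under this order each element of $\{1,\dots,i-1\}$ is greater than $n$, so the top of the right subtree of $n$ in the sense of Definition~\ref{def:Gtree} is exactly $n-k-1$. The tree move on this right edge lifts $n-k-1$ out of $n$'s right subtree and reattaches it on the main chain between $n-k$ and $n$; since $n-k-1 < n$ and $n-k-1 <_B n$, this is an ascent and Proposition~\ref{prop:cover_relation_cycle} gives $\cj_{i,k} \lessdot \cj_{i,k+1}$.

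The main obstacle is the subtree bookkeeping required to verify that each of these moves really does yield $G_{\cj_{i,k+1}}$ and preserves the cyclic BST condition of Proposition~\ref{prop:cyclic_BST}. In range (ii) the root changes from $n$ to $i$, so the cyclic order switches from $<_n$ to $<_i$, and one must confirm that the chains of Figure~\ref{fig:jic} are indeed cyclic binary trees under $<_i$. In range (iii) the ambient cyclic order $<_i$ is fixed throughout, but one still has to check consistently that the ``right child of $n$'' being lifted is $n-k-1$ and that the residual subtree and the pre-existing main chain reassemble as in Figure~\ref{fig:jid}. Once these identifications are verified across the three ranges, concatenating the covers gives the saturated chain $\cj_{i,1} \lessdot \cj_{i,2} \lessdot \cdots \lessdot \cj_{i,n-1}$, proving the proposition.
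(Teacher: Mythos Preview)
Your proposal is correct and follows essentially the same approach as the paper's proof: both partition into the same three ranges of $k$ (namely $1\le k\le n-i-2$, $k=n-i-1$, and $n-i\le k\le n-2$) and in each range identify the same single tree move---on the edge $(i,i+k+1)$, the top edge $(i,n)$, and the edge $(n-k-1,n)$ respectively---then invoke Proposition~\ref{prop:cover_relation_cycle}. The ``obstacle'' you flag about verifying the cyclic BST condition after each move is not really an obstacle, since Proposition~\ref{prop:cover_relation_cycle} (together with Lemma~\ref{lem:tubing_covers}) already guarantees the result of a flip is a maximal tubing; the only genuine check needed is the subtree bookkeeping that the result equals $j_{i,k+1}$, which you outline correctly.
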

\begin{proof}
We will demonstrate that the $G$-trees $j_{i,k}$ and $j_{i,k+1}$ are related by a a single $G$-tree move, thus implying a cover relation $\cj_{i,k}\lessdot\cj_{i,k+1}$.
    We proceed by three cases depending on the value of $k$. 

    \begin{enumerate}
        \item Suppose that $1\leq k<n-i-1$. Note that in this case, one can witness $j_{i,k+1}$ as starting with $j_{i,k}$ and moving $i+k+1$ so that $i+k\lessdot_{\cj_{i,k+1}} i+k+1\lessdot_{\cj_{i,k+1}} i\lessdot_{\cj_{i,k+1}} i+k+2$, that is, $i+k+1$ becomes a left-child of $i$ while otherwise all other relations are kept the same from the partial order of $j_{i,k}$.  Note in this case that in the schematic for $G$-tree moves in Figure~\ref{fig:tree_move}, $T_1$ is empty.
        \item If $k=n-i-1$, then one can witness $j_{i,n-i}$ from $j_{i,n-i-1}$ by switching the locations of $n$ and $i$. Note in this case that in the schematic for $G$-tree moves in Figure~\ref{fig:tree_move}, $T_4$ and $T_3$ are empty.
        \item If $k\geq n-i$, then one can witness $j_{i,k+1}$ as $j_{i,k}$ where $n-k-1$  is moved so that $n-k-2\lessdot_{\cj_{i,k+1}} n\lessdot_{\cj_{i,k+1}} n-k-1\lessdot_{\cj_{i,k+1}} n-k$, that is, $n-k-1$ covers $n$ while otherwise all other relations are kept the same from $j_{i,k}$.  Note in this case that in the schematic for $G$-tree moves in Figure~\ref{fig:tree_move}, $T_2$ is empty.\qedhere
    \end{enumerate}
\end{proof}

Recall that an \emph{atom} of a finite poset containing a minimum element is an element which covers the minimal element. Similarly a \emph{coatom} is an element of a finite poset containing a maximum element which is covered by the maximum element. Recall that in a lattice $L$ if $j$ is a join irreducible element, then $j_*$ is the unique element covered by $j$. Consequently for us if $\cj$ is a join irreducible element of $\MTub(C)_n$ we denote by $\cj_*$ the unique element such that $\cj_*\lessdot \cj$.
\begin{proposition}\label{prop:subposetjoins}
    In $\MTub(C_n)$, the subposet of join irreducible elements is $(n-1)$ disjoint chains of $(n-1)$ elements.
\end{proposition}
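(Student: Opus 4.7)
The plan is to combine the explicit classification of join irreducibles from Definition~\ref{def:join_irr} and Lemma~\ref{lem:all_ji_gtrees} with the saturated-chain structure already established in Proposition~\ref{prop:lrshifts}, and to use the global order characterization in Theorem~\ref{thm:inversion_order} to rule out comparabilities between distinct chains. By Corollary~\ref{cor:jienum} there are exactly $(n-1)^2$ join irreducibles, which partition into the $n-1$ collections $\{\cj_{i,k}:1\le k\le n-1\}$ indexed by $i\in\{1,\ldots,n-1\}$; Proposition~\ref{prop:lrshifts} already shows each such collection is a saturated chain of length $n-1$ in $\MTub(C_n)$. So the counting works out, and the only remaining task is to show that $\cj_{i,k}$ and $\cj_{i',k'}$ are incomparable whenever $i\ne i'$.

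My approach is to exhibit, for each pair of distinct indices $i\ne i'$, a single witness pair in $[n]^2$ that certifies non-comparability via Theorem~\ref{thm:inversion_order}. The natural witness is the consecutive pair $(i,i+1)$. First I would verify that $(i,i+1)\in\inv(\cj_{i,k})$ for every $k$ by inspecting the four tree types A--D of Definition~\ref{def:join_irr} and noting that in each, $i+1$ sits inside the subtree rooted at $i$ in $j_{i,k}$: it appears in the right subtree of $i$ in Cases A and B, in the left subtree of $n$ (which is a descendant of $i$) in Case C, and in that same subtree hanging off the central chain from $i$ down to $n-k$ in Case D. In every case $i+1<_{\cj_{i,k}} i$, so $(i,i+1)\in\inv(\cj_{i,k})$.

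Next I would prove the complementary statement: for every $i'\ne i$ and every $k'$, we have $i<_{\cj_{i',k'}} i+1$, so $(i,i+1)\in\coinv(\cj_{i',k'})$ and hence $(i,i+1)\notin\inv(\cj_{i',k'})\cup\inc(\cj_{i',k'})$. Granting this, Theorem~\ref{thm:inversion_order} yields $\cj_{i,k}\not\le\cj_{i',k'}$, and the symmetric argument with the witness $(i',i'+1)$ gives $\cj_{i',k'}\not\le\cj_{i,k}$, finishing the proof.

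The main obstacle is the complementary statement, which I plan to handle by a uniform case analysis on the four types of $j_{i',k'}$. Cases A, B, and C are straightforward: the inversion set has the shape $\{i'\}\times\{i'+1,\ldots\}$ and the incomparability set is contained in $\{1,\ldots,i'-1\}\times\{i'+1,\ldots,n-1\}$, so membership of $(i,i+1)$ in either set forces $i=i'$. Case D requires more care because its inversion set is the larger rectangle $\{(a,b):n-k'\le a\le i',\ i'+1\le b\le n\}$; however the defining inequality $n-k'\le i'-1$ of Case D's parameter range forces $(i,i+1)\in\inv(\cj_{i',k'})$ to pin $i=i'$, and the stronger consequence $n-k'-1\le i'-2$ rules out $(i,i+1)\in\inc(\cj_{i',k'})$ entirely. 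This uniform statement---that in every $G$-tree $j_{i',k'}$ the only consecutive pair failing $x<_{\cj_{i',k'}}x+1$ is $(i',i'+1)$---is the crux of the argument.
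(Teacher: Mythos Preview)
Your argument is correct, and the case analysis goes through cleanly: in every $j_{i',k'}$ with $i'\ne i$ one checks that $i<_{\cj_{i',k'}}i+1$, so $(i,i+1)$ is the witness showing $\inv(\cj_{i,k})\not\subseteq\inv(\cj_{i',k'})\cup\inc(\cj_{i',k'})$, and symmetrically with $(i',i'+1)$. A couple of your phrasings are slightly loose (in Cases A--C membership of $(i,i+1)$ in $\inc$ is actually impossible rather than merely forcing $i=i'$; and the ``defining inequality'' of Case D is not what pins $i=i'$ in the inversion check, though it is exactly what rules out the incomparability check), but the logic is sound.

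The paper takes a different and more abstract route. Rather than comparing chains pairwise via Theorem~\ref{thm:inversion_order}, it argues that each join irreducible lies above a \emph{unique} atom. The key step: if $\mathcal{A}$ is the atom at the bottom of the saturated chain through $\cj$ (from Proposition~\ref{prop:lrshifts}) and $\mathcal{B}$ is any atom with $\mathcal{B}\le\cj$, then since $\cj$ covers only $\cj_*$ one gets $\mathcal{A},\mathcal{B}\le\cj_*$; but $\cj_*$ is again join irreducible in the same chain, so one inducts down to conclude $\mathcal{B}\le\mathcal{A}$, whence $\mathcal{B}=\mathcal{A}$. Uniqueness of the atom below then forces the chains to be pairwise incomparable. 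This argument avoids any inspection of the $G$-trees or inversion sets and uses only the definition of join irreducibility together with the chain structure from Proposition~\ref{prop:lrshifts}. Your approach, by contrast, is a direct computation that essentially anticipates the inversion formulas proved later in Lemma~\ref{lem:invs_jis}; it is more explicit but requires the four-case analysis of Definition~\ref{def:join_irr}.
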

\begin{proof}
    Since we can partition the set of join irreducibles of $\MTub(C_n)$ into saturated chains of join irreducibles each of which contains an atom, it will suffice to show that for each join irreducible element $\cj$ there is a unique atom $\mathcal{A} \le \cj$. This is because if we have an order relation between elements of two disjoint saturated chains of join irreducibles each containing an atom, this relation would imply there is some element in one of the chains that is larger than multiple distinct atoms. 

    So suppose that $\mathcal{B}$ is an atom and $\mathcal{B}\le \cj$. We will show that $\mathcal{A} = \mathcal{B}$. Assume that $\cj$ is not an atom, as if so the result follows immediately.
   By Proposition~\ref{prop:lrshifts} there is a saturated chain $C$ of join irreducibles from $\mathcal{A}$ to $\cj$. Since $\mathcal{A}\le \cj, \mathcal{B}\le \cj$ and $\cj$ is join irreducible, then $\mathcal{A},\mathcal{B}\le \cj_*$. Otherwise, $\cj$ would not be join irreducible. Note that $\cj_*$ must also be a join irreducible element in $C$. Following the same logic, since $\cj_*\in C$ because $\cj$ is not an atom, then inductively it follows that $\mathcal{B}\le \ck$ for all $\ck\in C$ with $\ck\le \cj$. This implies that $\mathcal{B}\le \mathcal{A}$ implying $\mathcal{B}= \mathcal{A}$ since both $\mathcal{A}$ and $\mathcal{B}$ are atoms.  The result then follows by Corollary~\ref{cor:jienum} and Proposition~\ref{prop:lrshifts}.
\end{proof}

\begin{corollary} \label{cor:atoms} An element $\mathcal{A}$ is an atom in $\MTub(C_n)$ if and only if $\mathcal{A}=\cj_{i,1}$ for some $i$. 
\end{corollary}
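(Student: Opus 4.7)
The plan is to combine three facts established in the preceding sections: atoms are automatically join irreducible; every join irreducible has the form $\cj_{i,k}$ by Lemma~\ref{lem:all_ji_gtrees}; and for each fixed $i$, the elements $\cj_{i,1} \lessdot \cj_{i,2} \lessdot \cdots \lessdot \cj_{i,n-1}$ form a saturated chain in $\MTub(C_n)$ by Proposition~\ref{prop:lrshifts}, with the $(n-1)$ resulting chains being mutually incomparable by Proposition~\ref{prop:subposetjoins}. The first observation reduces the problem to deciding which of the $\cj_{i,k}$ are atoms, and the chain structure will force the answer $k=1$.

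For the forward direction, I would start from an atom $\mathcal{A}$, observe that its covering only $\hat{0}$ makes it join irreducible, and invoke Lemma~\ref{lem:all_ji_gtrees} to write $\mathcal{A} = \cj_{i,k}$. If $k \geq 2$, Proposition~\ref{prop:lrshifts} identifies the unique element covered by $\cj_{i,k}$ as $\cj_{i,k-1}$, which is itself a join irreducible and therefore distinct from $\hat{0}$. This contradicts $\mathcal{A}$ being an atom, so $k=1$.

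For the converse, fix $i$ and suppose for contradiction that $\cj_{i,1}$ is not an atom. Then $(\cj_{i,1})_* > \hat{0}$ in $\MTub(C_n)$, and in any finite lattice every element strictly above $\hat{0}$ dominates some atom, so we may choose an atom $\mathcal{A} \leq (\cj_{i,1})_*$. By the forward direction $\mathcal{A} = \cj_{j,1}$ for some $j$, hence $\cj_{j,1} < \cj_{i,1}$ strictly in $\MTub(C_n)$, and in particular in the subposet of join irreducibles. By Proposition~\ref{prop:subposetjoins} the subposet is a disjoint union of $(n-1)$ chains, so two comparable join irreducibles must lie in the same chain; this forces $j=i$ and yields the contradiction $\cj_{i,1} < \cj_{i,1}$.

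The only subtle point I expect is that the conclusion of Proposition~\ref{prop:subposetjoins}---that the join irreducibles break into $(n-1)$ disjoint chains---needs to be read as saying elements in distinct chains are incomparable in $\MTub(C_n)$ itself, not merely that they are listed in distinct chains. This is the natural reading (the subposet inherits its order from $\MTub(C_n)$), but it is the one place the argument is not entirely formal, and it is precisely what allows me to transport the contradiction in the converse direction from the subposet back to the full lattice.
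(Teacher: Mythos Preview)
Your proof is correct and matches the paper's intended approach: the corollary is stated without proof there, immediately after Proposition~\ref{prop:subposetjoins}, because it is meant to be read off from the chain structure you invoke. Your concern about the reading of ``disjoint chains'' is unfounded but understandable; since the subposet of join irreducibles carries the order induced from $\MTub(C_n)$, incomparability in the subposet is exactly incomparability in the lattice.

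One remark on the converse direction: while your contradiction via Proposition~\ref{prop:subposetjoins} is valid, there is a more direct route that avoids it entirely. From Definition~\ref{def:join_irr} (case $k=1$) one sees that the tubes of $\cj_{i,1}$ and those of the minimum tubing $\hat{0}$ (whose $G$-tree is the chain $1<_\hat{0}2<_\hat{0}\cdots<_\hat{0}n$, cf.\ Remark~\ref{rem:minmax_gtree}) differ in exactly one tube, namely $\{i+1\}$ versus $\{1,\ldots,i\}$; hence $\hat{0}\lessdot\cj_{i,1}$ by Definition~\ref{def:MTub}. Equivalently, Lemma~\ref{lem:invs_jis} gives $\inv(\cj_{i,1})=\{(i,i+1)\}$, and the tree move at the unique descent $(i,i+1)$ produces the inversion-free tubing $\hat{0}$. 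This direct verification also clarifies the phrase ``each of which contains an atom'' used at the start of the proof of Proposition~\ref{prop:subposetjoins}, so establishing it independently removes any residual worry about circularity between that proposition and the present corollary.
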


\begin{lemma}\label{lem:invs_jis}
   For the join irreducible $\cj_{i,k}\in\MTub(C_n)$,
   \begin{align*}
   \inv(\cj_{i,k}) &= \begin{cases}
       \{(i,i+1), \dots, (i,i+k)\} & \text{if }i\le n-k\, ,\\
       \bigcup_{\ell = 0}^{i-(n-k)}\{(i-\ell,i+1), \dots, (i-\ell,n)\} & \text{if }i> n-k \, ,  \end{cases} \\
       &= \begin{cases}
       \{i\} \times \{i+1, \dots, i+k\} & \text{if }i\le n-k\, ,\\
       \{n-k,\ldots ,i  \} \times \{i+1,\ldots,n\} & \text{if }i> n-k \, .  \end{cases}
   \end{align*}
\end{lemma}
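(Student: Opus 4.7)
By definition, $(a,b) \in \inv(\cj_{i,k})$ if and only if $a < b$ in the standard order and $b$ is a strict descendant of $a$ in the $G$-tree $j_{i,k}$, equivalently $b \in \downJ(a) \setminus \{a\}$ with $b > a$. Since Definition~\ref{def:join_irr} gives the four possible shapes of $j_{i,k}$ explicitly, the plan is to read the strict descendants directly off each tree and collect those exceeding the chosen ancestor in the standard order. I will organize the argument into two groups of cases, corresponding to the two lines of the claimed formula.

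In the three ``non-cyclic'' shapes (A), (B), (C), that is, whenever $i \le n-k$ (so $i+k \le n$), the vertex $i$ is the unique branching vertex of $j_{i,k}$, with left descending chain $i-1 > i-2 > \cdots > 1$ and ``right side'' consisting of the chain $i+k > i+k-1 > \cdots > i+1$. In case (A) this right side is the singleton $\{i+1\}$, in (B) it is the chain just described, and in (C) we have $k=n-i$, and the vertex $n$ sits directly below $i$ with the cyclic BST condition from Proposition~\ref{prop:cyclic_BST} placing $\{i+1, \ldots, n-1\}$ and $\{1, \ldots, i-1\}$ as its left and right subtrees in the cyclic order $<_i$. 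In every such case, the strict descendants of $i$ that exceed $i$ in the standard order are exactly $\{i+1, \ldots, i+k\}$, while every other vertex has only strictly smaller descendants (the chain above $i$ descends toward $i$, the chain on the left descends to $1$, and the chain on the right is already accounted for). This yields $\inv(\cj_{i,k}) = \{i\} \times \{i+1, \ldots, i+k\}$.

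In the ``cyclic'' shape (D), that is, $i > n-k$ (so $i+k > n$), the root is $i$, the partial order descends via $i > i-1 > \cdots > n-k > n$, and $n$ branches into its left subtree (the chain on $\{i+1, \ldots, n-1\}$ in $<_i$) and right subtree (the chain on $\{1, \ldots, n-k-1\}$). For every $a$ in the top chain $\{n-k, n-k+1, \ldots, i\}$, the strict descendants of $a$ contain the full subtree rooted at $n$, and among these, the elements strictly exceeding $a$ in the standard order are exactly $\{i+1, i+2, \ldots, n\}$, using that $a \le i$ (so $a < i+1, \ldots, n$) and that $\{1, \ldots, n-k-1\} \subseteq [1, a-1]$ (so these elements are too small to contribute). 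Every vertex outside the top chain has all of its strict descendants smaller than itself in the standard order, so it contributes no inversion. This gives $\inv(\cj_{i,k}) = \{n-k, \ldots, i\} \times \{i+1, \ldots, n\}$.

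The proof is essentially mechanical once the shape of $j_{i,k}$ is unwound, and the main point requiring care is the placement of the two subtrees below $n$ in case (C) and (D); this is immediate from the cyclic order $<_i$, in which $\{i+1, \ldots, n-1\}$ comes before $n$ and $\{1, \ldots, i-1\}$ comes after $n$. Apart from that bookkeeping, the lemma is a direct inspection of the trees in Figure~\ref{fig:jik_diagrams}.
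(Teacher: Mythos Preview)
Your proof is correct and follows the same approach as the paper, which simply states that the lemma ``follows from Definition~\ref{def:join_irr}''; you have just spelled out the case-by-case inspection of the trees $j_{i,k}$ in detail. The only thing to tighten is the parenthetical ``the chain above $i$ descends toward $i$'' in the $i\le n-k$ case---what you really use is that each vertex $v$ in that chain satisfies $v>i+k$, so every descendant of $v$ (which lies in $\{1,\dots,i+k\}$) is numerically smaller than $v$.
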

\begin{proof}
    This follows from Definition \ref{def:join_irr}.
\end{proof}

Recall that $w_0$ is the permutation that swaps $i$ and $n-i+1$.

\begin{lemma}\label{cor:coinvs_mis} 
    Let $\cm$ be the meet irreducible of $\MTub(C_n)$ obtained by applying the anti-isomorphism $w_0$ of Corollary~\ref{cor:involution} to the join irreducible $\cj$. Then $\coinv(\cm) = w_0(\inv (\cj)),$ 
    where $w_0$ applied to a set of pairs is applying $w_0$ to each term of each element.
\end{lemma}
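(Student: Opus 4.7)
The plan is to derive the identity directly from the fact (established in the proof of Lemma~\ref{lem:w0_dual}) that for any $\cj \in \MTub(C_n)$ and any $x,y \in [n]$, one has $x <_\cj y$ if and only if $w_0(x) <_{w_0(\cj)} w_0(y)$. This is a consequence of the observation that applying $w_0$ vertex-wise sends tubes of $\cj$ to tubes of $w_0(\cj)$ and preserves tops in the sense that $\top_{w_0\cj}(w_0 T) = w_0 \top_\cj(T)$.

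With this in hand, I would argue by exhibiting an explicit bijection between $\inv(\cj)$ and $\coinv(\cm)$. Given any $(i,j) \in \inv(\cj)$, so that $i<j$ and $j <_\cj i$, set $a = w_0(j) = n+1-j$ and $b = w_0(i) = n+1-i$. Since $i<j$ we get $a<b$, and the order equivalence translates $j <_\cj i$ into $a <_\cm b$, which is precisely the defining condition for $(a,b) \in \coinv(\cm)$. The map $(i,j) \mapsto (a,b)$ is clearly invertible by the same recipe, giving a bijection $\inv(\cj) \to \coinv(\cm)$. Since the unordered pair $\{a,b\}$ equals $w_0(\{i,j\})$, this bijection is exactly $w_0$ acting term-wise on pairs (reordered so that the smaller coordinate comes first).

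Note that this argument never uses the hypothesis that $\cj$ is join irreducible: the identity $\coinv(w_0(\cj)) = w_0(\inv(\cj))$ holds for every $\cj \in \MTub(C_n)$, and the only reason for the restriction to a join irreducible $\cj$ in the statement is to guarantee that $\cm = w_0(\cj)$ is meet irreducible, which follows immediately because $w_0$ is an anti-isomorphism. The only subtlety in the proof is notational: applying $w_0$ term-wise to an ordered pair $(i,j)$ with $i<j$ yields $(n+1-i, n+1-j)$ with the coordinates in reverse order, and this must be reordered as $(n+1-j, n+1-i)$ to match the convention that elements of $\coinv$ have their smaller coordinate listed first. There is no genuine obstacle here; the statement is essentially a bookkeeping verification once the order equivalence from Lemma~\ref{lem:w0_dual} is invoked.
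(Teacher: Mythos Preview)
Your proposal is correct and follows essentially the same approach as the paper: both arguments rest on the observation that the $G$-tree $G_\cm$ is obtained from $G_\cj$ by relabeling each vertex $i$ as $n+1-i$, so that order relations are preserved while the standard integer order is reversed, turning inversions into coinversions. The paper's proof is much terser and nominally cites Lemma~\ref{lem:invs_jis} (the explicit inversion formulas for join irreducibles), but the actual content of its argument is exactly the relabeling observation you spell out via Lemma~\ref{lem:w0_dual}; your version is simply more detailed, and your remark that the identity holds for all $\cj$ (not just join irreducibles) is a correct and useful observation.
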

\begin{proof}
    This follows from Lemma~\ref{lem:invs_jis} because $G_\cm$ is obtained by replacing each $i$ in $G_\cj$ with $n-i+1$. Consequently the only comparable non-inversion pairs of $G_\cm$ are the pairs $(n-k+1,n-i+1)$ where $(i,k)$ was an inversion of $G_\cj$.
\end{proof}

    Denote by $j_i$ the chain of join irreducibles whose inversions all contain $(i,i+1)$ and by $m_i$ the chain of meet irreducible elements of $w_0 j_i$, that is the meet irreducibles whose set of coinversions all do not contain $(n-i,n-i+1)$. 
    Similarly let $\cm_{i,k}$ be the meet irreducible element of height $k$ in the chain $m_i$ as a subposet of $\MTub(C_n)$. Note that $w_0 \cj_{i,k} = \cm_{i,n-k}$.
    \begin{corollary}\label{cor:coinv_mik}
        For the meet irreducible $\cm_{i,k}\in \MTub(C_n)$,
        \[
        \coinv(\cm_{i,k}) = \left(\begin{cases}
       \{k-i+1,\ldots,n-i\} \times \{n-i+1\} & \text{if }i\le k\\
        [n-i] \times \{n-i+1,\ldots n-k+1\} & \text{if }i > k\end{cases} \right)\, .
        \]
    \end{corollary}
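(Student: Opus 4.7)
The plan is to compute $\coinv(\cm_{i,k})$ directly by applying the anti-isomorphism $w_0$ to the formulas from Lemma~\ref{lem:invs_jis}. The main observation is already available: since $w_0\cj_{i,k}=\cm_{i,n-k}$, setting $k' = n-k$ gives
\[
\cm_{i,k} \;=\; w_0 \cj_{i,n-k}\, ,
\]
and then by Lemma~\ref{cor:coinvs_mis},
\[
\coinv(\cm_{i,k}) \;=\; w_0\!\left(\inv(\cj_{i,n-k})\right).
\]
So the entire proof reduces to (i) reading the inversion set of $\cj_{i,n-k}$ off Lemma~\ref{lem:invs_jis} and (ii) applying $w_0$ coordinatewise, then relabeling each pair so the smaller entry is written first.

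First I would split on which case of Lemma~\ref{lem:invs_jis} applies to $\cj_{i,n-k}$. The condition $i \le n-(n-k) = k$ corresponds to the upper case ($i\le k$), and $i > k$ to the lower case. In the upper case we have
\[
\inv(\cj_{i,n-k}) \;=\; \{i\}\times\{i+1,\ldots,i+(n-k)\}\, .
\]
Each pair $(i,\,i+t)$ with $1\le t\le n-k$ is sent by $w_0$ to the unordered pair $\{n-i+1,\,n-(i+t)+1\}$. Since $n-(i+t)+1 < n-i+1$, we reorder to obtain $(n-i-t+1,\,n-i+1)$. As $t$ ranges over $\{1,\ldots,n-k\}$, the first coordinate ranges over $\{k-i+1,\ldots,n-i\}$ and the second is fixed at $n-i+1$, exactly matching the first branch of the claim.

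Next, in the case $i > k$, Lemma~\ref{lem:invs_jis} gives
\[
\inv(\cj_{i,n-k}) \;=\; \{k,k+1,\ldots,i\}\times\{i+1,\ldots,n\}\, .
\]
Applying $w_0$ to a pair $(a,b)$ with $a\in\{k,\ldots,i\}$ and $b\in\{i+1,\ldots,n\}$ produces (after swapping to the $i<j$ convention) the pair $(n-b+1,\,n-a+1)$. As $b$ ranges over $\{i+1,\ldots,n\}$, the value $n-b+1$ ranges over $[n-i]$; as $a$ ranges over $\{k,\ldots,i\}$, the value $n-a+1$ ranges over $\{n-i+1,\ldots,n-k+1\}$. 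These vary independently, yielding
\[
\coinv(\cm_{i,k}) \;=\; [n-i]\times\{n-i+1,\ldots,n-k+1\}\, ,
\]
which matches the second branch.

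The only step that requires any care is the bookkeeping with $w_0$: one must remember that $w_0$ reverses the integer order on $[n]$, so each pair of \emph{inversions} of $\cj$ becomes a pair of \emph{coinversions} of $\cm$ only after one rewrites the image so that the smaller coordinate appears first. Beyond this indexing, the corollary is a direct substitution, so I expect no real obstacle.
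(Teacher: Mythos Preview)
Your proof is correct and follows essentially the same approach as the paper's own proof: both start from $\cm_{i,k}=w_0\cj_{i,n-k}$, invoke Lemma~\ref{cor:coinvs_mis} to reduce to $w_0(\inv(\cj_{i,n-k}))$, read off the inversion set from Lemma~\ref{lem:invs_jis} in the two cases $i\le k$ and $i>k$, and then apply $w_0$ coordinatewise. Your write-up is in fact slightly more explicit than the paper's about the reordering step after applying $w_0$.
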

    \begin{proof}
        This follows directly from Lemmas \ref{lem:invs_jis} and~\ref{cor:coinvs_mis}:
        \begin{align*}
        \coinv(\cm_{i,k}) &= w_0 \inv(\cj_{i,n-k})\\
        &=w_0 \left(\begin{cases}
       \{i\} \times \{i+1, \dots, i+(n-k)\} & \text{if }i\le n-(n-k)\\
       \{n-(n-k),\ldots ,i  \} \times \{i+1,\ldots,n\} & \text{if }i> n-(n-k)\end{cases} \right) \\
       &= \begin{cases}
       \{k-i+1,\ldots,n-i\} \times \{n-i+1\} & \text{if }i\le k\, ,\\
        [n-i] \times \{n-i+1,\ldots n-k+1\} & \text{if }i > k\, .\end{cases}
    \end{align*}
    \end{proof}

    We now introduce some more notation. For each $i\in [n-1]$ let $c_i$ be the permutation on $[n-1]$ given by
    \begin{equation*}
        c_i(k)=\left\{\begin{array}{ll}
            n-i+1-k  &  1\le k\le n-i\, ,\\
            k & k>n-i\, . 
        \end{array}
        \right.
    \end{equation*}

\begin{proposition}\label{prop:join_chains_relation_to_meets}
     For all $\ell > k$, it follows that
     $\cm_{c_i(\ell),1} > \cj_{i,k}$. Moreover, $\cj_{i,k}$ is less than every maximal tubing in $m_{c_i(\ell)}$.
\end{proposition}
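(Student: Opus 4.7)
The plan is to use Theorem~\ref{thm:inversion_order}, which reduces $\cj \leq \ck$ to the condition $\inv(\cj) \cap \coinv(\ck) = \emptyset$ (since $\inv(\ck) \cup \inc(\ck)$ is the complement of $\coinv(\ck)$). So the first goal is to compare the explicit set $\inv(\cj_{i,k})$ from Lemma~\ref{lem:invs_jis} against $\coinv(\cm_{c_i(\ell),1})$, which by Corollary~\ref{cor:coinv_mik} with second index $1$ is simply $[n-c_i(\ell)] \times \{n-c_i(\ell)+1,\ldots,n\}$.

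Unpacking the definition of $c_i$, one obtains two sub-cases for the coinversion set: if $\ell \leq n-i$ then $\coinv(\cm_{c_i(\ell),1}) = [\ell+i-1] \times \{\ell+i,\ldots,n\}$, while if $\ell > n-i$ then it equals $[n-\ell] \times \{n-\ell+1,\ldots,n\}$. Crossing this with the two cases $i \leq n-k$ and $i > n-k$ for $\inv(\cj_{i,k})$ gives four combinations to check. In each one, the hypothesis $\ell > k$ immediately produces the needed disjointness: when $i \leq n-k$, the first coordinate in $\inv(\cj_{i,k})$ is always $i$, and either the second-coordinate bound $j \geq \ell+i$ from the coinversion set exceeds the $\inv$-bound $j \leq i+k < i+\ell$ (subcase $\ell \leq n-i$), or the first-coordinate bound $[n-\ell]$ does not contain $i$ (subcase $\ell > n-i$); when $i > n-k$, the assumption $k > n-i$ together with $\ell > k$ forces $\ell > n-i$, and then $n-\ell < n-k \leq a$ for every first coordinate $a$ appearing in $\inv(\cj_{i,k})$. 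This establishes $\cj_{i,k} \leq \cm_{c_i(\ell),1}$.

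For the strict inequality, I would observe that $\cj_{i,k}$ has exactly one descent (Remark~\ref{rem:joinirred_descents}) and $n-2$ ascents, while $\cm_{c_i(\ell),1}$, being meet irreducible, has by duality via Corollary~\ref{cor:involution} exactly one ascent and $n-2$ descents; these two elements can therefore coincide only when $n-2 = 1$, and the remaining case $n=3$ can be verified directly from the six-element poset $\MTub(C_3)$. Thus $\cj_{i,k} < \cm_{c_i(\ell),1}$.

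For the second claim, note that by applying the order-reversing bijection of Corollary~\ref{cor:involution} to the saturated chain of join irreducibles in Proposition~\ref{prop:lrshifts}, the meet irreducibles $\cm_{c_i(\ell),1}, \cm_{c_i(\ell),2}, \ldots, \cm_{c_i(\ell),n-1}$ form a saturated chain in $\MTub(C_n)$, and tracking through $w_0 \cj_{c_i(\ell),k} = \cm_{c_i(\ell), n-k}$ identifies $\cm_{c_i(\ell),1}$ as its unique minimum. Since $\cj_{i,k}$ lies strictly below this minimum, it lies below every element of $m_{c_i(\ell)}$. The main obstacle is purely bookkeeping: keeping the boundary cases $i = n-k$ and $\ell = n-i$ straight so that no combination is missed, with all the real work done by the explicit inversion formulas and Theorem~\ref{thm:inversion_order}.
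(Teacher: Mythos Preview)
Your proposal is correct and follows essentially the same approach as the paper: reduce to $\inv(\cj_{i,k}) \cap \coinv(\cm_{c_i(\ell),1}) = \emptyset$ via Theorem~\ref{thm:inversion_order}, compute both sets explicitly from Lemma~\ref{lem:invs_jis} and Corollary~\ref{cor:coinv_mik}, and verify disjointness in the same three effective sub-cases (your fourth combination is correctly observed to be vacuous). The only difference is that you supply an explicit argument for the strictness of the inequality via the ascent/descent count, whereas the paper simply asserts the strict inequality; your addition is a welcome clarification rather than a departure in method.
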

\begin{proof} 
    We will leverage the characterization of order from Theorem \ref{thm:inversion_order}, and in particular prove the claim by show that $\inv(\cj_{i,k}) \cap \coinv(\cm_{c_i(\ell),1}) = \emptyset$. By Corollary \ref{cor:coinv_mik}, 
        \[\coinv(\cm_{c_i(\ell),1}) = [n-c_i(\ell)] \times \{n-c_i(\ell)+1,\ldots,n\}\, . \]
    The argument breaks into two cases based on the value of $k$. In each, we show that an arbitrary co-inversion $(p,q) \in \coinv(\coinv(\cm_{c_i(\ell),1})$ cannot be an inversion in $\inv(\cj_{i,k})$.
    
    (Case 1: $k\le n-i$) By Lemma~\ref{lem:invs_jis}, \[\inv(\cj_{i,k})=\{(i,i+1), (i,i+1), \dots, (i, i+k)\}\, .\] 
    The argument further breaks into two subcases based on the value of $\ell$.
    
    (Case 1A: $\ell \le n-i$) So $c_i(\ell) = n-i+1-\ell$, and $n-c_i(\ell) = \ell+i-1$.
    Since $\ell > k$, every coinversion $(p,q)$ in $\coinv(\cm_{c_i(\ell),1})$ has the larger term $q \geq i+\ell >  i+k$. So $(p,q) \notin \inv(\cj_{i,j})$, and the coinversions $\coinv(\cm_{c_i(\ell),1})$ are disjoint from the inversions $\inv(\cj_{i,k})$.
    
    (Case 1B: $\ell > n-i$) So $c_i(\ell) = \ell$, and $i > n-\ell$. Since the maximum first term of a coinversion $(p,q)$ in $\coinv(\cm_{\ell,1})$ is $p \leq n-\ell$, it must be that $p \neq i$.  Consequently, the coinversions $\coinv(\cm_{c_i(\ell),1})$ are disjoint from the inversions $\inv(\cj_{i,k})$.

(Case 2: $k>n-i$) So $\ell > n-i$ and $c_i(\ell) = \ell$. Again by Lemma~\ref{lem:invs_jis},  \[\inv(\cj_{i,k})=\bigcup_{r = 0}^{k+i-n}\{(i-r,i+1), \dots, (i-r,n)\} = \{n-k,\ldots ,i  \} \times \{i+1,\ldots,n\}\, .\] 

The largest that $p$ can be in a coinversion $(p,q) \in \coinv(\cm_{\ell,1})$ is $n-\ell$. On the other hand, the smallest first term in an element of $\inv(\cj_{i,k})$ is $n-k > n-\ell$. So the coinversions $\coinv(\cm_{c_i(\ell),1})$ are disjoint from the inversions $\inv(\cj_{i,k})$.
The ``moreover" part follows from $\cm_{c_i(\ell),1}$ being the minimal element in the chain~$m_{c_i(\ell)}$.
\end{proof}
\begin{proposition}\label{prop:max_not_jstar}
For $\ell \le k$: 
\begin{itemize}
    \item if $\ell \le n-i$, then $\cm_{c_i(\ell),n-\ell}$ is the maximal element of $m_{c_i(\ell)}$ not greater than $\cj_{i,k}$,
    \item if $\ell > n-i$, then $\cm_{c_i(\ell),n-i}$ is the maximal element of $m_{c_i(\ell)}$ not greater than $\cj_{i,k}$.
\end{itemize}
\end{proposition}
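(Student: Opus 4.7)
The plan is to invoke the inversion characterization of the order from Theorem~\ref{thm:inversion_order} in its contrapositive form: $\cj\not\le\ck$ if and only if $\inv(\cj)\cap\coinv(\ck)\ne\emptyset$. By Proposition~\ref{prop:lrshifts} and the anti-isomorphism $w_0$ of Corollary~\ref{cor:involution}, the chain $m_{c_i(\ell)}$ is linearly ordered $\cm_{c_i(\ell),1}<\cm_{c_i(\ell),2}<\cdots<\cm_{c_i(\ell),n-1}$, so it suffices to pinpoint the index $h^{\star}$ for which $\cj_{i,k}\not\le\cm_{c_i(\ell),h^{\star}}$ and $\cj_{i,k}\le\cm_{c_i(\ell),h^{\star}+1}$. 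The data for $\inv(\cj_{i,k})$ and $\coinv(\cm_{i',k'})$ are then provided directly by Lemma~\ref{lem:invs_jis} and Corollary~\ref{cor:coinv_mik}.

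For the first bullet, assume $\ell\le n-i$ and set $i'=c_i(\ell)=n-i+1-\ell$. Since $i'\le n-\ell$, Corollary~\ref{cor:coinv_mik} computes
\[
\coinv(\cm_{i',n-\ell})=\{i,i+1,\dots,i+\ell-1\}\times\{i+\ell\}.
\]
The pair $(i,i+\ell)$ lies in $\inv(\cj_{i,k})$ by Lemma~\ref{lem:invs_jis}, whether $k\le n-i$ (since $\ell\le k$ forces $i+\ell\le i+k$) or $k>n-i$ (since $\ell\le n-i$ forces $i+\ell\le n$), witnessing $\cj_{i,k}\not\le\cm_{i',n-\ell}$. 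Raising the second index to $n-\ell+1$ yields $\coinv(\cm_{i',n-\ell+1})=\{i+1,\dots,i+\ell-1\}\times\{i+\ell\}$, whose first coordinates strictly exceed the maximum first coordinate $i$ appearing in $\inv(\cj_{i,k})$; the intersection is empty, so $\cj_{i,k}\le\cm_{i',n-\ell+1}$.

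For the second bullet, assume $\ell>n-i$, so $c_i(\ell)=\ell$, and $\ell\le k$ forces $k>n-i$, giving $\inv(\cj_{i,k})=\{n-k,\dots,i\}\times\{i+1,\dots,n\}$. At $h=n-i$ one has $\ell>h$, so Corollary~\ref{cor:coinv_mik} (second case) yields
\[
\coinv(\cm_{\ell,n-i})=[n-\ell]\times\{n-\ell+1,\dots,i+1\},
\]
which contains $(n-\ell,i+1)\in\inv(\cj_{i,k})$ (note $n-k\le n-\ell\le i$). At $h=n-i+1$, the parallel computation, splitting on $\ell=n-i+1$ (where $i'\le k'$) and $\ell>n-i+1$ (where $i'>k'$), shows that every second coordinate appearing in $\coinv(\cm_{\ell,n-i+1})$ is at most $i$, while $\inv(\cj_{i,k})$ only uses second coordinates at least $i+1$; hence the intersection is empty and $\cj_{i,k}\le\cm_{\ell,n-i+1}$ (with the boundary convention that if $n-i+1>n-1$ the conclusion is vacuous since $\cm_{\ell,n-i}$ is already the top of the chain).

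The main difficulty is purely bookkeeping: both Lemma~\ref{lem:invs_jis} and Corollary~\ref{cor:coinv_mik} branch on the relative sizes of their two indices, so the proof must simultaneously track up to four subcases determined by $k$ vs.\ $n-i$ and $\ell$ vs.\ $n-i$. The saving device is the uniform choice of "corner" witnesses $(i,i+\ell)$ and $(n-\ell,i+1)$ on the boundaries of the inversion rectangles: these lie in $\inv(\cj_{i,k})$ across all relevant subcases, and the shift $h\mapsto h+1$ is exactly what removes the row or column they occupy from $\coinv(\cm_{c_i(\ell),h})$.
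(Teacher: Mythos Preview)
Your argument is correct and follows essentially the same route as the paper's proof: both invoke Theorem~\ref{thm:inversion_order}, compute $\inv(\cj_{i,k})$ and $\coinv(\cm_{c_i(\ell),h})$ via Lemma~\ref{lem:invs_jis} and Corollary~\ref{cor:coinv_mik}, and exhibit the witnesses $(i,i+\ell)$ and $(n-\ell,i+1)$ at the threshold index before showing the next index has empty intersection. Your version is slightly more explicit in tracking the branching of Corollary~\ref{cor:coinv_mik} (splitting $\ell=n-i+1$ from $\ell>n-i+1$ in the second bullet, where the paper silently uses that the two formulas coincide at the boundary), but the content is the same.
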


\begin{proof}
  By Corollary \ref{cor:coinv_mik},
 \[
 \coinv(\cm_{c_i(\ell),n-s}) = 
 \begin{cases}
       \{n-s-c_i(\ell)+1,\ldots,n-c_i(\ell)\} \times \{n-c_i(\ell)+1\} & \text{if }c_i(\ell)\le n-s\, ,\\
        [n-c_i(\ell)] \times \{n-c_i(\ell)+1,\ldots s+1\} & \text{if }c_i(\ell) > n-s\, .\end{cases}
 \]
The argument breaks into two cases, based on the value of $\ell$:
 
(Case 1: $\ell \le n-i$) 
So $c_i(\ell) = n-i+1-\ell$, and thus $n-c_i(\ell) = \ell+i-1$. Additionally, set $s = \ell$ above to get
 \[
 \coinv(\cm_{c_i(\ell),n-\ell}) = 
       \{i,\ldots,\ell+i-1\} \times \{\ell+i\}\, .
 \] since $n-c_i(\ell)+1-\ell= i+\ell-1+1-\ell = i$.
By Lemma \ref{lem:invs_jis}, $(i,i+\ell)$ is always an inversion $\inv(\cj_{i,k})$.
 Since $(i,i+\ell)$ is an inversion of $\cj_{i,k}$ then $\cm_{c_i(\ell),n-\ell}$ is not greater than $\cj_{i,k}$ by Theorem~\ref{thm:inversion_order}. On the other hand, ${\cm_{c_i(\ell),n-\ell+1}>\cj_{i,k}}$ again by Theorem~\ref{thm:inversion_order} because if we evaluate $s=\ell-1$ we have that 
 \[\coinv(\cm_{c_i(\ell),n-\ell+1})=\{(i+1,i+\ell),(i+2,i+\ell),\dots,(i+\ell-1,i+\ell)\}\] which is disjoint from $\inv(\cj_{i,k})$.

 (Case 2: $\ell > n-i$) So $c_i(\ell) = \ell$, and we set $s=i$ above to get 
 \[
 \coinv(\cm_{c_i(\ell),n-i})=[n-\ell]\times \{n-\ell+1, \dots, i+1\}.
 \]

Since $k \geq \ell$, this also means that $k > n-i$ (i.e. $i > n-k$), so 
\[\inv(\cj_{i,k}) = \{n-k,\ldots,i\} \times \{i+1,\ldots,n\}.\] In particular, $(n-\ell,i+1) \in \coinv(M_{c_i(\ell),n-s}) \cap \inv(J_{i,k})$ and so $\cm_{c_i(\ell),n-i} \not > \cj_{i,k}$.

On the other hand $\coinv(\cm_{c_i(\ell), n-i+1})=[n-\ell]\times \{n-\ell+1, \dots, i\}$ again by Lemma~\ref{cor:coinvs_mis}. Since every inversion of $\cj_{i,k}$ has the value of the second coordinate at least $i+1$ then no coinversion of $\cm_{c_i(\ell), n-i+1}$ is an inversion of $\cj_{i,k}$ implying that $\cm_{c_i(\ell), n-i+1} > \cj_{i,k}$ by Theorem~\ref{thm:inversion_order}. 
\end{proof}

\begin{lemma}\label{lem:max_meet}
    For $\cj_{i,k}$ there is a maximum meet irreducible element $\cm\in \MTub(C_n)$ such that $\cj_{i,k} \not < \cm$ and $\cj_{i,k}{}_* \le \cm$. 
\end{lemma}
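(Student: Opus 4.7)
The plan is to exhibit the claimed maximum as the element $\cm^{(k)}$ identified in Proposition~\ref{prop:max_not_jstar}—explicitly $\cm_{n-i-k+1,\,n-k}$ if $k\le n-i$, and $\cm_{k,\,n-i}$ if $k>n-i$. This is the top element of the chain $m_{c_i(k)}$ that is not above $\cj_{i,k}$. Throughout, I would use Theorem~\ref{thm:inversion_order} to translate both conditions into disjointness statements about inversions and coinversions, together with Proposition~\ref{prop:lrshifts} which gives $\cj_{i,k}{}_*=\cj_{i,k-1}$ when $k\ge 2$ (the case $k=1$ is handled separately, since then $\cj_{i,1}{}_*$ is the minimum tubing by Corollary~\ref{cor:atoms} and condition 2 is automatic).

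The first step is to verify $\cm^{(k)}$ satisfies both conditions. Applying Corollary~\ref{cor:coinv_mik} to $\cm^{(k)}$ yields
\[
\coinv(\cm^{(k)})=\begin{cases}\{i,i+1,\ldots,i+k-1\}\times\{i+k\}&k\le n-i,\\ [n-k]\times\{n-k+1,\ldots,i+1\}&k>n-i,\end{cases}
\]
while from Lemma~\ref{lem:invs_jis} the ``new'' inversion set $\inv(\cj_{i,k})\setminus\inv(\cj_{i,k-1})$ is $\{(i,i+k)\}$ (resp.\ $\{n-k\}\times\{i+1,\ldots,n\}$). A direct comparison shows that $\coinv(\cm^{(k)})$ meets $\inv(\cj_{i,k})$ only in the new inversions (via $(i,i+k)$ or $(n-k,i+1)$), hence is disjoint from $\inv(\cj_{i,k-1})$. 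By Theorem~\ref{thm:inversion_order} this simultaneously gives $\cj_{i,k}\not\le\cm^{(k)}$ and $\cj_{i,k-1}\le\cm^{(k)}$.

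The second step proves maximality. Let $\cm$ be any meet irreducible satisfying both conditions. By Proposition~\ref{prop:subposetjoins}, $\cm$ lies in some chain $m_j$; since $c_i$ is an involution on $[n-1]$, write $j=c_i(\ell)$. If $\ell>k$, Proposition~\ref{prop:join_chains_relation_to_meets} gives $\cj_{i,k}\le\cm$, contradicting condition 1. If $\ell<k$, then Proposition~\ref{prop:max_not_jstar} gives $\cm\le\cm^{(\ell)}$; I then compute $\coinv(\cm^{(\ell)})$ and exhibit an old inversion of $\cj_{i,k-1}$ inside it—namely $(i,i+\ell)$ when $\ell\le n-i$, and $(n-\ell,i+1)$ when $\ell>n-i$. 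Because coinversion sets only grow as one descends a chain $m_j$, $\cm$ inherits this old inversion as a coinversion, violating condition 2. Hence $\ell=k$, and Proposition~\ref{prop:max_not_jstar} then forces $\cm\le\cm^{(k)}$.

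The main obstacle will be the case analysis across the ranges $k\le n-i$ and $k>n-i$, combined with the boundary behavior at $\ell=n-i$ and $k=1$. In particular, in the subcase $n-i<\ell<k$ of the maximality argument I need to confirm that $n-\ell\in\{n-k+1,\ldots,i\}$, which requires both $\ell<k$ and $\ell\ge n-i+1$; these follow from the assumed inequalities, but the reasoning must be done carefully. Beyond these routine but delicate calculations, no new structural ideas are required—the lemma essentially packages Propositions~\ref{prop:join_chains_relation_to_meets} and~\ref{prop:max_not_jstar} together with an inversion computation verifying the refinement condition $\cj_{i,k}{}_*\le\cm^{(k)}$.
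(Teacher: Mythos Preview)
Your proof is correct and follows essentially the same strategy as the paper: both localize the search to the chain $m_{c_i(k)}$ by ruling out $\ell>k$ via Proposition~\ref{prop:join_chains_relation_to_meets} and $\ell<k$ via Proposition~\ref{prop:max_not_jstar}, then take the maximum within that chain.

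The differences are cosmetic. For $\ell<k$ the paper simply observes that the cutoff in Proposition~\ref{prop:max_not_jstar} depends only on $i$ and $\ell$ (not on $k$), so the elements of $m_{c_i(\ell)}$ that are $\ge\cj_{i,k-1}$ are exactly those $\ge\cj_{i,k}$, and no candidate exists; you instead exhibit an explicit coinversion of $\cm^{(\ell)}$ lying in $\inv(\cj_{i,k-1})$. Likewise, the paper obtains $\cj_{i,k-1}\le\cm^{(k)}$ for free from Proposition~\ref{prop:join_chains_relation_to_meets} applied with $\ell=k>k-1$, whereas you recompute it via Theorem~\ref{thm:inversion_order}. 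Your claim that coinversion sets grow as one descends $m_j$ is true (it follows from the explicit formulas in Lemma~\ref{lem:invs_jis} via $w_0$), though you should state why. The paper's version is slightly more economical; yours is slightly more self-contained.
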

\begin{proof}
    By Proposition~\ref{prop:join_chains_relation_to_meets} we know that the order relations between both of $\cj_{i,k}$ and $\cj_{i,k}{}_*$ and elements of $m_{c_i(\ell)}$ agree for all elements of $m_{c_i(\ell)}$ for $\ell > k$. Similarly by Proposition~\ref{prop:max_not_jstar} the order relations between elements of $m_{c_i(\ell)}$ and $\cj_{i,k}$ and $\cj_{i,k}{}_*$ agree for $\ell \le k-1$. Consequently the only chain $m_s$ of meet irreducible elements where the order relations between elements of that chain and $\cj_{i,k}$ and $\cj_{i,k}{}_*$ disagree is the only chain that can contain an element $\cm$ so that $\cm \ge \cj_{i,k}{}_*$ and $\cm\not > \cj_{i,k}$. By the preceding argument, this chain is $m_{c_i(k)}$. Additionally, there are elements of $m_{c_i(k)}$ that are not greater $\cj_{i,k}$ while all elements of $m_{c_i(k)}$ are greater than $\cj_{i,k}{}_* = \cj_{i,k-1}$ by Proposition~\ref{prop:join_chains_relation_to_meets}  and Proposition~\ref{prop:max_not_jstar}. The claim follows as there must be a maximal such meet irreducible in $m_{c_i(k)}$ as $m_{c_i(k)}$ is a saturated chain. 
\end{proof}
\begin{lemma}\label{lem:max_meet_is_max}
    The set $\{\cl \ | \ \cl\wedge \cj_{i,k} = \cj_{i,k}{}_*\}$ has a maximum element. Moreover, the maximal element is the meet irreducible element of Proposition~\ref{prop:max_not_jstar}. 
    
\end{lemma}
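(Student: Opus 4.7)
The plan is to rewrite the condition defining the set, then reduce the problem to Lemma~\ref{lem:max_meet} together with the standard fact that in a finite lattice every element is the meet of meet irreducibles above it.

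First I would observe that, because $\cj_{i,k}$ is join irreducible, $\cj_{i,k}{}_*$ is the unique element it covers, so every element strictly below $\cj_{i,k}$ lies below $\cj_{i,k}{}_*$. Since $\cl \wedge \cj_{i,k} \geq \cj_{i,k}{}_*$ whenever $\cl \geq \cj_{i,k}{}_*$, and $\cl \wedge \cj_{i,k} = \cj_{i,k}$ is equivalent to $\cl \geq \cj_{i,k}$, it follows that
\[
\{\cl : \cl \wedge \cj_{i,k} = \cj_{i,k}{}_*\} \;=\; S := \{\cl \in \MTub(C_n) : \cl \geq \cj_{i,k}{}_*\ \text{and}\ \cl \not\geq \cj_{i,k}\}.
\]
Let $\cm$ be the meet irreducible produced by Proposition~\ref{prop:max_not_jstar}. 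By construction, $\cm \geq \cj_{i,k}{}_*$ and $\cm \not\geq \cj_{i,k}$, so $\cm \in S$; it remains only to show $\cm$ is an upper bound for $S$.

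For this I would invoke the standard separation fact: in any finite lattice, whenever $a \not\geq b$, there is a meet irreducible $u$ with $a \leq u$ and $b \not\leq u$ (take $u$ maximal in $\{v : v \geq a,\ v \not\geq b\}$; such a $u$ must be meet irreducible, since otherwise writing $u$ as a meet of two strictly larger elements would force $b \leq u$ by maximality). Given any $\cl \in S$, applying this with $a = \cl$ and $b = \cj_{i,k}$ produces a meet irreducible $\cm' \geq \cl \geq \cj_{i,k}{}_*$ with $\cm' \not\geq \cj_{i,k}$. By Lemma~\ref{lem:max_meet}, we then have $\cm' \leq \cm$, and hence $\cl \leq \cm' \leq \cm$, proving $\cm = \max S$.

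I expect essentially no substantive obstacle at this stage: the combinatorial content has already been packaged into Proposition~\ref{prop:max_not_jstar} and Lemma~\ref{lem:max_meet}, and what remains is the clean reformulation of the set $S$ together with the routine meet irreducible separation argument in a finite lattice.
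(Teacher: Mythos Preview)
Your proof is correct and follows essentially the same approach as the paper: both reformulate the set as $\{\cl : \cl \geq \cj_{i,k}{}_*,\ \cl \not\geq \cj_{i,k}\}$ and then, for an arbitrary element of this set, pass to a meet irreducible above it and invoke Lemma~\ref{lem:max_meet}. Your version is in fact a bit cleaner than the paper's, which splits into the cases $k=1$ and $k>1$ and cites an external result of Markowsky for the separation step, whereas your direct argument (taking a maximal element above $\cl$ not above $\cj_{i,k}$ and checking it must be meet irreducible) handles all $k$ uniformly.
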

\begin{proof}
    Let $\cm\in\MTub(C_n)$ be the maximum meet irreducible element so that $\cm > \cj_{i,k}{}_*$ and $\cm\not > \cj_{i,k}$ as guaranteed by Lemma~\ref{lem:max_meet}. We claim that $\cm$ is maximal among those elements of $\MTub(C_n)$ that are greater than $\cj_{i,k}{}_*$ and not $\cj_{i,k}$. There are two cases, either $k=1$ or $k>1$. If $k=1$, then $\cm$ is a coatom.    Since each pair $(i,i+1)$ is the unique inversion of an atom for all $1\le i \le n-1$, there is a coatom whose only coinversion is $(i,i+1)$ which is maximal among elements not larger than $\cj_{i,1}$. The only element larger than $\cm$ is $\hat{1}$ so $\cm$ is maximal. Now suppose that $k>1$ and let $\ck$ be a maximal element of $\{\cl \ |  \ \cl\wedge \cj_{i,k} = \cj_{i,k}{}_*\}$. 
    
    We will prove that $\ck = \cm$. Recall that every element $p$ of a finite lattice is the meet of the meet irreducible elements larger than or equal to $p$. 
    Since $\cj_{i,k}{}_*= \cj_{i,k-1}$ is also a join irreducible element, then if $\ck$ is any maximum element of the set of elements of $\MTub(C_n)$ whose meet with $\cj_{i,k}$ is $\cj_{i,k}{}_*$, then $\ck$ must be less than a meet irreducible element which is larger than $\cj_{i,k}{}_*$ and not less than $\cj_{i,k}$. This is because the meet of any two elements of a lattice is an element of the lattice whose set of join irreducible elements below is the intersection of the sets of join irreducible elements~\cite[Theorem 4]{markowsky1975factorization}. This maximal element must be less than or equal to $\cm$, so $\cm$ is the unique maximum element. Since $\{\cl 
    \ | \ \cl\wedge \cj_{i,k} = \cj_{i,k}{}_*\}$ is precisely the subset of $\MTub(C_n)$ that are greater than $\cj_{i,k}{}_*$ and not $\cj_{i,k}$ we are done. 
\end{proof}

\begin{lemma}\label{lem:kappa_bij}
    The map of $\kappa: Ji(\MTub(C_n))\to Mi(\MTub(C_n))$ of $\kappa(\cj_{i,k}) = \max\{\cl \ | \ \cl\wedge \cj_{i,k} = \cj_{i,k}{}_*\}$ is bijective.  
\end{lemma}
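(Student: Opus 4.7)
My plan is to use the explicit description of $\kappa$ furnished by the previous two lemmas to reduce bijectivity to a concrete combinatorial check on index pairs. By Lemma~\ref{lem:max_meet_is_max}, $\kappa(\cj_{i,k})$ is exactly the meet irreducible produced by Proposition~\ref{prop:max_not_jstar} when $\ell=k$, namely
\[
\kappa(\cj_{i,k}) = \begin{cases} \cm_{c_i(k),\,n-k} & \text{if } k\le n-i,\\[2pt] \cm_{c_i(k),\,n-i} & \text{if } k> n-i.\end{cases}
\]
First I would unpack $c_i(k)$ in each case using its definition. In the first case $c_i(k)=n-i+1-k$, so $\kappa(\cj_{i,k})=\cm_{s,t}$ with $(s,t)=(n+1-i-k,\;n-k)$; in the second case $c_i(k)=k$, so $(s,t)=(k,\;n-i)$.

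Next I would show $\kappa$ is injective by inverting each branch. In the first case, the pair satisfies $t-s=i-1\ge 0$, so from $(s,t)$ one recovers $i=t-s+1$ and $k=n-t$; the range condition $1\le k\le n-i$ is equivalent to $1\le s\le t\le n-1$. In the second case, $(s,t)=(k,n-i)$ with $s>t$, so one recovers $i=n-t$ and $k=s$; the range condition $k>n-i$ is equivalent to $s>t$. Thus the first branch parameterizes pairs $(s,t)$ with $s\le t$ and the second parameterizes pairs with $s>t$, and these two subsets are disjoint. This exhibits the map on index pairs as a bijection from the $(n-1)^2$ parameter pairs $(i,k)$ to the $(n-1)^2$ parameter pairs $(s,t)$ indexing all meet irreducibles $\cm_{s,t}$.

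I expect no real obstacle here: everything follows mechanically from the explicit formulas, together with Corollary~\ref{cor:jienum} (there are exactly $(n-1)^2$ join irreducibles) and the self-duality from Corollary~\ref{cor:involution} (which guarantees the same number of meet irreducibles, indexed by $\cm_{s,t}$ for $s,t\in[n-1]$). The only point requiring care is double-checking that the two branches of the case analysis produce exactly the complementary subsets $\{s\le t\}$ and $\{s>t\}$, so that neither overlap nor omission occurs; once that is verified, injectivity is immediate and a count of cardinalities upgrades it to bijectivity.
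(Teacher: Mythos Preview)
Your proposal is correct and follows essentially the same approach as the paper: both extract the explicit formula $\kappa(\cj_{i,k})=\cm_{f(i,k)}$ with $f(i,k)=(n+1-i-k,n-k)$ for $i+k\le n$ and $f(i,k)=(k,n-i)$ otherwise, and then argue that $f$ is a bijection on $[n-1]^2$. Your verification that the two branches hit precisely the complementary sets $\{s\le t\}$ and $\{s>t\}$ is in fact more detailed than the paper's, which simply asserts that $f$ is ``clearly surjective and thus a bijection.''
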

\begin{proof}
   By Propositions ~\ref{prop:join_chains_relation_to_meets} and ~\ref{prop:max_not_jstar}, and Lemma \ref{lem:max_meet_is_max}, 
   \[
       \max\{\cl : \cl\wedge \cj_{i,k} = \cj_{i,k}{}_*\} = \begin{cases}\cm_{c_{i(k), n-k}} &\text{if } k \le n-i\text{, and }\\ \cm_{c_i(k), n-i} &\text{if }k > n-i.
       \end{cases}
   \]
    As a mapping on $[n-1]\times [n-1]$, $\kappa$ can be equivalently encoded by the function $\kappa(\cj_{i,k})=\cm_{f(i,k)}$ where $f(i,k) = (n+1-i-k,n-k)$ if $i+k\le n$ and $(k, n-i)$ otherwise. In particular $f$ is clearly surjective and thus a bijection, so $\kappa$ is as well.
\end{proof}

We now have laid out enough to prove the main result of this section: that $\MTub(C_n)$ is a semidistributive lattice. 
Let $L$ be a finite lattice. 
For each join irreducible $j\in L$, we define $\kappa(j)$ to the unique maximal element of the set $\{a \ | \ a\ge j_*, a\not\ge j\}$, when such an element exists. 

\begin{proposition}\cite[Theorem 2.56]{freese1995free}\label{prop:kappa_meet_semidistributive}
    A finite lattice $L$ is meet-semidistributive if and only if $\kappa(j)$ exists for all join irreducible elements $j$.
\end{proposition}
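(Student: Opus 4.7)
The plan is to prove both directions of this classical equivalence, with the forward direction being the straightforward one and the reverse direction requiring a careful minimality argument on join irreducibles.

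For the forward direction, I would assume $L$ is meet-semidistributive and, given a join irreducible $j$, consider the set $S_j = \{a \in L : a \geq j_*,\ a \not\geq j\}$. This set contains $j_*$ itself, so it is nonempty. The key observation is that for any $y \in S_j$, the meet $y \wedge j$ lies in the interval $[j_*, j)$ (since $y \geq j_*$ and $y \not\geq j$), and because $j_* \lessdot j$, we must have $y \wedge j = j_*$. So for $y, z \in S_j$, applying meet-semidistributivity with $x = j$ yields $(y \vee z) \wedge j = j_*$, which means $y \vee z \not\geq j$ and $y \vee z \geq j_*$, i.e., $y \vee z \in S_j$. Since $L$ is finite and $S_j$ is closed under binary joins, $\bigvee S_j$ is itself in $S_j$ and is the desired $\kappa(j)$.

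For the reverse direction, I would argue by contrapositive. Assume $\kappa(j)$ is defined for every join irreducible, and suppose toward contradiction that there exist $a, b, c \in L$ with $a \wedge b = a \wedge c =: m$ but $a \wedge (b \vee c) > m$. Since every element of a finite lattice is the join of the join irreducibles below it, there exists a join irreducible $p \leq a \wedge (b \vee c)$ with $p \not\leq m$; choose such a $p$ that is \emph{minimal} among join irreducibles with this property. Since $p \leq a$, we have $p \not\leq b$ and $p \not\leq c$ (otherwise $p \leq a \wedge b = m$).

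The crucial step is to show $p_* \leq b$ and $p_* \leq c$. Any join irreducible $q \leq p_*$ satisfies $q < p$ and $q \leq p_* \leq p \leq a \wedge (b \vee c)$, so by the minimality of $p$, we must have $q \leq m$. Writing $p_*$ as the join of the join irreducibles below it, we conclude $p_* \leq m \leq b$, and similarly $p_* \leq c$. Thus $b, c \in S_p$, so by the defining maximality of $\kappa(p)$, both $b \leq \kappa(p)$ and $c \leq \kappa(p)$, giving $b \vee c \leq \kappa(p)$. But $p \leq b \vee c$, which forces $p \leq \kappa(p)$, contradicting $\kappa(p) \not\geq p$.

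The main obstacle is the reverse direction, and specifically the use of minimality to force $p_* \leq b$ and $p_* \leq c$. This is the one place where some care is needed: one must observe that the minimality of $p$ among join irreducibles witnessing failure propagates down to every join irreducible below $p_*$, and then reassemble $p_*$ from those join irreducibles using the fact that in a finite lattice every element is a join of join irreducibles. Once this is in hand, the contradiction with the maximality of $\kappa(p)$ is immediate.
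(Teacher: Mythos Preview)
The paper does not give its own proof of this proposition: it is stated with a citation to \cite[Theorem~2.56]{freese1995free} and used as a black box. So there is no proof in the paper to compare against.

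That said, your argument is correct and is essentially the standard one. The forward direction is exactly as you say: meet-semidistributivity makes $S_j$ closed under joins, so finiteness gives a maximum. In the reverse direction, your minimality argument is the key step and is carried out cleanly; the only place to be slightly more explicit is that ``minimal among join irreducibles $p$ with $p \leq a \wedge (b \vee c)$ and $p \not\leq m$'' means minimal in the poset order on $L$ (not in some auxiliary order), so that any strictly smaller join irreducible $q \leq p_*$ automatically fails the second condition and hence lies below $m$. You have this right, but stating the sense of minimality up front would make the deduction $p_* \leq m$ read more smoothly.
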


In the case where $L$ is a finite semidistributive lattice, then $\kappa(j)$ exists  and is  unique for all join irreducible elements $j$.  Viewed as a map, $\kappa$ is a bijection between join irreducibles and meet irreducibles of $L$.

We now prove that $\MTub(C_n)$ is a semidistributive lattice, our first main result of this section.
\begin{proof}[Proof of Theorem~\ref{thm:semidistributive}]
     The map 
     \begin{align*}
         \kappa: Ji(\MTub(C_n))&\to Mi(\MTub(C_n))\\
         \cj_{i,k}&\mapsto  \max\{\cl \ | \  \cl\wedge \cj_{i,k} = \cj_{i,k}{}_*\}
     \end{align*}
      is well defined by Lemma~\ref{lem:max_meet_is_max}. 
     So then by Proposition~\ref{prop:kappa_meet_semidistributive} $\MTub(C_n)$ is meet-semidistributive. Since $\MTub(C_n)$ is self dual by Corollary \ref{cor:involution} then $\MTub(C_n)$ is also join-semidistributive and thus semidistributive.
\end{proof}

Now that we know that $\MTub(C_n)$ is a semidistributive lattice, we will show that $\MTub(C_n)$ satisfies the stronger property of being a \emph{congruence uniform} lattice. To do so, we will make use of an alternative characterization of finite congruence uniform lattices given in~\cite{reading2021fundamental} that we will restate after the necessary preliminaries in Corollary~\ref{cor:RST_CongruenceUniform}.

To properly explain our proof, we review the characterization of finite semidistributive lattices of~\cite{reading2021fundamental}, which we refer to as the Fundamental Theorem of Finite Semidistributive Lattices, whose notation and terminology we follow for our exposition. 

We begin by defining some binary relations on an arbitrary finite set. These statements work similarly for the case of infinite sets and lattices, but we restrict to the case where everything is finite.
\begin{definition}[c.f. \cite{reading2021fundamental}]
    Let $\Sha$ be a finite set and $\rightarrow$ be a binary relation on $\Sha$, and let $X\subseteq \Sha$. Then define \[X^\perp \coloneqq \{y\in \Sha \ \mid \ x\not\rightarrow y \text{ }\forall x\in X\}\text{ and }{}^\perp X\coloneqq \{y\in \Sha \ \mid \  y\not\rightarrow x \text{ }\forall x\in X\}.\]
    Then a \emph{maximal orthogonal pair} is a pair $(X,Y)$ of subsets of $\Sha$ so that $X^\perp = Y$ and $X = {}^\perp Y$.
\end{definition}

If $(X',Y')$ and $(X,Y)$ are maximal orthogonal pairs of $\Sha$ then $X'\subseteq X$ if and only if $Y' \supseteq Y$. Importantly this allows us to discuss the lattice of maximal orthogonal pairs of a set $\Sha$ together with the relation $\rightarrow$ by partially ordering $(X',Y') \le (X,Y)$ if $X'\subseteq X$ or $Y'\supseteq Y$. We refer to this lattice as $\text{Pairs}(\to)$.
For the Fundamental Theorem of Finite Semidistributive Lattices, now recall two other relations on $\Sha$ induced by $\to$.

\begin{definition}
    The relation $\twoheadrightarrow$ (pronounced ``onto'') on $\Sha$ is defined by $x\twoheadrightarrow y$ if for all $z\in \Sha$ such that $y\to z$ then $x\to z$. 

    Similarly, the relation $\hookrightarrow$ (pronounced ``into'') is defined dually by $x\hookrightarrow y$ if for all $z$ such that $z\to x$ then $z\to y$.
    
 The pair $(\twoheadrightarrow, \hookrightarrow)$ is called the \emph{factorization} of $\to$ and denoted by $\text{Fact}(\to)$.
\end{definition}
In particular, the relations $\twoheadrightarrow$ and $\hookrightarrow$ are a pair of reflexive and transitive relations. A relation satisfying these properties are known as preorders. Dually, one can take an ordered pair of preorders $(\twoheadrightarrow,\hookrightarrow)$ on the same set $\Sha$ to a reflexive relation on $\Sha$ via the process of \emph{multiplication}, denoted by $\text{Mult}(\twoheadrightarrow,\hookrightarrow)$, where the relation is $x\to z$ if and only if there exists some $y\in \Sha$ such that $x\twoheadrightarrow y \hookrightarrow z$. 

\begin{definition}[\cite{reading2021fundamental}]
    A factorization system is a tuple $(\Sha, \to, \twoheadrightarrow, \hookrightarrow)$ such that $\to, \twoheadrightarrow,\hookrightarrow$ are relations on the set $\Sha$ satisfying $\text{Fact}(\to)= (\twoheadrightarrow, \hookrightarrow)$ and $\text{Mult}(\twoheadrightarrow, \hookrightarrow) = \to$. 
    \end{definition}
    \begin{definition}[\cite{reading2021fundamental}]
    A factorization system is said to be \emph{two--acyclic} if we do \textbf{not} have \begin{itemize}
        \item $x\hookrightarrow y \hookrightarrow x$ or $x\twoheadrightarrow y \twoheadrightarrow x$ for $x\neq y$, and
        \item $x\twoheadrightarrow y\hookrightarrow x$ for $x\neq y$.
    \end{itemize} 
The first condition is the \emph{order condition}, which is to say that the relations $\hookrightarrow, \twoheadrightarrow$ are partial orders. The second condition is the \emph{brick} condition (for historical reasons).
\end{definition}

Recall that for $j\in Ji(L)$ finite semidistributive lattice $L$ $\kappa(j) = \max\{x \mid x\meet j = j_*\}$. Associated to $L$ are the following relations on $Ji(L)$:
\begin{itemize}
    \item $x\to_L y$ if $x\not\leq \kappa(y)$,
    \item $x \twoheadrightarrow_L y$ if $x \ge y$, and
    \item $x \hookrightarrow_L y$ if $\kappa(x) \ge \kappa(y)$.
\end{itemize}

We can now state the Fundamental Theorem of Finite Semidistributive Lattices.

\begin{theorem}[c.f. \cite{reading2021fundamental}]
    A finite poset $L$ is a semidistributive lattice  if and only if it is isomorphic to $\text{Pairs}(\to)$ for a finite two-acyclic factorization system $(\Sha, \to, \twoheadrightarrow,\hookrightarrow)$. In this case $(\Sha, \to, \twoheadrightarrow,\hookrightarrow)$ and $(Ji(L),\to_L,\twoheadrightarrow_L,\hookrightarrow_L) $ are isomorphic.
\end{theorem}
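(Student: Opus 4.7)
Following the strategy of Reading, Speyer, and Thomas, the plan is to establish both directions of the equivalence and then verify that the factorization systems match. For the forward implication, assume $L$ is a finite semidistributive lattice, so by Proposition~\ref{prop:kappa_meet_semidistributive} and its dual the map $\kappa : Ji(L) \to Mi(L)$ is a bijection. I would define $\phi : L \to \text{Pairs}(\to_L)$ by $\phi(x) = (D(x), Y(x))$, where $D(x) = \{j \in Ji(L) : j \le x\}$ and $Y(x) = \{j \in Ji(L) : x \le \kappa(j)\}$. The key orthogonality check is that $D(x)^\perp = Y(x)$ and ${}^\perp Y(x) = D(x)$, which follows from the definition of $\to_L$ together with the fact that $j \le x \le \kappa(j')$ forces $j \le \kappa(j')$, i.e., $j \not\to_L j'$. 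To see $\phi$ is a bijection with inverse $(X, Y) \mapsto \bigvee X$, one uses the identity $\bigvee X = \bigwedge_{j' \in Y} \kappa(j')$ on maximal orthogonal pairs; order preservation is then immediate from containment of principal downsets, giving the lattice isomorphism.

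I would next verify that $\text{Fact}(\to_L) = (\twoheadrightarrow_L, \hookrightarrow_L)$ and check two-acyclicity. Factorization holds because whenever $j \to_L z$ one may replace $j$ by a $\twoheadrightarrow_L$-predecessor in $Ji(L)$ and $z$ by a $\hookrightarrow_L$-successor (in the $\kappa$-reversed order) while preserving the relation. The order condition is automatic since $\twoheadrightarrow_L$ is the restriction of the $L$-order to $Ji(L)$ and $\hookrightarrow_L$ is the $L$-order on $Mi(L)$ pulled back through $\kappa$. The brick condition forbids $j \twoheadrightarrow_L j' \hookrightarrow_L j$ for $j \ne j'$, that is $j' \le j$ together with $\kappa(j) \le \kappa(j')$; in a semidistributive lattice this forces $j = j'$ via the canonical join representations induced by $\kappa$.

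For the reverse implication, given a finite two-acyclic factorization system $(\Sha, \to, \twoheadrightarrow, \hookrightarrow)$, I would first verify $\text{Pairs}(\to)$ is a lattice, with joins $(X_1, Y_1) \vee (X_2, Y_2) = ({}^\perp(Y_1 \cap Y_2), Y_1 \cap Y_2)$ and meets dually. Identifying $Ji(\text{Pairs}(\to))$ with $\Sha$ proceeds by sending each $s \in \Sha$ to the maximal orthogonal pair $({}^\perp\{s\}^\perp, \{s\}^\perp)$, with uniqueness of its cover-down guaranteed by the brick condition. Semidistributivity of $\text{Pairs}(\to)$ then follows by showing $\kappa$ is well defined on this lattice, which is exactly what two-acyclicity provides. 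The isomorphism of factorization systems in the semidistributive case comes for free from this join-irreducible correspondence, since the defining relations match by construction.

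The main obstacle will be the careful simultaneous bookkeeping between lattice-theoretic notions (semidistributivity and the $\kappa$-map) and relation-theoretic ones (factorization and two-acyclicity), especially in verifying $\text{Mult}(\twoheadrightarrow_L, \hookrightarrow_L) = \to_L$ and the orthogonality conditions in tandem. Within this paper, however, this theorem will almost certainly be applied as a black box: combined with Lemma~\ref{lem:kappa_bij} and the explicit description of join and meet irreducibles from Section~\ref{sec:joinirr}, it provides the leverage needed to deduce Theorem~\ref{thm:congruence_uniform} via the promised Corollary~\ref{cor:RST_CongruenceUniform}.
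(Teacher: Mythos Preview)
The paper does not prove this theorem: it is stated with the attribution ``c.f.\ \cite{reading2021fundamental}'' and immediately followed by an example, with no proof given. It is quoted from Reading, Speyer, and Thomas as background machinery, exactly as you anticipate in your final paragraph. So there is no ``paper's own proof'' to compare your proposal against.

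Your sketch is a reasonable outline of the argument in the cited reference, and your instinct that the present paper treats it as a black box is correct. The only use made of it here is to license the passage to the two-acyclic factorization system $(Ji(\MTub(C_n)), \to_{\MTub(C_n)}, \twoheadrightarrow_{\MTub(C_n)}, \hookrightarrow_{\MTub(C_n)})$, after which Corollary~\ref{cor:RST_CongruenceUniform} is invoked to reduce congruence uniformity to acyclicity of $\rightsquigarrow$. If you want feedback on the sketch itself: the step ``factorization holds because whenever $j \to_L z$ one may replace $j$ by a $\twoheadrightarrow_L$-predecessor\ldots'' is vague as written and in the actual proof requires the semidistributive canonical join/meet representations rather than just a replacement argument; and the claim that $\text{Mult}(\twoheadrightarrow_L,\hookrightarrow_L) = {\to_L}$ is the nontrivial direction you would need to spell out. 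But none of that bears on the present paper.
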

\begin{example}
The following is the relations of the two--acyclic factorization system  $(Ji(L),\to_L,\twoheadrightarrow_L,\hookrightarrow_L) $ when $L = \MTub(C_3)$.
\begin{center}
    \begin{tikzpicture}[scale = .75]
    \def \x {2};
    \def \y {2};

 \node[circle,draw] (11) at (0*\x,0*\y) {$\cj_{1,1}$};
 \node[circle,draw] (12) at (0*\x,2*\y) {$\cj_{1,2}$};
 \node[circle,draw] (21) at (1.75*\x,0*\y) {$\cj_{2,1}$};
 \node[circle,draw] (22) at (1.75*\x,2*\y) {$\cj_{2,2}$};
 \draw[->>, line width=.5mm] (12) -- (11);
 \draw[->>, line width=.5mm] (22) -- (21);
 \draw (0.875*\x,-0.5*\y) node (dbl) {\Large$\twoheadrightarrow_{L}$};
 \node[circle,draw] (11m) at (4*\x,2*\y) {$\cj_{1,1}$};
 \node[circle,draw] (12m) at (5.75*\x,0*\y) {$\cj_{1,2}$};
 \node[circle,draw] (21m) at (5.75*\x,2*\y) {$\cj_{2,1}$};
 \node[circle,draw] (22m) at (4*\x,0*\y) {$\cj_{2,2}$};
 \draw[right hook->, line width=.25mm] (21m) -- (12m);
 \draw[right hook->, line width=.25mm] (11m) -- (22m);
 \draw (4.875*\x,-0.5*\y) node (hook) {\Large$\hookrightarrow_{L}$};
 \node[circle,draw] (11f) at (8*\x,2*\y) {$\cj_{1,1}$};
 \node[circle,draw] (12f) at (8*\x,0*\y) {$\cj_{1,2}$};
 \node[circle,draw] (21f) at (9.75*\x,0*\y) {$\cj_{2,1}$};
 \node[circle,draw] (22f) at (9.75*\x,2*\y) {$\cj_{2,2}$};
 \draw (8.875*\x,-0.5*\y) node (mlt) {\Large$\to_{L}$};  
 \draw[->, thick] (12f)--(11f);
 \draw[->, thick] (12f)--(22f);
 \draw[->, thick] (21f)--(12f);
 \draw[->, thick] (22f)--(21f);
 \draw[->, thick] (11f)--(22f);
 \draw[->, thick] (22f)--(12f);
    \end{tikzpicture}
    \end{center}
It is easy to check that $\to_L = \text{Mult}(\twoheadrightarrow_L, \hookrightarrow_L)$, and if one constructs the partial order $\text{Pairs}(\to_L)$, the result is isomorphic to $\MTub(C_3)$.
\end{example}
In particular, one can think of the relations of $\twoheadrightarrow,\hookrightarrow$ as the partial order of the semidsitributive lattice $L$ restricted to the dual order on $Ji(L)$ for $\twoheadrightarrow$ and $\kappa(Ji(L))$ for $\hookrightarrow$. It will be helpful to have a more specific description of these relations, for which we have Lemma \ref{lem:hook-double_rels} below.

\begin{lemma}\label{lem:hook-double_rels}
    Let $\cj_{i,k}$ and $\cj_{s,t}$ be two different join-irreducible elements in $\MTub(C_n)$. Then 
    \begin{center}
    \begingroup
    \renewcommand{\arraystretch}{1.5}
    \begin{tabular}{lll}
        $\cj_{i,k} \twoheadrightarrow_{\MTub(C_n)} \cj_{s,t}$ &\hspace{0.5cm} if and only if &\hspace{0.5cm} $s=i\text{ and }t < k$, and \\
    $\cj_{i,k} \hookrightarrow_{\MTub(C_n)} \cj_{s,t}$ &\hspace{0.5cm} if and only if & \hspace{0.5cm} $c_i(k) = c_s(t)\text{ and } (i+k,k) <_{\text{lex}} (s+t,t)$,
    \end{tabular}
    \endgroup
    \end{center}
    where $<_\text{lex}$ is lexicographic order.
\end{lemma}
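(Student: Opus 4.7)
The plan is to handle the two relations separately, each by reducing to structural facts already established in the paper.

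For the $\twoheadrightarrow_{\MTub(C_n)}$ statement, I would apply Propositions~\ref{prop:subposetjoins} and~\ref{prop:lrshifts} directly: by definition $\cj_{i,k} \twoheadrightarrow \cj_{s,t}$ is the order relation $\cj_{i,k} \geq \cj_{s,t}$, and the subposet of join-irreducibles consists of $(n-1)$ disjoint saturated chains $\cj_{i,1} \lessdot \cj_{i,2} \lessdot \cdots \lessdot \cj_{i,n-1}$ indexed by $i \in [n-1]$. For two distinct join-irreducibles this forces $s = i$ and $t < k$, giving the claimed equivalence.

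For the $\hookrightarrow_{\MTub(C_n)}$ statement, observe that $\cj_{i,k} \hookrightarrow \cj_{s,t}$ iff $\kappa(\cj_{i,k}) \geq \kappa(\cj_{s,t})$. By self-duality (Corollary~\ref{cor:involution}) applied to Proposition~\ref{prop:subposetjoins}, the meet-irreducibles decompose into $(n-1)$ disjoint saturated chains $m_1, \ldots, m_{n-1}$, each of length $n-1$; within $m_j$ the element $\cm_{j,h}$ of height $h$ is uniquely determined. Propositions~\ref{prop:join_chains_relation_to_meets} and~\ref{prop:max_not_jstar}, as used in the proof of Lemma~\ref{lem:kappa_bij}, give the explicit formula
\[
\kappa(\cj_{i,k}) \;=\; \cm_{c_i(k),\, h(i,k)}\,, \qquad h(i,k) \;=\; \begin{cases} n-k & \text{if } i+k \leq n,\\ n-i & \text{if } i+k > n.\end{cases}
\]
Therefore $\kappa(\cj_{i,k}) \geq \kappa(\cj_{s,t})$ iff $c_i(k) = c_s(t)$ (same chain) and $h(i,k) \geq h(s,t)$, with strict inequality required when the two join-irreducibles are distinct.

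The main computational step is to verify that within a fixed chain $m_j$, strictly larger height on $\kappa$ corresponds to strictly smaller lex value of $(i+k, k)$. To see this, fix $j$ and enumerate the pairs $(i,k)$ with $c_i(k) = j$: family (A) comes from the first branch of $c_i$ and consists of pairs with $i+k = n-j+1$, giving $(i+k, k) = (n-j+1, k)$ for $k \in \{1, \ldots, n-j\}$ with heights $n-k \in \{n-1, n-2, \ldots, j\}$; family (B) comes from the second branch and consists of pairs with $k = j$ and $i > n-j$, giving $(i+k, k) = (i+j, j)$ for $i \in \{n-j+1, \ldots, n-1\}$ with heights $n-i \in \{j-1, j-2, \ldots, 1\}$. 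Since the first coordinate $n-j+1$ of every family-(A) point is strictly less than $n+1 \leq i+j$ of every family-(B) point, sorting the combined list by $<_{\text{lex}}$ yields the strictly decreasing height sequence $n-1, n-2, \ldots, 1$. This gives the equivalence $h(i,k) > h(s,t) \Leftrightarrow (i+k, k) <_{\text{lex}} (s+t, t)$, completing the $\hookrightarrow$ half. The main obstacle is precisely this bookkeeping: the two-case definitions of $h$ and $c_i$ combine to describe each chain as a concatenation of two families, and one must confirm that the lex order on $(i+k, k)$ interleaves them correctly; once this is handled the $\twoheadrightarrow$ part is essentially by inspection and both halves of the lemma follow without further input.
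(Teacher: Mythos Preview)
Your proposal is correct and follows essentially the same approach as the paper: both parts reduce to the explicit formula $\kappa(\cj_{i,k}) = \cm_{c_i(k),\,h(i,k)}$ from Lemma~\ref{lem:kappa_bij} together with the chain structure of the join- and meet-irreducibles. The only difference is organizational: the paper verifies $h(i,k) > h(s,t) \Leftrightarrow (i+k,k) <_{\mathrm{lex}} (s+t,t)$ by a bidirectional four-case analysis on the signs of $i+k-n$ and $s+t-n$, whereas you enumerate each fiber $c_i(k)=j$ as the concatenation of two families and observe directly that lex order on $(i+k,k)$ runs through the heights $n-1,n-2,\ldots,1$ in order, which is a tidier way to package the same computation.
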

\begin{proof}
    For the first tautology, we have by the definition of $\twoheadrightarrow_{\MTub(C_n)}$ and Proposition \ref{prop:lrshifts} that 
    \[
    \begin{matrix}
    \cj_{i,k} \twoheadrightarrow \cj_{s,t} & \text{ if and only if } & \cj_{i,k} > \cj_{s,t} & \text{ if and only if } & s=i,\;\;t < k.
\end{matrix}
\]
The second tautology requires slightly more. Recall from the proof of Lemma \ref{lem:kappa_bij} that, by Propositions ~\ref{prop:join_chains_relation_to_meets} and ~\ref{prop:max_not_jstar}, and Lemma \ref{lem:max_meet_is_max}, 
   \[
       \kappa(\cj_{i,k}) = \begin{cases}\cm_{c_{i(k), n-k}} &\text{if } k \le n-i\text{, and }\\ \cm_{c_i(k), n-i} &\text{if }k > n-i.
       \end{cases}
   \]
By the definition of $\hookrightarrow_{\MTub(C_n)}$  we have that $\cj_{i,k} \hookrightarrow_{\MTub(C_n)} \cj_{s,t}$ if and only if $\kappa(\cj_{i,k}) > \kappa(\cj_{s,t})$. By the above description of $\kappa$, we have that $\cj_{i,k} \hookrightarrow_{\MTub(C_n)} \cj_{s,t}$ if and only if
\[
c_i(k) = c_s(t) \;\;\text{ and }\;\; \begin{cases}
    k < t &\text{if }i+k \leq n \text{ and }s+t \leq n, \\
    i < t &\text{if }i+k > n \text{ and }s+t \leq n, \\
    k < s &\text{if }i+k \leq n \text{ and }s+t > n, \\
    i < s &\text{if }i+k > n \text{ and }s+t > n .\\
\end{cases}
\]
Now if $i=s$ and $c_i(k)=c_s(t)$, then  $k=t$, which is not true since $\cj_{i,k} \neq \cj_{s,t}$. So we get that $i \neq s$. Furthermore, if $t+s \leq n < i+k$, then $k = c_i(k) = c_s(t) = n-s+1-t$ and $i < t$. So $i+k < i+t = n-s+1 \leq n$, a contradiction. So this third case never occurs when $c_i(k) = c_s(t)$. To finish the claim, we will assume that $c_i(k) = c_s(t)$ and prove that 
\[
(i+k,k) <_\text{lex} (s+t,t) \;\; \text{ if and only if }\;\; \begin{cases}
    k < t &\text{if }i+k \leq n \text{ and }s+t \leq n ,\\
    i < t &\text{if }i+k > n \text{ and }s+t \leq n ,\\
    i < s &\text{if }i+k > n \text{ and }s+t > n. \\
\end{cases}
\]
The forward direction can be done in cases based on the values $i+k$ and $s+t$. 
\begin{itemize}
    \item If $i+k = s+t \leq n$ then $k < t$ and we are done.
    \item If $i+k < s+t \leq n$ then $n-i+1-k = c_i(k) = c_s(t)= n-s+1-t$ and so $i+k=s+t$, a contradiction. This case never occurs.
    \item If $i+k \leq n < s+t$ then $n-i+1-k = c_i(k) = c_s(t) = t$, and so $k+t =n-i+1 \leq n < s+t$. In particular, $k<s$ and we are done.
    \item If $n < i+k < s+t$ then $k = c_i(k) = c_s(t) = t$, and so $i < k$ and we are done.
    \item If $n < i+k = s+t$ then $k=c_i(k)=c_s(t)=t$ and so $i = s$, a contradiction. This case never occurs.
\end{itemize}
Thus we have the forwards implication. The backwards direction can also be done in the three cases given.
\begin{itemize}
    \item  If $i+k \leq n$ and $s+t \leq n$, then $n-i+1-k = c_i(k) = c_s(t) = n-s+1-t$ and so $i+k = s+t$. Since $k < t$, we are done.
    \item If $i+k \leq n$ and $s+t > n$, we are done automatically.
    \item  If $i+k > n$ and $s+t > n$, then $k = c_i(k) = c_s(t) = t$ and $i < s$, so $i+k < s+t$ and we are done.
\end{itemize}
So we have the claim in full.
\end{proof}
\begin{example}
    With Lemma \ref{lem:hook-double_rels}, we can compute the $\twoheadrightarrow_{\MTub(C_n)}$ and $\hookrightarrow_{\MTub(C_n)}$ relations for much larger $n$. Below is the relevant relations for $n = 5$:
    \begin{center}
            \begin{tikzpicture}[scale = .75]
    \def \x {2};
    \def \y {2};

 \node[circle,draw] (11) at (4*\x,1*\y) {11};
 \node[circle,draw] (12) at (4*\x,2*\y) {12};
 \node[circle,draw] (13) at (4*\x,3*\y) {13};
 \node[circle,draw] (14) at (4*\x,4*\y) {14};
 \node[circle,draw] (21) at (3*\x,1*\y) {21};
 \node[circle,draw] (22) at (3*\x,2*\y) {22};
 \node[circle,draw] (23) at (3*\x,3*\y) {23};
 \node[circle,draw] (24) at (3*\x,4*\y) {24};
 \node[circle,draw] (31) at (2*\x,1*\y) {31};
 \node[circle,draw] (32) at (2*\x,2*\y) {32};
 \node[circle,draw] (33) at (2*\x,3*\y) {33};
 \node[circle,draw] (34) at (2*\x,4*\y) {34};
 \node[circle,draw] (41) at (1*\x,1*\y) {41};
 \node[circle,draw] (42) at (1*\x,2*\y) {42};
 \node[circle,draw] (43) at (1*\x,3*\y) {43};
 \node[circle,draw] (44) at (1*\x,4*\y) {44};
 \draw[->>, line width=.5mm] (14) -- (13);
 \draw[->>, line width=.5mm] (13) -- (12);
 \draw[->>, line width=.5mm] (12) -- (11);
 \draw[->>, line width=.5mm] (24) -- (23);
 \draw[->>, line width=.5mm] (23) -- (22);
 \draw[->>, line width=.5mm] (22) -- (21);
 \draw[->>, line width=.5mm] (34) -- (33);
 \draw[->>, line width=.5mm] (33) -- (32);
 \draw[->>, line width=.5mm] (32) -- (31);
 \draw[->>, line width=.5mm] (44) -- (43);
 \draw[->>, line width=.5mm] (43) -- (42);
 \draw[->>, line width=.5mm] (42) -- (41);
 \node[circle,draw] (11m) at (10*\x,4*\y) {11};
 \node[circle,draw] (12m) at (9*\x,3*\y) {12};
 \node[circle,draw] (13m) at (8*\x,2*\y) {13};
 \node[circle,draw] (14m) at (7*\x,1*\y) {14};
 \node[circle,draw] (21m) at (9*\x,4*\y) {21};
 \node[circle,draw] (22m) at (8*\x,3*\y) {22};
 \node[circle,draw] (23m) at (7*\x,2*\y) {23};
 \node[circle,draw] (24m) at (10*\x,3*\y) {24};
 \node[circle,draw] (31m) at (8*\x,4*\y) {31};
 \node[circle,draw] (32m) at (7*\x,3*\y) {32};
 \node[circle,draw] (33m) at (9*\x,2*\y) {33};
 \node[circle,draw] (34m) at (10*\x,2*\y) {34};
 \node[circle,draw] (41m) at (7*\x,4*\y) {41};
 \node[circle,draw] (42m) at (8*\x,1*\y) {42};
 \node[circle,draw] (43m) at (9*\x,1*\y) {43};
 \node[circle,draw] (44m) at (10*\x,1*\y) {44};
 \draw[right hook->, line width=.25mm] (41m) -- (32m);
 \draw[right hook->, line width=.25mm] (32m) -- (23m);
 \draw[right hook->, line width=.25mm] (23m) -- (14m);
 \draw[right hook->, line width=.25mm] (31m) -- (22m);
 \draw[right hook->, line width=.25mm] (22m) -- (13m);
 \draw[right hook->, line width=.25mm] (13m) -- (42m);
 \draw[right hook->, line width=.25mm] (21m) -- (12m);
 \draw[right hook->, line width=.25mm] (12m) -- (33m);
 \draw[right hook->, line width=.25mm] (33m) -- (43m);
 \draw[right hook->, line width=.25mm] (11m) -- (24m);
 \draw[right hook->, line width=.25mm] (24m) -- (34m);
 \draw[right hook->, line width=.25mm] (34m) -- (44m);
    \end{tikzpicture}
    \end{center}
\end{example}

\begin{definition}[c.f. \cite{reading2021fundamental}]
    Let $(\Sha, \rightarrow,\twoheadrightarrow, \hookrightarrow)$ be a two-acyclic factorization system. Given $x,y\in \Sha$ we write $x\rightsquigarrow y$ and say $x$ \emph{directly forces} $y$ if and only if either
    \begin{enumerate}[(i)]
        \item $x$ is $\twoheadrightarrow$-minimal in $\{x'\in \Sha \ | \ x'\hookrightarrow y\}$, or
        \item $x$ is $\hookrightarrow$-maximal in $\{x'\in \Sha \ | \ y\twoheadrightarrow x'\}$.
    \end{enumerate}
    \end{definition}

Now we can proceed to the characterization we will use of congruence uniform lattices (which are always semidistributive \cite[Lemma 4.1, Theorem 5.2]{day1979characterizations})
    \begin{corollary}[Corollary 7.4 \cite{reading2021fundamental}]\label{cor:RST_CongruenceUniform} Suppose $(\Sha, \rightarrow,\twoheadrightarrow, \hookrightarrow)$ is a two-acyclic factorization system. Then $\text{Pairs}(\to)$ is congruence uniform if and only if $\rightsquigarrow$ is acyclic. 
    \end{corollary}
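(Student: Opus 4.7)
The corollary is the content of Corollary~7.4 of Reading--Speyer--Thomas~\cite{reading2021fundamental}, so the plan is to sketch the flow of their argument rather than reprove it from scratch. The Fundamental Theorem of Finite Semidistributive Lattices realizes $L=\text{Pairs}(\to)$ as the lattice whose join irreducibles, equipped with $\to$, recover the data $(\twoheadrightarrow_L,\hookrightarrow_L)$ via $\text{Fact}$. The key link to congruence theory is that in a finite semidistributive lattice, every lattice congruence is uniquely determined by the set of join irreducibles $j$ for which the edge $j_*\lessdot j$ is contracted, and the admissible such subsets are exactly those closed under a ``forcing'' relation on $Ji(L)$.

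My first step would be to identify that forcing relation in the $(\Sha,\to,\twoheadrightarrow,\hookrightarrow)$ language. Specifically, one shows that whenever a congruence contracts $j$, it is compelled to also contract every $j'$ with $j\rightsquigarrow j'$: the two clauses in the definition of $\rightsquigarrow$ correspond exactly to the two ways that contracting a single cover relation propagates, one through $\hookrightarrow$-minimality (the ``bottom'' of a nontrivial interval that must collapse) and one through $\twoheadrightarrow$-maximality (the ``top'' of such an interval). Consequently the transitive and reflexive closure $\rightsquigarrow^*$ of $\rightsquigarrow$ is precisely the full forcing preorder on $Ji(L)$ that governs lattice congruences.

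With that dictionary in place, the second step is to translate the defining property of congruence uniformity. A congruence uniform lattice is one in which the congruence lattice is itself distributive (equivalently, each congruence has a unique minimal generating set), and for a finite semidistributive lattice this is equivalent to the forcing preorder being antisymmetric, i.e.\ a genuine partial order on $Ji(L)$. Indeed, if $j\rightsquigarrow^* j'\rightsquigarrow^* j$ with $j\ne j'$, then any congruence contracting one must contract the other, so they cannot be distinguished by principal congruences, contradicting uniformity; conversely, if $\rightsquigarrow^*$ is antisymmetric, the order-ideal construction in $(Ji(L),\rightsquigarrow^*)$ exhibits the congruence lattice as a distributive lattice of downsets.

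Finally, combining these two steps gives the equivalence: $\rightsquigarrow$ is acyclic iff $\rightsquigarrow^*$ is a partial order iff $L=\text{Pairs}(\to)$ is congruence uniform. I expect no hard obstacle in the present corollary itself, since the real technical work is the Fundamental Theorem and the forcing dictionary, both already available from~\cite{reading2021fundamental}; the genuine work in our paper will be the subsequent verification that the concrete $\rightsquigarrow$ on $Ji(\MTub(C_n))$, computable from Lemma~\ref{lem:hook-double_rels}, is acyclic, which is where the combinatorics of the chains $j_i$ and $m_i$ and the bijection $\kappa$ of Lemma~\ref{lem:kappa_bij} will have to be exploited.
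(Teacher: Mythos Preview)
The paper does not prove this corollary at all: it is quoted verbatim from \cite{reading2021fundamental} and, immediately afterwards, the authors explicitly adopt it as their \emph{definition} of congruence uniform (``Since the above Corollary is an equivalent characterization, we will actually take Corollary~\ref{cor:RST_CongruenceUniform} as the definition of congruence uniform lattices''). So there is no in-paper proof to compare your proposal against.

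Your sketch of the Reading--Speyer--Thomas argument is a reasonable summary of how that result is obtained in the cited source, and your closing paragraph correctly identifies where the actual work of the present paper lies (verifying acyclicity of $\rightsquigarrow$ for $\MTub(C_n)$ via Lemma~\ref{lem:hook-double_rels}). For the purposes of this paper, however, no argument is required here; the statement functions purely as an imported black box.
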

    Since the above Corollary is an equivalent characterization, we will actually take Corollary~\ref{cor:RST_CongruenceUniform} as the definition of congruence uniform lattices.
        \begin{definition}
        A Lattice $L$ is \emph{congruence-uniform} if $L$ is isomorphic to $\text{Pairs}(\to)$ for a two-acyclic factorization system  $(\Sha, \rightarrow,\twoheadrightarrow, \hookrightarrow)$ where $\rightsquigarrow$ is acyclic.
    \end{definition}
    For a more detailed background on congruence uniform lattices, we refer the interested reader to the following work~\cite{day1979characterizations}.

    \begin{theorem}
    \label{thm:congruence_uniform}
       $\MTub(C_n)$ is a congruence uniform lattice.
    \end{theorem}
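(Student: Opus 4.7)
The plan is to apply Corollary~\ref{cor:RST_CongruenceUniform} together with the semidistributivity of $\MTub(C_n)$ established in Theorem~\ref{thm:semidistributive}: by the Fundamental Theorem of Finite Semidistributive Lattices we already know that $(Ji(\MTub(C_n)), \to, \twoheadrightarrow, \hookrightarrow)$ is a two-acyclic factorization system whose pairs lattice reproduces $\MTub(C_n)$, so the only thing left to check is that the direct forcing relation $\rightsquigarrow$ on $Ji(\MTub(C_n))$ has no directed cycles.

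The first step is to unfold the two defining conditions of direct forcing using the explicit description of $\twoheadrightarrow$ and $\hookrightarrow$ provided by Lemma~\ref{lem:hook-double_rels}. The key observation is that for each fixed first index $i$, the map $k \mapsto c_i(k)$ is a bijection of $[n-1]$; consequently if $\cj_{i,k}$ and $\cj_{i,k'}$ are $\hookrightarrow$-comparable then $k = k'$. This yields two simplifications. In the set $\{x' : x' \hookrightarrow \cj_{s,t}\}$ each first index appears at most once, so distinct elements are pairwise $\twoheadrightarrow$-incomparable and every element is automatically $\twoheadrightarrow$-minimal. Dually, $\{x' : \cj_{s,t} \twoheadrightarrow x'\} = \{\cj_{s,t'} : t' < t\}$ consists of elements sharing a first index, so it is a $\hookrightarrow$-antichain on which every element is $\hookrightarrow$-maximal. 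Conditions~(i) and~(ii) then collapse to the concrete disjunction
\[
    x \rightsquigarrow y \quad\LRa\quad x \hookrightarrow y \ \text{or}\ y \twoheadrightarrow x,
\]
valid for all distinct $x, y \in Ji(\MTub(C_n))$.

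The second step is to introduce the potential function $\phi : Ji(\MTub(C_n)) \to \Zbb \times \Zbb$ defined by $\phi(\cj_{i,k}) \coloneqq (i+k, k)$, ordered lexicographically, and check that $\phi$ strictly increases along every $\rightsquigarrow$-edge. If $y \twoheadrightarrow x$ with $x = \cj_{i,t}$ and $y = \cj_{i,k}$, then $t < k$ and hence $\phi(x) = (i+t, t) <_{\text{lex}} (i+k, k) = \phi(y)$. If $x = \cj_{i,k} \hookrightarrow \cj_{s,t} = y$, Lemma~\ref{lem:hook-double_rels} gives directly $(i+k, k) <_{\text{lex}} (s+t, t)$, which is precisely $\phi(x) <_{\text{lex}} \phi(y)$. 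Since $\phi$ is a strictly increasing monovariant along $\rightsquigarrow$, the relation is acyclic, and Corollary~\ref{cor:RST_CongruenceUniform} finishes the proof.

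The only mildly delicate step will be collapsing the direct forcing conditions~(i) and~(ii) into the simple disjunction above, but this reduces entirely to the bijectivity of $c_i$. Once that is in hand, the choice of monovariant is essentially dictated by the inequality $(i+k, k) <_{\text{lex}} (s+t, t)$ already appearing in Lemma~\ref{lem:hook-double_rels}, so no real obstacle remains.
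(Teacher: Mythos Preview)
Your proposal is correct and follows essentially the same route as the paper's own proof: reduce $x \rightsquigarrow y$ to the disjunction $x \hookrightarrow y$ or $y \twoheadrightarrow x$ by observing that the $\hookrightarrow$-chains and $\twoheadrightarrow$-chains are mutual antichains (since $c_i$ is a bijection for each fixed $i$), and then use the lexicographic quantity $(i+k,k)$ from Lemma~\ref{lem:hook-double_rels} as a strictly increasing potential along $\rightsquigarrow$. The only cosmetic difference is that you name the monovariant $\phi$ explicitly, whereas the paper leaves it implicit.
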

    \begin{proof}
        We show this by demonstrating that the relation $\rightsquigarrow_{\MTub(C_n)}$ of the associated two-acyclic factorization system of $\MTub(C_n)$ is acyclic. 
        Once accomplished, the claim will follow immediately from Corollary~\ref{cor:RST_CongruenceUniform}.
        Let $(Ji(\MTub(C_n)),\twoheadrightarrow_{\MTub(C_n)}, \hookrightarrow_{\MTub(C_n)})$ be the associated two-acylic factorization system of $\MTub(C_n)$ and let $\cj\rightsquigarrow_{\MTub(C_n)} \cj'$ denote the relation that $\cj$ directly forces $\cj'$.
        
        For the purposes of notation, we now drop the subscripts when dealing with the relations. 
        Let $i,k$ and $s,t$ be such that $\cj_{i,k} \rightsquigarrow \cj_{s,t}$. We will prove that $\rightsquigarrow$ is acyclic by showing that $(i+k,k)<(s+t,t)$ in lexicographic order.
        
        The relations $\hookrightarrowcn$ and $\twoheadrightarrowcn$ are unions of chains. Furthermore, $i=s$ and $c_i(k) = c_s(t)$ cannot be simultaneously true, so a chain in $\hookrightarrowcn$ is an antichain in $\twoheadrightarrowcn$ and vice-versa. In particular, ${\{\cj\in Ji(\MTub(C_n)) \ | \  \cj\hookrightarrowcn \cj_{s,t}\}}$ is an antichain in $\twoheadrightarrowcn$, and thus every element is minimal. Similarly, in ${\{\cj\in Ji(\MTub(C_n))\  | \ \cj_{s,t}\twoheadrightarrowcn \cj\}}$ every element is $\hookrightarrowcn$--maximal.  Thus, $\cj_{i,k}\rightsquigarrowcn \cj_{s,t}$ if and only if $\cj_{s,t} \twoheadrightarrowcn \cj_{i,k}$ or $\cj_{i,k}\hookrightarrowcn \cj_{s,t}$. \par 
        By Lemma \ref{lem:hook-double_rels}, we get in both cases that $(i+k,k) <_\text{lex} (s+t,t)$, and we have the claim. \qedhere

    \end{proof}
    \section{Future Work}
    The poset $\MTub(C_n)$ shares many properties with the weak order on a (type A) Coxeter group as well as its quotient, the Tamari lattice.
    All three posets are semidistributive, congruence uniform and their Hasse diagrams form a regular graph.
    Both the weak order and the Tamari lattice can also be realized as a lattice of certain subcategories called \emph{torsion classes} $\tors \Lambda$ for a $\tau$-tilting finite algebra $\Lambda$. See \cite{Mizuno} and \cite{Thomas_Torsion}.  
    It would be interesting to know whether there exists $\Lambda_{C_n}$ such that $\MTub(C_n)$ is isomorphic to $\tors(\Lambda_{C_n})$.  

    Recently, several authors have studied the action of pop-stack sorting on the weak order on a type A Coxeter group and on the Tamari lattice (and more generally $c$-Cambrian lattices).
    See \cite{Barnard_Defant_Hanson, Choi_Sun, Defan_Lin,Defant, Hong}
    The dynamics of pop-stack sorting on the weak order and the Tamari lattice have a number of pleasant enumerative properties.
    For example, the size image of the pop-stack sorting map on the Tamari lattice is a Motzkin number, and the number of elements with the maximal size orbit is equal to a Catalan number.
    It would be interesting to study the dynamics of the pop-stack sorting map on $\MTub(C_n)$.

    Finally, in \cite[Section~7]{Reading_Lattice_Theory}, Reading defines an alternative partial order on any congruence uniform lattice $L$, which is often called the \emph{shard intersection order on $L$}.
    For both the weak order and the Tamari lattice, the corresponding shard intersection has been shown to be EL-shellable \cite{Bancroft, Petersen}.
    For both of these lattices, the shard intersection order can also be realized a partial order on certain polyhedral cones.
    It would be interesting to know whether the shard intersection order of $\MTub(C_n)$ has a nice geometric interpretation, and whether it is also EL-shellable.

\bibliographystyle{amsalpha}
\bibliography{arxiv_bib}
\end{document}